\documentclass[reqno,oneside,11pt]{amsart}
\usepackage[T1]{fontenc}
\usepackage[utf8]{inputenc}
\usepackage{lmodern}
\usepackage[french,english]{babel}
\usepackage{geometry} 
\usepackage{csquotes}

\usepackage{amsmath,amssymb,amsfonts,amsthm,mathtools}
\usepackage{mathrsfs}
\usepackage{graphicx}
\usepackage{caption}
\usepackage{subcaption} 
\usepackage{booktabs}
\usepackage{array}
\usepackage{paralist}
\usepackage{fancyhdr}
\usepackage{emptypage}
\fancypagestyle{plain}{
	\fancyhf{}
	\fancyfoot[RO,RE]{\thepage}
	
	}

\usepackage{comment} 
\usepackage{diagbox}
\usepackage{xcolor}
\usepackage{epigraph}

\usepackage[noadjust]{cite}
\usepackage[
pagebackref=true,
colorlinks=true,
urlcolor=purple,
linkcolor=purple!87!black,
citecolor=green!60!black,
pdfborder={0 0 0}
]{hyperref}
\renewcommand*{\backref}[1]{}
\renewcommand*{\backrefalt}[4]{[{\tiny%
		\ifcase #1 Not cited.%
		\or Cited on page~#2.%
		\else Cited on pages #2.%
		\fi%
	}]}
\usepackage{cleveref}

\usepackage{bookmark}
\usepackage{dsfont}

\newcommand{\Z}{\mathbb{Z}}
\newcommand{\N}{\mathbb{N}}
\newcommand{\R}{\mathbb{R}}
\newcommand{\Q}{\mathbb{Q}}
\renewcommand{\P}{\mathbb{P}}
\newcommand{\E}{\mathbb{E}}

\newcommand{\supp}[1]{\mathrm{supp}(#1)}

\usepackage{tikz}
\usetikzlibrary{patterns,positioning,arrows,decorations.markings,calc,decorations.pathmorphing,decorations.pathreplacing}
\usetikzlibrary{fadings}
\usepackage{forest}
\usepackage{tikz-qtree}
\RequirePackage{pgffor} 
\makeatletter
\newcommand{\neutralize}[1]{\expandafter\let\csname c@#1\endcsname\count@}
\makeatother

\newcommand{\thistheoremname}{}

\newtheorem*{genericthm*}{\thistheoremname}
\newenvironment{namedthm*}[1]
{\renewcommand{\thistheoremname}{#1}%
	\begin{genericthm*}}
	{\end{genericthm*}}
\theoremstyle{plain}
\newtheorem{thm}{Theorem}[section] 

\newtheorem{prop}[thm]{Proposition}
\newtheorem{lem}[thm]{Lemma}

\newtheorem{claim}[thm]{Claim}
\newtheorem{question}[thm]{Question}

\newtheorem{innercustomthm}{Theorem}
\newenvironment{customthm}[1]
  {\renewcommand\theinnercustomthm{#1}\innercustomthm}
  {\endinnercustomthm}

\theoremstyle{definition}
\newtheorem{rem}[thm]{Remark}

\newtheorem{defn}[thm]{Definition} 

\usepackage[notocbasic,refpage]{nomencl}
\makenomenclature

\usepackage{etoolbox}
\newcommand{\getgroupname}[1]{%
  \ifstrequal{#1}{C}{Cellular $\ell^2$-chains}{%
  \ifstrequal{#1}{G}{Group presentations}{%
  \ifstrequal{#1}{O}{Notation used in Section \ref{section:convergence of the spectral measure}}{%
  \ifstrequal{#1}{P}{Partitions}{%
  Other}}}}%
}

\newcounter{nomcount}
\newcommand{\padzeroes}{%
  \ifnum\value{nomcount}<10 000\else
  \ifnum\value{nomcount}<100 00\else
  \ifnum\value{nomcount}<1000 0\fi\fi\fi
  \arabic{nomcount}%
}

\newcommand{\mynomenclature}[3][]{%
  \stepcounter{nomcount}%
  \nomenclature[#1\padzeroes]{#2}{#3}%
}

\usepackage{bm,thmtools}
\usepackage{amscd}

\usetikzlibrary{arrows.meta,decorations.markings}

\newcommand{\freesum}[1]{\mathbf{s}(#1)}
\newcommand{\typemap}{\mathbf{j}}

\newcommand{\rg}{\mathbf{rg}}

\newcommand{\cost}{\mathrm{cost}}
\DeclareMathOperator{\lab}{lab}
\newcommand{\Cay}{\mathrm{Cay}}
\newcommand{\Sch}{\mathrm{Sch}}

\allowdisplaybreaks[1]
\author{Miguel Donoso-Echenique}
\email[Miguel Donoso-Echenique]{mdonosoe@uni-muenster.de, migueldonosoe@gmail.com}
\address{University of Münster, Einsteinstrasse 62, Münster 48149, Germany}
\urladdr{\url{https://sites.google.com/view/miguel-donoso-echenique}}

\author{Eduardo Silva} 
\email[Eduardo Silva]{eduardo.silva@uni-muenster.de, edosilvamuller@gmail.com}
\address{University of Münster, Einsteinstrasse 62, Münster 48149, Germany}

\urladdr{\url{https://edoasd.github.io/eduardo_silva_math/}}

\keywords{Rank gradient, cost, $\ell^2$-Betti numbers, random groups}
\title[]{The first $\ell^2$-Betti number, cost and rank gradient of few-relator random groups}
\allowdisplaybreaks
\usepackage{url}
\usepackage{enumitem}
\setlist[enumerate, 1]{label = (\roman*), ref = \roman*}
\setlist[enumerate, 2]{label = \theenumi.\alph*}
\newlist{stepsAone}{enumerate}{1}
\setlist[stepsAone,1]{label = \textbf{\underline{Step A.1.\arabic*}}:}

\newlist{stepsAtwo}{enumerate}{1}
\setlist[stepsAtwo,1]{label = \textbf{\underline{Step A.2.\arabic*}}:}

\newlist{stepsB}{enumerate}{1}
\setlist[stepsB,1]{label = \textbf{\underline{Step B.\arabic*}}:}
\begin{document}	
	\begin{abstract}
Let $G$ be a random group in the few-relator model with $n\ge 2$ generators and $m\ge 1$ relators. We prove that asymptotically almost surely the first $\ell^2$-Betti number, the cost minus one, and the absolute rank gradient of $G$ are all equal to $\max\{n-m-1,0\}$. In particular, the equality $\smash{\beta_1^{(2)}(G)=0}$ when $m\ge n-1$ answers positively a conjecture of Kielak, Kropholler and Wilkes \cite{KielakKrophollerWilkes2022}. 
	\end{abstract}
	
	\maketitle
	
\section{Introduction}\label{section: introduction}
The \emph{cost} of a probability-measure-preserving (p.m.p.)\ action of a countable group is an invariant of its orbit equivalence relation. It was introduced by Levitt \cite{levitt1995cost} and subsequently developed by Gaboriau \cite{gaboriau2000cout}. The cost of a countable group $G$, denoted by $\cost(G)$, is the infimum of the costs of its free p.m.p.\ actions. It also has the following probabilistic characterization: $\cost(G)$ is half of the infimum of the expected degree of the identity over all $G$-invariant random connected spanning graphs on $G$; see \cite[Proposition~29.5]{KechrisMiller2004}. The cost of $G$ is closely related to the \emph{first $\ell^2$-Betti number} $\beta_1^{(2)}(G)$. Gaboriau proved that every infinite countable group $G$ satisfies $\beta_1^{(2)}(G)+1\leq \cost(G)$ \cite[Corollaire 3.23]{gaboriau2002}.

Suppose now that $G$ is finitely generated and residually finite, and let $(G_i)_{i\geq 0}$ be a decreasing chain of finite-index normal subgroups with $G_0=G$. The \emph{rank gradient} of the chain measures the growth of the minimal number of generators along the chain relative to the corresponding indices. It was introduced by Lackenby in \cite{Lackenby2005} and is defined by
\[
\rg\left(G,(G_i)_{i\geq 0}\right) \coloneqq \lim_{i\to\infty}\frac{d(G_i)-1}{[G:G_i]},
	\]
where $d(G_i)$ denotes the minimal number of generators of the group $G_i$ for each $i\ge 0$.
	
The chain $(G_i)_{i\geq 0}$ naturally gives rise to a p.m.p.\ action of $G$ as follows. Equip each finite coset space $G/G_i$ with the uniform probability measure, and consider the inverse limit $X_{(G_i)_{i\ge 0}}\coloneqq\varprojlim_i G/G_i$ endowed with the associated Haar measure. Left multiplication induces a p.m.p.\ action $G\curvearrowright X_{(G_i)_{i\ge 0}}$, which is essentially free when $\bigcap_{i\geq 0}G_i=\{e_G\}$. With this latter assumption, Abért and Nikolov proved that the cost of the equivalence relation associated with the action $G\curvearrowright X_{(G_i)_{i\ge 0}}$ equals $\rg\left(G,(G_i)_{i\geq 0}\right)+1$ \cite[Theorem 1]{abert2012rank}.
	
Taking the inverse limit over all finite-index normal subgroups yields the profinite completion $\widehat{G}$ of $G$, equipped with its Haar probability measure, and the associated free p.m.p.\ action $G\curvearrowright\widehat{G}$. The corresponding rank gradient is called the \emph{absolute rank gradient} of $G$, which we denote by $\rg(G)$. Equivalently, we have
	\begin{equation*}\label{eq:absolute-rank-gradient}
		\rg(G)=\inf\left\{ \frac{d(H)-1}{[G:H]} : H\lhd G,\ [G:H]<\infty
		\right\}.
	\end{equation*}
	By combining Gaboriau's inequality between cost and the first $\ell^2$-Betti number with the definition of group cost as the infimum of the costs over all free p.m.p.\ actions, every infinite finitely generated residually finite group satisfies the inequalities
	\begin{equation}\label{eq:general-inequalities}
		\beta_1^{(2)}(G)
		\leq \cost(G)-1
		\leq \cost(G\curvearrowright\widehat{G})-1
		=\rg(G).
	\end{equation}
	There is no known example of an infinite countable residually finite group $G$ in which either inequality in \eqref{eq:general-inequalities} is strict. 
    
    The following questions have appeared several times in the literature and have motivated the study of cost and rank gradient of groups; see, e.g., Question 8.2 in Gaboriau's 2010 ICM survey \cite{gaboriauICM}, \cite[Problem 1.5]{Osin2011}, \cite[Question 1.10]{ErshovLuck2014} and \cite[Question 4.2]{Luck2016}.

\begin{question}\label{question: equality cost betti}
Is it true that $\beta_1^{(2)}(G)=\cost(G)-1$ for every infinite countable group $G$?
\end{question}
Gaboriau asked, more generally, whether $\beta_1^{(2)}(\mathcal{R})=\cost(\mathcal{R})-1$ for every aperiodic countable p.m.p.\ Borel equivalence relation $\mathcal{R}$ \cite[page 129]{gaboriau2002}. The claimed example of a group without fixed price by OpenAI \cite{OpenAI26} would answer this stronger question negatively. Indeed, the proposed group $G$ has cost $1$, so $\beta_1^{(2)}(G)=0$, while its Bernoulli action has cost strictly greater than one. The orbit relation $\mathcal{R}$ of this Bernoulli action satisfies $\beta_1^{(2)}(\mathcal{R})=\beta_1^{(2)}(G)=0$  \cite[Corollary 3.16]{gaboriau2002}. The group $G$ in OpenAI's example satisfies $\beta_1^{(2)}(G)=0=\mathrm{cost}(G)-1$, and therefore does not give a negative answer to Question \ref{question: equality cost betti}.

\begin{question}\label{question: equality cost rank gradient}
 Suppose that $G$ is infinite, finitely generated and residually finite. Does it hold that $\cost(G)-1=\rg(G)$?
\end{question}

A more general version of Question \ref{question: equality cost rank gradient} is the following: fix an arbitrary decreasing chain $(G_i)_{i\ge 0}$ of finite-index normal subgroups of $G$ with trivial intersection. Does it hold that $\cost(G)-1=\rg(G,(G_i)_{i\ge 0})$? An affirmative answer would imply a positive solution to Problem 11 in Abért's 2022 ICM survey \cite{AbertICM} and to the first sentence of Question 5.1 in Bader's 2026 ICM survey \cite{BaderpublishedICM}; see also Question 82 in the extended version of this survey by Bader and Sauer \cite{BaderICM}.

Lück observes in \cite[Remark 4.3]{Luck2016} that, for finitely presented groups, the equality $\beta_1^{(2)}(G)=\rg(G,(G_i)_{i\ge 0})$ along a residual normal chain implies that the minimum number of generators of $G_i$ and the free abelian rank of its abelianization agree asymptotically, after normalizing by the index $[G:G_i]$. He notes that this would be surprising, since these quantities need not agree for an individual finitely generated group. He writes:
\begin{displayquote}
\itshape
This raises the question whether this equality holds for a ``random group'' in the sense of Gromov.
\end{displayquote}
Our main result ensures that, asymptotically almost surely, the answers to both Question \ref{question: equality cost betti} and Question \ref{question: equality cost rank gradient} are affirmative for a random group in the equal-length version of the few-relator model; see also Subsection \ref{subsection: intro few relator} below for more details.
\begin{customthm}{A}\label{thm: main theorem}
		Let $G_\ell=\langle x_1,\ldots,x_n\mid r_1,\ldots,r_m\rangle$, $\ell\ge 1$, be a random group with $n\geq2$ generators and $m\geq1$ relators in the few-relator model. Here, $\ell$ denotes the length of the relators. Then, asymptotically almost surely as $\ell\to\infty$, we have
		\[
			\beta_1^{(2)}(G_\ell) = \mathrm{cost}(G_\ell)-1 = \rg(G_\ell) = \max\{n-m-1,0\}.
		\]
	\end{customthm}

Thus, all quantities in \eqref{eq:general-inequalities} agree for a finitely presented group whose relations are ``chosen at random''. 

Our methods do not extend to other random group models, such as Gromov's density model. They also leave open whether random few-relator groups have \emph{fixed price}, that is, whether every essentially free p.m.p.\ action of such a group has cost equal to $\max\{n-m,1\}$. We further discuss these limitations in Subsection \ref{subsection: scope}.

\subsection{The few-relator model and further context of Theorem \ref{thm: main theorem}}\label{subsection: intro few relator}
In the few-relator model of a random group, one fixes the number of generators $n\ge 2$ and the number of relators $m\ge 1$. For each $\ell \ge 1$ one considers the group $G_\ell$ with presentation obtained by choosing $m$ relators independently and uniformly among all cyclically reduced words of length $\ell$ in the generators. One says that a property holds ``asymptotically almost surely'' if the probability that $G_\ell$ satisfies the property converges to $1$ as $\ell\to \infty$. The formal definition of the probability space and model considered is given in Subsection \ref{section: probability small cancellation}.

The few-relator model was introduced by Arzhantseva and Ol'shanskii \cite{ArzhantsevaOlshanskii}, following earlier work of Guba \cite{Guba1986}. In their formulation, the number of generators and relators is fixed, and the relators are chosen among cyclically reduced words of length at most $\ell$. They proved that, generically, every subgroup generated by fewer than $n$ elements is free, thereby answering \cite[Problem 11.75]{KourovkaNotebook}. Their estimates also show that, for every fixed $\lambda>0$, the defining presentation satisfies the $C'(\lambda)$ small-cancellation condition asymptotically almost surely, and hence the group is hyperbolic \cite[Lemma 3]{ArzhantsevaOlshanskii}. These conclusions also hold when all relators have length exactly $\ell$, as explained in \cite[Proof of Theorem 2 in page 355]{ArzhantsevaOlshanskii}. We use this equal-length formulation in our paper. For further background on random groups, we refer to Ollivier's survey \cite{Ollivier2005}, and for a comparison of the Arzhantseva--Ol'shanskii model with Gromov's density model to \cite{KapovichSchupp2008}.

Kielak, Kropholler, and Wilkes previously studied $\ell^2$-homology,
coherence, and virtual fibering in the few-relator model \cite{KielakKrophollerWilkes2022}. Their
results are divided according to the sign of the Euler characteristic $\chi(G_\ell)$ of the random group, which equals $1-n+m$ asymptotically almost surely, since the presentation complex is asymptotically
almost surely aspherical.

When the presentation has negative Euler characteristic, i.e., when $1-n+m<0$, they proved that $\beta_1^{(2)}(G_\ell)=n-m-1$ asymptotically almost surely, that the remaining $\ell^2$-Betti numbers vanish, and that $G_\ell$ is coherent and embeds in a virtually free-by-cyclic group. They also obtained results that hold with positive probability (not one) in the zero Euler characteristic case and results for two-generator presentations of positive Euler characteristic; see \cite[Theorems A and B, Proposition 2.12]{KielakKrophollerWilkes2022}.
	
Kielak, Kropholler and Wilkes conjectured that $\beta_1^{(2)}(G_\ell)=\max\{n-m-1,0\}$ asymptotically almost surely for every $n\geq2$ and $m\geq1$ \cite[Conjecture 2.14]{KielakKrophollerWilkes2022}, where their main theorem covers the case $n>m+1$. The equality $\beta_1^{(2)}(G_{\ell})=\max\{n-m-1,0\}$ guaranteed by Theorem \ref{thm: main theorem} gives a complete affirmative answer to their conjecture.

The estimates of Arzhantseva-Ol'shanskii \cite[Lemma 3]{ArzhantsevaOlshanskii} imply that, for every fixed $\lambda>0$, the defining presentation of $G_\ell$ satisfies the $C'(\lambda)$ small-cancellation condition asymptotically almost surely. Taking $\lambda=1/6$, the group is hyperbolic, and \cite[Theorem 1.2]{Wise2004} together with \cite[Theorem 1.1]{Agol2013} imply that it is virtually compact special. Such a group has a finite-index subgroup that embeds in a right-angled Artin group \cite[Theorem 1.1 and Proposition 3.10]{HaglundWise2008}. Since right-angled Artin groups are residually finite, the group $G_\ell$ is residually finite asymptotically almost surely. This justifies considering its rank gradient in the statement of Theorem \ref{thm: main theorem}.

\subsection{Convergence of the tracial spectral measure of the Laplacian}\label{subsection: convergence of tracial spectral measure of the laplacian}
 We next describe the main operator-theoretic component of the proof of Theorem \ref{thm: main theorem}. In Subsections \ref{subsection: vn algebras}, \ref{subsection: preliminaries tracial spectral measure}, \ref{subsection: preliminaries homology} and \ref{subsection: laplacian} we recall in further detail the relevant definitions of $\ell^2$-cellular complexes, $\ell^2$-Betti numbers and von Neumann algebras used below.

Let $X_\ell$ be the presentation complex of $G_\ell$ and let $A_\ell=\partial_{2,\ell}^{(2)}:\ell^2(G_\ell)^m\longrightarrow\ell^2(G_\ell)^n$ be its degree-$2$ cellular $\ell^2$-boundary operator. Since $X_\ell$ is two-dimensional, its degree-$2$ Laplacian is $\Delta_{2,\ell}=A_\ell^*A_\ell$, whose kernel identifies with the reduced second $\ell^2$-homology; see \cite[Equation (1.17) and Lemma 1.18]{Luck2002}. On the asymptotically almost-sure event that $X_\ell$ is aspherical and $G_\ell$ is infinite, the $\ell^2$-Euler--Poincaré formula states that
\begin{equation}\label{eq: euler poincare}
\beta_1^{(2)}(G_\ell)=n-m-1+\mathrm{Tr}_m\!\left(\mathds{1}_{\{0\}}(A_\ell^*A_\ell)\right); \end{equation}
see \cite[Theorem 1.35 (2) and (8)]{Luck2002}.

We study the normalized Laplacian $\widehat{\Delta}_{2,\ell}=\ell^{-1}A_\ell^*A_\ell$, $\ell \ge 1$, through its tracial spectral measure
\[
\sigma_\ell(B)\coloneqq \mathrm{Tr}_m\left(\mathds{1}_B(\widehat\Delta_{2,\ell})\right), \text{ for each Borel subset } B\subseteq\R,
\]
where $\mathrm{Tr}_m$ is the unnormalized matrix trace on $M_m(L(G_\ell))$ induced by the canonical trace on the group von Neumann algebra $L(G_\ell)$. Since the normalization by $\ell$ does not change the kernel of the operator, the last term on the right-hand side of Equation \eqref{eq: euler poincare} is equal to $\sigma_\ell(\{0\})$.

The expected limiting behavior of the tracial spectral measure $\sigma_\ell$ can be motivated by an analogy with classical random matrix theory. Heuristically, since the relators of $G_\ell$ are chosen independently from each other, one may expect the operator $A_\ell/\sqrt{\ell}$ to exhibit spectral behavior similar to that of a large rectangular matrix with independent and identically distributed entries as $\ell \to \infty$. Just as the central limit theorem gives a universal Gaussian limit for suitably normalized sums of i.i.d.\ random variables with finite, non-zero variance, the
Marchenko--Pastur theorem gives a universal spectral limit for sample covariance matrices formed from i.i.d.\ entries \cite{MarchenkoPastur1967}. With the appropriate normalization, this limit is a \emph{free-Poisson law}; see \cite[Section 4.5.1]{MingoSpeicher2017} and Remark \ref{rem: comparison random matrix theorem} in Subsection \ref{subsection:free poisson preliminaries}. This theorem does not apply directly to $A_\ell$. Its entries are operators on $\ell^2(G_\ell)$ instead of complex numbers, and entries in the same column are not independent, since they all depend on a same given relator.

The free-Poisson distribution with mean $\lambda>0$ is the probability measure $\mu_{\lambda}$ on $\R$ defined by
\[
\mu_\lambda(B) =(1-\lambda)_+\delta_0(B)
+ \int_{B\cap(a_\lambda,b_\lambda)}
\frac{\sqrt{(b_\lambda-t)(t-a_\lambda)}}{2\pi t}\ dt, \text{ for each Borel subset }B\subseteq \R,
\]
where $a_\lambda=(1-\sqrt{\lambda})^2$, $b_\lambda=(1+\sqrt{\lambda})^2$  and $(1-\lambda)_+=\max\{1-\lambda,0\}$; see \cite[Proposition 12.11 and Definition 12.12]{NicaSpeicher2006} and Subsection 2.6 for further background.

The following theorem identifies the limiting tracial spectral measure in our setting.
	
	\begin{customthm}{B}\label{thm:main spectral law}
		Let
		$G_\ell=\langle x_1,\ldots,x_n\mid r_1,\ldots,r_m\rangle$, $\ell\ge 1$,
		be a random group with $n\geq2$ generators and $m\geq1$ relators in the
		few-relator model. Denote by $L(G_{\ell})$ the group von Neumann algebra of $G_{\ell}$, and let
		\[
		\widehat{\Delta}_{2,\ell}
		=
		\frac{1}{\ell}A_\ell^*A_\ell
		\in M_m\bigl(L(G_\ell)\bigr)
		\]
		be the normalized degree-$2$ cellular $\ell^2$-Laplacian on the universal cover of the presentation complex of $G_\ell$. Let $\sigma_\ell$ be the tracial spectral measure of $\widehat{\Delta}_{2,\ell}$ with respect to the matrix trace on $M_m(L(G_\ell))$. Let us denote by $\mu_\lambda$ the free-Poisson law of mean $\lambda\coloneqq m/(n-1)$. Then
		\[
		\sigma_\ell \xrightarrow[\ell\to\infty]{} \sigma_\infty \coloneqq (n-1)\mu_\lambda+(m-n+1)\delta_0
		\]
		weakly in probability\footnote{Note that the measure $\sigma_\infty$ is positive:
			when $m<n-1$, the negative correction at zero cancels exactly the atom
			of $(n-1)\mu_\lambda$ at zero.}. More precisely, for every bounded continuous function $f:\R\to\R$ and every $\varepsilon>0$, we have
		\begin{equation}\label{eq:weak spectral convergence}
			\P_\ell\left( \left| \int_{\R}f(t)\ d\sigma_\ell(t) - \int_{\R}f(t)\ d\sigma_\infty(t) \right|>\varepsilon \right) \xrightarrow[\ell\to\infty]{}0.
		\end{equation}
	\end{customthm}
To prove Theorem \ref{thm:main spectral law}, we establish convergence in probability of the moments of $\sigma_\ell$ to those of $\sigma_\infty$. The main difficulty is to control the dependencies arising from the defining relators. This requires combining known geometric estimates for intersections of lifted relator cells, which rely on small-cancellation methods, with probabilistic estimates for subwords of the random relators. We carry this out in Sections \ref{section: trace laplacian} and \ref{section:convergence of the spectral measure}, and deduce weak convergence from moment convergence in Subsection \ref{subsection:proof of spectral convergence}.
 
\subsection{Background, related literature and scope of our methods}\label{subsection: scope}
A group $G$ is said to have \emph{fixed price} if all its free p.m.p.\ actions have the same cost. For a residually finite group with fixed price, the second inequality in \eqref{eq:general-inequalities} is therefore an equality.

A well-known question of Gaboriau asks whether there are countable groups that do not have fixed price. A recent preprint of OpenAI claims that such groups do exist \cite{OpenAI26}. However, it is not clear from their arguments whether one can also find groups for which either of the inequalities in \eqref{eq:general-inequalities} is strict.

 Several classes of groups are known to have fixed price. Gaboriau proved that the free group $F_r$ of rank $r$ has fixed price $r$, and that every group containing an infinite amenable normal subgroup has fixed price $1$ \cite[Section VI.26]{gaboriau2000cout}. More recently, Fr\k{a}czyk, Mellick, and Wilkens proved fixed price one for lattices in higher-rank semisimple real Lie groups \cite{fraczykmellickwilkens}. A recent preprint by Slutsky gives a combinatorial criterion for fixed price $1$ that recovers several previously known examples and provides new ones \cite{Slutsky2026}.
	
In some situations, the cost of a group can be determined without answering the question on whether the group has fixed price. A notable example is a theorem of Hutchcroft and Pete, stating that every infinite group with Kazhdan's property (T) has cost $1$ \cite{HutchcroftPete2020}; whether all such groups have fixed price one remains an open problem. In recent work, the authors proved that free Burnside groups of sufficiently large odd exponent also have cost $1$ \cite{donososilva2026}, although fixed price is not known for these groups either. Note that free Burnside groups of sufficiently large non-prime exponent do not have property (T) \cite{Osajda2018}, so this result is not covered by Hutchcroft and Pete's theorem.

In a recent preprint, Poulin and Wróbel computed the cost of every infinite one-relator group \cite{poulinwrobel2026}. They show that if $G$ admits a presentation $G=\langle S\mid w^q\rangle$, where $w$ is not a proper power, then $\cost(G)=|S|-q^{-1}$. Combined with the corresponding computation of the first $\ell^2$-Betti number, their result shows that Gaboriau's inequality $\beta_1^{(2)}(G)+1\leq\cost(G)$ is an equality for every infinite one-relator group. In particular, for few-relator random groups with a single defining relator, the equality $\cost(G)=\beta_1^{(2)}(G)+1$ established in Theorem \ref{thm: main theorem} already follows from their result. Indeed, the defining relator is asymptotically almost surely not a proper power, so their formula gives $\cost(G)=|S|-1$. Their results also apply to certain presentations with several relators, including finite reducible presentations without proper powers \cite[Corollary 4.2]{poulinwrobel2026}. However, with at least two relators, the random presentations considered here are asymptotically almost surely not reducible: every relator contains every generator asymptotically almost surely, whereas reducibility requires a generator that occurs in only one relator.
	
Rank gradient was introduced by Lackenby in his study of expansion in finite quotients of finitely presented groups \cite{Lackenby2005}. His main result \cite[Theorem 1.1]{Lackenby2005} can be phrased as follows. Let $G$ be a finitely presented group and let $(G_i)_{i\geq 0}$ be a family of finite-index normal subgroups closed under taking intersections. Suppose that only finitely many members of this family admit a non-trivial splitting as an HNN extension or amalgamated free product. If $\rg(G,(G_i)_{i\ge 0})>0$ then the Cayley graphs of the quotients $G/G_i$, with respect to the images of a fixed finite generating set of $G$, form an expander family.

Subsequent computations of rank gradient include the vanishing of the rank gradient for any infinite finitely generated amenable groups along every residual normal chain \cite[Theorem 5]{AbertJaikinNikolov2011}, and more generally, for finitely generated residually finite groups containing an infinite amenable normal subgroup \cite[Theorem 3]{abert2012rank}. For infinite limit groups, Bridson and Kochloukova established the formula $\rg(G,(G_i)_{i\geq 0})=-\chi(G)$ along every residual normal chain \cite[Theorem A]{BridsonKochloukova2017}, where $\chi(G)$ is the Euler characteristic of $G$. A major recent advance is the work of Frączyk, Mellick, and Wilkens, who showed the vanishing rank gradient along every residual normal chain in a lattice in a higher-rank semisimple real Lie group \cite[Theorem D]{fraczykmellickwilkens}.

Nevertheless, even for residual normal chains, it remains unknown whether the value of the rank gradient can depend on the chosen chain. Abért asked whether every infinite finitely presented residually finite group with property (T) has vanishing rank gradient along every such chain \cite[Problem 11]{AbertICM}. A challenging unresolved case is that of lattices in rank-one simple Lie groups with property (T), such as $\mathrm{Sp}(n,1)$ for $n\geq 2$. Although these lattices have vanishing first $\ell^2$-Betti number and cost one, these computations do not settle their rank gradient. Neither the results of Hutchcroft and Pete nor those of Poulin and Wróbel provide a computation of rank gradient.

The arguments and methods in our proofs do not establish fixed price for random few-relator groups. The upper bounds for cost are obtained through
finite-index subgroups and graphings of the profinite action, whose cost is related to the absolute rank gradient by \cite[Corollary 11]{abert2012rank}. The graphings constructed using finite quotients do not furnish such graphings for an arbitrary action. By \cite[Corollary 2]{AbertWeiss}, Bernoulli actions attain the maximal cost among essentially free p.m.p.\ actions of a finitely generated group. Consequently, in view of our computation of $\cost(G_\ell)$, it would suffice to prove that a non-trivial \emph{Bernoulli action} of $G_\ell$ has cost at most $\max\{n-m,1\}$. Our methods do not yield this additional bound; see also \cite[Remark 3.6]{poulinwrobel2026} where the limitations of the edge-replacement method are discussed.

The proof of Theorem \ref{thm: main theorem} does not extend to Gromov's density model. Our spectral moment estimates use both the fixed number of relators and the logarithmic bound $O(\log\ell)$ on the maximal length of a piece. At a fixed density $d\in(0,1/2)$, the number of relators grows exponentially with $\ell$, and the maximal piece length is of order $\ell$; see \cite[Section 9.B]{GromovAsymptotic} and
\cite[Proposition 1.5]{MackayConformalDimension2012}.
Although random presentations still satisfy $C'(1/6)$
asymptotically almost surely in the density regime $d<1/12$, this condition allows overlaps whose lengths are a fixed positive proportion of $\ell$. The stronger logarithmic bound required by our moment estimates therefore fails even at arbitrarily small fixed positive density. The same obstruction applies to the low-density version of the
Arzhantseva--Ol'shanskii model defined in
\cite[Definition 2.3]{KapovichSchupp2008}, where
the positive density $d=d(n)$ may depend on the number of generators but remains fixed as $\ell\to\infty$.

Spectral measures have also been studied for random simplicial complexes in the literature. In the Linial-Meshulam model, one considers the complete $(d-1)$-skeleton on $N$ vertices and includes each $d$-simplex independently with probability $p$. Knowles and Rosenthal prove a semicircle law for rescaled adjacency eigenvalues \cite[Theorem 2.5]{KnowlesRosenthal2017}. See also, e.g., \cite[Theorem 2]{GundertWagner2016} and \cite[Theorem 2.4]{LeibzirerRosenthal2025} for related results on the distribution of eigenvalues. Closer to our homological applications, Linial and Peled use the mass at zero of a limiting Laplacian spectral measure to study Betti numbers \cite[Corollary 3.4 and Section 4]{LinialPeled2016}. These results concern finite complexes with growing numbers of vertices. Our presentation complexes have fixed numbers of cells, whereas their contours' lengths go to infinity.
	
The next three subsections outline the proofs of Theorems
\ref{thm: main theorem} and \ref{thm:main spectral law}. These sketches are intended to provide both an overview of the proofs as well as a roadmap when reading the complete proofs in detail. A glossary of the notation used throughout the paper can be found in page \pageref{sec:nomenclature}.

\subsection{Sketch of the proof of Theorem \ref{thm: main theorem} in the case of non-negative Euler characteristic}
 We first consider the case in which the presentation complex of $G_\ell$ has non-negative Euler characteristic, i.e., when $m \ge n-1$. In this case, the statement of Theorem \ref{thm: main theorem} is that
\[
\P_\ell\left(\beta_1^{(2)}(G_\ell) = \cost(G_\ell)-1 = \rg(G_\ell)= 0\right)\xrightarrow[\ell\to\infty]{} 1.
\]
This result is stated separately in Theorem \ref{thm:low_deficiency}.

\begin{stepsAone}
 \item Our starting point are several properties known to hold asymptotically for a random group in the few-relator model that we gather in Proposition \ref{prop: lifted face geometry} and which we briefly recall now. Asymptotically almost surely, the presentation of $G_{\ell}$ satisfies the $C^{\prime\prime}(1/6)$ small-cancellation condition, and $G_\ell$ is infinite, hyperbolic, and torsion-free, with an aspherical presentation complex. These properties follow from Arzhantseva--Ol'shanskii estimates in \cite[Lemmas 3 and 4]{ArzhantsevaOlshanskii} together with known consequences of the small cancellation condition; see \cite[Theorem 13.3]{OlshanskiiDefiningRelations} and \cite[Section 4.7]{GromovHyperbolic}. By Wise's cubulation theorem \cite[Theorem 1.2]{Wise2004}, $G_\ell$ acts properly and cocompactly on a $\mathrm{CAT}(0)$ cube complex. Agol's theorem then implies that $G_\ell$ is virtually special \cite[Theorem 1.1]{Agol2013}. Consequently, $G_\ell$ is residually finite \cite[Theorem 4.4]{HaglundWise2008}, virtually residually finite rationally solvable (RFRS) \cite[Corollary 2.3]{Agol2008}, and satisfies the strong Atiyah conjecture over $\mathbb{Q}$ \cite[Theorem 1.2]{Schreve2014}.
 \item It is thus enough to prove that
 \[
\P_\ell\left(\beta_1^{(2)}(G_\ell)<1\right) \xrightarrow[\ell\to\infty]{} 1.
\]
 Indeed, let $G_\ell$ be a group in the event of the previous step. Torsion freeness and the fact that $G_\ell$ satisfies the Atiyah conjecture over $\Q$ imply that $\beta_1^{(2)}(G_\ell)$ is an integer. Thus $\beta_1^{(2)}(G_\ell)<1$ forces $\beta_1^{(2)}(G_\ell)=0$. Since $G_\ell$ is virtually RFRS, it follows by Kielak's virtual fibration theorem \cite[Theorem 5.3]{Kielak2020} that $G_\ell$ has a finite-index subgroup that maps homomorphically onto $\Z$ with a finitely generated kernel. Any such group has a vanishing absolute rank gradient (see Proposition \ref{prop: vanishing_rank_gradient}) and therefore Inequalities \eqref{eq:general-inequalities} give $\cost(G_\ell)=1$.

 \item
 The proof that $\beta_1^{(2)}(G_\ell)<1$ asymptotically almost surely relies on Theorem \ref{thm:main spectral law}. Recall from Equation \eqref{eq: euler poincare} that, on the event that the presentation complex is aspherical and $G_\ell$ is infinite, we have
 \[
 \beta_1^{(2)}(G_\ell) = n-m-1+\mathrm{Tr}_m\left(\mathds{1}_{\{0\}}\left(\widehat{\Delta}_{2,\ell}\right)\right)=n-m-1+\sigma_\ell(\{0\}),
 \]
 where $\widehat{\Delta}_{2,\ell}$ is the normalized degree-$2$ Laplacian; see also Subsection \ref{subsection: laplacian}. Applying the weak convergence in probability established in Theorem \ref{thm:main spectral law}, we obtain in Theorem \ref{thm: vanishing first l2 betti number} that
 \[
 \P_{\ell}\left(\sigma_{\ell}(\{0\})-\sigma_\infty(\{0\})>\frac{1}{2}\right)\xrightarrow[\ell \to \infty]{}0.
 \]
 Since $\sigma_\infty(\{0\})=m-n+1$, we obtain that asymptotically almost surely 
 \[\beta_1^{(2)}(G_\ell)=n-m-1+\sigma_\ell(\{0\})\leq \frac{1}{2}<1.\]
 As explained in the preceding step, this finishes the proof.
\end{stepsAone}

\subsection{Sketch of the proof of Theorem \ref{thm:main spectral law}}
Let $X_\ell$ be the presentation complex of $G_\ell$, and let $\widetilde{X}_\ell$ be its universal cover. To prove Theorem \ref{thm:main spectral law}, we first prove convergence in probability of every moment of $\sigma_\ell$ to the corresponding moment of $\sigma_\infty$, and then deduce weak convergence. The  steps of the proof are the following.

\begin{stepsB}
\item For each fixed integer $k\geq1$, Lemma \ref{lem: trace expansion} computes the tracial moment $\mathrm{Tr}_m(\Delta_{2,\ell}^k)$ as a sum over tuples $(F_0,\ldots,F_{k-1})$ of $2$-cells in $\widetilde{X}_\ell$, where the initial face $F_0$ ranges only over a fundamental domain of the $m$ defining $2$-cells of the presentation complex. Each tuple determines a partition $\pi$ of $\{0,\ldots,k-1\}$ that encodes which faces coincide: two indices $i,j$ belong to the same class precisely when $F_i=F_j$. The map $\typemap:\pi\to\{1,\ldots,m\}$ assigns to each class the index of the defining relator corresponding to its face. Different classes may have the same relator type. Grouping the terms that appear in the expansion of the tracial moment according to these data, we obtain in Lemma \ref{lem: partition trace} the equality
\[
\int_{\R}t^k\ d\sigma_\ell(t) = \frac{1}{\ell^k} \sum_{\pi\in\mathcal{P}(k)} \sum_{\typemap:\pi\to\{1,\ldots,m\}} Z_\ell(\pi,\typemap).
\]
 Here, $\mathcal{P}(k)$ denotes the set of partitions of $\{0,\ldots,k-1\}$ and $Z_\ell(\pi,\typemap)$ is the sum of the trace terms with the prescribed equality pattern and relator types, as defined in Equation \eqref{eq: def of Z}.

\item We estimate these terms on the generic event $\mathcal{G}_\ell$ of Proposition \ref{prop: lifted face geometry}. Here, every lifted face has an embedded boundary of length $\ell$, and every non-empty intersection of two distinct lifted faces is a path of length at most $\rho_\ell=O(\log\ell)$.

Lemma \ref{lemma: partition off diagonal} rewrites
$Z_\ell(\pi,\typemap)$ as a signed sum over choices of common boundary edges between consecutive distinct faces.
The geometric bounds above allow us to count these choices. In particular, Lemmas \ref{lem:bound order l} and
\ref{lem: noncrossing_partitions} imply that, whenever $\pi$ is a crossing partition in the sense of
Definition \ref{defn: noncrossing partitions}, we have
\[
\left|\frac{Z_\ell(\pi,\typemap)}{\ell^k}\right|
\leq C\frac{(\log\ell)^{2k}}{\ell} \xrightarrow[\ell\to\infty]{}0,
\]
where $C$ is independent of $\ell$ and of the presentation
in $\mathcal{G}_\ell$. Since the number of partitions and
type maps is finite for fixed $k$, the contribution of crossing partitions is negligible to the asymptotic value of the moment.

\item
We next evaluate the terms indexed by non-crossing partitions, whose collection is denoted by $\mathrm{NC}(k)$. 

Fix $\pi\in\mathrm{NC}(k)$ and a type map $\typemap:\pi\to \{1,\ldots,m \}$. Write $|\pi|$ for the number of classes of $\pi$. The main technical result of the section is Proposition \ref{prop:main_technical}, which states that
\[
\frac{Z_\ell(\pi,\typemap)}{\ell^k} \xrightarrow[\ell\to\infty]{\P_\ell} \frac{1}{(n-1)^{|\pi|-1}}.
\]
When $|\pi|=1$, this follows directly from the embedded
boundary of the prescribed initial face, since we have
$Z_\ell(\pi,\typemap)=\ell^k$ on the event $\mathcal{G}_\ell$.

For $|\pi|\geq 2$, we use windows of radius $R_\ell=2k\rho_\ell=O(\log\ell)$ in the defining relators.
A window of radius $R$ consists of the $2R+1$ consecutive
letters centered at a chosen relator position, with positions read cyclically; see Definition \ref{defn: windows}. The function $\Phi_{\pi,R}$ of Definition \ref{defn: observables} counts, with orientation signs, the placements of these windows in the Cayley graph
of the free group $F_n$ that satisfy the edge identifications encoded by $\pi$.

Lemma \ref{lem: comparison between Z and average of observables} approximates $\ell^{-k}Z_\ell(\pi,\typemap)$ by the average of this function over tuples of positions in the prescribed relators. In this average, windows belonging to the same relator have distinct centers, as specified in Definitions \ref{defn: tau admissible roots} and \ref{defn: average over tau admissible roots}. The approximation error tends to zero on the event $\mathcal{G}_\ell$. Proposition \ref{prop: weak ergodic theorem} then replaces
this average, up to an error tending to zero in probability, by its expectation for independent windows, each uniformly distributed among non-backtracking words of length $2R_\ell+1$. Lemma \ref{lem: expectation of the observable} shows that these expectations converge to $(n-1)^{1-|\pi|}$. This proves the claimed limit.

\item For each partition $\pi$, there are $m^{|\pi|}$ possible type maps $\typemap$. Combining the vanishing of the crossing terms with the limits of the non-crossing terms gives, for every $k\geq1$, that
\[
\int_{\R}t^k\,d\sigma_\ell(t) \xrightarrow[\ell\to\infty]{\P_\ell} \sum_{\pi\in\mathrm{NC}(k)} \frac{m^{|\pi|}}{(n-1)^{|\pi|-1}} = (n-1)\sum_{\pi\in\mathrm{NC}(k)} \left(\frac{m}{n-1}\right)^{|\pi|}.
\]
By Proposition \ref{prop:free poisson moments}, the last
expression above is the $k$-th moment of $(n-1)\mu_\lambda$, where $\lambda=m/(n-1)$. It is also the $k$-th moment of the measure
\[
\sigma_\infty=(n-1)\mu_\lambda+(m-n+1)\delta_0,
\]
because $t^k$ vanishes at zero for $k\geq1$.
\item
Finally, Subsection \ref{subsection:proof of spectral convergence} constructs events $\mathcal{H}_\ell$ with
$\P_\ell(\mathcal{H}_\ell)\to1$ such that, along any choice of one presentation from each $\mathcal{H}_\ell$, all moments of the resulting spectral measures converge to those of $\sigma_\infty$. These measures are supported on $[0,\infty)$ and have mass $m$. Their bounded first moments show that the sequence is tight, while bounds on higher moments ensure that every subsequential weak limit has the same moments as $\sigma_\infty$. Since $\sigma_\infty$ has compact support, these moment identities force the weak limit to have the same compact support, and the Stone--Weierstrass theorem then identifies it with $\sigma_\infty$. This proves the weak convergence in probability stated in Equation \eqref{eq:weak spectral convergence}.
\end{stepsB}

\subsection{Sketch of the proof of Theorem \ref{thm: main theorem} in the negative Euler characteristic case}
 
The negative Euler characteristic case is stated in Theorem \ref{thm:high_deficiency} and proved in Section \ref{section:main theorem high deficiency}. Its proof is independent of Sections \ref{section: trace laplacian}, \ref{section:convergence of the spectral measure}, and \ref{section:main theorem low deficiency}. The argument uses groupoid cost and proceeds as follows.

\begin{stepsAtwo}
 \item
 By \cite[Theorem B]{KielakKrophollerWilkes2022}, we already know that $\beta_1^{(2)}(G_\ell)=n-m-1$ asymptotically almost surely. In view of
 Inequalities \eqref{eq:general-inequalities}, to prove Theorem \ref{thm:high_deficiency}, it suffices to prove that $\rg(G_\ell)\le n-m-1$ with probability tending to $1$ as $\ell \to \infty$.

 \item Kielak, Kropholler, and Wilkes also show that, when $m<n-1$, the random presentation asymptotically almost surely satisfies a minimum condition (Definition \ref{def: minimum condition}). The minimum condition allows certain edges in the Cayley graph to be replaced by paths with the same endpoints, as described in Lemma \ref{lem: path lemma}. We learned the latter technique from Poulin and Wr{\'o}bel's recent preprint \cite{poulinwrobel2026}.

 \item In Proposition \ref{prop: groupoid cost}, we use the technique described in the previous step to bound the groupoid cost of the action of $G_\ell$ on its profinite completion by $n-m$. Since $G_\ell$ is residually finite asymptotically almost surely, this action is free. Its groupoid cost therefore equals its cost, which is $\rg(G_\ell)+1$. We conclude that $\rg(G_\ell)\le n-m-1$, as required.
\end{stepsAtwo}

	\subsection{Organization of the article}
	In Section \ref{section: preliminaries von neumann algebras and free Poisson} we review basic definitions and results concerning the few-relator model for random groups and some consequences of small cancellation theory; group von Neumann algebras and traces; functional calculus and spectral measures; cellular $\ell^2$-chains and $\ell^2$-Betti numbers; the Hodge Laplacian and the Euler-Poincaré formula, which allows to relate its tracial spectral measure with the first $\ell^2$-Betti number; the free-Poisson distribution; non-crossing partitions. 

 The main body of the article is contained in Sections \ref{section: trace laplacian}, \ref{section:convergence of the spectral measure}, \ref{section:main theorem low deficiency} and \ref{section:main theorem high deficiency}. In Section \ref{section: trace laplacian} we rewrite the trace of the $k$-th power of the Laplacian in terms of partitions. Section \ref{section:convergence of the spectral measure} contains the convergence of the spectral measure of the normalized Laplacian to the measure $\sigma_{\infty}$, established in Theorem \ref{thm:main spectral law}. Section \ref{section:main theorem low deficiency} proves Theorem \ref{thm: main theorem} in the non-negative Euler characteristic setting (Theorem \ref{thm:low_deficiency}). Finally, Section \ref{section:main theorem high deficiency} is devoted to the proof of Theorem \ref{thm: main theorem} in the negative Euler characteristic setting (Theorem \ref{thm:high_deficiency}). 

 We include a list of notation in page \pageref{sec:nomenclature} to facilitate the reading of the article.

	\subsection*{Acknowledgments}
    We thank Antoine Poulin and Konrad Wróbel for stimulating conversations during the Oberwolfach workshop Groups and Dynamics: Topology, Measure, and Borel Structure in April 2026 which led to this project. We are grateful to Alexander Van Werde for reading a preliminary draft, for useful discussions, and for pointing us to literature on random matrices and spectral limit theorems. We also thank Konrad Wróbel for reading our manuscript, and for remarking to us that OpenAI's claimed example of a group without fixed price implies a negative solution to Gaboriau's question on the equality between the first $\ell^2$-Betti number plus one and the cost of an aperiodic countable Borel p.m.p.\ equivalence relation. We are very grateful to Goulnara Arzhantseva for reading an early draft of this paper and for her valuable comments, which helped us improve both the historical account and the exposition of the paper. We thank Konstantin Recke for reading a preliminary draft of this paper and giving us feedback.
    
    The authors are supported by the Deutsche Forschungsgemeinschaft (DFG, German Research Foundation) under Germany's Excellence Strategy EXC 2044/2 –390685587, Mathematics Münster: Dynamics–Geometry–Structure. 

 \subsection*{AI disclosure}
 The mathematical arguments and the writing process were carried out entirely by the authors. GPT 5.6 Sol and GPT 6 Astra were used to improve the exposition of the document, to proofread, and to explore the literature. The proofreading part allowed us to remove unnecessary steps from our application of Theorem \ref{thm:main spectral law} to Theorem \ref{thm: main theorem}, thus simplifying the argument.

	\section{Preliminaries}\label{section: preliminaries von neumann algebras and free Poisson}

 \subsection{The few-relator model and consequences of small cancellation}\label{section: probability small cancellation}
Let us fix integers $n\geq2 $ and $m\geq1 $, and denote
	\[
	\mathcal{A}=\{x_1,x_1^{-1},\ldots,x_n,x_n^{-1}\}.
	\]
 \mynomenclature[G]{$\mathcal{A}$}{The set $\{x_1,x_1^{-1},\dots,x_n,x_n^{-1}\}$ of generators of $G_\ell$ and inverses}
	Let $F_n$ be the free group with free basis $\{x_1,\ldots,x_n\}$. For each $\ell\geq 1$, let $\mathcal{C}_\ell$
 \mynomenclature[G]{$\mathcal{C}_\ell$}{The set of cyclically reduced words of length $\ell$ on the alphabet $\mathcal{A}$}
 be the set of cyclically reduced words of length $\ell$ on $\mathcal{A}$. We equip $\mathcal{C}_\ell^m$ with the uniform probability measure $\P_\ell$.
 \mynomenclature[G]{$\P_\ell$}{The uniform probability measure on the set $\mathcal{C}_\ell^m$ of $m$-tuples of cyclically reduced words of length $\ell$ on the alphabet $\mathcal{A}$, where $m\geq 1$ is fixed}
 Equivalently, choose words $r_{1,\ell},\ldots,r_{m,\ell}$ independently and uniformly distributed. The associated random presentation is
	\[
	G_\ell =\langle x_1,\ldots,x_n \mid r_{1,\ell},\ldots,r_{m,\ell}\rangle.
	\]
A sequence of events $(E_\ell)_{\ell\geq 1}$ occurs asymptotically \emph{almost surely}, or \emph{with high probability}, if $\lim_{\ell\to\infty}\P_\ell(E_\ell)=1$. Given a sequence of random variables $Y_\ell$, $\ell\ge 1$, depending on the presentation $G_{\ell}$, and a constant $y\in \R$, we will write $Y_{\ell}\xrightarrow[\ell\to \infty]{\P_{\ell}}y$
\mynomenclature[G]{$\xrightarrow[\ell\to \infty]{\P_{\ell}}$}{Convergence in probability along the presentations $G_\ell$}
to mean that for every $\varepsilon>0$, we have $\P_\ell\left(\left|Y_{\ell}-y\right| >\varepsilon\right)\xrightarrow[\ell\to \infty]{}0$.

 A sequence $z_0,z_1,\ldots,z_h$ of elements of $\mathcal{A}$ is said to be \emph{backtracking} if for some $i=0,\ldots, h-1$ there is $1\le j\le n$ such that $z_i=x_j$ and $z_{i+1}=x_j^{-1}$ or $z_i=x_j^{-1}$ and $z_{i+1}=x_j$. If no such indices exist, we say the sequence is \emph{non-backtracking}.

  We record in the next lemma elementary estimates that can be found in the proof of Lemma 3 in \cite{ArzhantsevaOlshanskii}.  
\begin{lem}\label{lem: number of cyclically reduced words}
	Consider the notation from the preceding paragraph. Let $a,b\in\mathcal{A}$ and let $h\geq0$. Denote by $N_h(a,b)$ the number of non-backtracking sequences $a=z_0,z_1,\ldots,z_h=b$ of length $h$. Then
		\begin{equation}\label{eq: path count}
			\left|	N_h(a,b)-\frac{(2n-1)^h}{2n}\right|	\leq1.
		\end{equation}
		If $\mathcal{C}_\ell$ denotes the set of cyclically reduced words of length $\ell$, then
		\begin{equation}\label{eq: number of cyclically reduced words}
			|\mathcal{C}_\ell| =(2n-1)^\ell+n+O(1).
		\end{equation}
	\end{lem}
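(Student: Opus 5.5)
Both estimates will come from the transfer-matrix description of non-backtracking sequences. Let $M$ be the $2n\times 2n$ matrix indexed by $\mathcal{A}$ with $M_{a,c}=1$ if $c\neq a^{-1}$ and $M_{a,c}=0$ otherwise. Then $N_h(a,b)=(M^h)_{a,b}$. Moreover, a cyclically reduced word $z_1\cdots z_\ell$ is exactly a closed non-backtracking cycle $z_1\to z_2\to\cdots\to z_\ell\to z_1$, so $|\mathcal{C}_\ell|=\mathrm{Tr}(M^\ell)$. It therefore suffices to find the spectral decomposition of $M$ explicitly.

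Write $M=J-P$, where $J$ is the all-ones matrix and $P$ is the permutation matrix of the involution $a\mapsto a^{-1}$. Since $P$ is a fixed-point-free involution, $P$ commutes with $J$, $PJ=J$, and $P^2=I$. We decompose $\R^{2n}$ into three orthogonal pieces:
\begin{itemize}
\item the line spanned by $\mathbf{1}$, on which $M$ acts by $2n-1$;
\item the $P$-symmetric vectors orthogonal to $\mathbf{1}$, of dimension $n-1$, on which $M=-P=-1$;
\item the $P$-antisymmetric vectors, of dimension $n$, on which $J=0$ and hence $M=-P=+1$.
\end{itemize}
Consequently
\[
M^h=(2n-1)^h\frac{J}{2n}+(-1)^h E_-+E_+,
\]
where $E_\pm$ are the corresponding orthogonal projections. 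Explicitly, $E_+=\tfrac12(I-P)$ and $E_-=\tfrac12(I+P)-\tfrac{J}{2n}$.

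For \eqref{eq: path count}, read off the entries. We get
\[
N_h(a,b)-\frac{(2n-1)^h}{2n}=(-1)^h(E_-)_{a,b}+(E_+)_{a,b}.
\]
A short case check over $b=a$, $b=a^{-1}$, and other $b$ bounds the right-hand side by $1$ in absolute value. For instance, when $b=a$ it equals $(-1)^h\bigl(\tfrac12-\tfrac1{2n}\bigr)+\tfrac12$, whose absolute value is at most $1$.

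For \eqref{eq: number of cyclically reduced words}, take traces. This gives
\[
|\mathcal{C}_\ell|=(2n-1)^\ell+(-1)^\ell(n-1)+n,
\]
which is $(2n-1)^\ell+n+O(1)$, with the error bounded by $n-1$.

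The only step needing care is the bijection between cyclically reduced words and closed walks, specifically the wrap-around condition $z_\ell\neq z_1^{-1}$, together with the sign bookkeeping in the eigen-decomposition. The rest is routine linear algebra.
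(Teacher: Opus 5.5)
Your proof is correct and complete. The splitting $M=J-P$ into eigenspaces with eigenvalues $2n-1$, $-1$ (multiplicity $n-1$) and $+1$ (multiplicity $n$) is right. The deviations $N_h(a,b)-\frac{(2n-1)^h}{2n}$ come out as $1-\frac1{2n}$ or $\frac1{2n}$ for $b=a$, as $-\frac1{2n}$ or $-1+\frac1{2n}$ for $b=a^{-1}$, and as $\pm\frac1{2n}$ otherwise. The trace gives the exact count $|\mathcal{C}_\ell|=(2n-1)^\ell+(-1)^\ell(n-1)+n$.

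The paper does not actually prove this lemma; it only refers to the elementary estimates in the proof of Lemma 3 of Arzhantseva--Ol'shanskii. Your transfer-matrix diagonalization is therefore a self-contained substitute. It also gives exact closed forms: a bound of $1-\frac{1}{2n}$ rather than $1$, and the $O(1)$ term identified as $(-1)^\ell(n-1)$.

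If you prefer to avoid linear algebra, condition on the penultimate letter. This gives $N_{h+1}(a,b)=(2n-1)^h-N_h(a,b^{-1})$, hence $N_{h+1}(a,b)-\frac{(2n-1)^{h+1}}{2n}=-\bigl(N_h(a,b^{-1})-\frac{(2n-1)^h}{2n}\bigr)$. So the deviation is preserved in absolute value, and the bound follows by induction from $h=0$. The identity $|\mathcal{C}_\ell|=\sum_{a\in\mathcal{A}}N_\ell(a,a)$ then yields the second estimate with an error of absolute value less than $2n$. Either version suffices for the use of \eqref{eq: path count} in Lemma \ref{lem: separated windows ergodic}, which only needs an additive $O(1)$ error.
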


	Write the random relators of $G_\ell$ as $r_{j,\ell}=s_{j,0}s_{j,1}\cdots s_{j,\ell-1}$
 \mynomenclature[G]{$r_{j,\ell}$}{The $j$-th relator from the presentation of $G_\ell$, where $1\leq j\leq m$. We write $r_{j,\ell}=s_{j,0}s_{j,1}\cdots s_{j,\ell-1}$, with $s_{j,p}\in \mathcal{A}$ for each $0\leq p<\ell$}
 for each $1\leq j\leq m$, where $s_{j,l}\in\mathcal{A}$ for every $0\le l <\ell$. The second index is meant to be read cyclically as an element of $\Z/\ell\Z$. A \emph{rooted oriented reading} is a triple
	\[
	\omega=(j,p,\varepsilon)\in\{1,\ldots,m\}\times \Z/\ell\Z\times \{+1,-1\}.
	\]
	The triple $\omega=(j,p,\varepsilon)$ specifies the relator $r_{j,\ell}$, a root at the position $p$ in this word, and an orientation in which to read the relator. For each $0\leq l<\ell$, we define
	\[
	\mathsf{R}_\omega(l)\coloneqq
	\begin{cases}
		s_{j,p+l}, & \text{if }\varepsilon=+1,\\
		s_{j,p-1-l}^{-1}, & \text{if }\varepsilon=-1.
	\end{cases}
	\]
	Thus, in the positive orientation we read forward starting with $s_{j,p}$, whereas in the negative orientation we read backward starting with $s_{j,p-1}^{-1}$.
	
A word $u$ with $1\leq |u|\leq\ell$ is called a \emph{piece} if it occurs as the initial segment of two distinct rooted oriented readings. Equivalently, there exist $\omega\neq\omega'$ such that
	\[
	u=\mathsf{R}_\omega(0)\cdots\mathsf{R}_\omega(|u|-1)=\mathsf{R}_{\omega'}(0)\cdots\mathsf{R}_{\omega'}(|u|-1).
	\]
	We denote by $P_\ell$ the maximum length of a piece, with the convention that $P_\ell=0$ if no piece exists.
	
	The following logarithmic bound on the length of pieces is standard in
	the few-relator model. The $C'(\lambda)$ conclusion below for each fixed $\lambda>0$ is proved in \cite[Lemma 3]{ArzhantsevaOlshanskii}.  We refer to \cite[Chapter V, Section 2]{LyndonSchupp1977} for the definitions of the classical small-cancellation conditions and to \cite[Definition 1.2]{Gruber2015} for their graphical counterparts. Graphical small-cancellation theory was introduced by Gromov in \cite[Section 2]{Gromov2003}.
	\begin{lem}\label{lem: logarithmic}
		Consider the few-relator model of a random group $G_{\ell}$ with $n$ generators and $m$ relators. Denote by $P_\ell$ the maximum length of a piece in the presentation. Then there exists a constant $C_0>0$, depending only on $n$ and $m$, such that
		\[
		\P_\ell\left(P_\ell\leq C_0\log(\ell)\right)\xrightarrow[\ell\to \infty]{} 1.
		\]
		As a consequence, for every fixed $\lambda>0$ with asymptotic probability $1$ the presentation satisfies
		$C^{\prime\prime}(\lambda)$ and the $C'(\lambda)$ condition.
	\end{lem}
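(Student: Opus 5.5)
We bound the probability that some long word is a piece by a union bound. Set $h=\lceil C_0\log \ell\rceil$ with $C_0$ chosen below. If $P_\ell> h$, then some word of length $h+1$ (an initial segment of a longer piece) is the initial segment of two distinct rooted oriented readings $\omega=(j,p,\varepsilon)\neq\omega'=(j',p',\varepsilon')$. The number of such pairs is at most $(2m\ell)^2$, polynomial in $\ell$. So it suffices to show that, for each fixed pair $(\omega,\omega')$, the probability that their first $h+1$ letters agree is at most $c\,(2n-1)^{-h/2}$ or similar, with $c$ independent of $\ell$. Choosing $C_0$ large, so that $(2m\ell)^2 c(2n-1)^{-C_0\log\ell/2}\to0$, then gives the claim.

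For a fixed pair we split into cases. If $j\neq j'$, the two readings involve independent uniform cyclically reduced words. Condition on $r_{j'}$. Then the event says that a prescribed (possibly reversed and inverted) segment of length $h+1$ of the uniform word $r_j$ equals a given word. By Lemma \ref{lem: number of cyclically reduced words}, the number of cyclically reduced words of length $\ell$ containing a prescribed segment of length $h+1$ at a prescribed position is $O((2n-1)^{\ell-h-1})$. This follows from \eqref{eq: path count}, closing up the remaining $\ell-h-1$ letters with the right boundary letters, together with \eqref{eq: number of cyclically reduced words}. Hence the probability is $O((2n-1)^{-h})$.

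If $j=j'$, we look at the two index sets $I=\{p,\dots,p+h\}$ and $I'$ inside the same word (read cyclically, with orientation). The equalities $\mathsf R_\omega(l)=\mathsf R_{\omega'}(l)$ define a map $\phi$ from letter positions to letter positions. This map is a translation when $\varepsilon=\varepsilon'$ (with shift $p'-p\neq0$) and a reflection when $\varepsilon\neq\varepsilon'$. We extract a subset $J\subseteq I$ of size at least about $h/3$ such that $J\cap\phi(J)=\emptyset$ and the positions in $J$ and $\phi(J)$ are all distinct. For a translation, take alternating blocks of length equal to the shift, or greedily; for a reflection, take one side of the centre, noting that the reflection cannot fix a letter consistently, since $s=s^{-1}$ is impossible. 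Each equality $s_{\phi(i)}=s_i^{\pm1}$ with $i\in J$ then constrains a fresh letter. Counting cyclically reduced words as before, fixing letters one at a time with non-backtracking counts via \eqref{eq: path count}, gives probability $O((2n-1)^{-|J|})=O((2n-1)^{-h/3})$. This case analysis, and especially making the counting uniform when the constrained positions are scattered and interleaved, is the main obstacle. I would handle it by revealing the word letter by letter in cyclic order: each constrained letter has conditional probability at most $\frac{1}{2n-1}+O((2n-1)^{-(\ell-\text{pos})})$ of taking its forced value. That estimate again comes from \eqref{eq: path count}, up to a bounded number of positions near the wrap-around, which are simply discarded from $J$.

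For the consequence, on the event $P_\ell\le C_0\log\ell$, every piece has length at most $C_0\log\ell<\lambda\ell$ for $\ell$ large. Every relator has length exactly $\ell$, so the presentation satisfies $C'(\lambda)$. For $C''(\lambda)$, the lifted relator cycles in the Cayley graph of $F_n$ meet each other only along pieces, since a common path of two distinct readings is a piece. Moreover, no relator is a proper power and no relator is a cyclic conjugate of another or of an inverse: such a coincidence would yield a piece of length $\ell$. So the graphical condition reduces to the classical one with the same bound.
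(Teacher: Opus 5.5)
Your proposal is correct, but for the probabilistic estimate it takes a different route from the paper. The paper does not reprove the bound on $P_\ell$. It cites \cite[Proposition 2.2]{MackayConformalDimension2012} and remarks that, although that result is stated for relators of length at most $\ell$, its proof is uniform over prescribed lengths in $[0.99\ell,\ell]$. Hence it covers length exactly $\ell$.

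You instead give a self-contained first-moment argument. You take a union bound over the $O(m^2\ell^2)$ pairs of distinct rooted oriented readings. When $j\neq j'$, you use independence together with \eqref{eq: path count}. When $j=j'$, you extract pairwise disjoint constraint pairs $\{i,\phi(i)\}$ and reveal the word letter by letter. Each forced letter at a position $b$ away from the wrap-around then has conditional probability $\frac{1}{2n-1}\bigl(1+O((2n-1)^{-(\ell-b)})\bigr)$.

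This checks out, and the self-overlap case is easier than you suggest:
\begin{itemize}
\item For a translation, alternating blocks of length equal to the shift give $|J|\geq (h+1)/2$.
\item For a reflection, suppose $I\cap\phi(I)\neq\varnothing$. Then the short arc of $I$ between $i$ and $\phi(i)$ is preserved by $\phi$. Its midpoint forces either $s_x=s_x^{-1}$ or the backtrack $s_{x+1}=s_x^{-1}$, so the event is empty. Otherwise $J=I$ works.
\end{itemize}
The paper's route is a one-line citation that asks the reader to accept the adaptation to exact length. Yours uses only Lemma \ref{lem: number of cyclically reduced words} and treats exact length directly. It also yields an explicit constant depending only on $n$: for instance, any $C_0>4/\log(2n-1)$ works with $h=\lfloor C_0\log\ell\rfloor$ and the above sizes of $J$.

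Your deduction of $C'(\lambda)$ and $C''(\lambda)$ is essentially the paper's: $C_0\log\ell<\lambda\ell$ for large $\ell$. Moreover, a proper power would give two distinct readings agreeing on all $\ell$ letters, hence a piece of length $\ell$. Your additional exclusion of coincidences between cyclic conjugates of relators or their inverses follows the same way and is harmless. One imprecision: the graphical condition concerns the relator graph, a disjoint union of labelled $\ell$-cycles. It does not concern cycles in the Cayley graph of $F_n$, which is a tree.
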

	\begin{proof}
		The first statement is proved in \cite[Proposition 2.2]{MackayConformalDimension2012}. In that proposition it is shown that for some $C_0>0$, with probability tending to $1$, no two distinct rooted oriented readings have a common initial segment of length greater than $C_0\log(\ell)$. This is equivalent to saying that $P_\ell\leq C_0\log\ell$. Although the proposition is formulated using relators of length at most $\ell$, its proof gives estimates uniform over prescribed
		relator lengths in $[0.99\ell,\ell]$. In particular, it applies
		when every relator has length exactly $\ell$.

For every fixed $\lambda>0$ and for all sufficiently large $\ell$ we have
	\[
	C_0\log(\ell)<\lambda\ell.
	\]
On the event $P_\ell\leq C_0\log(\ell)$, every piece therefore has length less than $\lambda\ell$. Moreover, no defining relator is a proper power: otherwise, two distinct cyclic roots of that relator would determine rooted oriented readings agreeing for all $\ell$ letters. Thus, the presentation satisfies $C^{\prime\prime}(\lambda)$, and hence also the $C'(\lambda)$ condition.
\end{proof}
	
	Throughout the paper, let us fix a constant $C_0$ as in Lemma \ref{lem: logarithmic} and set
	\begin{equation}\label{eq: rho ell}
	\rho_\ell\coloneqq\max\{1,\lceil C_0\log(\ell)\rceil\}.
	\end{equation}
	Then, thanks to Lemma \ref{lem: logarithmic}, $\rho_\ell$ is a positive integer that satisfies	$\rho_\ell=O(\log\ell)$ and $\P_\ell(P_\ell\leq\rho_\ell) \xrightarrow[\ell\to \infty]{} 1$. We use $\rho_\ell$ as the upper bound for piece lengths in the presentation of $G_{\ell}$.

 We will also need the following basic fact.
	\begin{lem}\label{lem: generator present}
		Consider the few-relator model of a random group $G_{\ell}$ with $n$ generators and $m$ relators. Then with asymptotic probability $1$, every letter $a\in\mathcal{A}$
		occurs in every defining relator $r_1,\ldots,r_m$.
	\end{lem}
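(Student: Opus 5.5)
Because $\mathcal{A}$ and $m$ are finite, a union bound reduces the statement to showing that, for each fixed relator index $j$ and each letter $a\in\mathcal{A}$,
\[
\P_\ell\left(a \text{ does not occur in } r_{j,\ell}\right)\xrightarrow[\ell\to\infty]{}0 .
\]
The relators are independent and identically distributed, so it is enough to bound, for a uniform random element of $\mathcal{C}_\ell$, the proportion of cyclically reduced words of length $\ell$ that avoid $a$.

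We first count all reduced words of length $\ell$ over $\mathcal{A}\setminus\{a\}$ that avoid $a$; this count bounds the number of cyclically reduced ones. The first letter has at most $2n-1$ choices. Each subsequent letter must differ from the inverse of its predecessor and also from $a$. If the predecessor is not $a^{-1}$, these two excluded letters are distinct, leaving at most $2n-2$ choices. If the predecessor is $a^{-1}$, the inverse of the predecessor is $a$ itself, so only one letter is excluded and there are $2n-1$ choices.

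To handle both cases, group the letters in pairs. After a letter $a^{-1}$, the next letter must be different from $a$, so it cannot be $a^{-1}$ either without the step being free of backtracking. In fact $a^{-1}a^{-1}$ is allowed, so instead we estimate directly with a transfer matrix. Let $M$ be the $(2n-1)\times(2n-1)$ matrix indexed by the letters of $\mathcal{A}\setminus\{a\}$, whose $(b,c)$ entry equals $1$ exactly when $c\neq b^{-1}$. Every row sum is $2n-2$, except the row indexed by $a^{-1}$, whose sum is $2n-1$.

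We want the Perron eigenvalue of $M$ to be strictly less than $2n-1$. Since $M$ is irreducible for $n\ge2$ and it is entrywise dominated by the non-backtracking matrix on all of $\mathcal{A}$ restricted to a proper subset, Perron–Frobenius theory gives spectral radius $\theta<2n-1$. Alternatively, one can bound $M^2$: every row sum of $M^2$ is at most $(2n-1)(2n-2)+1<(2n-1)^2$, so $\theta\le\sqrt{(2n-1)(2n-2)+1}<2n-1$.

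It follows that the number of words avoiding $a$ is $O(\theta^\ell)$. By \eqref{eq: number of cyclically reduced words} in Lemma~\ref{lem: number of cyclically reduced words}, we have $|\mathcal{C}_\ell|\ge(2n-1)^\ell$ up to a bounded additive term. The probability is therefore $O\bigl((\theta/(2n-1))^\ell\bigr)\to0$, and the union bound over the $2nm$ pairs $(j,a)$ finishes the proof.

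The only real obstacle is the exceptional $a^{-1}$ row, which prevents a naive count of $(2n-2)^{\ell-1}$. The estimate on $M^2$ handles it cleanly.
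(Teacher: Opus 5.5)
Your proof is correct, but it takes a different route from the paper. The paper does no counting. It invokes Mackay's result (Proposition~2.6 of \cite{MackayConformalDimension2012} and its proof): with asymptotic probability $1$, every reduced word of length $\lceil c\log\ell\rceil$, for fixed $0<c<(\log(2n-1))^{-1}$, occurs as a subword of every relator. It then picks such a word beginning with $a$. That is a one-line deduction from a much stronger statement than needed. The price is reliance on an external result formulated for relators of length at most $\ell$, whose proof has to be read as covering the equal-length model.

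Your transfer-matrix count is self-contained and needs only the estimate $|\mathcal{C}_\ell|=(2n-1)^\ell+O(1)$ from Lemma~\ref{lem: number of cyclically reduced words}. It also gives an explicit exponential rate. The bound $\|M^2\|_\infty\le(2n-1)(2n-2)+1<(2n-1)^2$ is attained exactly at the row $a^{-1}$, and together with $\mathbf{1}^{T}M^{\ell-1}\mathbf{1}\le(2n-1)\|M^{\ell-1}\|_\infty$ and submultiplicativity it yields
\[
\P_\ell\bigl(a\text{ does not occur in }r_{j,\ell}\bigr)=O\Bigl(\bigl((2n-1)(2n-2)+1\bigr)^{\ell/2}(2n-1)^{-\ell}\Bigr).
\]

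Two cosmetic points. First, delete the abandoned ``group the letters in pairs'' paragraph; its middle sentence is false as written, even though you immediately retract it. Second, your Perron--Frobenius remark is imprecise. The fact you actually need is one of the following:
\begin{itemize}
\item a proper principal submatrix of the irreducible non-backtracking matrix on $\mathcal{A}$ has strictly smaller spectral radius;
\item an irreducible nonnegative matrix with non-constant row sums has spectral radius strictly below its maximal row sum.
\end{itemize}
Since the $M^2$ row-sum estimate already gives everything you need, you can drop the spectral-radius remark altogether.
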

	\begin{proof}
		Fix $0<c<\left(\log(2n-1)\right)^{-1}$. By Proposition 2.6 and its proof in \cite{MackayConformalDimension2012}, with asymptotic probability $1$ every reduced word of length $\lceil c\log(\ell)\rceil$ occurs as a subword of every defining relator. For each $a\in\mathcal{A}$, choose such a reduced word beginning with $a$. It follows that every $a\in\mathcal{A}$ occurs in every defining relator.
	\end{proof}
	We recall terminology concerning simplicial complexes that will be used below in Proposition \ref{prop: lifted face geometry}. We refer to \cite[Definition 1.1 and Fact 1.2]{JanuszkiewiczSwiatkowski} for the next definitions.
	\begin{defn}
		\begin{itemize}
			\item A simplicial complex is \emph{flag} if every finite collection of
			pairwise adjacent vertices spans a simplex.
			
			\item A cycle in a simplicial complex is \emph{full} if the subcomplex
			spanned by its vertices is precisely the cycle.
			
			\item Let $p\geq4$. A simplicial complex is \emph{$p$-large} if it is
			flag and its $1$-skeleton contains no induced cycle of length $r$ with $4\leq r<p$.
			
			\item Let $K$ be a simplicial complex and let $\sigma\in K$ be a simplex. The \emph{link} of $\sigma$ in $K$ is the subcomplex
			\[
			\mathrm{Lk}_K(\sigma) \coloneqq \left\{\tau\in K:	\tau\cap\sigma=\varnothing \text{ and } \tau\cup\sigma\in K \right\}.
			\]
			
			\item A simplicial complex $K$ is \emph{$p$-systolic} if it is connected, simply connected, and for every non-empty simplex $\sigma\in K$, the
			link $\mathrm{Lk}_K(\sigma)$ contains no full cycle of length less
			than $p$.
		\end{itemize} 
	\end{defn}
	The last condition is the link formulation of local $p$-largeness given in
	\cite[Fact 1.2(0)]{JanuszkiewiczSwiatkowski}, and it is equivalent to the definition of a $p$-systolic complex in \cite[Definition 1.1]{JanuszkiewiczSwiatkowski}. If $p\geq 6$, every $p$-systolic complex is itself $p$-large by \cite[Proposition 1.4]{JanuszkiewiczSwiatkowski}.
	
	\begin{defn}
		The \emph{nerve} of a cover $(U_\alpha)_{\alpha\in I}$ is the simplicial
		complex with vertex set $I$ in which $\alpha_0,\ldots,\alpha_d$ span a simplex when $U_{\alpha_0}\cap\cdots\cap U_{\alpha_d}\neq \varnothing$.
	\end{defn}
	
	Let $X_\ell$ be the presentation complex of $G_{\ell}$: it has one vertex, one oriented $1$-cell for each generator, and one $2$-cell attached along each defining relator.
 \mynomenclature[G]{$X_\ell$}{The presentation $2$-complex of the group $G_\ell$}
 Write $\widetilde X_\ell$ for its universal cover. A lifted face is a lift of a defining $2$-cell, together with its boundary attachment.
 \mynomenclature[G]{$\widetilde{X}_\ell$}{The universal cover of the presentation $2$-complex $X_\ell$}
 When the lifted faces cover $\widetilde{X}_\ell$, we denote the
	nerve of this cover by $\mathcal{W}_\ell$ and call it the Wise
	complex; see \cite[Definition 7.7]{OsajdaPrytula}. The covering property holds in particular on the event considered in Proposition \ref{prop: lifted face geometry} below.
 
The following proposition collects the properties that we will use throughout the paper. These follow from known results in the literature and we claim no originality. We present them together to specify a single event of asymptotic probability $1$ on which all the required properties hold simultaneously. Note that, as remarked in the introduction of \cite{ArzhantsevaOlshanskii}, the intersection of finitely many properties that hold with asymptotic probability $1$ also holds with asymptotic probability $1$.

\begin{prop}\label{prop: lifted face geometry}
Fix $n\geq 2$ and $m\geq 1$, and consider the few-relator model with $n$ generators and $m$ relators. Then there exist events
$\mathcal{G}_\ell\subseteq\mathcal{C}_\ell^m$ with $\P_\ell(\mathcal{G}_\ell)\to 1$ as $\ell\to\infty$ such that, for every fixed integer $p\geq 7$ and all sufficiently large $\ell$, the following properties hold simultaneously for every presentation in $\mathcal{G}_\ell$. \mynomenclature[G]{$\mathcal{G}_\ell$}{The asymptotically almost sure event in $\mathcal{C}_\ell^m$ satisfying properties \eqref{item: prob 1}-\eqref{item: prob 8} in Proposition \ref{prop: lifted face geometry}}
	\begin{enumerate}
		\item\label{item: prob 1} Every piece has length at most $\rho_\ell$, every letter of
		$\mathcal{A}$ occurs in every defining relator, and the presentation
		satisfies $C''(1/6)$.
		
		\item \label{item: prob 2} The boundary contour of every lifted face is an embedded cycle of length $\ell$. The intersection of two distinct lifted faces is empty or a path, possibly reduced to a vertex, of length at most	$\rho_\ell$.
		
		\item\label{item: prob 3} Every finite family of pairwise-intersecting lifted faces has non-empty total intersection.
		
		\item\label{item: prob 4} The lifted faces cover the universal cover $\widetilde{X}_\ell$ and the associated Wise complex $\mathcal{W}_\ell$ is $p$-systolic.
		
		\item \label{item: prob 5} If $w\in F_n$ is a non-trivial reduced word of length at most $\ell/2$, then $q_\ell(w)\neq e_{G_\ell}$, where $q_\ell:F_n\to G_\ell$ is the quotient homomorphism.
		
		\item \label{item: prob 6} The presentation complex $X_\ell$ is aspherical, and $G_\ell$ is infinite, torsion-free, and Gromov-hyperbolic.
		
		\item \label{item: prob 7} The group $G_\ell$ is virtually compact special. In particular, it is residually finite, virtually RFRS, and satisfies the strong Atiyah conjecture over $\Q$.
		
		\item \label{item: prob 8} If $1\leq m\leq n-2$, the presentation satisfies a minimum condition in the sense of Definition \ref{def: minimum condition}, and $\beta_1^{(2)}(G_\ell)=n-m-1$.
	\end{enumerate}
\end{prop}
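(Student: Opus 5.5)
The plan is to define $\mathcal{G}_\ell$ as the intersection of finitely many events, each with asymptotic probability $1$. Since the intersection of finitely many such events again has asymptotic probability $1$, it suffices to show, for each item, that the property holds on an event of asymptotic probability $1$, or follows deterministically from the earlier items once $\ell$ is large. I take $\mathcal{G}_\ell$ to be the intersection of three events: $\{P_\ell\le\rho_\ell\}$ from Lemma \ref{lem: logarithmic}; the event of Lemma \ref{lem: generator present}; and, when $1\le m\le n-2$, the event from \cite{KielakKrophollerWilkes2022} that the minimum condition holds and $\beta_1^{(2)}(G_\ell)=n-m-1$. The last event comes from their Theorem B together with their verification of the minimum condition. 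Item \eqref{item: prob 8} is then immediate, and item \eqref{item: prob 1} follows from the argument in the proof of Lemma \ref{lem: logarithmic}: once $\rho_\ell<\ell/6$, the bound $P_\ell\le\rho_\ell$ gives $C''(1/6)$. The bound $\rho_\ell<\ell/6$ holds for all large $\ell$, and this is the source of the phrase ``sufficiently large $\ell$''.

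All remaining items are deterministic consequences of $C''(\lambda)$ for small $\lambda$, together with $P_\ell\le\rho_\ell$ and the fact that no relator is a proper power. For \eqref{item: prob 2} I use graphical or classical small cancellation theory. Under $C'(1/6)$, each relator labels an embedded cycle in the Cayley graph (Greendlinger), so lifted faces have embedded boundaries of length $\ell$. Two distinct lifted faces meet in a connected set: if the intersection were disconnected, one would obtain a reduced van Kampen diagram violating Greendlinger's lemma. The intersection is therefore a path, whose label is a piece, hence of length at most $\rho_\ell$. Items \eqref{item: prob 3} and \eqref{item: prob 4} I take from the theory of Wise complexes of $C'(1/6)$ presentations. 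The Helly-type property \eqref{item: prob 3}, the covering property, and the systolicity of the nerve are \cite[Section 7]{OsajdaPrytula}, where $C'(1/(p-1))$, or a comparable condition, yields $p$-systolicity; I invoke this with $\lambda$ chosen depending on $p$. Since $\lambda$ is arbitrary but fixed, and $P_\ell\le\rho_\ell=o(\ell)$ gives $C'(\lambda)$ for every fixed $\lambda$ once $\ell$ is large, a single event serves all $p\ge7$ simultaneously. This is exactly why $\mathcal{G}_\ell$ is defined through $P_\ell\le\rho_\ell$ rather than through a fixed $\lambda$.

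Item \eqref{item: prob 5} follows from Greendlinger's lemma. A nontrivial reduced $w$ with $q_\ell(w)=e$ must contain more than $(1-3\lambda)\ell>\ell/2$ consecutive letters of a relator, which forces $|w|>\ell/2$. Item \eqref{item: prob 6} collects standard $C'(1/6)$ consequences. Asphericity holds because no relator is a proper power \cite[Theorem 13.3]{OlshanskiiDefiningRelations}, \cite{LyndonSchupp1977}. Hyperbolicity follows from Dehn's algorithm. Torsion-freeness holds because, in $C'(1/6)$ groups, torsion only arises from proper-power relators. Infiniteness follows from asphericity: a finite group with an aspherical finite $2$-complex would have to be trivial, and $G_\ell$ is nontrivial by \eqref{item: prob 5}. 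Item \eqref{item: prob 7} is the chain of results quoted in the introduction: \cite{Wise2004} cubulates $C'(1/6)$ groups, \cite{Agol2013} gives virtual compact specialness, and then residual finiteness, virtual RFRS, and the strong Atiyah conjecture follow from \cite{HaglundWise2008}, \cite{Agol2008} and \cite{Schreve2014}.

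The main obstacle is item \eqref{item: prob 4}, together with the precise form of \eqref{item: prob 3}. These require matching the exact small-cancellation hypotheses under which the Wise complex is known to be $p$-systolic, and checking that the covering and Helly properties hold for lifted faces as closed subcomplexes. I would first try to quote \cite{OsajdaPrytula} directly under a $C'(1/(p+1))$-type hypothesis. If that fails, I would argue by hand that a full cycle of length less than $p$ in a link yields a reduced disc diagram in which some face has few neighbours, which contradicts small cancellation.
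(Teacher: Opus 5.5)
Your plan follows the paper's proof closely. You use the same event $\mathcal{G}_\ell$: pieces bounded by $\rho_\ell$, every letter in every relator, and the Kielak--Kropholler--Wilkes event when $m\le n-2$. You also derive the remaining items deterministically from small cancellation and \cite{OsajdaPrytula}, using that for each fixed $p$ one has $(p-1)\rho_\ell<\ell$ once $\ell$ is large. The differences are cosmetic. The paper cones each relator polygon into a triangulated disc so that Osajda--Prytu\l{}a's simplicial framework applies, and then reads off (ii) and (iii) from their lemmas; you get (ii) directly from Greendlinger. The paper deduces infiniteness from torsion-freeness plus (v); you use asphericity plus non-triviality. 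Both routes are fine.

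One step, as written, is wrong. The covering property in (iv) does not follow from $C''(\lambda)$, $P_\ell\le\rho_\ell$, or the absence of proper powers. Nor is it a conclusion of \cite[Section 7]{OsajdaPrytula}. It is a hypothesis there: the Wise complex is only defined, and their Theorem 7.10 only applies, when the relator discs cover the complex. Indeed, a generator occurring in no relator is compatible with every small-cancellation hypothesis you list, yet its edges in $\widetilde X_\ell$ would lie in no lifted face. Covering is supplied exactly by the event of Lemma \ref{lem: generator present}, which you put into $\mathcal{G}_\ell$ but used only for (i). Since every letter occurs in every relator, every edge of $\widetilde X_\ell$, and hence every vertex, lies on the boundary of some lifted face. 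With that sentence added before you invoke $p$-systolicity, the argument is complete.
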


\begin{proof}
	Define $\mathcal{G}_\ell$ to be the event where the following conditions hold.	
	\begin{itemize}
		\item $P_\ell\leq\rho_\ell$.
		\item Every letter of $\mathcal{A}$ occurs in every relator.
		\item If $1\leq m\leq n-2$, also ask that the presentation satisfies a minimum condition and $\beta_1^{(2)}(G_\ell)=n-m-1$.
	\end{itemize} 
	
	Lemmas \ref{lem: logarithmic} and \ref{lem: generator present}, together with \cite[Theorem B and Proposition 5.3]{KielakKrophollerWilkes2022} and a union bound give $\P_\ell(\mathcal{G}_\ell)\xrightarrow[\ell\to \infty]{}1$.

Fix $p\geq7$. Since $\rho_\ell=o(\ell)$, for all sufficiently large $\ell$ we have $\rho_\ell<\min\{\ell/6,\ell/p\}$. This implies that the $C''(1/6)$ small-cancellation condition holds. This proves Item \eqref{item: prob 1}.

Next, we apply results of Osajda and Prytu\l{}a \cite{OsajdaPrytula}. To do so, we first subdivide the relator discs as follows. For each defining relator, take an abstract polygon whose oriented boundary is labelled by that relator. Add a vertex in its interior and join it to every boundary vertex, so that the polygon is subdivided into triangles. We then attach the boundary of the polygon to the presentation graph according to its labels, exactly as in the original presentation complex. When the relator graphs are cycles, this is the construction described in \cite[Definition 7.4]{OsajdaPrytula}. This subdivision lifts in an equivariant way to the relator discs in the universal cover $\widetilde{X}_\ell$ of the presentation complex of $G_\ell$.

To verify the hypothesis in \cite[Definitions 6.6 and 7.5]{OsajdaPrytula}, note that a piece in their sense gives two distinct rooted oriented readings. Indeed, the relator type, boundary position, and directed initial edge determine the lifted face uniquely. Its length is therefore at most $\rho_\ell$. Since a nontrivial reduced closed path in a relator circle has length at least $\ell$, a decomposition into fewer than $p$ pieces would imply $\ell\leq(p-1)\rho_\ell<\ell$. This is a contradiction.

Since $\widetilde{X}_\ell$ is simply connected, Items \eqref{item: prob 2} and \eqref{item: prob 3}  follow from \cite[Lemmas 6.10 (1), (2), 6.11 and Remark 7.6]{OsajdaPrytula}. Here, any nonempty intersection of two distinct lifted faces is a finite tree contained in an embedded boundary cycle, hence a path, possibly reduced to a vertex. If this path contains at least one edge, reading it in the two boundary contours gives two distinct rooted oriented readings, as above. Thus, its label is a piece, and its length is at most $\rho_\ell$.

The condition that each letter of $\mathcal{A}$ occurs in every relator ensures that every edge lies on a lifted face. Thus the covering hypothesis of \cite[Definition 7.7]{OsajdaPrytula} holds, and then \cite[Theorem 7.10]{OsajdaPrytula} proves Item \eqref{item: prob 4}.

For Item \eqref{item: prob 5}, suppose that a non-empty freely reduced word $w$ of length at most $\ell/2$ represents the identity in $G_{\ell}$. Its cyclic reduction is still non-empty and has length at most $\ell/2$. Greendlinger's lemma for $C'(1/6)$ presentations gives a subword of this cyclic reduction that is also a subword of a defining relator and has length strictly greater than $\ell/2$. This is a contradiction; see \cite[Chapter V, Theorem 4.4]{LyndonSchupp1977}.
	
Asphericity, torsion-freeness, and hyperbolicity follow from the classical small-cancellation results; see \cite[Section 4.7]{GromovHyperbolic} and \cite[Theorem 13.3]{OlshanskiiDefiningRelations}. By \eqref{item: prob 5}, a generator represents a non-trivial element. Since the group is torsion-free, it is infinite. This proves \eqref{item: prob 6}. Wise's cubulation theorem \cite{Wise2004}, followed by \cite[Theorem 1.1]{Agol2013}, gives virtual compact specialness. Residual finiteness follows from \cite[Theorem 4.4]{HaglundWise2008}. Virtual RFRS follows from the embedding of a special group in a right-angled Artin group and \cite[Corollary 2.3]{Agol2008}. Finally, \cite[Theorem 1.2]{Schreve2014} gives the strong Atiyah	conjecture. This proves Item \eqref{item: prob 7}. Finally, Item \eqref{item: prob 8} is satisfied by the definition of the event $\mathcal{G}_{\ell}$.
\end{proof}
	
	The systolicity of the Wise complex will be used in the next lemma. For each $u,v\in\Z/\ell\Z$, we write $d_{\ell}(u,v)$ for their cyclic distance on $\Z/\ell\Z$.
	
	\begin{lem}\label{lem: close overlap positions}
Fix integers $K\geq3$ and $p\ge\max\{7,K+1\}$. For all sufficiently large $\ell$, consider a presentation in the event $\mathcal{G}_\ell$ of Proposition \ref{prop: lifted face geometry}. Let $\mathcal{S}$ be a family of at most $K$ lifted faces of $\widetilde X_\ell$. Let $E,L,H\in\mathcal{S}$ be distinct faces such that $E\cap L$ and $E\cap H$ contain edges. Suppose that $L$ and $H$ can be joined by a chain of intersecting faces in $\mathcal{S}\backslash\{E\}$. Choose edges $e_L\subseteq E\cap L$ and $e_H\subseteq E\cap H$. Let $u,v\in\Z/\ell\Z$ be the positions of their initial vertices on the oriented contour of $E$. Then $d_\ell(u,v)\leq K \rho_\ell $.
	\end{lem}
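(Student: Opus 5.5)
We take a shortest chain $L=F_1,F_2,\ldots,F_s=H$ of faces in $\mathcal{S}\setminus\{E\}$ with $F_i\cap F_{i+1}\neq\varnothing$; minimality makes the $F_i$ distinct and $s\le K-1$. Each $E\cap F_i$ is empty or a path of length at most $\rho_\ell$ by Proposition \ref{prop: lifted face geometry}\eqref{item: prob 2}. We then build a closed loop in $\widetilde X_\ell$: start at the initial vertex of $e_L$ in $\partial E$, move through $L=F_1$ to a point of $F_1\cap F_2$, continue through $F_2,\ldots,F_s=H$, return to $e_H$, and close up along the shorter arc of $\partial E$ between $u$ and $v$. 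Passing to the Wise complex $\mathcal{W}_\ell$, this gives a cycle of vertices $E,F_1,\ldots,F_s$ of length $s+1\le K<p$.

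\textbf{Triangle claim.} The key step is to show that $\mathcal{W}_\ell$ contains a triangle $\{E,F_i,F_{i+1}\}$, or more generally that $E$ meets many consecutive $F_i$. Since $\mathcal{W}_\ell$ is $p$-systolic with $p\ge7$, it is $p$-large by \cite[Proposition 1.4]{JanuszkiewiczSwiatkowski}. Hence the cycle $E,F_1,\ldots,F_s,E$, of length less than $p$, is not full. Iterating this, the cycle decomposes into triangles; it is filled by a triangulated disc with no interior vertices, as for short cycles in $6$-large complexes.

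\textbf{Cone structure.} Since the cycle passes through $E$, we argue by induction on $s$ that every $F_i$ is adjacent to $E$, i.e.\ $E$ cones off the cycle. We take a chord of the cycle. If it involves $E$, we split the cycle into two shorter cycles through $E$ and induct. If it is a chord $F_iF_j$ avoiding $E$, we shortcut the chain, contradicting minimality. So every $F_i$ meets $E$, and consecutive triples $E,F_i,F_{i+1}$ span triangles of $\mathcal{W}_\ell$. By Proposition \ref{prop: lifted face geometry}\eqref{item: prob 3} the triple intersection $E\cap F_i\cap F_{i+1}$ is non-empty, so the arcs $E\cap F_i$ and $E\cap F_{i+1}$ on $\partial E$ share a vertex.

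\textbf{Conclusion.} The arcs $E\cap F_1\ni e_L,\;E\cap F_2,\;\ldots,\;E\cap F_s\ni e_H$ are successively overlapping sub-paths of the cycle $\partial E$, each of length at most $\rho_\ell$. Their union is connected and contains both initial vertices, so one of the two arcs of $\partial E$ between $u$ and $v$ is covered by them. Hence $d_\ell(u,v)\le s\rho_\ell\le K\rho_\ell$.

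The main obstacle is the induction showing $E$ is adjacent to every $F_i$. The base case is a $4$-cycle $E,F_1,F_2,\ldots$ without the chord through $E$, where one must rule out the diagonal $F_1F_{\cdot}$ configuration. This needs care with the flag and largeness conditions, and with the case where $E\cap F_i$ is a single vertex. If that argument is delicate, we would instead use that the link of $E$ is $p$-large. The loop would then be projected into $\mathrm{Lk}(E)$ via a systolic disc diagram whose vertices adjacent to $E$ form a path, and the bound would be read off along that path.
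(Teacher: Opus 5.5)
Your proposal is correct and follows the paper's proof. You take a minimal chain in $\mathcal{S}\setminus\{E\}$, use $p$-largeness of $\mathcal{W}_\ell$ to force $E$ to meet every face of the chain, then use Item \eqref{item: prob 3} of Proposition \ref{prop: lifted face geometry} for the triple intersections, and finally concatenate arcs of length at most $\rho_\ell$ along $\partial E$; the only difference is that you argue by induction on chords, while the paper takes a maximal run of faces missing $E$ and exhibits the induced cycle $E,H_a,\ldots,H_b$ of length between $4$ and $K<p$, which is the same idea. The ``main obstacle'' you flag is not one: in the $4$-cycle the chord $F_1F_3$ is excluded by minimality, so the chord must be $EF_2$, and single-vertex intersections are harmless because adjacency in $\mathcal{W}_\ell$ only means non-empty intersection; the loop in $\widetilde X_\ell$, the disc-filling remark and the link fallback can all be dropped.
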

	\begin{proof}
		We use Items \eqref{item: prob 2}, \eqref{item: prob 3}, and \eqref{item: prob 4} of Proposition \ref{prop: lifted face geometry}. In particular, we use that the Wise complex $\mathcal{W}_\ell$ is $p$-large.
		
Choose a chain $L=H_0,H_1,\ldots,H_s=H$ of minimal length among all chains in $\mathcal{S}\backslash\{E\}$ joining $L$ to $H$. Thus $H_i\cap H_{i+1}\neq\varnothing$ for every $0\leq i<s$. The minimality of this chain implies that $H_i\cap H_j=\varnothing$ whenever $|i-j|\geq2$, since any such non-empty intersection would give a shorter chain.
		
We claim that $E\cap H_i\neq\varnothing$ for every $0\leq i\leq s$. Indeed, suppose that this is not the case. Since $E$ meets both $H_0=L$ and $H_s=H$, there would exist indices $a<b$, with $b-a\geq2$, such that $E\cap H_a\neq \varnothing$ and $E\cap H_b\neq \varnothing$, but $E\cap H_j=\varnothing$ for every $a+1\le j<b$. The sequence $E,H_a,H_{a+1},\ldots,H_b$ would then form an induced cycle in the $1$-skeleton of the Wise complex $\mathcal{W}_\ell$, of length at least $4$ and at most $s+2\leq|\mathcal{S}|\leq K<p$. This contradicts the fact that \(\mathcal{W}_\ell\) is \(p\)-large. Hence, we conclude that $E\cap H_i\neq\varnothing$ for every $0\leq i\leq s$.
		
For each $0\leq i<s$, the three faces $E,H_i,H_{i+1}$ intersect pairwise. Since we are assuming that the conclusions of Proposition \ref{prop: lifted face geometry} hold, it must be that their total intersection is non-empty. Hence, we may choose $z_i\in E\cap H_i\cap H_{i+1}$.
		
Let $y_0$ and $y_s$ be the initial vertices of $e_L$ and $e_H$, respectively. Let us set $z_{-1}=y_0$ and $z_s=y_s$. For every $0\leq i\leq s$, we have $z_{i-1}, z_i\in E\cap H_i$. Since $E\cap H_i$ is a path of length at most $\rho_\ell$, these two points can be joined inside the contour $\partial E$ by a path of length at most $\rho_\ell$. Concatenating these $s+1$ paths gives a path in $\partial E$ from $y_0$ to $y_s$ of length at most $(s+1)\rho_\ell$. Finally, since $y_0$ and
$y_s$ have positions $u$ and $v$ on $\partial E$, it follows that
		\[
		d_\ell(u,v)\leq (s+1)\rho_\ell\leq (K-1)\rho_\ell \leq K\rho_\ell.
		\]
This finishes the proof.
	\end{proof}

	\subsection{The group von Neumann algebra and its trace}\label{subsection: vn algebras}
	
	Let $G$ be a countable discrete group with identity element $e_G$ and consider the associated Hilbert space $\ell^2(G)$. Denote by $\delta_g\in \ell^2(G)$ the point mass at $g\in G$. The left regular
	representation $\lambda:G\to\mathcal{U}(\ell^2(G))$ is defined by $\lambda(g)\delta_h=\delta_{gh}$ for each $g,h\in G$, where $\mathcal{U}(\ell^2(G))$ denotes the space of unitary operators on $\ell^2(G)$. The group von Neumann algebra of $G$ is given by
	\begin{equation*}
		L(G) \coloneqq \{T\in B(\ell^2(G)):T\lambda(g)=\lambda(g)T
		\text{ for every }g\in G\}.
	\end{equation*}
	Thus, $L(G)$ is the algebra of bounded $G$-equivariant operators on
	$\ell^2(G)$; see \cite[Definitions 1.1 and 1.2]{Luck2002}. The canonical trace on $L(G)$ is given by $\tau_G(T)\coloneqq \langle\delta_e,T\delta_e\rangle$, for each $T\in L(G)$. For every $s\geq1$ the (unnormalized) matrix trace on $M_s(L(G))$ is
	\[
	\mathrm{Tr}_s(T)\coloneqq \sum_{j=1}^s\tau_G(T_{jj})=\sum_{j=1}^s \left\langle \delta_e\otimes e_j,T(\delta_e\otimes e_j)\right\rangle,\]
	for each $T=(T_{ij})_{i,j=1}^s\in M_s(L(G))$. In particular,
	it holds that $\mathrm{Tr}_s(I_s)=s$. The trace is faithful, normal and positive. The definition for positive endomorphisms of Hilbert $L(G)$-modules and these basic properties are given in \cite[Definition 1.8 and Theorem 1.9]{Luck2002}.

We define the von Neumann dimension of a subspace of a Hilbert $L(G)$-module following \cite[Definitions 1.5 and 1.10]{Luck2002}.
\begin{defn}
 Let $V\subseteq\ell^2(G)^s$ be a closed	$G$-invariant subspace and let us denote by $p_V\in M_s(L(G))$ the orthogonal projection onto $V$. The \emph{von Neumann dimension} of $V$ is defined as
	\begin{equation*}\label{eq:vn dimension}
		\dim_{L(G)}\left(V\right)\coloneqq \mathrm{Tr}_s(p_V).
	\end{equation*}
\end{defn}

\subsection{Functional calculus and the tracial spectral measure} \label{subsection: preliminaries tracial spectral measure}
	
	Let $T\in M_s(L(G))$ be bounded and self-adjoint. The spectral theorem
	provides a projection-valued measure $E_T$ on the Borel subsets of
	$\R$ such that $f(T)=\int_{\R}f(t)\,dE_T(t)$ for every bounded Borel function $f$ on the spectrum of $T$. We refer to \cite[Section 1.4, Equations (1.64)-(1.67), and Definition 1.68]{Luck2002} for further details on Borel functional calculus, and also to \cite[Chapter VII, Sections 1-3]{ReedSimon1980}. 
 
 Applying the above to the positive operator $\widehat{\Delta}_{2,\ell}=\frac{1}{\ell} \Delta_{2,\ell}$ we obtain
 \mynomenclature[C]{$\widehat{\Delta}_{2,\ell}$}{The normalized Laplacian $\widehat{\Delta}_{2,\ell}=\frac{1}{\ell}\Delta_{2,\ell}$}
 its
	\emph{tracial spectral measure with respect to the unnormalized matrix
		trace} as the finite positive Borel measure $\sigma_{\ell}$ defined by
	\begin{equation*}
		\sigma_\ell(B)
		\coloneqq \mathrm{Tr}_m
		\bigl(E_{\widehat{\Delta}_{2,\ell}}(B)\bigr)
		=\mathrm{Tr}_m
		\left(\mathds{1}_B(\widehat\Delta_{2,\ell})\right),
		\text{ for every Borel subset } B\subseteq\R.
	\end{equation*}
 \mynomenclature[C]{$\sigma_\ell$}{The tracial spectral measure of $\widehat{\Delta}_{2,\ell}$}
	Normality of $\mathrm{Tr}_m$ implies that $\sigma_{\ell}$ is countably additive. Moreover, we have that $\mathrm{supp}(\sigma_\ell)\subseteq\left[0,\|\widehat{\Delta}_{2,\ell}\|\right]$ and $\sigma_\ell(\R)=\mathrm{Tr}_m(I_m)=m$.
 
	For every bounded Borel function $f$ defined on the spectrum of $\widehat{\Delta}_{2,\ell}$, we have
	\begin{equation*}
		\mathrm{Tr}_m\bigl(f(\widehat\Delta_{2,\ell})\bigr)
		=\int_{\R}f(t)\,d\sigma_\ell(t).
	\end{equation*}
	In particular, for every integer $k\geq0$, we have that
	\begin{equation*}\label{eq: spectral moments}
		\int_{\R}t^k\,d\sigma_\ell(t)
		=\mathrm{Tr}_m(\widehat\Delta_{2,\ell}^{k}).
	\end{equation*}

\subsection{Cellular \texorpdfstring{$\ell^2$}{l2}-chains and \texorpdfstring{$\ell^2$}{l2}-Betti numbers}\label{subsection: preliminaries homology}
	We refer to \cite[Definitions 1.15 and 1.16]{Luck2002} for the definitions of Hilbert chain complexes and reduced $\ell^2$-homology. Below, we recall the basic terminology and results needed for our paper.
	
The action of $G_{\ell}$ on $\widetilde{X}_{\ell}$ by deck transformations makes the cellular chain groups $C_p(\widetilde{X}_\ell;\Z)$ based free $\Z G_\ell$-modules. We use the completed cellular chain complex
	\begin{equation*}
		C_p^{(2)}(\widetilde X_\ell)
		\coloneqq \ell^2(G_\ell)\otimes_{\Z G_\ell}
		C_p(\widetilde X_\ell;\Z).
	\end{equation*}

After choosing one oriented lift of each cell of $X_\ell$, the elementary tensors $\delta_g\otimes e$ form an orthonormal cellular basis, and one completes in the resulting Hilbert norm. Thus we have the identifications
	\begin{equation*}
		C_2^{(2)}(\widetilde{X}_\ell)\cong\ell^2(G_\ell)^m, \ \ 
		C_1^{(2)}(\widetilde X_\ell)\cong\ell^2(G_\ell)^n, \ \ \text{ and }\ \ \ C_0^{(2)}(\widetilde X_\ell)\cong\ell^2(G_\ell);
	\end{equation*}
	see \cite[Section 1.22]{Luck2002} and \cite[Definition 1.29]{Luck2002}. The reduced cellular $\ell^2$-homology of $\widetilde{X}_\ell$ is defined by
	\[
	\overline{H}_p^{(2)}(\widetilde{X}_\ell)	\coloneqq	\ker\left(\partial_p^{(2)}\right)\Big/ \overline{\mathrm{im}\left(\partial_{p+1}^{(2)}\right)},
	\]
	where the closure is taken in $C_p^{(2)}(\widetilde{X}_\ell)$. Its induced $G_\ell$-action makes it a finitely generated Hilbert $L(G_\ell)$-module. The $p$-th $\ell^2$-Betti number is then defined as
	\[
	\beta_p^{(2)}(X_\ell)\coloneqq\dim_{L(G_\ell)}\left(\overline{H}_p^{(2)}(\widetilde{X}_\ell)\right);
	\]
	see \cite[Definitions 1.16 and 1.30]{Luck2002}.
	
	For a discrete group $G$, its $\ell^2$-Betti numbers are defined using a contractible free $G$-CW-complex $EG$:
	\[
	\beta_p^{(2)}(G) \coloneqq \beta_p^{(2)}(EG;L(G)).
	\]
	
	If the classifying space $BG=G\backslash EG$ has finitely many cells	in each degree, the $\ell^2$-Betti numbers are computed by the Hilbert chain construction above; see \cite[Lemma 6.53]{Luck2002}. In this case the definition also appears in \cite[Example 1.38]{Luck2002}, and its independence of the chosen model follows from \cite[Theorem 1.35(1)]{Luck2002}. In the generic event where $X_\ell$ is aspherical, its universal cover is contractible and is a model for $EG_\ell$.
	Hence $\beta_p^{(2)}(G_\ell)=\beta_p^{(2)}(X_\ell)$ for each $p\ge 0$.
 \subsection{The Laplacian and the Euler-Poincaré formula}\label{subsection: laplacian}
	We now recall how the degree-two $\ell^2$-boundary is computed. Denote by $F_n$ the free group with free basis $\{x_1,\ldots, x_n\}$. Let $\varepsilon:\Z F_n\to\Z$ be the augmentation map. The Fox derivative $\partial/\partial x_i:\Z F_n\to\Z F_n$ is the
	unique $\Z$-linear map satisfying
	\begin{align*}
		\frac{\partial x_j}{\partial x_i}&=\delta_{ij},\ \ \ \text{ and }\ \ \
		\frac{\partial(uv)}{\partial x_i}
		=\frac{\partial u}{\partial x_i}\varepsilon(v)
		+u\frac{\partial v}{\partial x_i}
	\end{align*}
	for each $i,j=1,\ldots,n$ and every $u,v\in \Z F_n$. For group elements $u,v\in F_n$, the second formula above becomes $\partial(uv)/\partial x_i=\partial u/\partial x_i
	+u\,\partial v/\partial x_i$, so that one has $\frac{\partial x_j^{-1}}{\partial x_i}=-\delta_{ij}x_j^{-1}$.
	The fundamental formula is
	\begin{equation*}
		w-1=\sum_{i=1}^n
		\frac{\partial w}{\partial x_i}(x_i-1),
	\end{equation*}
	for each $w\in F_n$. These are Fox's formulas \cite[Section 1 formulas (1.1)--(1.2), and Section 2 formula (2.3)]{Fox1953}; see also \cite[Chapter II, Section 2]{Brown1994}.
	
	Let $q_\ell:\Z F_n\to\Z G_\ell$
 \mynomenclature[C]{$q_\ell$}{The linear extension $q_\ell\colon \Z F_n\to \Z G_\ell$ of the canonical quotient map $F_n\to G_{\ell}$}
 be the linear extension of the canonical quotient map $F_n\to G_{\ell}$. In the
	chosen cellular bases, the completed degree-two boundary map is the bounded
	$G_\ell$-equivariant operator $A_\ell=\partial_2^{(2)}:\ell^2(G_\ell)^m\to\ell^2(G_\ell)^n$ given by
	\begin{equation*}A_\ell=\left(q_\ell\!\left(\frac{\partial r_{j}}{\partial x_i}\right)\right)_
		{1\leq i\leq n,\ 1\leq j\leq m}.
	\end{equation*}
 \mynomenclature[C]{$A_\ell$}{The Fox matrix associated with $G_\ell$}
	Each entry in the above matrix acts through the right $\Z G_\ell$-action on $\ell^2(G_{\ell})$, and hence as a bounded operator commuting with $\lambda(G_\ell)$. Consequently $A_\ell\in M_{n\times m}(L(G_\ell))$ and therefore $A_\ell^*A_\ell\in M_m(L(G_\ell))$. The cellular interpretation of the Fox matrix is explained in \cite[Chapter II, Section 2]{Brown1994}.
	
For the completed cellular chain complex, the degree-$p$ Laplacian is given by
	\[
	\Delta_{p,\ell} = \partial_{p+1}^{(2)}\left(\partial_{p+1}^{(2)}\right)^*	+	\left(\partial_p^{(2)}\right)^*\partial_p^{(2)};
	\]
	see \cite[Equation (1.17)]{Luck2002}. For every $v\in C_p^{(2)}(\widetilde{X}_\ell)$, we have $\langle\Delta_{p,\ell}v,v\rangle=\|(\partial_{p+1}^{(2)})^*v\|^2+\|\partial_p^{(2)}v\|^2$, so that
	\[
	\ker\left(\Delta_{p,\ell}\right)=	
\ker\left(\partial_p^{(2)}\right)\cap\left(\overline{\mathrm{im}\left(\partial_{p+1}^{(2)}\right)}\right)^\perp.	
 \]
Hence, we obtain the identification $\overline{H}_p^{(2)}(\widetilde{X}_\ell) \cong \ker\Delta_{p,\ell}$; see \cite[Lemma 1.18]{Luck2002}. Since $X_\ell$ is two-dimensional, we have that $C_3^{(2)}(\widetilde X_\ell)=0$. Therefore $\Delta_{2,\ell}=A_\ell^*A_\ell$ and
	\[
	\overline{H}_2^{(2)}(\widetilde{X}_\ell)=\ker\left(A_\ell\right)=\ker\left(A_\ell^*A_\ell\right).
	\]
 \mynomenclature[C]{$\Delta_{2,\ell}$}{The Hodge Laplacian $\Delta_{2,\ell}=A_\ell^*A_\ell$ associated to $G_\ell$}
	The spectral projection $\mathds{1}_{\{0\}}(A_\ell^*A_\ell)$ is the orthogonal projection onto this kernel. Consequently, we can write
	\[
	\beta_2^{(2)}(X_\ell)=\mathrm{Tr}_m\left(	\mathds{1}_{\{0\}}(A_\ell^*A_\ell)\right).\]
	
Next, we explain the Euler-Poincaré identity for this complex.
 
 Set $s_{p,\ell} \coloneqq\dim_{L(G_\ell)}\left(\overline{\operatorname{im}\left(\partial_p^{(2)}\right)}\right)$. Additivity of von Neumann dimension for weakly exact sequences \cite[Theorem 1.12(2)]{Luck2002} gives
\[
	\dim_{L(G_\ell)}\left(C_p^{(2)}(\widetilde{X}_\ell)\right)=\dim_{L(G_\ell)}\left(\ker\left(\partial_p^{(2)}\right)\right)+s_{p,\ell}=	\beta_p^{(2)}(X_\ell)+s_{p+1,\ell}+s_{p,\ell}.
\]
	Since $s_{0,\ell}=s_{3,\ell}=0$, we have $1=\beta_0^{(2)}(X_\ell)+s_{1,\ell}$, $n=\beta_1^{(2)}(X_\ell)+s_{1,\ell}+s_{2,\ell}$ and $m=\beta_2^{(2)}(X_\ell)+s_{2,\ell}$. Taking the alternating sum gives
	\[
	1-n+m=\beta_0^{(2)}(X_\ell) -\beta_1^{(2)}(X_\ell) +\beta_2^{(2)}(X_\ell);
	\]
	see also \cite[Theorem 1.35(2)]{Luck2002}. The left-hand side in the equation above is the Euler characteristic $\chi(X_\ell)$. 
 
On the event that $G_\ell$ is infinite we have $\beta_0^{(2)}(X_\ell)=0$ \cite[Theorem 1.35(8)]{Luck2002}, and if $X_\ell$ is aspherical we may replace the cellular $\ell^2$-Betti numbers by the $\ell^2$-Betti numbers of the group. Rearranging the preceding identity gives
	\[
	\beta_1^{(2)}(G_\ell)=n-m-1+\mathrm{Tr}_m\left(\mathds{1}_{\{0\}}(A_\ell^*A_\ell)\right),
	\]
 which is the Euler-Poincaré formula.

	\subsection{The free-Poisson distribution} \label{subsection:free poisson preliminaries}
	
	We recall the facts about the free-Poisson distribution that will be used below. Our convention is that the free-Poisson distribution with parameter $\lambda\geq 0$ has mean $\lambda$. For further background on the free-Poisson distribution and its connection with non-crossing partitions and random matrices, we refer to Lectures 9 and 12 in \cite{NicaSpeicher2006}, and particularly to \cite[Proposition 12.11 and Definition 12.12]{NicaSpeicher2006}. We also refer to the foundational monograph \cite{VoiculescuDykemaNica}, and to \cite[Chapters 1-3]{MingoSpeicher2017}.
	
	\begin{defn}
		The \emph{free-Poisson distribution} with parameter $\lambda\geq 0$ is the
		probability measure $\mu_\lambda$ on $[0,\infty)$ given by the density
		\begin{equation*}
			d\mu_\lambda(t) =
			(1-\lambda)_+\,d\delta_0(t)
			+
			\frac{\sqrt{(b_\lambda-t)(t-a_\lambda)}}{2\pi t}
			\mathds{1}_{[a_\lambda,b_\lambda]}(t)\ dt,
		\end{equation*}
		where $a_\lambda\coloneqq (1-\sqrt{\lambda})^2$, 	$b_\lambda\coloneqq(1+\sqrt{\lambda})^2$, and $(1-\lambda)_+\coloneqq\max\{1-\lambda,0\}$.
	\end{defn}
    
	See also \cite[Equation (12.15) and Definition 12.12]{NicaSpeicher2006}. \footnote{ Note that there is a typo in \cite[Equation (12.14)]{NicaSpeicher2006}. The factor $\lambda$ multiplying $\widetilde{\nu}$ in the case $0\leq\lambda\leq1$ should be omitted; see Speicher's errata for the book \url{https://rolandspeicher.com/literature/nica-speicher/}.}
	
	As we explain in the next remark, the free-Poisson distribution appears naturally when studying the limits of random matrices. 
	\begin{rem}\label{rem: comparison random matrix theorem}
Let $(x_{ij})_{i,j\geq1}$ be independent and identically
distributed real random variables with mean zero and variance one. Let $(M_N)_{N\geq1}$ be a sequence of positive integers such that $M_N/N\to\lambda>0$, and set
\[
X_N=(x_{ij})_{1\leq i\leq N,\ 1\leq j\leq M_N}.
\]
For a self-adjoint $r\times r$ matrix $B$, denote by $\theta_j(B)$, $j=1,\ldots,r$ its eigenvalues counted with multiplicity, and consider its empirical spectral measure
\[
\mu_B\coloneqq\frac{1}{r}\sum_{j=1}^r\delta_{\theta_j(B)}.
\]
The Marchenko--Pastur theorem states that almost surely we have $\mu_{N^{-1}X_NX_N^*}
\xrightarrow[N\to\infty]{\mathrm{weakly}}
\mu_\lambda$,
where $\mu_\lambda$ is the free-Poisson law of mean $\lambda$; see \cite{MarchenkoPastur1967} and \cite[Theorem 3.6]{BaiSilverstein}.

An alternative normalization uses $M_N^{-1}X_NX_N^*$, whose empirical spectral measure converges
almost surely to the mean-one Marchenko--Pastur law
$\rho_y$, with $y=1/\lambda$, given by
\[
d\rho_y(t) = \left(1-\frac{1}{y}\right)_+d\delta_0(t)
+ \frac{\sqrt{(b_y-t)(t-a_y)}}{2\pi yt}
\mathds{1}_{[a_y,b_y]}(t)\ dt,
\]
where $a_y=(1-\sqrt y)^2$ and $b_y=(1+\sqrt y)^2$.
Thus, if $Y$ has distribution $\rho_{1/\lambda}$, then
$\lambda Y$ has distribution $\mu_\lambda$. In particular,
\[
\int_{\R}t^k\ d\mu_\lambda(t)
=
\lambda^k\int_{\R}t^k\ d\rho_{1/\lambda}(t),
\]
for each $k\ge 0$; see \cite[Remark 2.12 and Exercise 2.12]{MingoSpeicher2017} for this comparison.

To relate this to Theorem \ref{thm:main spectral law},
note that both matrices $X_N^*X_N$ and $X_NX_N^*$ have the same non-zero eigenvalues with multiplicity. Consequently,
\[
\mu_{N^{-1}X_N^*X_N}
= \frac{N}{M_N}\mu_{N^{-1}X_NX_N^*} + \left(1-\frac{N}{M_N}\right)\delta_0.
\]
Taking $\lambda=m/(n-1)$ therefore gives
\[
m \mu_{N^{-1}X_N^*X_N}
\xrightarrow[N\to\infty]{\mathrm{weakly}}
(n-1)\mu_{m/(n-1)}+(m-n+1)\delta_0 \ \ \ \text{almost surely}.
\]

In our setting, the operator $A_\ell/\sqrt{\ell}$ maps
$\ell^2(G_\ell)^m$ into $\ker\left(\partial_{1,\ell}^{(2)}\right)$. When $G_\ell$ is infinite, these spaces have von Neumann dimensions $m$ and $n-1$, respectively. This motivates both the parameter $\lambda=m/(n-1)$ and the form of the limiting measure in Theorem \ref{thm:main spectral law}.
\end{rem}

	\subsection{Non-crossing partitions}
	Next, we recall non-crossing partitions. We use as main reference \cite[Definition 9.1]{NicaSpeicher2006}. 
 
 For a partition $\pi$ of a set $X$, we write $x\sim_\pi y$ if the elements $x,y\in X$ belong to the same atom of $\pi$. Write $x\not\sim_\pi y$ otherwise.
	
	\begin{defn}\label{defn: noncrossing partitions}
		Let $k\geq1$. A partition $\pi\in\mathcal{P}(k)$ of $\{0,\ldots,k-1\}$ is \emph{crossing} if there exist $0\leq i<j<r<s\leq k-1$ such that $i\sim_\pi r$, $j\sim_\pi s$, and $i\not\sim_\pi j$. The partition is \emph{non-crossing} if no such four indices exist. We denote the set of non-crossing partitions by $\mathrm{NC}(k)$.
 \mynomenclature[P]{$\mathrm{NC}(k)$}{The subset of all non-crossing partitions in $\mathcal{P}(k)$}
 If $\pi\in\mathrm{NC}(k)$, we write $|\pi|$ for the number of equivalence classes of the partition $\pi$.
	\end{defn}
	
	Non-crossing partitions appear naturally when computing the moments of the free-Poisson distribution. The following proposition is a standard result; see, e.g., Proposition 1.1 and the paragraph following its proof in \cite{OraveczPetz1997}, and Exercise 11 in Section 2 of \cite{MingoSpeicher2017}.
	\begin{prop}\label{prop:free poisson moments}
		Let $\lambda>0$ and consider the free-Poisson distribution $\mu_{\lambda}$ with parameter $\lambda$. Then, for every $k\geq1$ we have
		\[
		\int_{\R}t^k\,d\mu_\lambda(t) = \sum_{\pi\in\mathrm{NC}(k)} \lambda^{|\pi|}=\sum_{j=1}^k \frac{1}{k} \binom{k}{j}\binom{k}{j-1}\lambda^j.
		\]
	\end{prop}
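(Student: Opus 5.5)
The plan is to compute the moments of $\mu_\lambda$ through free cumulants. The defining property I will use is that all free cumulants of the free-Poisson law equal $\lambda$: $\kappa_j(\mu_\lambda)=\lambda$ for every $j\ge1$. Granting this, the moment–cumulant formula over non-crossing partitions, $m_k=\sum_{\pi\in\mathrm{NC}(k)}\prod_{V\in\pi}\kappa_{|V|}$, immediately gives $\int t^k\,d\mu_\lambda=\sum_{\pi\in\mathrm{NC}(k)}\lambda^{|\pi|}$. Hence the first equality reduces to two things: identifying the density in the definition with the measure whose cumulants are all $\lambda$, and invoking the moment–cumulant formula (\cite[Lecture 11]{NicaSpeicher2006}).

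To verify that all cumulants equal $\lambda$, I will compute the Cauchy transform $G(z)=\int (z-t)^{-1}\,d\mu_\lambda(t)$ directly from the density, including the atom $(1-\lambda)_+$ at $0$. The goal is to check that
\[
G(z)=\frac{z+1-\lambda-\sqrt{(z-a_\lambda)(z-b_\lambda)}}{2z},
\]
with the branch of the square root chosen so that $G(z)\sim 1/z$ at infinity. By Stieltjes inversion it suffices to check two things. First, $-\pi^{-1}\operatorname{Im}G(x+i0)$ reproduces the density on $[a_\lambda,b_\lambda]$. Second, the residue of $G$ at $0$ is $(1-\lambda)_+$: the square root at $z=0$ equals $\pm|1-\lambda|$ depending on the branch, which gives either a removable singularity or the atom.

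This $G$ solves the quadratic $zG^2-(z+1-\lambda)G+1=0$. Substituting $z=K(G)$ then gives $K(w)=1/w+\lambda/(1-w)$, so the $R$-transform is $R(w)=\lambda/(1-w)=\sum_{j\ge1}\lambda w^{j-1}$. Reading off coefficients shows that every cumulant equals $\lambda$.

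For the second equality, I group the sum by $j=|\pi|$. It then suffices to show that the number of non-crossing partitions of $\{0,\dots,k-1\}$ with exactly $j$ blocks is the Narayana number $\frac1k\binom{k}{j}\binom{k}{j-1}$. My preferred route is functional: setting $F(w,\lambda)=\sum_k m_k w^k$, the relation above gives an algebraic equation for $F$, and Lagrange inversion in $w$ extracts the coefficient of $w^k\lambda^j$ as exactly $\frac1k\binom kj\binom k{j-1}$. As an alternative, one can use Kreweras complementation together with a cycle-lemma count.

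The main obstacle is the branch and atom bookkeeping in the Cauchy transform, especially the case $\lambda<1$ where the atom appears; the typo in \cite{NicaSpeicher2006} noted in the footnote shows this is a genuine danger point. Since the statement is standard, in the paper I would simply cite \cite[Proposition 12.11]{NicaSpeicher2006} (with the errata) and \cite[Proposition 1.1]{OraveczPetz1997}, with the sketch above as a justification.
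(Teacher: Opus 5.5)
Your proposal is correct and matches the paper's approach: the paper gives no argument of its own and simply cites \cite{OraveczPetz1997} (Proposition 1.1 and the paragraph after it) and Exercise 11 in Section 2 of \cite{MingoSpeicher2017}, and your cumulant/$R$-transform sketch (with $K(w)=1/w+\lambda/(1-w)$, which checks out) plus the Lagrange-inversion count of Narayana numbers is the standard derivation behind those citations.

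One small precision. The branch of $\sqrt{(z-a_\lambda)(z-b_\lambda)}$ is already fixed by $G(z)\sim 1/z$, and this branch is negative on $(-\infty,a_\lambda)$. So its value at $0$ is $-|1-\lambda|$, and the atom-versus-removable dichotomy comes from the sign of $1-\lambda$, not from a choice of branch. For $\lambda=1$ the point $0$ is a branch point rather than a pole, so the (zero) atom should be read off from $\lim_{y\downarrow 0} iy\,G(iy)$ instead of a residue.
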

	 
\section{The trace expansion of the Laplacian} \label{section: trace laplacian}
	
Recall from Subsection \ref{section: probability small cancellation} that we denote by $G_\ell$ the group with $n$ generators and $m$ relators of length $\ell$ chosen uniformly on the set of cyclically reduced words of length $\ell$, all independent from each other. We also denote its presentation complex by $X_\ell$ and the associated universal cover by $\widetilde{X}_\ell$.
	
	\subsection{Expansion of the moments of the Laplacian}\label{subsection: expansion moments laplacian}
	
	Let us denote by $\mathcal{F}_\ell$
 \mynomenclature[C]{$\mathcal{F}_\ell$}{The set of $2$-cells of the universal cover $\widetilde{X}_\ell$ of the presentation $2$-complex $X_\ell$} the set of $2$-cells of $\widetilde{X}_\ell$. Choose one lift $F^0_{j}$
 \mynomenclature[C]{$\mathcal{F}_\ell^0$}{Fundamental domain for the action $G_\ell\curvearrowright \mathcal{F}_\ell$, containing one lift of the $2$-cell attached to the relator $r_j$ in $X_\ell$ for each $1\leq j\leq m$} 
 of the $2$-cell attached by $r_{j}$ to $X_{\ell}$, for each $1\leq j\leq m$, and put $\mathcal{F}_\ell^0 \coloneqq \{F^0_{1},\ldots,F^0_{m}\}$. Every $F\in\mathcal{F}_\ell$ belongs to a unique $G_\ell$-orbit represented by one of $F_1^0,\ldots,F_m^0$. We will call this the \emph{type} of $F$ and denote it by $\mathrm{type}(F)\in\{1,\ldots,m\}$.
 \mynomenclature[C]{$\mathrm{type}(F)$}{The number $j\in\{1,\dots,m\}$ such that $F\in \widetilde{X}_\ell$ is the lift of the $2$-cell attached to the relator $r_j$ in $X_\ell$}
	Let $u_F\in C_2^{(2)}(\widetilde X_\ell)$ be the unit cellular basis vector
	corresponding to $F$,
 \mynomenclature[C]{$u_F$}{Unit cellular basis vector in $C_2^{(2)}(\widetilde X_\ell)$ corresponding to $F\in \mathcal{F}_\ell$}
 and define
	\begin{equation}\label{eq: bF}
	 	b_F\coloneqq A_\ell u_F\in C_1^{(2)}(\widetilde X_\ell),
	\end{equation} \mynomenclature[C]{$b_F$}{The signed cellular boundary of $F\in \mathcal{F}_\ell$, i.e., $b_F=A_\ell u_F$}
 and
 \begin{equation}\label{eq: h ell}
 h_\ell(F,H)\coloneqq \langle b_F,b_H\rangle \ \ \ \text{ for each } F,H\in\mathcal{F}_\ell.
 \end{equation}
 
 \mynomenclature[C]{$h_\ell(F,H)$}{The value $\langle b_F,b_H\rangle=\langle u_F,\Delta_{2,\ell} u_H\rangle$}
	Note that we have for all
	$F,H\in\mathcal{F}_\ell$ the equality $\langle u_F,\Delta_{2,\ell} u_H\rangle=h_\ell(F,H)$.

	\begin{lem}
		For every $F\in\mathcal{F}_\ell$, the vector $b_F$ defined in Equation \eqref{eq: bF} is the signed cellular boundary of $F$.
	\end{lem}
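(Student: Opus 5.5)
The plan is to unwind the identifications of Subsection \ref{subsection: preliminaries homology} and compare $A_\ell u_F$ with the cellular boundary map of $\widetilde X_\ell$ in the cellular bases. The key facts are the following.

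First, $C_p^{(2)}(\widetilde X_\ell)=\ell^2(G_\ell)\otimes_{\Z G_\ell}C_p(\widetilde X_\ell;\Z)$. Second, the chosen lifts of cells give the orthonormal basis $\{\delta_g\otimes e\}$. Third, the completed boundary $\partial_2^{(2)}=\mathrm{id}\otimes\partial_2$ restricts, on finitely supported chains, to the integral cellular boundary.

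Since $F\in\mathcal{F}_\ell$ is a $2$-cell of $\widetilde X_\ell$, we have $F=gF^0_j$ for a unique $g\in G_\ell$ and $j=\mathrm{type}(F)$. Freeness of the deck action makes $g$ unique, and $u_F$ corresponds to $\delta_g\otimes e_j$, i.e., $u_F=g\cdot u_{F^0_j}$. Both $A_\ell$ and the cellular boundary are $G_\ell$-equivariant, and $b_{gF}=g\,b_F$ for signed boundaries, since deck transformations preserve the labelling and orientation of lifted cells. It therefore suffices to treat $F=F^0_j$.

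For $F^0_j$, choose the lift so that its basepoint on the contour is the lifted vertex $e_{G_\ell}$. Read $r_j=s_{j,0}\cdots s_{j,\ell-1}$ along the boundary. Because the boundary $2$-cell is attached along $r_j$, the $p$-th edge of $\partial F^0_j$ starts at the vertex $q_\ell(s_{j,0}\cdots s_{j,p-1})$. If $s_{j,p}=x_i$, it is the lift of $x_i$ at that vertex, appearing with sign $+1$. If $s_{j,p}=x_i^{-1}$, it is the lift of $x_i$ starting at $q_\ell(s_{j,0}\cdots s_{j,p})$, traversed backwards with sign $-1$.

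Summing these terms gives exactly $\sum_i q_\ell(\partial r_j/\partial x_i)\otimes e_{x_i}$. This is an induction on word length using the Fox rule $\partial(uv)/\partial x_i=\partial u/\partial x_i+u\,\partial v/\partial x_i$ together with $\partial x_i^{-1}/\partial x_i=-x_i^{-1}$, which is the standard cellular interpretation of the Fox matrix \cite[Chapter II, Section 2]{Brown1994}. That sum is the $j$-th column of $A_\ell$, i.e., $A_\ell(\delta_e\otimes e_j)$.

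The main obstacle is bookkeeping of conventions: left versus right actions. The paper has entries act through the right $\Z G_\ell$-action, so I would check that a coefficient $q_\ell(u)$ in the $j$-th column indeed sends $\delta_e\otimes e_j$ to $\delta_{q_\ell(u)}$ in the $x_i$-slot. I would also check the sign and basepoint convention for inverse letters against the Fox derivative. Once these conventions are fixed consistently, the identity is formal.

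Finally, note that the signed cellular boundary counts each edge with multiplicity. On $\mathcal{G}_\ell$ the contour is embedded, so no cancellation occurs, but the lemma itself holds for any presentation.
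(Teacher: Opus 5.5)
Your proposal is correct and follows essentially the same route as the paper's proof. You reduce to the representative $F^0_j$ by $G_\ell$-equivariance, expand $\partial r_j/\partial x_i=\sum_{\{p:s_p=x_i\}}w_p-\sum_{\{p:s_p=x_i^{-1}\}}w_{p+1}$ by the Fox product rule, and match each term with the boundary edge of $F^0_j$ at position $p$ together with its orientation sign.
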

	\begin{proof}
		Let $F\in\mathcal{F}_\ell$ and denote $j=\mathrm{type}(F)$. By $G_\ell$-equivariance, it suffices to prove the statement for the representative $F^0_{j}$. 
		
		Let us write $r_{j}=s_0s_1\cdots s_{\ell-1}$ as a word of length $\ell$ in $\{x_1,\ldots, x_n\}^{\pm 1}$. Denote $w_0=1$ and $w_p=s_0s_1\cdots s_{p-1}$ for each $1\le p\le \ell$. Repeated use of the Fox product rule gives the equality
		\[
		\frac{\partial r_{j}}{\partial x_i} =\sum_{\{p:s_p=x_i\}}w_p -\sum_{\{p:s_p=x_i^{-1}\}}w_{p+1};
		\]
		see also \cite[Section 2, Equation (2.8)]{Fox1953}. The term $w_p$ is the
		initial vertex of the positively traversed $x_i$-edge at position $p$.
		If $s_p=x_i^{-1}$, the term $-w_{p+1}$ represents the same geometric edge
		with the opposite orientation. Hence the $j$-th Fox column is exactly the signed cellular boundary of $F^0_{j}$. Equivariance proves the same assertion for every lifted face $F$.
	\end{proof}
	
	\begin{lem} \label{lem: trace expansion}
		For each $\ell\ge 1$, let $\Delta_{2,\ell}$ be the degree-$2$ Laplacian of the cellular $\ell^2$-chain complex of the universal cover $\widetilde{X}_\ell$ of the presentation complex of $G_{\ell}$. Denote by $h_{\ell}:\mathcal{F}_{\ell}\times \mathcal{F}_{\ell}\to \R$ be the map defined in Equation \eqref{eq: h ell}. Then, for every $k\ge 1$, we have
		\[
		\mathrm{Tr}_m\left(\Delta_{2,\ell}^k\right)=\sum_{F_0\in\mathcal{F}_\ell^0}
		\ \sum_{F_1,\ldots,F_{k-1}\in\mathcal{F}_\ell}
		\prod_{i=0}^{k-1}h_\ell(F_i,F_{i+1}),
		\]
		with the convention $F_k\coloneqq F_0$. For each fixed $F_0$ in the first sum, only finitely many tuples in the second sum have a non-zero term.
	\end{lem}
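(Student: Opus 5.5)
The plan is to expand the matrix trace in the orthonormal cellular basis $\{u_F\}_{F\in\mathcal{F}_\ell}$ of $C_2^{(2)}(\widetilde X_\ell)$, and then insert resolutions of the identity between the factors of $\Delta_{2,\ell}^k$. The argument has three steps.

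\textbf{Step 1: the trace as a sum over the fundamental domain.} Under the identification $C_2^{(2)}(\widetilde X_\ell)\cong\ell^2(G_\ell)^m$, the vector $\delta_e\otimes e_j$ is $u_{F^0_j}$, for the chosen lifts. The definition of $\mathrm{Tr}_m$ therefore gives
\[
\mathrm{Tr}_m(T)=\sum_{F_0\in\mathcal{F}^0_\ell}\langle u_{F_0},Tu_{F_0}\rangle
\]
for every $T\in M_m(L(G_\ell))$. We apply this with $T=\Delta_{2,\ell}^k$.

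\textbf{Step 2: inserting the identity.} Since $\{u_F\}_{F\in\mathcal{F}_\ell}$ is an orthonormal basis, every $v$ satisfies $v=\sum_F\langle u_F,v\rangle u_F$, with convergence in norm. Applying this to $\Delta_{2,\ell}u_{F_k}$ gives
\[
\Delta_{2,\ell} u_H=\sum_F\langle u_F,\Delta_{2,\ell}u_H\rangle u_F=\sum_F h_\ell(F,H)\,u_F.
\]
Iterating this $k-1$ times, we get
\[
\langle u_{F_0},\Delta_{2,\ell}^k u_{F_0}\rangle=\sum_{F_1,\ldots,F_{k-1}}\prod_{i=0}^{k-1} h_\ell(F_i,F_{i+1}),
\]
with $F_k=F_0$. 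Here we use that $h_\ell$ is symmetric and real, because $\Delta_{2,\ell}$ is self-adjoint and the boundary vectors $b_F$ have integer coefficients in the real cellular basis.

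\textbf{Step 3: finiteness, which is also what justifies Step 2.} By the preceding lemma, $b_F$ is the signed cellular boundary of $F$. It is supported on the at most $\ell$ edges of $\partial F$, with integer coefficients. Hence $h_\ell(F,H)=\langle b_F,b_H\rangle$ vanishes unless $\partial F$ and $\partial H$ share an edge.

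The complex $\widetilde X_\ell$ is locally finite: each vertex meets finitely many edges, and each edge lies in at most $\ell\cdot m$ lifted faces, namely one for each relator position carrying the corresponding letter. So for fixed $H$ only finitely many $F$ satisfy $h_\ell(F,H)\neq0$. By induction along the chain $F_0,F_1,\ldots$, each $F_{i+1}$ must be $h_\ell$-adjacent to $F_i$. This leaves only finitely many tuples with nonzero product, which proves the last claim of the lemma.

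The same finiteness makes the iterated expansion in Step 2 a finite sum at every stage. The vector $\Delta_{2,\ell}u_H$ is finitely supported, so no convergence or reordering issue arises.

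The only point needing care is the identification of $\delta_e\otimes e_j$ with $u_{F^0_j}$ and the consistency of the orientation conventions. I do not expect a genuine obstacle: the matrix coefficients $\langle u_F,\Delta_{2,\ell}u_H\rangle=\langle A_\ell u_F,A_\ell u_H\rangle$ are orientation-consistent by construction, and the rest is bookkeeping.
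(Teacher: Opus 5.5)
Your proposal is correct and follows the paper's proof step for step. You identify $\delta_e\otimes e_j$ with $u_{F^0_j}$, expand $\langle u_{F_0},\Delta_{2,\ell}^k u_{F_0}\rangle$ in the orthonormal cellular basis, and obtain finiteness from the finite support of each $b_F$ together with the fact that each lifted edge lies on only finitely many lifted faces. Your explicit bound of $\ell m$ faces per edge, and your remark that this finiteness also justifies the expansion, are small but welcome additions. The appeal to symmetry of $h_\ell$ is harmless but not needed: inserting the identity between consecutive factors already produces $h_\ell(F_i,F_{i+1})$ in the right order.
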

	
	\begin{proof}
		Under the identification
		$C_2^{(2)}(\widetilde X_\ell)\cong\ell^2(G_\ell)^m$, the vectors
		$u_{F^0_{j}}$, $1\le j\le m$ are exactly the vectors $\delta_e\otimes e_j$ occurring in the definition of the matrix trace. Therefore, we have
		\[\mathrm{Tr}_m(\Delta_{2,\ell}^k)
		=\sum_{F_0\in\mathcal{F}_\ell^0} \langle u_{F_0},\Delta_{2,\ell}^ku_{F_0}\rangle. \]

		Next, for each fixed $F_0$ we can write
		\begin{align*}
			\langle u_{F_0},\Delta_{2,\ell}^ku_{F_0}\rangle
			&={}
			\sum_{F_1,\ldots,F_{k-1}\in\mathcal{F}_\ell}
			\langle u_{F_0},\Delta_{2,\ell} u_{F_1}\rangle
			\langle u_{F_1},\Delta_{2,\ell} u_{F_2}\rangle\cdots \\
			&\hspace{35mm}\cdots
			\langle u_{F_{k-2}},\Delta_{2,\ell} u_{F_{k-1}}\rangle
			\langle u_{F_{k-1}},\Delta_{2,\ell} u_{F_0}\rangle.
		\end{align*}
		The last two equations prove the first statement of the lemma. 
		
		Finally, let us explain the second sentence in the statement of the lemma. Every $b_F$ is supported on finitely many lifted $1$-cells. Moreover, each
		lifted $1$-cell occurs in the boundaries of only finitely many lifted $2$-cells, because $X_\ell$ has finitely many attaching maps and each has finite length. Thus, for each fixed $F$, only finitely many $2$-cells $H$ satisfy	$h_\ell(F,H)\neq0$. Starting at the fixed face $F_0$, there are therefore only finitely many non-zero choices at each of the $k-1$ steps.
	\end{proof}

	\subsection{Partitions and reindexation of the trace expansion of the Laplacian}\label{subsection: partitions and reindexation}
	
	Fix $k\geq1$, and let us regard the index set $\{0,1,\ldots,k-1\}$ cyclically, so that $k$ is identified with $0$. Let $\mathcal{P}(k)$ be the set of all set partitions of this index set.
 \mynomenclature[P]{$\mathcal{P}(k)$}{Collection of partitions of the set $\{0,\dots,k-1\}$}
 We regard each $\pi\in\mathcal{P}(k)$ as an
	equivalence relation. For $i\in\{0,\ldots,k-1\}$, write $[i]_\pi\coloneqq \{j\in\{0,\ldots,k-1\}:j\sim_\pi i\}$ for the $\pi$-equivalence class of $i$.
 \mynomenclature[P]{$[i]_\pi$}{The $\pi$-equivalence class of an element $i\in \{0,\dots,k-1\}$, given $\pi\in \mathcal{P}(k)$}
 We also regard $\pi$ itself as the collection of its equivalence classes, so that notation such as $C\in\pi$ means that $C$ is one of these classes.

	Given a tuple $(F_0,\ldots,F_{k-1})$ of faces, its \emph{equality partition} is the partition $\pi\in \mathcal{P}(k)$ defined by $i\sim_\pi j $ if and only if $F_i=F_j$, for each $0\le i,j<k$. Thus, the equivalence classes of $\pi$ are in bijection with the distinct lifted faces appearing in the tuple. Each class $C\in\pi$ has a relator type ${\typemap}(C)\coloneqq \mathrm{type}(F_i)$ for $i\in C$.

	\begin{defn}\label{defn: omega ell pi j}
		Fix a partition $\pi\in\mathcal{P}(k)$ and a type map ${\typemap}:\pi\to\{1,\ldots,m\}$.
 \mynomenclature[P]{${\typemap}$}{A map ${\typemap}\colon \pi\to \{1,\dots,m\}$}
 Let us denote by $\Omega_\ell(\pi,{\typemap})$
 \mynomenclature[P]{$\Omega_\ell(\pi,{\typemap})$}{Tuples $\mathbf{F}=(F_C)_{C\in \pi}$ of elements from $\mathcal{F}_\ell$ such that (i) $F_C=F_D$ if and only if $C=D$, (ii) ${\typemap}(C)=\mathrm{type}(F_C)$, (iii) $F_{[0]_\pi}=F_{{\typemap}([0]_\pi)}^0$, and (iv) $h_\ell(F_{[i]_\pi},F_{[i+1]_\pi})\neq 0$}
 the set of families of lifted faces $\mathbf{F}=(F_C)_{C\in\pi}\in\left(\mathcal{F}_\ell\right)^{\pi}$ that satisfy the following three conditions.
		\begin{enumerate}
			\item $\mathrm{type}(F_C)={\typemap}(C)$ for every class $C\in\pi$.
			\item $F_C\neq F_D$ whenever $C,D\in\pi$ and $C\neq D$.
			\item $F_{[0]_\pi}=F^0_{{\typemap}([0]_\pi)}$, and
			$h_\ell(F_{[i]_\pi},F_{[i+1]_\pi})\neq0$ for every $i\in\{0,\ldots,k-1\}$.
		\end{enumerate}
	\end{defn}
	The first two conditions ensure that the equality pattern is exactly $\pi$ and that the prescribed relator types are respected. The third condition fixes the representative required by the von Neumann trace and omits families whose product is zero. Define
	\begin{equation}\label{eq: def of Z}
		Z_\ell(\pi,{\typemap}) \coloneqq \sum_{\mathbf{F}\in\Omega_\ell(\pi,{\typemap})} \prod_{i=0}^{k-1} h_\ell(F_{[i]_\pi},F_{[i+1]_\pi}).
	\end{equation}
 \mynomenclature[P]{$Z_\ell(\pi,{\typemap})$}{The sum over all $\mathbf{F}=(F_C)_{C\in \pi}$ of the (non-zero) product $\prod_{i=0}^{k-1}h_\ell(F_{[i]_\pi},F_{[i+1]_\pi})$}
	Note that the sum is finite thanks to Lemma \ref{lem: trace expansion}.

	\begin{lem}\label{lem: partition trace}
	Let $\ell\ge 1$ and let $\Delta_{2,\ell}$ be the degree-$2$ Laplacian of the cellular $\ell^2$-chain complex of the universal cover $\widetilde{X}_\ell$ of the presentation complex of $G_{\ell}$. Then for every $k\geq1$ we have
		\begin{equation*}
			\mathrm{Tr}_m \left(\Delta_{2,\ell}^{\,k}\right) =\sum_{\pi\in\mathcal{P}(k)} \ \sum_{{\typemap}:\pi\to\{1,\ldots,m\}} Z_\ell(\pi,{\typemap}),
		\end{equation*}
 where for each $\pi\in \mathcal{P}(k)$ and $\typemap:\pi\to \{1,\ldots,m\}$ the value $Z_\ell(\pi,\typemap)$ is defined as in Equation \eqref{eq: def of Z}.
	\end{lem}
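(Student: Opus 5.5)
The plan is to start from Lemma \ref{lem: trace expansion} and regroup its terms according to the combinatorial data they carry. Lemma \ref{lem: trace expansion} writes $\mathrm{Tr}_m(\Delta_{2,\ell}^k)$ as a sum over tuples $(F_0,\ldots,F_{k-1})$ with $F_0\in\mathcal{F}_\ell^0$ and $F_1,\ldots,F_{k-1}\in\mathcal{F}_\ell$. For each fixed $F_0$ only finitely many tuples contribute a non-zero term. Hence the whole sum is a finite sum of non-zero terms plus zeros, and it may be rearranged freely.

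First, discard every tuple whose product $\prod_{i}h_\ell(F_i,F_{i+1})$ vanishes. Each remaining tuple is assigned two pieces of data:
\begin{itemize}
\item its equality partition $\pi\in\mathcal{P}(k)$;
\item the type map $\typemap:\pi\to\{1,\ldots,m\}$, given by $\typemap(C)=\mathrm{type}(F_i)$ for $i\in C$.
\end{itemize}
The type map is well defined because all indices in a class $C$ carry the same face. This partitions the set of surviving tuples into finitely many pieces, indexed by the pairs $(\pi,\typemap)$.

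Next, for fixed $(\pi,\typemap)$, I would exhibit a bijection between the surviving tuples with these data and $\Omega_\ell(\pi,\typemap)$. A tuple is sent to $\mathbf{F}=(F_C)_{C\in\pi}$ with $F_C\coloneqq F_i$ for any $i\in C$. Conversely, a family $\mathbf{F}$ is sent to the tuple with $F_i\coloneqq F_{[i]_\pi}$.

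I then check that the conditions of Definition \ref{defn: omega ell pi j} match exactly.
\begin{itemize}
\item Distinctness of the $F_C$ for distinct classes is equivalent to the equality partition being exactly $\pi$, and not a coarsening of it.
\item The type condition is equivalent to the type map being $\typemap$.
\item The condition $F_0\in\mathcal{F}_\ell^0$ with $\mathrm{type}(F_0)=\typemap([0]_\pi)$ forces $F_0=F^0_{\typemap([0]_\pi)}$, since $\mathcal{F}^0_\ell$ contains exactly one face of each type. Conversely, the face $F^0_{\typemap([0]_\pi)}$ lies in $\mathcal{F}_\ell^0$.
\item Non-vanishing of the product is equivalent to $h_\ell(F_{[i]_\pi},F_{[i+1]_\pi})\neq0$ for all $i$, with the cyclic convention $F_k=F_0$.
\end{itemize}
Under this bijection the products agree term by term, so summing over each piece yields $Z_\ell(\pi,\typemap)$. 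Summing over all pairs $(\pi,\typemap)$ then gives the claimed identity.

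There is no real obstacle here. The only point needing care is the bookkeeping with the fixed initial face and with the cyclic index convention. Because $\Omega_\ell(\pi,\typemap)$ already excludes families with a zero factor, including or omitting zero terms does not change either side of the identity.
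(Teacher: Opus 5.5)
Your proposal is correct and matches the paper's proof. Like the paper, you group the non-zero terms of Lemma \ref{lem: trace expansion} by equality partition and type map, and you check that $(F_0,\ldots,F_{k-1})\mapsto(F_C)_{C\in\pi}$ is a bijection onto $\Omega_\ell(\pi,\typemap)$ that preserves the summands. Your remark that $\mathcal{F}_\ell^0$ contains exactly one face of each type, so that $F_0=F^0_{\typemap([0]_\pi)}$, spells out a step the paper states only briefly.
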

	
	\begin{proof} Thanks to Lemma \ref{lem: trace expansion}, we have that \[	\mathrm{Tr}_m\left(\Delta_{2,\ell}^k\right)=\sum_{F_0\in\mathcal{F}_\ell^0}
		\ \sum_{F_1,\ldots,F_{k-1}\in\mathcal{F}_\ell}
		\prod_{i=0}^{k-1}h_\ell(F_i,F_{i+1}).\]
		Given a tuple $(F_0,\ldots,F_{k-1})$ contributing a non-zero summand, consider its equality partition $\pi\in \mathcal{P}(k)$ as described in the paragraph preceding the lemma, as well as the type map ${\typemap}:\pi\to \{1,\ldots, m\}$. For each class $C\in\pi$, let $F_C$ be the common value of $F_i$ for $i\in C$. Different classes give different lifted faces, $F_{[0]_\pi}=F_0$ is the prescribed orbit representative, and every overlap factor is non-zero. Hence $(F_C)_{C\in\pi}\in\Omega_\ell(\pi,{\typemap})$.
		
		Conversely, suppose that $\pi$, ${\typemap}$, and $(F_C)_{C\in\pi}\in\Omega_\ell(\pi,{\typemap})$ are given. Define $F_i\coloneqq F_{[i]_\pi}$ for each $0\leq i\leq k-1$. The pairwise-distinctness condition in the definition of $\Omega_\ell$ ensures that the equality partition of the resulting tuple coincides with $\pi$. The representative condition ensures that $F_0\in\mathcal{F}_\ell^0$, and the corresponding trace summand is exactly the product above.
		
		We conclude that the non-zero face tuples that appear in the preceding sum decompose into a disjoint union indexed by $(\pi,{\typemap})$. This proves the statement of the lemma.
	\end{proof}
	
	\subsection{The off-diagonal-connections graph}
	\begin{defn}\label{defn: graph off diagonal connections}
 	Let $k\ge 1$ and let $\pi\in \mathcal{P}(k)$ be a partition of $\{0,\ldots,k-1\}$. Recall that we regard the latter indices cyclically as elements of $\Z/k\Z$.
	 \begin{itemize}
	 \item We define the set $\mathcal{O}_\pi$ of \emph{off-diagonal indices} of $\pi$ as \mynomenclature[P]{$\mathcal{O}_\pi$}{The subset of off-diagonal indices in $\{0,\dots,k-1\}$}
 \[\mathcal{O}_\pi\coloneqq \{ i\in \{0,\ldots,k-1\}: [i]_{\pi}\neq[i+1]_{\pi}\}.\]
 \item We denote by $\mathrm{diag}(\pi)\coloneqq k-|\mathcal{O}_{\pi}|$ the amount of diagonal indices.
 \mynomenclature[P]{$\mathrm{diag}(\pi)$}{The number of diagonal indices in $\{0,\dots,k-1\}$}
\item We define the \emph{off-diagonal connections graph} $\Gamma_{\pi}=(V_\pi,E_\pi)$
 \mynomenclature[P]{$\Gamma_\pi$}{The off-diagonal connections graph, where $V(\Gamma_\pi)=\pi$, and $\{[i]_\pi,[i+1]_\pi\}$ belongs to $E(\Gamma_\pi)$ for every $i\in \mathcal{O}_\pi$}
 as the simple undirected graph with vertex set $V_\pi\coloneqq\pi$, i.e., the equivalence classes of $\pi$, and where we say that for each $i\in \mathcal{O}_{\pi}$ the edge $\{[i]_\pi,[i+1]_\pi\}$ belongs to $E_\pi$. 
 
 Let us also define for each $e\in E_{\pi}$ its \emph{multiplicity} $\mu_\pi(e)\coloneqq \left|\left\{i\in \mathcal{O}_\pi: \{[i]_{\pi},[i+1]_\pi \}=e\right\} \right|$,
 \mynomenclature[P]{$\mu_\pi(e)$}{The multiplicity $\left|\left\{i\in \mathcal{O}_\pi: \{[i]_{\pi},[i+1]_\pi \}=e\right\} \right|$ of an edge $e\in E(\Gamma_\pi)$}
 so that $|\mathcal{O}_{\pi}|=\sum_{e\in E_{\pi}} \mu_\pi(e)$.
	 \end{itemize}

	\end{defn}
	
	\begin{figure}[ht]
		\centering

\tikzset{every picture/.style={line width=0.75pt}} 

\begin{tikzpicture}[x=0.75pt,y=0.75pt,yscale=-1,xscale=1]

\draw [color={rgb, 255:red, 0; green, 0; blue, 0 } ,draw opacity=0.8 ] (419.52,189.92) -- (518.38,90.94) ;
\draw [color={rgb, 255:red, 0; green, 0; blue, 0 } ,draw opacity=0.8 ] (416.53,188.89) -- (416.71,90.9) ;
\draw [fill={rgb, 255:red, 0; green, 0; blue, 0 } ,fill opacity=1 ] (231,29.72) .. controls (231,28.53) and (231.97,27.56) .. (233.17,27.56) .. controls (234.36,27.56) and (235.33,28.53) .. (235.33,29.72) .. controls (235.33,30.92) and (234.36,31.89) .. (233.17,31.89) .. controls (231.97,31.89) and (231,30.92) .. (231,29.72) -- cycle ;
\draw [fill={rgb, 255:red, 0; green, 0; blue, 0 } ,fill opacity=1 ] (278,63.72) .. controls (278,62.53) and (278.97,61.56) .. (280.17,61.56) .. controls (281.36,61.56) and (282.33,62.53) .. (282.33,63.72) .. controls (282.33,64.92) and (281.36,65.89) .. (280.17,65.89) .. controls (278.97,65.89) and (278,64.92) .. (278,63.72) -- cycle ;
\draw [fill={rgb, 255:red, 0; green, 0; blue, 0 } ,fill opacity=1 ] (297,120.72) .. controls (297,119.53) and (297.97,118.56) .. (299.17,118.56) .. controls (300.36,118.56) and (301.33,119.53) .. (301.33,120.72) .. controls (301.33,121.92) and (300.36,122.89) .. (299.17,122.89) .. controls (297.97,122.89) and (297,121.92) .. (297,120.72) -- cycle ;
\draw [fill={rgb, 255:red, 0; green, 0; blue, 0 } ,fill opacity=1 ] (279,176.72) .. controls (279,175.53) and (279.97,174.56) .. (281.17,174.56) .. controls (282.36,174.56) and (283.33,175.53) .. (283.33,176.72) .. controls (283.33,177.92) and (282.36,178.89) .. (281.17,178.89) .. controls (279.97,178.89) and (279,177.92) .. (279,176.72) -- cycle ;
\draw [fill={rgb, 255:red, 0; green, 0; blue, 0 } ,fill opacity=1 ] (231,211.72) .. controls (231,210.53) and (231.97,209.56) .. (233.17,209.56) .. controls (234.36,209.56) and (235.33,210.53) .. (235.33,211.72) .. controls (235.33,212.92) and (234.36,213.89) .. (233.17,213.89) .. controls (231.97,213.89) and (231,212.92) .. (231,211.72) -- cycle ;
\draw [fill={rgb, 255:red, 0; green, 0; blue, 0 } ,fill opacity=1 ] (171,211.72) .. controls (171,210.53) and (171.97,209.56) .. (173.17,209.56) .. controls (174.36,209.56) and (175.33,210.53) .. (175.33,211.72) .. controls (175.33,212.92) and (174.36,213.89) .. (173.17,213.89) .. controls (171.97,213.89) and (171,212.92) .. (171,211.72) -- cycle ;
\draw [fill={rgb, 255:red, 0; green, 0; blue, 0 } ,fill opacity=1 ] (124,177.72) .. controls (124,176.53) and (124.97,175.56) .. (126.17,175.56) .. controls (127.36,175.56) and (128.33,176.53) .. (128.33,177.72) .. controls (128.33,178.92) and (127.36,179.89) .. (126.17,179.89) .. controls (124.97,179.89) and (124,178.92) .. (124,177.72) -- cycle ;
\draw [fill={rgb, 255:red, 0; green, 0; blue, 0 } ,fill opacity=1 ] (105,120.72) .. controls (105,119.53) and (105.97,118.56) .. (107.17,118.56) .. controls (108.36,118.56) and (109.33,119.53) .. (109.33,120.72) .. controls (109.33,121.92) and (108.36,122.89) .. (107.17,122.89) .. controls (105.97,122.89) and (105,121.92) .. (105,120.72) -- cycle ;
\draw [fill={rgb, 255:red, 0; green, 0; blue, 0 } ,fill opacity=1 ] (124,64.72) .. controls (124,63.53) and (124.97,62.56) .. (126.17,62.56) .. controls (127.36,62.56) and (128.33,63.53) .. (128.33,64.72) .. controls (128.33,65.92) and (127.36,66.89) .. (126.17,66.89) .. controls (124.97,66.89) and (124,65.92) .. (124,64.72) -- cycle ;
\draw [fill={rgb, 255:red, 0; green, 0; blue, 0 } ,fill opacity=1 ] (171,29.72) .. controls (171,28.53) and (171.97,27.56) .. (173.17,27.56) .. controls (174.36,27.56) and (175.33,28.53) .. (175.33,29.72) .. controls (175.33,30.92) and (174.36,31.89) .. (173.17,31.89) .. controls (171.97,31.89) and (171,30.92) .. (171,29.72) -- cycle ;
\draw [color={rgb, 255:red, 255; green, 0; blue, 0 } ,draw opacity=1 ] (118.33,49.33) .. controls (134.33,33.33) and (143.33,12.33) .. (186.33,21.33) .. controls (229.33,30.33) and (166.33,58.33) .. (146.33,86.33) .. controls (126.33,114.33) and (137.33,158.67) .. (106.33,139.33) .. controls (75.33,120) and (102.33,65.33) .. (118.33,49.33) -- cycle ;
\draw [color={rgb, 255:red, 140; green, 100; blue, 40 } ,draw opacity=1 ] (276.33,133.33) .. controls (286.33,92.33) and (304.33,96.33) .. (313.33,111.33) .. controls (322.33,126.33) and (309.33,130.33) .. (299.33,159.33) .. controls (289.33,188.33) and (284.33,204.33) .. (267.33,190.33) .. controls (250.33,176.33) and (266.33,174.33) .. (276.33,133.33) -- cycle ;
\draw [color={rgb, 255:red, 0; green, 0; blue, 255 } ,draw opacity=1 ] (175.33,97) .. controls (207.33,6.33) and (238.67,3) .. (252.33,26) .. controls (266,49) and (227.33,73) .. (216.33,128) .. controls (205.33,183) and (188.33,231.33) .. (168.33,225) .. controls (148.33,218.67) and (143.33,187.67) .. (175.33,97) -- cycle ;
\draw [color={rgb, 255:red, 0; green, 200; blue, 0 } ,draw opacity=1 ] (200.33,87.33) .. controls (262.33,33.33) and (335.33,30.33) .. (273.33,104.33) .. controls (211.33,178.33) and (273.33,249.33) .. (235.33,230.33) .. controls (197.33,211.33) and (192,172.72) .. (127.33,193.33) .. controls (62.67,213.94) and (138.33,141.33) .. (200.33,87.33) -- cycle ;
\draw [color={rgb, 255:red, 255; green, 0; blue, 0 } ,draw opacity=1 ][fill={rgb, 255:red, 255; green, 0; blue, 0 } ,fill opacity=1 ] (416.47,195.06) .. controls (415.27,195.04) and (414.31,194.06) .. (414.32,192.87) .. controls (414.33,191.67) and (415.31,190.71) .. (416.51,190.72) .. controls (417.71,190.73) and (418.67,191.71) .. (418.65,192.91) .. controls (418.64,194.11) and (417.66,195.07) .. (416.47,195.06) -- cycle ;
\draw [color={rgb, 255:red, 0; green, 0; blue, 255 } ,draw opacity=1 ][fill={rgb, 255:red, 0; green, 0; blue, 255 } ,fill opacity=1 ] (416.56,89.06) .. controls (415.36,89.05) and (414.4,88.07) .. (414.41,86.87) .. controls (414.43,85.68) and (415.41,84.72) .. (416.6,84.73) .. controls (417.8,84.74) and (418.76,85.72) .. (418.75,86.92) .. controls (418.73,88.11) and (417.75,89.07) .. (416.56,89.06) -- cycle ;
\draw [color={rgb, 255:red, 0; green, 200; blue, 0 } ,draw opacity=1 ][fill={rgb, 255:red, 126; green, 211; blue, 33 } ,fill opacity=1 ] (521.56,89.14) .. controls (520.37,89.13) and (519.41,88.15) .. (519.42,86.95) .. controls (519.43,85.76) and (520.41,84.8) .. (521.61,84.81) .. controls (522.8,84.82) and (523.76,85.8) .. (523.75,87) .. controls (523.74,88.19) and (522.76,89.15) .. (521.56,89.14) -- cycle ;
\draw [color={rgb, 255:red, 140; green, 100; blue, 40 } ,draw opacity=1 ][fill={rgb, 255:red, 139; green, 87; blue, 42 } ,fill opacity=1 ] (521.46,196.14) .. controls (520.26,196.12) and (519.3,195.14) .. (519.32,193.95) .. controls (519.33,192.75) and (520.31,191.79) .. (521.51,191.8) .. controls (522.7,191.82) and (523.66,192.8) .. (523.65,193.99) .. controls (523.64,195.19) and (522.66,196.15) .. (521.46,196.14) -- cycle ;
\draw [color={rgb, 255:red, 0; green, 0; blue, 0 } ,draw opacity=0.8 ] (420.58,86.94) -- (517.58,86.93) ;
\draw [color={rgb, 255:red, 0; green, 0; blue, 0 } ,draw opacity=0.8 ] (521.52,189.97) -- (521.7,91.98) ;

\draw (360.84,129.41) node [anchor=north west][inner sep=0.75pt] [align=left] {$\displaystyle \mu _{\pi } =1$};
\draw (456.48,149.19) node [anchor=north west][inner sep=0.75pt] [align=left] {$\displaystyle \mu _{\pi } =1$};
\draw (526.84,129.77) node [anchor=north west][inner sep=0.75pt] [align=left] {$\displaystyle \mu _{\pi } =2$};
\draw (445.7,60.08) node [anchor=north west][inner sep=0.75pt] [align=left] {$\displaystyle \mu _{\pi } =3$};
\draw (112,115) node [anchor=north west][inner sep=0.75pt] [align=left] {$\displaystyle 0$};
\draw (130,63) node [anchor=north west][inner sep=0.75pt] [align=left] {$\displaystyle 1$};
\draw (171,34) node [anchor=north west][inner sep=0.75pt] [align=left] {$\displaystyle 2$};
\draw (225,34) node [anchor=north west][inner sep=0.75pt] [align=left] {$\displaystyle 3$};
\draw (267,64) node [anchor=north west][inner sep=0.75pt] [align=left] {$\displaystyle 4$};
\draw (285,115) node [anchor=north west][inner sep=0.75pt] [align=left] {$\displaystyle 5$};
\draw (267,164) node [anchor=north west][inner sep=0.75pt] [align=left] {$\displaystyle 6$};
\draw (225,195) node [anchor=north west][inner sep=0.75pt] [align=left] {$\displaystyle 7$};
\draw (171,195) node [anchor=north west][inner sep=0.75pt] [align=left] {$\displaystyle 8$};
\draw (129,164) node [anchor=north west][inner sep=0.75pt] [align=left] {$\displaystyle 9$};

\end{tikzpicture}

		\caption{The partition $\pi=\{\{0,1,2\},\{3,8\},\{4,7,9\},\{5,6\}\}\in \mathcal{P}(10)$ is represented on the left. Its corresponding off-diagonal connections graph $\Gamma_\pi$ is represented on the right. Note that $\pi$ is crossing.}
		\label{fig:placeholder}
	\end{figure}

	Next, we recall some terminology to decompose a graph into its maximal $2$-connected subgraphs. We follow the notation of \cite[Section 3.1]{Diestel2017}. 
	\begin{defn}\label{def: blocks}
		Let $\Gamma$ be a finite connected simple graph.
		\begin{itemize}
			\item An edge $e$ is a \emph{bridge} if $\Gamma-e$ is disconnected.
			\item A vertex $v$ is a \emph{cut vertex} if $\Gamma-v$ is disconnected.
			\item A graph with at least three vertices is \emph{2-connected} if it is connected and has no cut vertex.
			\item A \emph{graph block} is either a bridge together with its two endpoints, or a maximal 2-connected subgraph.
		\end{itemize}
		We denote the family of graph blocks of $\Gamma$ by $\mathcal{B}(\Gamma)$.
 \mynomenclature[P]{$\mathcal{B}(\Gamma_\pi)$}{The family of $2$-connected components (also called graph blocks) of $\Gamma_\pi$}
	\end{defn} 
	Note that distinct graph blocks have disjoint edge sets and meet in at most one vertex, which is then a cut vertex. 	
	
	Let $E^+(\widetilde{X}_\ell)$ denote the set of oriented lifted $1$-cells corresponding to the basis of $C_1^{(2)}(\widetilde{X}_\ell)$
 \mynomenclature[C]{$E^+(\widetilde{X}_\ell)$}{The set of oriented lifted $1$-cells of $X_\ell$}
 chosen in Subsection \ref{subsection: preliminaries homology}, so that we have $C_1^{(2)}(\widetilde{X}_\ell) = \ell^2\left(E^+(\widetilde{X}_\ell)\right)$. For each $e\in E^+(\widetilde{X}_\ell)$, denote by $u_e$
 \mynomenclature[C]{$u_e$}{The unit vector in $C_1^{(2)}(\widetilde{X}_\ell)=\ell^2(E^+(\widetilde{X}_\ell))$ associated to $e\in E^+(\widetilde{X}_\ell)$}
 the corresponding cellular basis vector. Recall that, for every lifted face $F$, the vector $b_F=A_\ell u_F$ is the signed cellular boundary of $F$. For each $e\in E^+(\widetilde{X}_\ell)$ let us write $c_F(e)\coloneqq \langle b_F,u_e\rangle$.
 \mynomenclature[C]{$c_F(e)$}{The coefficient $\langle b_F,u_e\rangle$ of an edge $e\in E^+(\widetilde{X}_\ell)$ with respect to a lifted face $F\in \mathcal{F}_\ell$}
 On the event $\mathcal{G}_\ell$, Item \eqref{item: prob 2} of Proposition \ref{prop: lifted face geometry} implies that the
	boundary of $F$ traverses each geometric edge at most once. Consequently, we have
	\[
	c_F(e)=\begin{cases}
		1,
		&\text{if }\partial F \text{ traverses }e \text{ in its chosen orientation},\\
		-1,
		&\text{if } \partial F\text{ traverses }e \text{ in opposite orientation, and }\\
		0,
		&\text{if }e\text{ is not contained in }\partial F.
	\end{cases}
	\]
	In particular, $c_F(e)\in\{-1,0,1\}$ and $b_F= \sum_{e\in E^+(\widetilde{X}_\ell)}c_F(e)u_e$, where exactly $\ell$ terms are non-zero. Since the vectors $(u_e)_{e\in E^+(\widetilde{X}_\ell)}$ form an orthonormal basis of $\ell^2(E^+(\widetilde{X}_\ell))$, for any two lifted faces $F$ and $H$ we have
	\[
	h_\ell(F,H)=\langle b_F,b_H\rangle= \sum_{e\in E^+(\widetilde{X}_\ell)}c_F(e)c_H(e).
	\]
	Only edges contained in $\partial F \cap \partial H$ contribute to this sum.
	
	For each $\mathbf{F}=(F_C)_{C\in\pi}\in \Omega_{\ell}(\pi,{\typemap})$ and $i \in \mathcal{O}_\pi$, let us define
	\[
	\mathcal{E}_{i}(\mathbf{F})\coloneqq \left\{ e\in E^+(\widetilde{X}_\ell):e\subseteq \partial F_{[i]_\pi}\cap \partial F_{[i+1]_\pi} \right\},
	\]
 \mynomenclature[P]{$\mathcal{E}_{i}(\mathbf{F})$}{Edges in $E^+(\widetilde{X}_\ell)$ that belong to the intersection between $\partial F_{[i]_\pi}$ and $\partial F_{[i+1]_\pi}$}
	and denote $\mathcal{E}(\mathbf{F}) \coloneqq \prod_{i\in \mathcal{O}_\pi}\mathcal{E}_i(\mathbf{F})$.
 \mynomenclature[P]{$\mathcal{E}(\mathbf{F})$}{Tuples $(e_i)_{i\in \mathcal{O}_\pi}$, with $e_i\in \mathcal{E}_{i}(\mathbf{F})$ for each $i\in \mathcal{O}_\pi$}
 In other words, an element $\mathbf{e}\in \mathcal{E}(\mathbf{F})$ corresponds to a choice, for each $i\in \mathcal{O}_\pi$, of an edge $e_i\in \partial F_{[i]_{\pi}}\cap \partial F_{[i+1]_{\pi}}$.

	\begin{lem}\label{lemma: partition off diagonal}
		Let $\pi\in \mathcal{P}(k)$ and let ${\typemap}:\pi\to \{1,\ldots,m\}$. Let $Z_{\ell}(\pi,{\typemap})$ be defined as in Equation \eqref{eq: def of Z}. Then, on the event $\mathcal{G}_\ell$ of Proposition \ref{prop: lifted face geometry}, we have
		\[
		Z_{\ell}(\pi,{\typemap})=\ell^{\mathrm{diag}(\pi)}\sum_{\mathbf{F}\in \Omega_{\ell}(\pi,{\typemap})}\sum_{\mathbf{e}\in \mathcal{E}(\mathbf{F})} \prod_{i\in \mathcal{O}_\pi}c_{F_{[i]_\pi}}(e_i)c_{F_{[i+1]_\pi}}(e_i).
		\]
	\end{lem}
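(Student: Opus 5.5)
Starting from the definition in Equation \eqref{eq: def of Z}, I split the product over $i\in\{0,\ldots,k-1\}$ into diagonal and off-diagonal indices. For a diagonal index $i\notin\mathcal{O}_\pi$ we have $[i]_\pi=[i+1]_\pi$, so the factor is $h_\ell(F,F)=\langle b_F,b_F\rangle$ with $F=F_{[i]_\pi}$. On $\mathcal{G}_\ell$, Item \eqref{item: prob 2} of Proposition \ref{prop: lifted face geometry} says that $\partial F$ is an embedded cycle of length $\ell$. Hence $b_F=\sum_e c_F(e)u_e$ has exactly $\ell$ non-zero coefficients, each in $\{-1,1\}$, and therefore $h_\ell(F,F)=\sum_e c_F(e)^2=\ell$. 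This holds for every lifted face regardless of type. The diagonal factors thus contribute exactly $\ell^{\mathrm{diag}(\pi)}$, independently of $\mathbf{F}$, and this factor comes out of the sum over $\Omega_\ell(\pi,\typemap)$.

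For an off-diagonal index $i\in\mathcal{O}_\pi$, I use the expansion $h_\ell(F,H)=\sum_{e}c_F(e)c_H(e)$ from the discussion preceding the lemma. The summand vanishes unless $e$ lies in both $\partial F$ and $\partial H$, so the sum can be restricted to $e\in\mathcal{E}_i(\mathbf{F})$. This set is finite; on $\mathcal{G}_\ell$ it has at most $\rho_\ell$ elements. I then expand the product over $i\in\mathcal{O}_\pi$ of these finite sums by distributivity. This turns it into a single sum over tuples $\mathbf{e}=(e_i)_{i\in\mathcal{O}_\pi}\in\prod_{i}\mathcal{E}_i(\mathbf{F})=\mathcal{E}(\mathbf{F})$ of the product $\prod_i c_{F_{[i]_\pi}}(e_i)c_{F_{[i+1]_\pi}}(e_i)$. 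Substituting into the definition of $Z_\ell(\pi,\typemap)$ gives the stated formula.

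There is no real obstacle here. The only point needing care is that the identity $h_\ell(F,F)=\ell$ genuinely uses embeddedness of the boundary. Without it, an edge traversed twice would contribute $c_F(e)^2$ computed from a coefficient in $\{0,\pm 2\}$, and the formula for $c_F(e)$ would fail. This is why the lemma is stated on $\mathcal{G}_\ell$. A second, minor point is that the sum over $\Omega_\ell(\pi,\typemap)$ is finite by Lemma \ref{lem: trace expansion}, so rearranging the sums is legitimate. Condition (iii) in Definition \ref{defn: omega ell pi j} plays no role beyond restricting the index set, which is the same on both sides.
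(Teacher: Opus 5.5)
Your proof is correct and follows the paper's argument essentially step for step. The diagonal factors equal $\|b_F\|_2^2=\ell$ by the embedded-boundary property in Item \eqref{item: prob 2} of Proposition \ref{prop: lifted face geometry}, and each off-diagonal factor is written as a sum over the common edges in $\mathcal{E}_i(\mathbf{F})$, after which the product is expanded by distributivity.
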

	\begin{proof}
		By Item \eqref{item: prob 2} of Proposition \ref{prop: lifted face geometry}, every lifted face has an embedded boundary consisting of $\ell$ edges. Consequently, $\|b_F\|_2^2=\ell$ for every lifted face $F$.
		
		Let us fix $\mathbf{F}\in \Omega_{\ell}(\pi,{\typemap})$. Note that for each $i\notin \mathcal{O}_\pi$ we have
		\[
		h_{\ell}(F_{[i]_\pi},F_{[i+1]_\pi} )=h_{\ell}(F_{[i]_\pi},F_{[i]_\pi} )=\|b_{F_{[i]_\pi}}\|_2^2=\ell.
		\]
		Since there are exactly $\mathrm{diag}(\pi)$ such indices, we have that
		\[
		\prod_{i=0}^{k-1}	h_{\ell}(F_{[i]_\pi},F_{[i+1]_\pi} )=\ell^{\mathrm{diag}(\pi)}\prod_{i\in \mathcal{O}_\pi}	h_{\ell}(F_{[i]_\pi},F_{[i+1]_\pi}).
		\]
		On the other hand, for each $i\in \mathcal{O}_{\pi}$ we have
		\[
		h_{\ell}(F_{[i]_\pi},F_{[i+1]_\pi}) = \sum_{e\in \mathcal{E}_i(\mathbf{F})}c_{F_{[i]_\pi}}(e)c_{F_{[i+1]_\pi}}(e).
		\]
		
		As a consequence, we get
		\begin{equation*}
			\prod_{i\in \mathcal{O}_\pi}	h_{\ell}(F_{[i]_\pi},F_{[i+1]_\pi})=\prod_{i\in \mathcal{O}_\pi}\left( \sum_{e\in \mathcal{E}_i(\mathbf{F})}c_{F_{[i]_\pi}}(e)c_{F_{[i+1]_\pi}}(e)\right)=\sum_{\mathbf{e}\in \mathcal{E}(\mathbf{F})}\prod_{i\in \mathcal{O}_\pi}c_{F_{[i]_\pi}}(e_i)c_{F_{[i+1]_\pi}}(e_i).
		\end{equation*}
		
		This concludes the proof.
	\end{proof}
	
	\subsection{Degrees of freedom and non-crossing partitions}
	
	Consider the event $\mathcal{G}_\ell$, let $F$ be a lifted face and let
	$e\subseteq\partial F$ be an edge. Denote by $\mathrm{pos}_F(e)\in\Z/\ell\Z$ \mynomenclature[P]{$\mathrm{pos}_F(e)$}{The starting position of an edge $e\in \partial F$}
 the unique boundary position at which the oriented contour of $F$ traverses $e$, in either direction. This position is unique by Item \eqref{item: prob 2} of Proposition \ref{prop: lifted face geometry}.
	
	\begin{lem}\label{lem: graph block reference position}
	Fix $k\geq1$ and consider a presentation $G_{\ell}$ in the event $\mathcal{G}_\ell$ of Proposition \ref{prop: lifted face geometry}. Let $\pi\in\mathcal{P}(k)$ and let ${\typemap}:\pi\to\{1,\ldots,m\}$. Let $\mathbf{F}=(F_C)_{C\in\pi}\in\Omega_\ell(\pi,{\typemap})$. For each $i\in \mathcal{O}_\pi$, choose an edge $e_i\subseteq F_{[i]_\pi}\cap F_{[i+1]_\pi}$. For $B\in\mathcal{B}(\Gamma_\pi)$ and $C\in V(B)$, set
		\begin{equation*}
			\mathcal{P}_{C,B}\coloneqq
			\left\{ \mathrm{pos}_{F_C}(e_i): i\in \mathcal{O}_\pi, \{[i]_\pi,[i+1]_\pi\}\in E(B),\text{ and } C\in\{[i]_\pi,[i+1]_\pi\}
			\right\}.
		\end{equation*}
		Then $\mathcal{P}_{C,B}$ is contained in a cyclic interval of length at most $2k\rho_\ell$.
	\end{lem}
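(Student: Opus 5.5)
The idea is to reduce the statement to Lemma \ref{lem: close overlap positions}, applied with $E=F_C$, the family $\mathcal{S}=\{F_D: D\in\pi\}$ of at most $K=k$ distinct lifted faces, and a choice of $p\ge\max\{7,k+1\}$. If $k\le 2$, a block has at most two vertices, so the set $\mathcal{P}_{C,B}$ consists of positions of edges lying in a single intersection $F_C\cap F_D$. By Item \eqref{item: prob 2} of Proposition \ref{prop: lifted face geometry}, this intersection is a path of length at most $\rho_\ell$, so the claim is immediate. The same argument covers any block $B$ that is a bridge, for any $k$.

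Now suppose $B$ is $2$-connected. Take two positions in $\mathcal{P}_{C,B}$, coming from edges $e_i\subseteq F_C\cap F_L$ and $e_{i'}\subseteq F_C\cap F_H$, where $\{C,L\},\{C,H\}\in E(B)$.

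If $L=H$, both edges lie in the path $F_C\cap F_L$, and their distance is at most $\rho_\ell$.

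If $L\neq H$, then since $B$ is $2$-connected, the graph $B-C$ is connected. So there is a path $L=D_0,D_1,\dots,D_s=H$ in $B$ avoiding $C$. Each edge $\{D_t,D_{t+1}\}$ of $\Gamma_\pi$ arises from some off-diagonal index $i''$ with $h_\ell(F_{D_t},F_{D_{t+1}})\neq0$. This forces $F_{D_t}\cap F_{D_{t+1}}\neq\varnothing$. Hence $F_L$ and $F_H$ are joined by a chain of intersecting faces in $\mathcal{S}\setminus\{F_C\}$. The faces $F_C,F_L,F_H$ are distinct by condition (ii) of $\Omega_\ell(\pi,\typemap)$. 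Both $F_C\cap F_L$ and $F_C\cap F_H$ contain edges, namely $e_i$ and $e_{i'}$. Lemma \ref{lem: close overlap positions} then gives cyclic distance at most $k\rho_\ell$ between the two positions. The lemma uses initial vertices on the oriented contour, which is exactly $\mathrm{pos}_{F_C}$ because each edge is traversed once. If $k<3$, the case $L\neq H$ cannot occur in a $2$-connected block, since such a block needs three vertices.

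It remains to pass from pairwise distances to a single interval. Every pair of points of $\mathcal{P}_{C,B}\subseteq\Z/\ell\Z$ is within cyclic distance $k\rho_\ell$, and for large $\ell$ we have $k\rho_\ell<\ell/4$. Fix a point $x_0\in\mathcal{P}_{C,B}$. All points then lie in the arc $[x_0-k\rho_\ell,\,x_0+k\rho_\ell]$, of length $2k\rho_\ell$, which is what the statement claims.

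The main obstacle is the connectivity step. It requires that $2$-connectedness of $B$ really yields a chain avoiding $C$ within the face family, and that the bounds $K=k$ and the choice of $p$ meet the hypotheses of Lemma \ref{lem: close overlap positions}. If I wanted the sharper bound, I would try to place all points within $k\rho_\ell/2$ of a centre, but the crude doubling above already matches $2k\rho_\ell$.
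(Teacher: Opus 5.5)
Your proof is correct and follows essentially the same route as the paper. Bridge blocks are handled by the single intersection path of length at most $\rho_\ell$. For $2$-connected blocks you use connectivity of $B-C$ to invoke Lemma~\ref{lem: close overlap positions}, which places every position within cyclic distance $k\rho_\ell$ of one fixed position. The differences are minor: the paper fixes the base position $u_0$ from the start rather than first bounding all pairwise distances, and your separate treatment of $k<3$ and the assumption $k\rho_\ell<\ell/4$ are harmless extras.
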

	\begin{proof}
		Fix an integer $p\ge \max\{7,k+1\}$. We use Item \eqref{item: prob 2} of
		Proposition \ref{prop: lifted face geometry} for intersections
		of two faces, and Lemma \ref{lem: close overlap positions}.
		
		If $B$ is a bridge with endpoints $C$ and $D$, then every position in
		$\mathcal{P}_{C,B}$ comes from an edge of the single intersection path
		$F_C\cap F_D$. This path has length at most $\rho_\ell$.
		
		Now suppose that $B$ is maximal 2-connected. Choose a neighbour $D_0$ of $C$ in $B$ and one selected edge in $F_C\cap F_{D_0}$, with starting position $u_0$ on $\partial F_C$. Let $u\in\mathcal{P}_{C,B}$ arise from a neighbor $D$ of $C$. If $D=D_0$, then $d_{\ell}(u,u_0)\leq \rho_\ell$. If $D\neq D_0$, the graph $B-C$ is connected, so it contains a simple path connecting $D_0$ to $D$. This path uses at most $k-1$ vertices. Applying
		Lemma \ref{lem: close overlap positions} to $F_C$ together with the two faces $F_{D_0},F_D$ and the faces along this path, we obtain that $d_{\ell}(u,u_0)\leq k\rho_\ell$. Thus every point of $\mathcal{P}_{C,B}$ lies within cyclic distance $k\rho_\ell$ of $u_0$. Then $\mathcal{P}_{C,B}$ is contained in a cyclic interval
		of length at most $2k\rho_\ell$.
	\end{proof}
 \begin{defn}\label{defn: freesum}
 For a partition $\pi\in \mathcal{P}(k)$ let us write $\freesum{\pi}=\sum_{B\in \mathcal{B}(\Gamma_\pi)}|V(B)|$. \mynomenclature[P]{$\freesum{\pi}$}{The sum $\sum_{B\in \mathcal{B}(\Gamma_\pi)}|V(B)|$}
 \end{defn}
 One should think of this number as the number of degrees of freedom given by the positions in the off-diagonal terms of $\pi$. This is made concrete in the next lemma.
	\begin{lem} \label{lem:bound order l}
	Fix $k\geq1$, $\pi\in\mathcal{P}(k)$, and $\typemap:\pi\to\{1,\ldots,m\}$. Consider the event $\mathcal{G}_{\ell}$ of Proposition \ref{prop: lifted face geometry}. Then there is a constant $C>0$, independent of $\ell$ and of the presentation of $G_{\ell}$, such that for all sufficiently large $\ell$ we have
			\[
			|Z_\ell(\pi,\typemap)| 	\leq 	C\ell^{\mathrm{diag}(\pi)+\freesum{\pi}} (\log\ell)^{2k}.
			\]
		\end{lem}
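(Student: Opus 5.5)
Starting from Lemma \ref{lemma: partition off diagonal}, every summand $\prod_{i\in\mathcal{O}_\pi}c_{F_{[i]_\pi}}(e_i)c_{F_{[i+1]_\pi}}(e_i)$ has absolute value at most $1$. It therefore suffices to show that, on $\mathcal{G}_\ell$, the number $N$ of pairs $(\mathbf{F},\mathbf{e})$ with $\mathbf{F}\in\Omega_\ell(\pi,\typemap)$ and $\mathbf{e}\in\mathcal{E}(\mathbf{F})$ satisfies $N\le C\ell^{\freesum{\pi}}(\log\ell)^{2k}$. I will bound $N$ by an injective encoding. Each pair $(\mathbf{F},\mathbf{e})$ is mapped to the following data: for every block $B\in\mathcal{B}(\Gamma_\pi)$ and every vertex $C\in V(B)$, a reference position $p_{C,B}\in\Z/\ell\Z$ on $\partial F_C$, together with, for every $i\in\mathcal{O}_\pi$, a bounded offset of each selected edge $e_i$ from the relevant reference position. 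By Lemma \ref{lem: graph block reference position}, $\mathcal{P}_{C,B}$ lies in a cyclic interval of length at most $2k\rho_\ell$. Choose $p_{C,B}$ as the starting point of such an interval. Then $\mathrm{pos}_{F_C}(e_i)$ equals $p_{C,B}+t$ with $0\le t\le 2k\rho_\ell$ for the unique block $B$ containing the edge $\{[i]_\pi,[i+1]_\pi\}$. The reference positions contribute at most $\ell^{\freesum{\pi}}$ choices. Each offset contributes $O(\log\ell)$ choices. For a given $i$ we record one offset on each of the two faces, so there are at most $2|\mathcal{O}_\pi|\le 2k$ offsets. This yields $\ell^{\freesum{\pi}}(2k\rho_\ell+1)^{2k}\le C\ell^{\freesum{\pi}}(\log\ell)^{2k}$.

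The main step is to show that this data determines $(\mathbf{F},\mathbf{e})$; since $\pi$ and $\typemap$ are fixed, the encoding is then injective. I would reconstruct the faces by exploring the connected graph $\Gamma_\pi$ from the root $[0]_\pi$. The graph is connected because consecutive indices are joined either by an edge or by equality of classes. The face $F_{[0]_\pi}=F^0_{\typemap([0]_\pi)}$ is fixed by condition (iii) of Definition \ref{defn: omega ell pi j}. Suppose $F_C$ is already known and $i\in\mathcal{O}_\pi$ gives an edge from $C$ to a new class $D$. The position of $e_i$ on $\partial F_C$, namely $p_{C,B}$ plus its offset, determines the geometric edge $e_i$. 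The face $F_D$ has type $\typemap(D)$ and contains $e_i$ at the position $p_{D,B}$ plus its offset. By Item \eqref{item: prob 2} of Proposition \ref{prop: lifted face geometry}, boundary contours are embedded. Hence a face of a given type is determined by a boundary position together with the edge of $\widetilde{X}_\ell$ occupying it, up to the two orientations, that is, a factor $2$ absorbed into $C$. An induction along a spanning tree of $\Gamma_\pi$ therefore recovers all the faces $F_C$, and then all the edges $e_i$.

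The point requiring care is the count of $\ell$-factors: it must be exactly $\ell^{\freesum{\pi}}$. Cruder encodings fail here. Recording one free position per off-diagonal index would give $\ell^{|\mathcal{O}_\pi|}$. Recording one free position per class $C$ would miss the fact that a cut vertex belonging to several blocks needs a separate reference position in each block. The blocks are exactly the groupings in which Lemma \ref{lem: close overlap positions} forces all positions on a given face to lie within $O(\log\ell)$ of each other. Different blocks at a cut vertex impose no such constraint, so their reference positions must be recorded independently; hence the sum $\sum_B|V(B)|$. One also has to check that the implicit constants depend only on $k$, $m$ and $\pi$. This holds for all sufficiently large $\ell$, namely once $\mathcal{G}_\ell$ satisfies Proposition \ref{prop: lifted face geometry} with $p\ge\max\{7,k+1\}$, as required in Lemma \ref{lem: graph block reference position}. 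Summing over the finitely many orientation choices then gives the lemma.
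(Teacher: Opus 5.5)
Your proposal is correct and follows essentially the same route as the paper's proof: the reduction via Lemma \ref{lemma: partition off diagonal} to counting pairs $(\mathbf{F},\mathbf{e})$, one reference position per face--block incidence (giving $\ell^{\freesum{\pi}}$) with $O(\log\ell)$ offsets supplied by Lemma \ref{lem: graph block reference position}, and reconstruction of the faces along a spanning tree of $\Gamma_\pi$ rooted at $[0]_\pi$. The differences are cosmetic and still give the exponent $2k$: you take the start of the interval as reference and record offsets for all $2|\mathcal{O}_\pi|$ positions, and you absorb at most two candidate faces per tree step instead of recording $2^{|\mathcal{O}_\pi|}$ relative orientations. (That at most two candidates exist follows from the free action of $G_\ell$ on vertices, not from embeddedness of the contours.)
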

	
	\begin{proof}
		From Lemma \ref{lemma: partition off diagonal} and the fact that the coefficients take values in $\{-1,0,1\}$, we have
		\begin{equation}\label{eq: non-zero summands}
			|Z_\ell(\pi,{\typemap})| \leq \ell^{\mathrm{diag}(\pi)} \left|\left\{
			(\mathbf{F},\mathbf{e}): \mathbf{F}\in\Omega_\ell(\pi,{\typemap}) \text{ and }\mathbf{e}\in\mathcal{E}(\mathbf{F}) \right\}\right|.
		\end{equation}
		We therefore have to bound the number of possible face families together
		with their selected common edges.
		
		Let $\mathbf{F}=(F_C)_{C\in \pi}\in\Omega_\ell(\pi,{\typemap})$ and $\mathbf{e}=(e_i)_{i\in \mathcal{O}_\pi}\in\mathcal{E}(\mathbf{F})$. For every $i\in \mathcal{O}_\pi$, put $C_i\coloneqq [i]_\pi$ and $D_i\coloneqq [i+1]_\pi$. The selected edge $e_i$ from $\mathbf{e}$ gives two cyclic boundary positions: $u_{i,C_i} \coloneqq \mathrm{pos}_{F_{C_i}}(e_i)$ and $u_{i,D_i}\coloneqq \mathrm{pos}_{F_{D_i}}(e_i)$. Thus there are exactly $2|\mathcal{O}_\pi|$ boundary-position variables.
		
		Note that the cardinality of $\{(C,B):B\in \mathcal{B}(\Gamma_\pi), C\in V(B)\}$ is $\freesum{\pi}$.
		
		Every graph edge $\{C_i,D_i\}$ belongs to a unique block $B_i$ of
		$\Gamma_\pi$. Assign $u_{i,C_i}$ to the incidence $(C_i,B_i)$ and $u_{i,D_i}$ to the incidence $(D_i,B_i)$. In this way the $2|\mathcal{O}_\pi|$ position variables are divided among the $\freesum{\pi}$ possible pairs $(C,B)$.
		
		Each pair $(C,B)$ has at least one position variable. Indeed, if $C\in V(B)$, then $C$ is incident to some graph edge of $B$, and that graph edge is represented by at least one index $i\in \mathcal{O}_\pi$. For each incidence $(C,B)$, choose and fix one such index $i(C,B)\in \mathcal{O}_\pi$, depending only on $\pi$, and call $u_{C,B}\coloneqq u_{i(C,B),C}$ the reference position of the incidence $(C,B)$.
		
		Every reference position is a place in $\Z/\ell\Z$, so that
		\begin{equation*}\label{eq: reference positions}
			\text{nr. of collections of reference positions}\le	\ell^{\freesum{\pi}}.
		\end{equation*}
		Once these reference positions have been fixed, Lemma \ref{lem: graph block reference position} implies that all the positions assigned to a fixed incidence $(C,B)$ lie in a cyclic interval of length at most $2k\rho_\ell$. Since this interval contains the chosen reference position $u_{C,B}$, every other position assigned to $(C,B)$ has at most $4k\rho_\ell+1\leq D_k\rho_\ell$ possible values, where $D_k$ depends only on $k$.

		There are $2|\mathcal{O}_\pi|-\freesum{\pi}$ non-reference position variables. It follows that the total number of possible collections of boundary positions is at most
		\begin{equation*}\label{eq: boundary positions}
			\ell^{\freesum{\pi}} (D_k\rho_\ell)^{2|\mathcal{O}_\pi|-\freesum{\pi}}.
		\end{equation*}
		
		For each $i\in \mathcal{O}_\pi$, we must also record whether the boundary
		orientations of $F_{C_i}$ and $F_{D_i}$ agree or disagree along the
		selected common edge $e_i$. Hence there are at most $2^{|\mathcal{O}_\pi|}$ possible collections of relative orientations.
		
		We next show that the boundary positions and relative orientations
		determine at most one face family $\mathbf{F}\in\Omega_\ell(\pi,{\typemap})$ and at most one collection of selected edges $\mathbf{e}=(e_i)_{i\in \mathcal{O}_\pi}$.
		
		The graph $\Gamma_\pi$ is connected. Choose a spanning tree $T\subseteq\Gamma_\pi$, root it at $C_0\coloneqq [0]_\pi$ and for every edge $\{C,D\}\in E(T)$, choose one index $i(C,D)\in \mathcal{O}_\pi$ satisfying $\{[i(C,D)]_\pi,[i(C,D)+1]_\pi\}=\{C,D\}$.
		
		By the definition of $\Omega_\ell(\pi,{\typemap})$, the root face is fixed: $F_{C_0} = F^0_{{\typemap}(C_0),\ell}$.
		
		Suppose inductively that $F_C$ has already been determined and that $D$ is an offspring of $C$ in the rooted spanning tree. Recall from Section \ref{section: probability small cancellation} that we write the relators as $r_{j,\ell}=s_{j,0}s_{j,1}\cdots s_{j,\ell-1}$, where $s_{j,p}\in \mathcal{A}$ for every $0\leq p\leq \ell-1$. We also write $w_{j,p}=s_{j,0}s_{j,1}\cdots s_{j,p-1}$
 \mynomenclature[G]{$w_{j,p}$}{The subword $s_{j,0}s_{j,1}\cdots s_{j,p-1}$ of $r_{j,\ell}$ consisting of the first $p$ positions}
 for each $0\leq p\leq \ell$ and $w_{j,0}=e_{G_\ell}$. Set $a\coloneqq {\typemap}(C)$ , $b\coloneqq {\typemap}(D)$, so that $F_C=gF^0_{a,\ell}$ and $\smash{F_D=hF^0_{b,\ell}}$ for some $g,h\in G_\ell$. We also write $p\coloneqq u_{i(C,D),C}$ and $q\coloneqq u_{i(C,D),D}$. The boundary oriented edge of $F_C$ at position $p$ is $(gw_{a,p}, gw_{a,p+1})$ while the boundary oriented edge of $F_D$ at position $q$ is $(hw_{b,q},hw_{b,q+1})$.	If the two boundary orientations agree along the common edge, then their initial vertices must coincide and hence $gw_{a,p}=hw_{b,q}$. This implies $h=gw_{a,p}w_{b,q}^{-1}$. If their orientations are opposite, the initial vertex of the first edge equals the terminal vertex of the second, and hence $gw_{a,p}=hw_{b,q+1}$. This implies $h=gw_{a,p}w_{b,q+1}^{-1}$. In either case, the already determined face $F_C$, the two boundary positions, the label $b$, and the relative orientation determine at most one possible face $F_D$.
		
		The inductive argument starting from the root along the spanning tree determines every face $F_C$. Once a face $F_C$ and a boundary position $u_{i,C}$ are known, the corresponding geometric edge is the edge of $\partial F_C$ starting at that position. Hence, every $e_i$ from $\mathbf{e}$ is also determined. 
		
		Combining the previous estimates, the cardinality on the right-hand side of Equation \eqref{eq: non-zero summands} is therefore at most
		\[
		2^{|\mathcal{O}_\pi|} \ell^{\freesum{\pi}} (D_k\rho_\ell)^{2|\mathcal{O}_\pi|-\freesum{\pi}}.\]
		Consequently,
		\[
		|Z_\ell(\pi,{\typemap})| \leq \ell^{\mathrm{diag}(\pi)}	2^{|\mathcal{O}_\pi|} \ell^{\freesum{\pi}} (D_k \rho_\ell)^{2|\mathcal{O}_\pi|-\freesum{\pi}}.
		\]
	Since $|\mathcal{O}_\pi|\leq k$, $\freesum{\pi}\leq 2|\mathcal{O}_\pi|$, and $\rho_\ell\geq1$, we obtain
	\[
	|Z_\ell(\pi,\typemap)| \leq C\ell^{\mathrm{diag}(\pi)+\freesum{\pi}}\rho_\ell^{2k}. \]
	Since $\rho_\ell=O(\log\ell)$, this implies
	\[
	|Z_\ell(\pi,\typemap)|	\leq	C'\ell^{\mathrm{diag}(\pi)+\freesum{\pi}}
	(\log(\ell))^{2k}
	\]
	for all sufficiently large $\ell$, with $C'$ independent of $\ell$. This proves the lemma.
	\end{proof}
	
	It will be useful to work with the structure of blocks in a connected graph.
	\begin{defn}\label{def: block cut graph}
Let $\pi \in\mathcal{P}(k)$ be a partition and let	$\mathrm{Cut}(\Gamma_\pi)$ be the set of cut vertices of $\Gamma_\pi$. The \emph{block-cut graph} of $\Gamma_\pi$ is the bipartite graph with vertex set equal to $\mathcal{B}(\Gamma_\pi)\cup\mathrm{Cut}(\Gamma_\pi)$,
		in which a block $B\in\mathcal{B}(\Gamma_\pi)$ is adjacent to a cut
		vertex $v\in\mathrm{Cut}(\Gamma_\pi)$ if and only if $v\in V(B)$. Since $\Gamma_\pi$ is connected, when $|\pi|\ge 2$ its block-cut graph is a tree; see \cite[Lemma 3.1.4]{Diestel2017}. 
 \mynomenclature[P]{$\mathrm{Cut}(\Gamma_\pi)$}{The set of cut vertices of $\Gamma_\pi$}
	\end{defn}
	We shall also use the bipartite incidence tree $\mathcal{I}_\pi$
 \mynomenclature[P]{$\mathcal{I}_\pi$}{ The bipartite incidence tree, with vertex set $\pi\sqcup \mathcal{B}(\Gamma_\pi)$, and an edge $\{C,B\}$ for each $C\in \pi$ and $B\in \mathcal{B}(\Gamma_\pi)$ such that $C\in V(B)$}
 with vertex set $\pi\sqcup\mathcal{B}(\Gamma_\pi)$ in which $C\in\pi$ is adjacent to $B\in\mathcal{B}(\Gamma_\pi)$ exactly when $C\in V(B)$. For $|\pi|\geq 2$, this is obtained from the block-cut tree by attaching each non-cut vertex to its unique graph block as a leaf. Thus $\mathcal{I}_\pi$ is a tree. When $|\pi|=1$, we take $\mathcal{I}_\pi$ to be its single vertex.
	
	\begin{lem}\label{lem: noncrossing_partitions}
		Let $k\ge 1$ and let $\pi\in \mathcal{P}(k)$ be a partition. Let $\freesum{\pi}$ be as in Definition \ref{defn: freesum}. Then 
		\begin{equation*}\label{eq: degrees inequality k}
			\freesum{\pi}+\mathrm{diag}(\pi)\le k,
		\end{equation*}
		with equality if and only if the partition $\pi$ is non-crossing. Moreover, if $\pi$ is non-crossing, every graph block $B\in\mathcal{B}(\Gamma_\pi)$ is either:
		\begin{enumerate}
			\item a simple cycle, with $\mu_\pi(e)=1$ for every $e\in E(B)$, or
			\item a bridge, occurring once in each direction in the cyclic
			sequence of off-diagonal transitions.
		\end{enumerate}
	\end{lem}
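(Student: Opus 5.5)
We reduce the inequality to a statement about the closed walk traced by the off-diagonal transitions, and then settle the equality case by induction on $k$. Write $W$ for the cyclic sequence of classes $[0]_\pi,[1]_\pi,\ldots,[k-1]_\pi$ with consecutive repetitions deleted. Then $W$ is a closed walk in $\Gamma_\pi$ of length $|\mathcal{O}_\pi|$ that traverses every edge of $\Gamma_\pi$, namely the edge $e$ exactly $\mu_\pi(e)$ times. If $|\pi|=1$, then $\mathcal{O}_\pi=\varnothing$, $\mathcal{B}(\Gamma_\pi)=\varnothing$, $\freesum{\pi}=0$ and $\mathrm{diag}(\pi)=k$, so equality holds and $\pi$ is non-crossing. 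Assume from now on that $|\pi|\ge 2$. Since $\mathrm{diag}(\pi)=k-|\mathcal{O}_\pi|$, the inequality is equivalent to
\[
\freesum{\pi}\le |\mathcal{O}_\pi| = \sum_{B\in\mathcal{B}(\Gamma_\pi)}\ \sum_{e\in E(B)}\mu_\pi(e).
\]
We prove it block by block. Restricted to a block $B$, the walk $W$ splits into closed sub-walks inside $B$, because leaving $B$ requires passing through a cut vertex and returning through the same cut vertex, by the tree structure of the block-cut graph (Definition \ref{def: block cut graph}). Hence the edge-multiset of $B$ is a union of closed walks, and therefore every vertex of $B$ has even $\mu_\pi$-weighted degree.

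For a bridge, the two traversal directions must alternate, so $\sum_{e\in E(B)}\mu_\pi(e)\ge 2=|V(B)|$. For a $2$-connected block, $|E(B)|\ge |V(B)|$ because $B$ is not a tree. Combining the two cases gives $\sum_{e\in E(B)}\mu_\pi(e)\ge |V(B)|$ for every block. Summing over blocks yields the inequality. Equality forces every bridge to have multiplicity exactly $2$, one traversal in each direction. It also forces every $2$-connected block to satisfy $|E(B)|=|V(B)|$ with all multiplicities equal to $1$; a $2$-connected graph with as many edges as vertices is a simple cycle. This gives the structural alternatives (i) and (ii), once the equality case is identified with non-crossing partitions.

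It remains to show that equality holds exactly when $\pi$ is non-crossing; we expect this to be the main obstacle, and we argue by induction on $k$. Suppose first that $\pi$ is non-crossing. It then contains an interval class $C=\{a,a+1,\ldots,b\}$ (cyclically) that is not all of $\{0,\ldots,k-1\}$. Removing $C$ produces $\pi'\in\mathcal{P}(k-|C|)$, which is non-crossing, and $\Gamma_\pi$ is obtained from $\Gamma_{\pi'}$ as follows. Let $D$ be the class of $a-1$ and $D'$ the class of $b+1$. If $D=D'$, the only effect is that a diagonal step $a-1\to b+1$ of $\pi'$ is replaced by a new bridge $D$--$C$ traversed once in each direction. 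If $D\neq D'$, an off-diagonal step $D\to D'$ is replaced by the path $D\to C\to D'$; non-crossingness should force $D,C,D'$ into a new or enlarged cycle block. In each case one checks that $\freesum{\pi}+\mathrm{diag}(\pi)$ increases by exactly $|C|$, so equality propagates from $\pi'$ to $\pi$. Conversely, suppose $\pi$ is crossing, witnessed by $i<j<r<s$ with $i\sim_\pi r$, $j\sim_\pi s$ and $i\not\sim_\pi j$. The arcs of $W$ from $[i]_\pi$ to $[j]_\pi$ to $[r]_\pi=[i]_\pi$ to $[s]_\pi=[j]_\pi$ and back to $[i]_\pi$ give two edge-disjoint closed sub-walks, each containing both vertices $[i]_\pi$ and $[j]_\pi$. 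By Menger's theorem, these two vertices then lie in a common $2$-connected block whose edge weights admit two distinct cycle decompositions through them. That block is therefore not a simple cycle with multiplicity one, and it is not a bridge traversed once in each direction, so the inequality for that block is strict. The delicate point is to make this last step rigorous: one must exclude the possibility that both sub-walks pass through a cut vertex separating $[i]_\pi$ from $[j]_\pi$. If they did, each sub-walk would have to cross that cut vertex in both directions, and this would force a bridge to be traversed at least four times, which again gives strict inequality.
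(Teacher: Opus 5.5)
Your block-by-block proof of the inequality is the paper's argument, and so is the characterisation of equality as ``every block is of type (i) or (ii)''. Your interval-class induction for non-crossing $\Rightarrow$ equality is also the paper's. One correction there: what forces $C$ into a subdivided cycle or a new triangle is not non-crossingness directly but the induction hypothesis for $\pi'$, namely that $\{D,D'\}$ is a multiplicity-one cycle edge or a bridge traversed once each way.

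The gap is in the converse, crossing $\Rightarrow$ strict inequality, which you argue directly instead of by the paper's second induction. It fails exactly at the step you flag.
\begin{itemize}
\item The sub-walks $[i]_\pi\to[j]_\pi\to[r]_\pi$ and $[r]_\pi\to[s]_\pi\to[i]_\pi$ are disjoint as sets of traversals of $W$. They need not be edge-disjoint in $\Gamma_\pi$.
\item Even edge-disjointness would give, via Menger, only $2$-edge-connectivity of $[i]_\pi,[j]_\pi$, not a common $2$-connected block.
\item Your fallback assumes the separating structure is a bridge, which need not be so.
\end{itemize}
For example, take $\pi=\{\{0,6\},\{1,7\},\{2,5,8,11\},\{3,9\},\{4,10\}\}\in\mathcal{P}(12)$. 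Then $\Gamma_\pi$ is two triangles glued at the cut vertex $\{2,5,8,11\}$, every edge has multiplicity $2$, and there is no bridge. For the crossing witness $(i,j,r,s)=(0,4,6,10)$, the classes $[0]_\pi$ and $[4]_\pi$ lie in different blocks, and both sub-walks traverse exactly the same six edges. The inequality is strict, but neither of your arguments detects it.

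The repair uses the excursion observation you already made for the inequality. Let $X$ be the block containing $[i]_\pi$ on the path towards $[j]_\pi$ in $\mathcal{I}_\pi$. Let $v\in V(X)$ be $[j]_\pi$ if it lies in $X$, and otherwise the cut vertex of $X$ on that path, so $v\neq[i]_\pi$ and every walk from $[i]_\pi$ to $[j]_\pi$ passes through $v$. Deleting excursions out of $X$ turns each sub-walk into a non-trivial closed walk in $X$ through $[i]_\pi$ and $v$, and the two closed walks use disjoint traversals.
\begin{itemize}
\item A bridge traversed exactly twice cannot support two such walks, since they would need four traversals.
\item A simple cycle with all multiplicities $1$ cannot either: a closed walk in a cycle using each edge at most once must go around the whole cycle, so every edge would have multiplicity at least $2$.
\end{itemize}
Hence $X$ is of neither type, and the inequality is strict.

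With this fix your route is a valid alternative that pinpoints a specific block where strictness occurs. The paper instead proves (i)/(ii) $\Rightarrow$ non-crossing by peeling off a class lying in a single block. Such a class has incident multiplicity $2$, so its positions form a cyclic interval; deleting it preserves the block structure, and reinserting an interval creates no crossing.
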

	\begin{proof}
	Let us first consider the case $|\pi|=1$. Then there are no off-diagonal
		transitions and no graph blocks. This implies the equality from the statement.
		
	Let us now assume that $|\pi|\geq 2$.
		
		If $B$ is a $2$-connected graph block, then
		\[
		\sum_{e\in E(B)}\mu_\pi(e) \geq |E(B)| =\frac12\sum_{C\in V(B)}\deg_B(C)
		\geq |V(B)|.
		\]
		Equality holds precisely when every edge has multiplicity one
		and every vertex has degree two, that is, when $B$ is a simple
		cycle whose edges occur once.
		
	If $B$ consists of a bridge $e$ and its endpoints, deleting $e$ separates $\Gamma_\pi$ into two components. The cyclic sequence of classes crosses from one component to the other as many times as it crosses back. Therefore $\mu_\pi(e)\geq 2=|V(B)|$, with equality precisely when $e$ occurs once in each direction.
		
	Since the graph blocks partition the edge set, we have
		\[
		k-\mathrm{diag}(\pi) =|\mathcal{O}_\pi| =\sum_{B\in\mathcal{B}(\Gamma_\pi)} \sum_{e\in E(B)}\mu_\pi(e) \geq	\sum_{B\in\mathcal{B}(\Gamma_\pi)}|V(B)| =\freesum{\pi}.
		\]
		Moreover, equality holds if and only if every graph block belongs to one of the two classes in the statement of the lemma.
		
	We next show that if $\pi$ is a non-crossing partition, then its blocks fall into one of these two classes. We proceed by induction on the number of classes.

Non-crossing partitions always contain a class consisting of consecutive positions; see \cite[Remark 9.2(2)]{NicaSpeicher2006}. Thus we may choose a class of $\pi$ of the form $C=\{i,i+1,\ldots,j\}$. Moreover, after removing the positions belonging to $C$, the classes that remain still form a non-crossing partition of the remaining positions. Denote this partition by $\pi'$, and remark that $|\pi'|=|\pi|-1$.

Since the positions of $C$ occur consecutively and $|\pi|\ge 2$, there is a well-defined class $L$ containing the position immediately before $C$, and a
well-defined class $H$ containing the position immediately after
$C$, where the positions are read in the original cyclic order.
		
If $L=H$, reinserting $C$ adds a bridge traversed once in each direction. If $L\neq H$, it replaces one occurrence of $L\to H$ by $L$, $C$, $H$. By induction, the edge $\{L,H\}$ in $\Gamma_{\pi'}$ belongs either to a simple cycle and occurs once, or is a bridge and occurs twice. In the first case, reinsertion subdivides that cycle edge. In the second case, one occurrence of the bridge remains and the other is replaced by the two-edge path through $C$; the resulting graph block is a triangle with each edge occurring once. All other graph blocks are unchanged.

Hence, we conclude that every block $B$ of a non-crossing partition $\pi$ is either a simple cycle with $\mu_\pi(e)=1$ for each $e\in E(B)$, or a bridge that occurs once in each direction.
		
Conversely, suppose that every block of a partition $\pi$ has one of the two forms in the statement of the lemma. We proceed once more by induction on the value $|\pi|$.

Choose a vertex $C$ belonging to exactly one graph block. Such a vertex exists since the graph of blocks is a finite tree, and hence has a leaf. Its incident edges have total multiplicity two. Hence the cyclic sequence enters $C$ once and leaves it once, so all positions belonging to $C$ form one cyclic interval.
		
Delete the positions of $C$. If $C$ belongs to a bridge, this removes a leaf and its incident bridge. If it belongs to a cycle of length at least four, this suppresses $C$ and shortens the cycle by one edge. If it belongs to a triangle, the remaining edge becomes a bridge occurring once in each direction.
Hence, we see that the new partition still has the stated graph-block structure. By the induction hypothesis, it must be non-crossing.
		
Since reinserting a class consisting of consecutive cyclic positions does not create a crossing, we conclude that $\pi$ is non-crossing.

This finishes the inductive step and hence the proof of the lemma.
	\end{proof}
	
	\begin{lem}\label{lem: counting size vertices blocks}
		Let $\pi\in \mathcal{P}(k)$ be a partition and let $\Gamma_{\pi}$ be the associated graph of off-diagonal connections (Definition \ref{defn: graph off diagonal connections}). Denote by $\mathcal{B}(\Gamma_{\pi})$ the blocks of $\Gamma_{\pi}$ (Definition \ref{def: blocks}). Then
		\[
		\sum_{B\in \mathcal{B}(\Gamma_\pi)}\left( |V(B)|-1 \right)= |\pi|-1.
		\]
	\end{lem}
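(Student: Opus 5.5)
The plan is to reduce the identity to the fact, recorded just before Lemma \ref{lem: noncrossing_partitions}, that the incidence graph $\mathcal{I}_\pi$ is a tree. The case $|\pi|=1$ is immediate. Then $\Gamma_\pi$ has a single vertex and no edges, so $\mathcal{B}(\Gamma_\pi)=\varnothing$. The left-hand side is an empty sum, and it equals $0=|\pi|-1$.

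Now assume $|\pi|\geq 2$. The graph $\Gamma_\pi$ is connected, since consecutive indices $i,i+1$ are either in the same class or joined by an edge. It has at least one edge, and it has no isolated vertices. Consequently every class $C\in\pi$ lies in at least one graph block.

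Count the vertices and edges of $\mathcal{I}_\pi$. Its vertex set is $\pi\sqcup\mathcal{B}(\Gamma_\pi)$, so
\[
|V(\mathcal{I}_\pi)|=|\pi|+|\mathcal{B}(\Gamma_\pi)|.
\]
Its edges are the incidences $(C,B)$ with $C\in V(B)$, so
\[
|E(\mathcal{I}_\pi)|=\sum_{B\in\mathcal{B}(\Gamma_\pi)}|V(B)|=\freesum{\pi}.
\]
A finite tree has exactly one more vertex than edges. Applying this to $\mathcal{I}_\pi$ gives
\[
\sum_{B}|V(B)|=|\pi|+|\mathcal{B}(\Gamma_\pi)|-1.
\]
Moving $|\mathcal{B}(\Gamma_\pi)|=\sum_B 1$ to the left-hand side yields the claimed identity.

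The only real content is that $\mathcal{I}_\pi$ is a tree, and this is the step I would check carefully. It follows from the block-cut tree of \cite[Lemma 3.1.4]{Diestel2017}. Every non-cut vertex of $\Gamma_\pi$ lies in exactly one block, because two blocks meet only in cut vertices. Attaching each non-cut vertex to its unique block as a leaf therefore preserves being a tree. Connectedness of $\Gamma_\pi$ is needed to invoke that lemma, and it holds by the consecutive-index argument above.

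An alternative route is induction on the number of blocks, removing a leaf block of the block-cut tree. Such a block shares exactly one vertex with the rest of the graph, so deleting it lowers $|\pi|-1$ by $|V(B)|-1$.
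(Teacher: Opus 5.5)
Your proof is correct, but your main argument differs from the paper's.

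The paper proves the identity by induction on the number of blocks. Since a cut vertex lies in at least two blocks, a leaf of the block-cut tree is a block $B_0$ with a unique cut vertex $v$. Deleting $V(B_0)\setminus\{v\}$ leaves a connected graph $\Gamma_\pi'$ with blocks $\mathcal{B}(\Gamma_\pi)\setminus\{B_0\}$ and $|V(\Gamma_\pi)|=|V(\Gamma_\pi')|+|V(B_0)|-1$. This is the alternative you sketch in your last paragraph.

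Your main route instead counts vertices and edges of the incidence tree $\mathcal{I}_\pi$. The relation $|E|=|V|-1$ then gives the identity in one line. All the graph theory is concentrated in the single fact that $\mathcal{I}_\pi$ is a tree. You justify this correctly from the block-cut tree lemma and the observation that each non-cut vertex lies in exactly one block, using that $\Gamma_\pi$ is connected without isolated vertices when $|\pi|\geq 2$.

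Both arguments rest on the block-cut tree. The paper uses it only to locate a leaf block and then checks by hand what deleting that block does. In doing so it tacitly applies the inductive hypothesis to graphs $\Gamma_\pi'$ that need not be of the form $\Gamma_{\pi'}$, so it is really proving the statement for arbitrary finite connected graphs. Its base case of a single block also leaves the block-free case $|\pi|=1$ implicit. Your version treats that degenerate case explicitly and reuses the tree $\mathcal{I}_\pi$, which the paper introduces anyway for its later reconstruction arguments.
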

	\begin{proof}
		We argue by induction on the number of blocks of $\Gamma_\pi$.
		
		If $\Gamma_\pi$ has only one block $B$, then $V(B)=V(\Gamma_\pi)$. Since the vertices of $\Gamma_\pi$ are the equivalence classes in $\pi$, we have $|V(\Gamma_\pi)|=|\pi|$, and hence $|V(B)|-1=|\pi|-1$.
		
		Suppose now that $\Gamma_\pi$ has at least two blocks. Note that every cut vertex of $\Gamma_{\pi}$ produces a vertex in the block-cut graph of degree at least $2$. Since the block-cut graph is a tree, it must have a vertex with degree $1$, which by the above argument must correspond to a block $B_0\in \mathcal{B}(\Gamma_\pi)$. Let $v$ be the unique cut vertex adjacent to $B_0$ in the block-cut graph.
		
		Let $\Gamma_\pi'$ be the graph obtained from $\Gamma_\pi$ by deleting
		the vertices of $B_0$ that are distinct from $v$. Then $\Gamma_\pi'$ is connected, and $\mathcal{B}(\Gamma_\pi')=\mathcal{B}(\Gamma_\pi)\backslash\{B_0\}$. Moreover, $|V(\Gamma_\pi)|=|V(\Gamma_\pi')|+|V(B_0)|-1$. Applying the induction hypothesis to $\Gamma_\pi'$, we obtain
		\begin{align*}
			\sum_{B\in\mathcal{B}(\Gamma_\pi)}\left(|V(B)|-1\right)&=
			|V(B_0)|-1+\sum_{B\in\mathcal{B}(\Gamma_\pi')}\left(|V(B)|-1\right)\\
			&=|V(B_0)|-1+|V(\Gamma_\pi')|-1\\
			&=|V(\Gamma_\pi)|-1=|\pi|-1.
		\end{align*}
		This completes the induction.
	\end{proof}

	\section{Convergence of the spectral measure to the free-Poisson law}\label{section:convergence of the spectral measure}
 
	The objective of this section is to prove Theorem \ref{thm:main spectral law}. We start by establishing the following proposition.
	\begin{prop}\label{prop:main_technical}
		Let $k\ge 1$ and recall that we denote by $\mathrm{NC}(k)$ the set of non-crossing partitions of $\{0,\ldots,k-1\}$ (Definition \ref{defn: noncrossing partitions}). Let $\pi\in \mathrm{NC}(k)$ and let ${\typemap}:\pi \to \{1,\ldots, m\}$. Let $Z_{\ell}(\pi,{\typemap})$ be defined as in Equation \eqref{eq: def of Z} for each $\ell\ge 1$. Then, the sequence of random variables $(\ell^{-k}Z_\ell(\pi,{\typemap}))_{\ell\geq 1}$ converges in probability to $(n-1)^{-(|\pi|-1)}$. In other words, for every $\varepsilon>0$ we have
		\begin{equation*}\label{eq: convergence noncrossing}
			\P_\ell\left(\left|\frac{1}{\ell^k}Z_{\ell}(\pi,{\typemap})-\left(\frac{1}{n-1}\right)^{|\pi|-1}\right|\geq \varepsilon\right)\xrightarrow[\ell\to \infty]{}0.
		\end{equation*}
	\end{prop}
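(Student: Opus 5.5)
We work throughout on the event $\mathcal{G}_\ell$ of Proposition \ref{prop: lifted face geometry}. The case $|\pi|=1$ is immediate. Indeed, $\mathcal{O}_\pi=\varnothing$ and $\mathrm{diag}(\pi)=k$, so Lemma \ref{lemma: partition off diagonal} gives $Z_\ell(\pi,\typemap)=\ell^k$.

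For $|\pi|\ge2$, Lemma \ref{lem: noncrossing_partitions} gives $\mathrm{diag}(\pi)+\freesum{\pi}=k$, and every graph block of $\Gamma_\pi$ is either a simple cycle with simple edges or a bridge crossed once in each direction. By Lemma \ref{lemma: partition off diagonal}, $\ell^{-k}Z_\ell(\pi,\typemap)$ equals $\ell^{-\freesum{\pi}}$ times a signed count of pairs $(\mathbf{F},\mathbf{e})$.

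Following the counting in the proof of Lemma \ref{lem:bound order l}, we parametrize such pairs by one reference position $u_{C,B}\in\Z/\ell\Z$ for each incidence $(C,B)$ of the tree $\mathcal{I}_\pi$. Together with bounded offsets of size $O(\rho_\ell)$ and relative orientations, these positions determine the family. For a fixed choice of reference positions, whether a given choice of offsets and orientations is realized depends, up to a negligible set, only on the letters of the relators $r_{\typemap(C)}$ in windows of radius $R_\ell=2k\rho_\ell$ around the positions $u_{C,B}$.

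Concretely, a bridge $\{C,D\}$ is realized when the words read around $u_{C,B}$ in $F_C$ and around $u_{D,B}$ in $F_D$ share a common segment containing both chosen edges, read in matching orientation. A cycle block $C_1,\dots,C_s$ is realized when the corresponding $s$ windows glue consistently in the Cayley graph of $F_n$, forming a ``tripod-like'' configuration around a point. We then express $\ell^{-k}Z_\ell$ as $\ell^{-\freesum{\pi}}\sum_{(u_{C,B})}\Phi_{\pi,R_\ell}(\text{windows})$, plus an error.

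The error has two sources: tuples of reference positions whose windows on the same relator overlap, and configurations where the actual faces $\mathbf{F}$ fail to be distinct. Both involve a coincidence between two of the $\freesum{\pi}$ free positions, so each costs a factor $O(R_\ell/\ell)$. Combined with the polylogarithmic bound on $|\Phi|$, this error is $O((\log\ell)^{c}/\ell)\to0$. Here we use item \eqref{item: prob 5} and the $C''(1/6)$ condition to ensure that gluings of windows in $F_n$ correspond to genuine adjacencies of lifted faces. This is the comparison step carried out in Lemma \ref{lem: comparison between Z and average of observables}.

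Next we replace the empirical average over positions by an expectation over independent uniform non-backtracking windows. This is a weak law of large numbers, proved by a second-moment computation. The variance involves pairs of tuples of positions; pairs whose windows are all at distance greater than $2R_\ell+O(1)$ behave almost independently, because the Markov chain generating a uniform cyclically reduced word mixes exponentially fast, by Lemma \ref{lem: number of cyclically reduced words}. The remaining pairs form a proportion $O(R_\ell/\ell)$ of all pairs. Hence the variance tends to $0$, and Chebyshev's inequality gives convergence in probability.

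Finally, we compute the expectation block by block, using independence. Consider a bridge block. Fix one edge on $F_C$ at a reference position; the partner face $F_D$, of prescribed type, must contain this edge at a position $u_{D,B}$. Summing over $u_{D,B}$ with the orientation sign, the expected signed number of matches is $\ell\cdot\frac{1}{2n}\cdot2$, up to the agreement of the remaining offsets. After normalizing by $\ell^{|V(B)|}$, this yields a factor $\frac{1}{n}$ per free vertex, corrected by the non-backtracking constraint. More precisely, the signed sum over the edges of a common piece telescopes: the sum of $c_Fc_H$ over a shared path of length $t$, together with the probability that the maximal common segment has length $t$, gives an exact geometric-series cancellation whose value is $(n-1)^{-1}$. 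The probability that a non-backtracking continuation agrees is $\frac{1}{2n-1}$, and $\sum_t t\,(\cdots)$ must be organized so that each extra block vertex contributes $(n-1)^{-1}$. I expect this computation to be the main obstacle, especially for cycle blocks. There one must show that the signed count of consistent gluings of $s$ windows around a vertex again gives $(n-1)^{-(s-1)}$. I would first verify the triangle case by hand, reducing it to an inclusion–exclusion over the ordering of the three half-branches at the branching vertex in the tree $F_n$. The product over blocks then gives $\prod_B(n-1)^{-(|V(B)|-1)}=(n-1)^{-(|\pi|-1)}$ by Lemma \ref{lem: counting size vertices blocks}.
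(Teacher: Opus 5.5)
Your overall architecture matches the paper's. You treat $|\pi|=1$ directly, compare $\ell^{-k}Z_\ell(\pi,\typemap)$ with the window average $S_{R_\ell,\tau}(\Phi_{\pi,R_\ell})$ (Lemma~\ref{lem: comparison between Z and average of observables}), use a second-moment law of large numbers (Proposition~\ref{prop: weak ergodic theorem}), and factor over blocks using Lemma~\ref{lem: counting size vertices blocks}. The gap is the step that produces the constant: you never establish $\lim_{R\to\infty}\int\Psi_{s,R}\,d\eta_R^{\otimes s}=(n-1)^{-(s-1)}$, as you acknowledge yourself. For a bridge, your geometric-series idea can be completed. Sum $t^2=h(F_C,F_D)h(F_D,F_C)$ against the probability, over both relative orientations, that two readings share a maximal common segment of length exactly $t$, namely $\frac1n(2n-1)^{-(t-1)}\bigl(1-\frac1{2n-1}\bigr)^2$; this gives exactly $\frac1{n-1}$. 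For a cycle block with $s\ge3$ vertices, however, checking $s=3$ by hand would not prove the general case, and the ``tripod'' picture is too narrow. The block observable only requires the central edge of each window to lie in the next window. Already for $s=3$ the contributing configurations include three windows through a common edge, with sign $+1$, and genuine tripods, with sign $-1$. The value $(n-1)^{-2}$ comes from cancellation between these, so an unsigned count of gluings will not produce it.

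The missing idea is that integrating out one window at a time (Lemma~\ref{lem: averaging one window}) turns the block average into an iterated kernel sum. Condition on the first window as a bi-infinite geodesic $\gamma$ with central edge $e_0$. Up to truncation at radius $R$, the average equals $\sum_{e_1,\dots,e_{s-1}}c_\gamma(e_0)c_\gamma(e_{s-1})\prod_{i=1}^{s-1}K(e_{i-1},e_i)$, where $K(e,f)=\sigma(e,f)\,n^{-1}(2n-1)^{-d_T(e,f)}$. The signed indicator $c_\gamma$ is an eigenvector of $K$ with eigenvalue $\frac1{n-1}$. For $e\in\gamma$ the sum is $\frac{c_\gamma(e)}{n}\bigl(1+2\sum_{h\ge1}(2n-1)^{-h}\bigr)$. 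For $e\notin\gamma$ it vanishes by exact cancellation between the two rays of $\gamma$ issuing from the vertex of $\gamma$ closest to $e$. Iterating gives $(n-1)^{-(s-1)}$ for all $s$ at once.

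To make this rigorous you must also show that the untruncated sum converges absolutely, since the intermediate edges range over the whole tree, not just $\gamma$. This follows from a bound of the form $C_s(1+h)^{A_s}(2n-1)^{h/2}$ on the number of contributing tuples of total length $h$. That bound lets you remove the truncation and apply dominated convergence in $\gamma$.

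A smaller point concerns your variance step: separating windows by $2R_\ell+O(1)$ is not enough. The total-variation error $C(2n-1)^{-D}$ has to beat $\|\Phi_{\pi,R_\ell}\|_\infty^2$, which grows polylogarithmically, so the buffer $D$ must tend to infinity; the paper uses $D=\lfloor\sqrt\ell\rfloor$.
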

	To prove this result, we proceed as follows. First, we establish a general weak ergodic theorem in Proposition \ref{prop: weak ergodic theorem}. Next, we introduce the function to which we will apply it in Definition \ref{defn: observables} and compare it with $\ell^{-k}Z_{\ell}(\pi,{\typemap})$ in Lemma \ref{lem: comparison between Z and average of observables}. Finally, in Lemma \ref{lem: expectation of the observable} we compute the value of the limit provided by the weak ergodic theorem for this specific function. The proof of Proposition \ref{prop:main_technical} is then presented in Subsection \ref{subsection: proof of the technical proposition}. We end the section by giving the proof of Theorem \ref{thm:main spectral law} in \S\ref{subsection:proof of spectral convergence}.
 
	\subsection{A weak ergodic theorem}\label{subsection:weak ergodic theorem}
	Fix $n\geq2$ and denote $\mathcal{A}\coloneqq\{x_1,x_1^{-1},\ldots, x_n,x_n^{-1}\}$. For every integer $R\geq 0$, we define $\mathrm{NB}_R$ to be the set of non-backtracking words of length $2R+1$ on the alphabet $\mathcal{A}$. That is, we set
	\begin{equation}\label{eq: non backtracking sequences}
		\mathrm{NB}_R\coloneqq\left\{\mathbf{y}=(y_{-R},\ldots,y_R)\in\mathcal{A}^{2R+1}: y_{i+1}\neq y_i^{-1}\text{ for every }-R\leq i<R \right\}.
	\end{equation}
 \mynomenclature[O]{$\mathrm{NB}_R$}{The set of non-backtracking words $(y_{-R},\dots,y_R)$ of length $2R+1$ on the alphabet $\mathcal{A}$}
	Note that there are $2n$ choices for $y_{-R}$ and $2n-1$ choices for each subsequent letter. Hence, it holds that $|\mathrm{NB}_R|=2n(2n-1)^{2R}$.

	\begin{defn}\label{defn: stationary non-backtracking law on NBR}
		The stationary non-backtracking law on $\mathrm{NB}_R$ is the probability measure $\eta_R\in \mathrm{Prob}(\mathrm{NB}_R)$ defined by
		\[
		\eta_R(\{\mathbf{y}\}) \coloneqq \frac{1}{2n(2n-1)^{2R}}, \text{ for each } \mathbf{y}\in\mathrm{NB}_R.
		\]
 \mynomenclature[O]{$\eta_R$}{The uniform probability measure on $\mathrm{NB}_R$}
	\end{defn}
	
	Recall that the random relators $r_{j,\ell}=s_{j,0}s_{j,1}\cdots s_{j,\ell-1}$, $ 1\leq j\leq m$ of $G_{\ell}$ are independent and uniformly distributed among the cyclically reduced words of length $\ell$ on $\mathcal{A}$. Also, recall that we work with the convention that the second index is always read cyclically as an element of $\Z/\ell\Z$.
	
	\begin{defn}\label{defn: windows}
		Let $p\in\Z/\ell\Z$, $j\in \{1,\ldots,m\}$, and $\ell \ge 2R+1$. We define the \emph{window of radius $R$ rooted at $p$ in the $j$-th relator} by
		\[W_{j,p}^R	\coloneqq \left(s_{j,p-R},\ldots,s_{j,p+R}\right)\in\mathrm{NB}_R.	\]
 \mynomenclature[O]{$W_{j,p}^R$}{The window of radius $R$ rooted at $0\leq p<\ell$ in the relator $r_{j,\ell}$. It is an element of $\mathrm{NB}_R$}
	\end{defn}
	\begin{defn}\label{defn: tau admissible roots}
		Let $f\geq1$ and let $\tau=(t_1,\ldots,t_f)\in\{1,\ldots,m\}^f$. We define the \emph{set of $\tau$-admissible roots} $\mathcal{R}_\ell(\tau)$ by
		\[
		\mathcal{R}_\ell(\tau)\coloneqq\left\{\mathbf{p}=(p_1,\ldots,p_f)\in(\Z/\ell\Z)^f:	p_i\neq p_j\text{ whenever }i\neq j\text{ and }t_i=t_j	\right\}.
		\]
 \mynomenclature[O]{$\mathcal{R}_\ell(\tau)$}{The set of $\tau$-admissible roots---where $\tau=(t_1,\dots,t_f)\in \{1,\dots,m\}^f$ and $f\geq 1$---consisting of tuples $\mathbf{p}=(p_1,\dots,p_f)\in (\Z/\ell\Z)^{f}$ such that $p_i\neq p_j$ whenever $i\neq j$ and $t_i=t_j$}
	\end{defn}
	Note that we have
	\begin{equation}\label{eq: number admissible roots}
		|\mathcal{R}_\ell(\tau)| =\ell^f+O(\ell^{f-1}).
	\end{equation}
	
	\begin{defn}\label{defn: average over tau admissible roots}
		Let $f\ge 1$ and let $\tau\in\{1,\ldots, m\}^f$. Let $\Phi:\mathrm{NB}_R^f\to\R$ be a function. Assume $\ell\geq\max\{f,2R+1\}$. We define the average $S_{R,\tau}(\Phi)$ of $\Phi$ over the set of $\tau$-admissible roots as
		\begin{equation*}\label{eq: average over window}
			S_{R,\tau}(\Phi)
			\coloneqq 	\frac{1}{|\mathcal{R}_\ell(\tau)|} \sum_{\mathbf{p}\in\mathcal{R}_\ell(\tau)}\Phi\left(W_{t_1,p_1}^R,\ldots,W_{t_f,p_f}^R\right).
		\end{equation*}
 \mynomenclature[O]{$S_{R,\tau}(\Phi)$}{The average value of an observable $\Phi\colon \mathrm{NB}_R^f\to \R$ over the set $\mathcal{R}_\ell(\tau)$}
	\end{defn}
	
	For $p,q\in\Z/\ell\Z$, let $d_\ell(p,q)$ denote their cyclic distance. For probability measures $\mu$ and $\nu$ on a finite set $E$, we use the convention
	\[
	\|\mu-\nu\|_{\mathrm{TV}}\coloneqq\frac{1}{2}\sum_{z\in E}|\mu(\{z\})-\nu(\{z\})|
	\]
	for their total variation distance.
	
	The following lemma gives a quantitative statement of approximate independence for windows that are sufficiently far apart from each other. More precisely, if roots belonging to the same relator are separated by a buffer of length at least $D$, then the joint law of the corresponding windows is exponentially close, in total variation, to $\eta_R^{\otimes f}$. Thus, up to an error that decays exponentially in the size of the buffer $D$, these windows behave as $f$ independent samples from the stationary non-backtracking law. Note that buffers are needed only for windows that belong to the same relator, since the law of distinct relators is already independent.
	
	\begin{lem} \label{lem: separated windows ergodic}
		Let $f\geq1$ and let $\tau\in \{1,\ldots,m\}^f$. Then there is a constant $C>0$ with the following property. Let $D\geq1$, and suppose that $\mathbf{p}=(p_1,\ldots,p_f)\in\mathcal{R}_\ell(\tau)$ is a $\tau$-admissible root vector that satisfies 
		\begin{equation}\label{eq: D separated roots}
			d_\ell(p_i,p_j)>2R+D \ \ \text{whenever }i\neq j\text{ and }t_i=t_j.
		\end{equation}
		Suppose that $\ell>f(2R+D)$. Then
		\begin{equation}\label{eq: separated window comparison}
			\left\|	\mathcal{L}\left(W_{t_1,p_1}^R,\ldots,W_{t_f,p_f}^R\right)
			-\eta_R^{\otimes f} \right\|_{\mathrm{TV}} \leq C(2n-1)^{-D}.
		\end{equation}
		Here $\mathcal{L}(\cdot)$ denotes the joint law of the variables inside with respect to the measure $\P_\ell$ of the random relators of $G_{\ell}$, and $\eta_R$ is the stationary non-backtracking law given in Definition \ref{defn: stationary non-backtracking law on NBR}.
	\end{lem}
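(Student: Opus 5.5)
Since distinct relators are independent and the target law $\eta_R^{\otimes f}$ is a product, I reduce to a single relator. Group the indices $i$ by the relator type $t_i$. The joint law of all windows is the product over $j\in\{1,\ldots,m\}$ of the joint laws of the windows in the $j$-th relator. Total variation between product measures is subadditive over factors, so it suffices to treat one uniformly random cyclically reduced word $r=s_0\cdots s_{\ell-1}\in\mathcal{C}_\ell$ with $f'\le f$ windows whose roots satisfy \eqref{eq: D separated roots}. Since $\ell>f(2R+D)$, the windows are disjoint arcs of the cycle $\Z/\ell\Z$, and consecutive windows are separated by gaps of at least $D$ positions.

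Next I compute the joint law exactly by counting. Fix window contents $\mathbf{y}_1,\ldots,\mathbf{y}_{f'}\in\mathrm{NB}_R$, and order the roots cyclically. The number of cyclically reduced words of length $\ell$ with these prescribed subwords equals a product of gap counts. For each gap of length $g_i$ (with $g_i\ge D$) between the last letter $b_i$ of one window and the first letter $a_{i+1}$ of the next, the count is the number of non-backtracking sequences from $b_i$ to $a_{i+1}$ with $g_i+1$ steps, namely $N_{g_i+1}(b_i,a_{i+1})$ in the notation of Lemma \ref{lem: number of cyclically reduced words}. Cyclic wrap-around is handled automatically, because the last gap closes the cycle. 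By \eqref{eq: path count},
\[
N_{g_i+1}(b_i,a_{i+1})=\frac{(2n-1)^{g_i+1}}{2n}\bigl(1+O((2n-1)^{-D})\bigr).
\]
The total length identity is $\sum_i g_i=\ell-f'(2R+1)$. Dividing by $|\mathcal{C}_\ell|=(2n-1)^\ell(1+O((2n-1)^{-\ell}))$ from \eqref{eq: number of cyclically reduced words}, the probability becomes
\[
\frac{(2n-1)^{\ell-f'(2R+1)+f'}}{(2n)^{f'}(2n-1)^{\ell}}\bigl(1+O((2n-1)^{-D})\bigr)=\bigl(2n(2n-1)^{2R}\bigr)^{-f'}\bigl(1+O((2n-1)^{-D})\bigr).
\]
This is $\eta_R^{\otimes f'}(\mathbf{y})\,(1+O((2n-1)^{-D}))$. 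The relative error is uniform in $\mathbf{y}$, with implied constant depending only on $f$, since $(1+x)^{f}-1=O(x)$ for small $x$ and $\ell>fD$ makes the $(2n-1)^{-\ell}$ term negligible.

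To conclude, I sum over $\mathbf{y}$: a uniform relative error $\epsilon$ gives total variation at most $\epsilon/2$. For large $D$ this yields $C(2n-1)^{-D}$. For small $D$ the bound is trivial after enlarging $C$, since the total variation is always at most $1$.

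The main obstacle is the uniform relative error in the path count, which \eqref{eq: path count} supplies. The remaining care is bookkeeping: checking that the cyclic reducedness constraint is exactly the non-backtracking condition across the closing gap, so that no special treatment of position $0$ is needed, and that the case $f'=1$ (a single window with one gap wrapping around) fits the same formula.
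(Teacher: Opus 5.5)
Your proposal is correct and follows essentially the same route as the paper's proof. Both arguments reduce to a single relator by independence, count the fillings of the gaps with $N_h$ and \eqref{eq: path count}, and divide by $|\mathcal{C}_\ell|$ to get a relative error $O((2n-1)^{-D})$ uniform in the prescribed windows. Your $g_i+1$ transitions per gap are exactly the paper's $h_i$, and your identity $\sum_i(g_i+1)=\ell-2Rf'$ is the paper's gap-length identity.
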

	
	\begin{proof}
		Since each of the $m$ relators has independent distributions, it suffices to study windows lying on one given specific relator.
		
		Suppose that the roots of $u\leq f$ windows in the same relator are listed in the cyclic order of $\Z/\ell\Z$. Let us fix prescribed values $\mathbf{y}_1,\ldots,\mathbf{y}_u\in\mathrm{NB}_R$ for the corresponding windows, and let $\alpha_i$ and $\beta_i$ be, respectively, the first and last letters of $\mathbf{y}_i$ for each $1\le i \le u$.
		
		For every $1\le i \le u$, denote by $h_i$ the number of transitions from the last letter of the $i$-th window to the first letter of the next window, with the index $i$ read cyclically. Condition \eqref{eq: D separated roots} gives $h_i\geq D$, and since each window contains $2R+1$ letters, we have
		\begin{equation}\label{eq: sum gap lengths}
			\sum_{i=1}^u h_i=\ell-2Ru.
		\end{equation}
		Once the values of the windows have been prescribed, the number of ways to fill the gaps between them is exactly
		\begin{equation}\label{eq: count window filled}
			\prod_{i=1}^u N_{h_i}(\beta_i,\alpha_{i+1}),
		\end{equation}
		where $N_h$ is defined as in Lemma \ref{lem: number of cyclically reduced words} and $\alpha_{u+1}=\alpha_1$. By Equation \eqref{eq: path count}, we have
		\[
		N_{h_i}(\beta_i,\alpha_{i+1}) =\frac{(2n-1)^{h_i}}{2n} \left(1+\theta_i\right)
		\]
		with $|\theta_i|\leq2n(2n-1)^{-D}$ for each $1\le i \le u$. Since $u\leq f$ is fixed, Equations \eqref{eq: sum gap lengths} and \eqref{eq: count window filled} give
		\begin{equation*}
			\prod_{i=1}^u N_{h_i}(\beta_i,\alpha_{i+1}) =\frac{(2n-1)^{\ell-2Ru}}{(2n)^u}	\left(1+O((2n-1)^{-D})\right).
		\end{equation*}
		Dividing by $|\mathcal{C}_{\ell}|$ and using Equation \eqref{eq: number of cyclically reduced words}, we obtain
		\begin{align*}
			\P\left( W_{t,p_1}^R=\mathbf{y}_1,\ldots, W_{t,p_u}^R=\mathbf{y}_u\right)&=
			\frac{1}{(2n)^u(2n-1)^{2Ru}}\left(1+O((2n-1)^{-D})\right)\\
			& =\eta_R^{\otimes u}(\{(\mathbf{y}_1,\ldots,\mathbf{y}_u)\}) \left(1+O((2n-1)^{-D})\right).
		\end{align*}
		Summing these errors proves the total-variation	bound for one relator. Taking the product over the different relators and using independence proves that Equation \eqref{eq: separated window comparison} holds.
	\end{proof}
	We are now ready to prove the weak ergodic theorem we need.
	\begin{prop}\label{prop: weak ergodic theorem}
		Let $f\geq1$ and let $\tau\in\{1,\ldots,m\}^f$. Let $(R_\ell)_{\ell\geq1}$ be a sequence of non-negative integers such that $R_\ell=O(\log(\ell))$. For every $R\geq0$, let $\Phi_R:\mathrm{NB}_R^f\to\R$ be a function and denote by $S_{R,\tau}(\Phi_{R})$ the associated average from Definition \ref{defn: average over tau admissible roots}. Suppose that there are constants $C,A<\infty$, independent of $R$, such that
		\begin{equation}\label{eq: polynomial bound}
			\|\Phi_R\|_\infty\leq C(1+R)^A \text{ for all }R\ge 1.
		\end{equation}
		Then
		\[
		S_{R_\ell,\tau}(\Phi_{R_\ell}) -\int_{\mathrm{NB}_{R_\ell}^f} \Phi_{R_\ell}\,d\eta_{R_\ell}^{\otimes f} \xrightarrow[\ell\to\infty]{\P_{\ell}}0.
		\] 
	\end{prop}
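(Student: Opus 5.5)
We prove convergence in probability by a second-moment (Chebyshev) argument. Write $R=R_\ell$, $\Phi=\Phi_{R_\ell}$, $S=S_{R,\tau}(\Phi)$ and $\bar\Phi=\int\Phi\,d\eta_R^{\otimes f}$. It suffices to show that $\E_\ell[S]-\bar\Phi\to 0$ and $\mathrm{Var}_\ell(S)\to 0$. Throughout we fix a buffer $D=D_\ell\coloneqq\lceil c\log\ell\rceil$. The constant $c$ is chosen large enough that $(2n-1)^{-D}\,C(1+R_\ell)^A\to 0$; since $R_\ell=O(\log\ell)$, any $c>0$ already gives this, but we take $c$ large to absorb polylogarithmic factors. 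We then have $2R_\ell+D_\ell=O(\log\ell)$.

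\textbf{Step 1 (the mean).} Call $\mathbf{p}\in\mathcal{R}_\ell(\tau)$ \emph{separated} if it satisfies Condition \eqref{eq: D separated roots} with this $D$.
\begin{itemize}
\item Non-separated roots: for each pair $i\neq j$ with $t_i=t_j$, the number of such $\mathbf{p}$ is $O((2R+D)\ell^{f-1})$. These roots therefore form a proportion $O(\log\ell/\ell)$ of $\mathcal{R}_\ell(\tau)$, by Equation \eqref{eq: number admissible roots}. Their total contribution to $\E_\ell[S]-\bar\Phi$ is at most $O(\log\ell/\ell)\cdot 2\|\Phi\|_\infty=O((\log\ell)^{A+1}/\ell)\to 0$, using \eqref{eq: polynomial bound}.
\item Separated roots: here Lemma \ref{lem: separated windows ergodic} gives $|\E_\ell\Phi(W_{t_1,p_1}^R,\ldots,W_{t_f,p_f}^R)-\bar\Phi|\le 2\|\Phi\|_\infty C(2n-1)^{-D}\to 0$ uniformly in $\mathbf{p}$. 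The hypothesis $\ell>f(2R+D)$ of the lemma holds for large $\ell$.
\end{itemize}
Together these give $\E_\ell[S]\to\bar\Phi$.

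\textbf{Step 2 (the variance).} Expand
\[
\E_\ell[S^2]=|\mathcal{R}_\ell(\tau)|^{-2}\sum_{\mathbf{p},\mathbf{p}'}\E_\ell\bigl[\Phi(W_{\mathbf{p}})\Phi(W_{\mathbf{p}'})\bigr].
\]
Each pair $(\mathbf{p},\mathbf{p}')$ is a root vector of length $2f$ for the type sequence $\tau\tau$ (the concatenation), except for the repeated-root constraint across the two halves.
\begin{itemize}
\item Generic pairs: the concatenation $(\mathbf{p},\mathbf{p}')$ is admissible and separated, i.e.\ all roots of equal type are at cyclic distance $>2R+D$. Lemma \ref{lem: separated windows ergodic}, applied with $2f$ windows, shows that the joint law is within $C(2n-1)^{-D}$ in total variation of $\eta_R^{\otimes 2f}$. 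Hence
\[
\E_\ell\bigl[\Phi(W_{\mathbf{p}})\Phi(W_{\mathbf{p}'})\bigr]=\bar\Phi^2+O\bigl(\|\Phi\|_\infty^2(2n-1)^{-D}\bigr).
\]
\item Remaining pairs: these form a proportion $O(\log\ell/\ell)$ of all pairs, and each is bounded by $\|\Phi\|_\infty^2=O((\log\ell)^{2A})$.
\end{itemize}
Thus $\E_\ell[S^2]=\bar\Phi^2+o(1)$. Combined with Step 1, this gives $\mathrm{Var}_\ell(S)=\E_\ell[S^2]-(\E_\ell S)^2\to 0$, and Chebyshev's inequality concludes.

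\textbf{Main obstacle.} The delicate point is the uniformity of Lemma \ref{lem: separated windows ergodic}. Its constant $C$ must not depend on $R$ or $D$, only on $f$ and $n$, and we need it simultaneously for $2f$ windows with $R$ growing. Inspecting its proof, the error factor is $\prod_i(1+\theta_i)$ with $|\theta_i|\le 2n(2n-1)^{-D}$ and at most $2f$ factors, together with the ratio $|\mathcal{C}_\ell|/(2n-1)^\ell=1+O((2n-1)^{-\ell})$ from Equation \eqref{eq: number of cyclically reduced words}. Neither depends on $R$. Summing over configurations gives a total-variation bound because the estimate is multiplicative relative to $\eta_R^{\otimes u}$. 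We will check these points explicitly and then choose $c$ so that $(\log\ell)^{2A}(2n-1)^{-D_\ell}\to 0$.
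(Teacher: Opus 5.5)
Your proposal is correct and follows essentially the same route as the paper's proof. Both arguments split root vectors, and pairs of root vectors viewed as $2f$ windows of type $\tau\tau$, into separated and non-separated ones, apply Lemma \ref{lem: separated windows ergodic} to the separated ones, bound the rest trivially by $\|\Phi_R\|_\infty$, and conclude with Chebyshev. The differences are cosmetic: you take the buffer $D=\lceil c\log\ell\rceil$ instead of the paper's $D=\lfloor\sqrt\ell\rfloor$ (both work, since $\|\Phi_R\|_\infty^2$ grows only polylogarithmically), and you work with $\E_\ell[S^2]$ rather than with covariances.

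One bookkeeping point: deducing $\mathrm{Var}_\ell(S)\to0$ from $\E_\ell[S^2]=\bar\Phi^2+o(1)$ needs $|\E_\ell S-\bar\Phi|\cdot\|\Phi\|_\infty\to0$, not just $\E_\ell S-\bar\Phi\to0$, because $\bar\Phi$ may grow with $\ell$. Your explicit rates in Step 1 do give this.
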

	\begin{proof}
		In order to simplify the notation, let us denote $R=R_\ell$, $D=\lfloor\sqrt\ell\rfloor$, $M_\ell=\|\Phi_R\|_\infty$ and $I_\ell=\int\Phi_R\,d\eta_R^{\otimes f}$. Note that, since we assume that $R=O(\log\ell)$ and $f$ is fixed, we have $2f(2R+D)<\ell$ for all sufficiently large $\ell$.

		Let us say that a $\tau$-admissible root vector $\mathbf{p}\in\mathcal{R}_\ell(\tau)$ is \emph{separated} if it satisfies the condition from Equation \eqref{eq: D separated roots}. Note that for each fixed pair $i\neq j$ with $t_i=t_j$, there are at most $\ell\bigl(2(2R+D)+1\bigr)\ell^{f-2}$ root vectors that are $\tau$-admissible for which $d_\ell(p_i,p_j)\leq2R+D$. Summing over the at most $f(f-1)/2$ relevant pairs gives
		\begin{equation*}\label{eq: bound for nonseparated roots}
			\#\{\mathbf{p}\in\mathcal{R}_\ell(\tau):
			\mathbf{p}\text{ is not separated}\} = O\left((R+D)\ell^{f-1}\right).
		\end{equation*}
		Thanks to \eqref{eq: number admissible roots}, we have
		\begin{equation}\label{eq: proportion nonseparated roots}
			\frac{	\#\{\mathbf{p}\in\mathcal{R}_\ell(\tau):
				\mathbf{p}\text{ is not separated}\}}{|\mathcal{R}_\ell(\tau)|}=O\left(\frac{R+D}{\ell}\right).
		\end{equation}
		
		We can apply the same counting argument to pairs of root vectors $\mathbf{p},\mathbf{q}\in\mathcal{R}_\ell(\tau)$. Set
		\[ \mathbf{u}=(p_1,\ldots,p_f,q_1,\ldots,q_f) \ \ \text{ and }\ \ \ 
		\tau^{(2)}=(t_1,\ldots,t_f,t_1,\ldots,t_f).
		\]
		The same argument, now applied to the $2f$ centers in $\mathbf{u}$, gives
		\begin{equation}\label{eq: proportion nonseparated root pairs}
			\frac{
				\#\left\{
				(\mathbf{p},\mathbf{q})\in\mathcal{R}_\ell(\tau)^2:
				\begin{array}{c}
					d_\ell(u_i,u_j)\leq 2R+D \text{ for some } i\neq j\\
					\text{such that }t_i^{(2)}=t_j^{(2)}
				\end{array} \right\}
			}{|\mathcal{R}_\ell(\tau)|^2}=O\left(\frac{R+D}{\ell}\right).
		\end{equation}
		
		For each $\mathbf{p}=(p_1,\ldots,p_f)\in\mathcal{R}_\ell(\tau)$, let us define $X_{\mathbf{p}}\coloneqq \Phi_R\left(W_{t_1,p_1}^R,\ldots,W_{t_f,p_f}^R\right)$. If $\mathbf{p}$ is separated, then we can use Lemma \ref{lem: separated windows ergodic} to conclude that
		\[
		|\E\left(X_{\mathbf{p}}\right)-I_\ell| = O\left(M_\ell(2n-1)^{-D}\right).
		\]
		For a non-separated $\mathbf{p}$, we can bound $|\E \left(X_{\mathbf{p}}\right)-I_\ell|\leq2M_\ell$. Averaging these two bounds and using Equation \eqref{eq: proportion nonseparated roots} to bound the number of non-separated root vectors, we obtain that
		\begin{equation}\label{eq:ergodic bound}
			\left|
			\E\left( S_{R,\tau}(\Phi_R)\right)-I_\ell	\right|	=O\left(M_\ell 	\left( 	(2n-1)^{-D}+\frac{R+D}{\ell}\right)\right).
		\end{equation}
		We now estimate the variance of $S_{R,\tau}(\Phi_R)$. Suppose that the combined $2f$ centers belonging to the root vectors $\mathbf{p}$ and $\mathbf{q}$ are separated on each relator. Applying Equation \eqref{eq: separated window comparison} to these $2f$ windows shows that their joint law is within a constant factor of $(2n-1)^{-D}$ in total variation of the probability measure $\eta_R^{\otimes2f}$. In the latter law, the first $f$ windows and	the last $f$ windows are independent. It follows that
		\[
		|\mathrm{Cov}(X_{\mathbf{p}},X_{\mathbf{q}})|= O\left(M_\ell^2(2n-1)^{-D}\right).
		\]
		For the rest of the pairs, we can use the bound 
		\[
		\left|\mathrm{Cov}(X_{\mathbf{p}},X_{\mathbf{q}})\right|\leq\|X_{\mathbf{p}}-\E\left( X_{\mathbf{p}}\right)\|_\infty \|X_{\mathbf{q} }-\E\left( X_{\mathbf{q}}\right)\|_\infty \leq 4M_\ell^2.
		\]
		Using Equation \eqref{eq: proportion nonseparated root pairs} to bound the number of non-separated root pairs, we see that
		\begin{align}\label{eq: variance bound}
			\mathrm{Var}\left(S_{R,\tau}(\Phi_R)\right)&=\frac{1}{|\mathcal{R}_\ell(\tau)|^2}\sum_{\mathbf{p},\mathbf{q}\in\mathcal{R}_\ell(\tau)}\notag
			\mathrm{Cov}(X_{\mathbf{p}},X_{\mathbf{q}})\\
			&=O\left(M_\ell^2 \left((2n-1)^{-D}+\frac{R+D}{\ell}\right)\right).
		\end{align}

	By \eqref{eq: polynomial bound} we have $M_\ell=O((1+\log\ell)^A)$. Since $D=\lfloor\sqrt\ell\rfloor$, we have
	\[
	M_\ell^2\left((2n-1)^{-D}+\frac{R+D}{\ell}\right) \xrightarrow[\ell\to \infty]{}0.
	\]	
	Equations \eqref{eq:ergodic bound} and \eqref{eq: variance bound} then imply that
		\[
		\left|\E \left(S_{R_\ell,\tau}(\Phi_{R_\ell})\right)-I_\ell \right|\xrightarrow[\ell\to \infty]{}0 \text{ and }\mathrm{Var}\left(
		S_{R_\ell,\tau}(\Phi_{R_\ell})\right)\xrightarrow[\ell\to\infty]{}0.
		\]
		Fix $\varepsilon>0$. Then for all sufficiently large $\ell$ we have
		\[
		\left|\E \left(S_{R_\ell,\tau}(\Phi_{R_\ell})\right)-I_\ell\right|<\frac{\varepsilon}{2}.
		\]
		Therefore it follows from Chebyshev's inequality that
		\begin{align*}
			\P\left(\left|S_{R_\ell,\tau}(\Phi_{R_\ell})-I_\ell\right|	>\varepsilon	\right)	&\leq	\P\left(	\left|	S_{R_\ell,\tau}(\Phi_{R_\ell})	-\E \left(S_{R_\ell,\tau}(\Phi_{R_\ell})\right)	\right|
			>\frac{\varepsilon}{2}	\right)\\
			&\leq	\frac{4}{\varepsilon^2}	\mathrm{Var}\left(S_{R_\ell,\tau}(\Phi_{R_\ell})	\right)	\xrightarrow[\ell\to \infty]{}0.
		\end{align*}
		This is what we wanted to prove.
	\end{proof}
	\subsection{Definition of the observable function}\label{subsection:definition of observable}
	Let $k\geq 1$. Let us fix a non-crossing partition $\pi\in\mathrm{NC}(k)$ and a map ${\typemap}:\pi\to\{1,\ldots,m\}$. Set
	\begin{equation}\label{eq: choice of window radius}
		R_\ell\coloneqq 2k\rho_\ell,
	\end{equation}
 where $\rho_\ell$ is defined as in Equation \eqref{eq: rho ell}. The dependence on $k$ is not made explicit in the notation. We use $R_\ell$ as the radius of the relator windows below. On the event $\mathcal{G}_\ell$ of Proposition \ref{prop: lifted face geometry}, this choice ensures that a window centered at any selected boundary position contains every other selected position belonging to the same face--graph-block incidence, by 	Lemma \ref{lem: graph block reference position}. In particular, it satisfies
	$R_\ell\xrightarrow[\ell\to \infty]{}\infty$ and $R_\ell=O(\log(\ell))$. Hence, for all sufficiently large $\ell$, we have $2R_\ell+1<\ell$ and $k(2R_\ell+1)<\ell/2$.
	
	 Recall that we already have the expansion
	\[
	Z_{\ell}(\pi,{\typemap}) = \ell^{\mathrm{diag}(\pi)}
	\sum_{\mathbf{F}\in \Omega_{\ell}(\pi,{\typemap})} \sum_{\mathbf{e}\in \mathcal{E}(\mathbf{F})} \prod_{i\in \mathcal{O}_\pi} c_{F_{[i]_\pi}}(e_i)c_{F_{[i+1]_\pi}}(e_i)
	\]
	by Lemma \ref{lemma: partition off diagonal} on the event $\mathcal{G}_\ell$. Recall also that we defined
	\[
	\freesum{\pi} \coloneqq \sum_{B\in \mathcal{B}(\Gamma_\pi)}|V(B)|.
	\]
	The fact that $\pi$ is non-crossing ensures by Lemma \ref{lem: noncrossing_partitions} that $\freesum{\pi}+\mathrm{diag}(\pi)=k$.
	
	For every block $B\in\mathcal{B}(\Gamma_\pi)$, put $r_B\coloneqq |V(B)|$.
 \mynomenclature[O]{$r_B$}{The cardinality of $V(B)$, for $B\in \mathcal{B}(\Gamma_\pi)$}
 By Lemma \ref{lem: noncrossing_partitions}, every graph block is either a bridge traversed once in each direction or a simple cycle whose edges occur once. Enumerate the vertices of $B$ in the associated cyclic order $C_0^B,C_1^B,\ldots,C_{r_B-1}^B$.
 \mynomenclature[O]{$C_i^B$}{The $i$-th vertex in $V(B)$ for $B\in\mathcal{B}(\Gamma_\pi)$, where $0\leq i<r_B$ and the ordering has been chosen so that $(C_i^B,C_{i+1}^B)\in E(B)$ for every $0\leq i<r_B$. The indices are meant to be read cyclically}
 As in the preceding pages of the paper, indices are meant to be read cyclically modulo $r_B$. Choose the cyclic numbering so that the transitions belonging to $B$ are
	\[
	C_0^B\to C_1^B\to\cdots \to C_{r_B-1}^B\to C_0^B.
	\]
	For each $i$, let $\nu_B(i)\in\mathcal{O}_\pi$
 \mynomenclature[O]{$\nu_B(i)$}{The index in $\mathcal{O}_\pi\subseteq \{0,\dots,k-1\}$ corresponding to the class $C_i^B$, so that $C_i^B=[\nu_B(i)]_\pi$ for all $B\in \mathcal{B}(\Gamma_\pi)$ and $0\leq i<r_B$}
 be the corresponding index in the original trace expansion. Thus $[\nu_B(i)]_\pi=C_i^B$ and $[\nu_B(i)+1]_\pi=C_{i+1}^B$. For each $\mathbf{e}\in \mathcal{E}(\mathbf{F})$, we write $e_{B,i}\coloneqq e_{\nu_B(i)}$.
	
	 Let us define
	\[
	t_{B,i}\coloneqq{\typemap}(C_i^B), \text{ for each } 0\leq i<r_B,
	\]
	so that $t_{B,i}$ tells us which type of relator the face $F_{C_i^B}$ has as contour. Let us choose once and for all an ordering of the set
	\[
	\{(B,i):B\in\mathcal{B}(\Gamma_\pi),\ 0\leq i<r_B\},
	\]
 \mynomenclature[O]{$(B,i)$}{An arbitrary element of the set $\{(B,i):B\in \mathcal{B}(\Gamma_\pi),0\leq i<r_B\}$, which is ordered through an arbitrary choice of bijection with the set $\{1,\dots,\freesum{\pi}\}$}
	and let us denote
	\begin{equation}\label{eq: tau}
		\tau\coloneqq(t_{B,i})_{B,i} \in\{1,\ldots,m\}^{\freesum{\pi}}.
	\end{equation}
 \mynomenclature[O]{$(t_{B,i})_{B,i}$}{The tuple in $\{1,\dots,m\}^{\freesum{\pi}}$ given by $t_{B,i}=\typemap(C_i^B)$ for every $(B,i)$, where $\mathbf{F}\in\Omega_\ell(\pi,\typemap)$ is fixed}
	We use this same ordering for the coordinates of a $\tau$-admissible root vector $\mathbf{p}=(p_{B,i})_{B,i}$ and for the coordinates of an element in $\mathrm{NB}_R^{\freesum{\pi}}$.

	\begin{defn}\label{defn: currents}
		Let $T=\mathrm{Cay}(F_n,\mathcal{A})$ be the Cayley graph of the free
		group $F_n$ with respect to its symmetric generating set $\mathcal{A}$. Choose one orientation of every edge of $T$, and denote the resulting set of oriented representatives by $E^+(T)$.
 \mynomenclature[O]{$E^+(T)$}{The edge set of the Cayley graph $T$ of $F_n$, with an arbitrary choice of orientations}
		
		If $e$ is a directed edge of $T$, let $|e|\in E^+(T)$ be its underlying edge equipped with the chosen orientation. Define the vector $u_e\in\ell^2(E^+(T))$ by
		\[
		u_e\coloneqq
		\begin{cases}
			\delta_{|e|},
			&\text{if }e\text{ has the chosen orientation, and}\\
			-\delta_{|e|},
			&\text{if }e\text{ has the opposite orientation}.
		\end{cases}
		\]
 \mynomenclature[O]{$u_e$}{The unit vector in $\ell^2(E^+(T))$ associated to an arbitrary directed edge in $T$, corresponding to $\delta_e$ if $e\in E^+(T)$ and to $-\delta_{\lvert e\rvert}$ if $e\not\in E^+(T)$, where $\lvert e\rvert\in E^+(T)$ is the underlying edge equipped with the chosen orientation}
		For $g\in F_n$, define $gu_e\coloneqq u_{ge}$, for each directed edge $e$ of $T$, and extend this action linearly to $\ell^2(E^+(T))$.
	\end{defn}
	We now introduce a definition that associates to a non-backtracking tuple a geodesic path in the tree $T$. Recall the definition of the set $\mathrm{NB}_R$ of non-backtracking sequences in Equation \eqref{eq: non backtracking sequences}.
	\begin{defn}\label{def: signed current}
		Let $R\geq 0$ and let $\mathbf{y}=(y_{-R},\ldots,y_R)\in\mathrm{NB}_R$.
		\begin{itemize}
			\item Define for each $-R\leq i\leq R+1$ the vertex $v_{\mathbf{y},i}\in T$ given by
			\[
			v_{\mathbf{y},i}\coloneqq
			\begin{cases}
				y_{-1}^{-1}y_{-2}^{-1}\cdots y_i^{-1},
				&\text{if }-R\leq i<0,\\
				e_{F_n},
				&\text{if }i=0,\\
				y_0y_1\cdots y_{i-1},
				&\text{if }0<i\leq R+1.
			\end{cases}
			\]
 \mynomenclature[O]{$v_{\mathbf{y},i}$}{The element in $F_n$ corresponding to the $i$-th position determined by a non-backtracking path $\mathbf{y}\in\mathrm{NB}_R$. Thus, $v_{\mathbf{y},i}=y_0\cdots y_{i-1}$ if $0<i\leq R+1$, $v_{\mathbf{y},i}=y_{-1}^{-1}\cdots y_i^{-1}$ if $-R\leq i<0$ and $v_{\mathbf{y},0}=e_{F_n}$}
			Note that $v_{\mathbf{y},i+1}=v_{\mathbf{y},i}y_i$ for each $-R\leq i\leq R$.
			\item The directed path on $T$ with consecutive vertices $(v_{\mathbf{y},i})_{i=-R}^{R+1}$ is called the \emph{canonical window path} associated with $\mathbf{y}$. Since $\mathbf{y}$ is non-backtracking, its canonical window path is
			geodesic on $T$. Its \emph{central edge} is $\left(v_{\mathbf{y},0},v_{\mathbf{y},1}\right)=(e_{F_n},y_0)$.
			\item For $-R\leq i\leq R$, define its signed edge vector by $\varepsilon_{\mathbf{y}}(i) \coloneqq	u_{(v_{\mathbf{y},i},v_{\mathbf{y},i+1})}$.
 \mynomenclature[O]{$\varepsilon_{\mathbf{y}}(i)$}{The signed edge vector $u_{(v_{\mathbf{y},i},v_{\mathbf{y},i+1})}$ associated to $\mathbf{y}\in\mathrm{NB}_R$}
 In particular, we have $\varepsilon_{\mathbf{y}}(0)=u_{(e_{F_n},y_0)}$. Define its \emph{signed current} by
			\[
			b_{\mathbf{y}}\coloneqq \sum_{i=-R}^{R}\varepsilon_{\mathbf{y}}(i)\in \ell^2(E^+(T)).
			\]
 \mynomenclature[O]{$b_{\mathbf{y}}$}{The signed current of $\mathbf{y}\in\mathrm{NB}_R$, defined as the sum $\sum_{i=-R}^R\varepsilon_{\mathbf{y}}(i)\in \ell^2(E^+(T))$}
		\end{itemize}
	\end{defn}
	
	Suppose that $r$ windows form a cyclic chain and that their canonical paths are translated by $x_0,\ldots,x_{r-1}\in F_n$, where indices are read modulo $r$ and $x_r=x_0$. Their common absolute position in $T$ is irrelevant. Indeed, since the action of $F_n$ on $\ell^2(E^+(T))$ is unitary, setting $g_j\coloneqq x_0^{-1}x_j$, $0\leq j\leq r$, gives $g_0=g_r=e_{F_n}$ and
	\[
	\left\langle x_j\varepsilon_{\mathbf{y}_j}(0),x_{j+1}b_{\mathbf{y}_{j+1}}\right\rangle=\left\langle g_j\varepsilon_{\mathbf{y}_j}(0),g_{j+1}b_{\mathbf{y}_{j+1}}\right\rangle
	\]
	for every $0\leq j<r$. Thus we may normalize the first translated window to begin at $e_{F_n}$ without changing the product of the inner products. This justifies the next definition.
	\begin{defn}\label{defn: observables}
		Let $r\geq2$, let $R\geq0$, and let $(\mathbf{y}_0,\ldots,\mathbf{y}_{r-1})\in\mathrm{NB}_R^r$. Set $\mathbf{y}_r\coloneqq\mathbf{y}_0$, and $g_0=g_r\coloneqq e_{F_n}$. Define
		\[
		\Psi_{r,R}(\mathbf{y}_0,\ldots,\mathbf{y}_{r-1})	\coloneqq	\sum_{g_1,\ldots,g_{r-1}\in F_n}
		\prod_{j=0}^{r-1}	\left\langle	g_j\varepsilon_{\mathbf{y}_j}(0),	g_{j+1}b_{\mathbf{y}_{j+1}}
		\right\rangle.
		\]
 \mynomenclature[O]{$\Psi_{r,R}$}{The block map $\mathrm{NB}_R^{r}\to \R$}
		Define also	$\Phi_{\pi,R}:\mathrm{NB}_R^{\freesum{\pi}}\to\R$ by
		\begin{equation}\label{eq: definition of main observable}
			\Phi_{\pi,R}((\mathbf{y}_{B,j})_{B,j})
			\coloneqq
			\prod_{B\in\mathcal{B}(\Gamma_\pi)}
			\Psi_{r_B,R}
			(\mathbf{y}_{B,0},\ldots,\mathbf{y}_{B,r_B-1}).
		\end{equation}
 \mynomenclature[O]{$\Phi_{\pi,R}$}{The observable map $\mathrm{NB}_R^{\freesum{\pi}}\to \R$ obtained as the product $\prod_{B\in\mathcal{B}(\Gamma_\pi)}\Psi_{r_B,R}$ of the block maps}
	\end{defn}
	We see that the sum defining $\Psi_{r,R}$ is finite. Indeed, for every $0\leq j<r$, we have
	\begin{align*}
		\left\langle g_j\varepsilon_{\mathbf{y}_j}(0), g_{j+1}b_{\mathbf{y}_{j+1}}
		\right\rangle
		&=
		\sum_{q=-R}^{R} \left\langle g_j\varepsilon_{\mathbf{y}_j}(0),g_{j+1}\varepsilon_{\mathbf{y}_{j+1}}(q)
		\right\rangle.
	\end{align*}
	Because the path associated with $\mathbf{y}_{j+1}$ is geodesic, at most
	one term in this sum is non-zero, and it is either $1$ or $-1$. Once $g_j$ is fixed, each $q\in\{-R,\ldots,R\}$ determines at most one $g_{j+1}\in F_n$ for which the corresponding inner product is non-zero. Starting with $g_0=e_{F_n}$, there are consequently at most
	$(2R+1)^{r-1}$ tuples $(g_1,\ldots,g_{r-1})$ that contribute to $\Psi_{r,R}$. Since every contributing product has an absolute value of $1$, we obtain
	\[
	\|\Psi_{r,R}\|_\infty\leq(2R+1)^{r-1}.
	\]
	It follows from Equation \eqref{eq: definition of main observable} that
	\[
	\|\Phi_{\pi,R}\|_\infty \leq\prod_{B\in\mathcal{B}(\Gamma_\pi)}(2R+1)^{r_B-1}=(2R+1)^{|\pi|-1},
	\]
	where we used Lemma \ref{lem: counting size vertices blocks} in the last equality.
	The tuple $(\mathbf{y}_0,\ldots,\mathbf{y}_{r-1})\in\mathrm{NB}_R^r$ should be regarded as a collection of windows of radius $R$, one for each incidence in a graph block with $r$ vertices. The canonical path associated with $\mathbf{y}_j$ is a geodesic segment in the free Cayley tree $T$ that models these windows in the tree. For each $0\leq j<r$, the factor
	\[
	\left\langle g_j\varepsilon_{\mathbf{y}_j}(0), g_{j+1}b_{\mathbf{y}_{j+1}}\right\rangle
	\]
	tests whether the central edge of the $j$-th translated window occurs among the edges of the $(j+1)$-th translated window. 
	It follows that
	\[
	\prod_{j=0}^{r-1}\left\langle g_j\varepsilon_{\mathbf{y}_j}(0),g_{j+1}b_{\mathbf{y}_{j+1}} \right\rangle
	\]
	is non-zero precisely when the chosen translations place the $r$ windows in a cyclic chain: the central edge of every window must occur in the next window. In that case, the product is the product of the orientation signs of all the corresponding common edges. Thus, the value $\Psi_{r,R}(\mathbf{y}_0,\ldots,\mathbf{y}_{r-1})$ is the signed number of relative placements of these windows in $T$ which satisfy all the edge-identification conditions of the graph block.
	
	Finally,
	\[
	\Phi_{\pi,R}((\mathbf{y}_{B,j})_{B,j})= \prod_{B\in\mathcal{B}(\Gamma_\pi)}
	\Psi_{r_B,R} (\mathbf{y}_{B,0},\ldots,\mathbf{y}_{B,r_B-1})
	\]
	assembles the above over all graph blocks of $\Gamma_\pi$. The coordinate $\mathbf{y}_{B,j}$ is the window associated with the incidence of the face class $C_j^B$ with the graph block $B$. Since $\freesum{\pi}=\sum_{B\in\mathcal{B}(\Gamma_\pi)}r_B$, there is exactly one window coordinate for every such incidence. Consequently, $\Phi_{\pi,R}$ is the signed count of all local edge-identification patterns prescribed by $\pi$, expressed entirely in terms of the relator windows of radius $R$.
	
\begin{lem}
	\label{lem: comparison between Z and average of observables}
	Fix $k\ge 1$. Let $\pi\in\mathrm{NC}(k)$ with $|\pi|\geq2$, and let $\typemap:\pi\to\{1,\ldots,m\}$. Let $\tau$ and $\Phi_{\pi,R}$ be defined as in Equations \eqref{eq: tau} and \eqref{eq: definition of main observable}. Consider the event $\mathcal{G}_{\ell}$ of Proposition \ref{prop: lifted face geometry}.
	
	Then there exists a constant $C$, independent of $\ell$ and of the presentation of $G_{\ell}$, such that, for all sufficiently large $\ell$ we have 
	\[
\left|\frac{1}{\ell^k}Z_\ell(\pi,\typemap)	-	S_{R_\ell,\tau}(\Phi_{\pi,R_\ell})	\right|	\leq	C\frac{(2R_\ell+1)^{|\pi|-1}}{\ell}.	\]
	In particular, we have
	\[
	\frac{1}{\ell^k}Z_\ell(\pi,\typemap)- S_{R_\ell,\tau}(\Phi_{\pi,R_\ell})
	\xrightarrow[\ell\to\infty]{\P_{\ell}}0.
	\]
\end{lem}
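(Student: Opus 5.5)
On the event $\mathcal{G}_\ell$ I will build an explicit near-bijection between the terms of the expansion of Lemma \ref{lemma: partition off diagonal} and the $\tau$-admissible root vectors weighted by $\Phi_{\pi,R_\ell}$. Since $\freesum{\pi}+\mathrm{diag}(\pi)=k$ by Lemma \ref{lem: noncrossing_partitions}, dividing by $\ell^k$ gives
\[
\frac{1}{\ell^k}Z_\ell(\pi,\typemap)=\frac{1}{\ell^{\freesum{\pi}}}\sum_{\mathbf{F},\mathbf{e}}\prod_{i\in\mathcal{O}_\pi}c_{F_{[i]_\pi}}(e_i)c_{F_{[i+1]_\pi}}(e_i).
\]
As in the proof of Lemma \ref{lem:bound order l}, to each pair $(\mathbf{F},\mathbf{e})$ I attach its reference positions. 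For each incidence $(C_i^B,B)$ the natural choice is $p_{B,i}\coloneqq\mathrm{pos}_{F_{C_i^B}}(e_{B,i})$, namely the position on $\partial F_{C_i^B}$ of the edge selected for the transition $C_i^B\to C_{i+1}^B$. The other selected edge on $F_{C_i^B}$ inside $B$, namely $e_{B,i-1}$, lies by Lemma \ref{lem: graph block reference position} within distance $2k\rho_\ell=R_\ell$ of $p_{B,i}$. Hence it is seen inside the window $W^{R_\ell}_{t_{B,i},p_{B,i}}$.

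The core step is a local-to-global identity. Fix $\mathbf{p}=(p_{B,i})\in\mathcal{R}_\ell(\tau)$, and let $N(\mathbf{p})$ be the signed sum of $\prod_i c\,c$ over all $(\mathbf{F},\mathbf{e})$ with these reference positions. I want to show $N(\mathbf{p})=\Phi_{\pi,R_\ell}(W_{t_{B,i},p_{B,i}}^{R_\ell})$ outside an exceptional set of $\mathbf{p}$. To do this I process the blocks along the incidence tree $\mathcal{I}_\pi$ starting from the root face $F^0$, as in the spanning-tree argument of Lemma \ref{lem:bound order l}. Inside one block $B$, the faces $F_{C_1^B},\dots,F_{C_{r_B-1}^B}$ are obtained from $F_{C_0^B}$ by successive translations determined by where the central edge of window $i$ sits inside window $i+1$. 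The window of radius $R_\ell$ in $\partial F$ develops isometrically onto a geodesic segment of $\widetilde X_\ell$, and a ball of radius $\ell/2$ maps injectively to $T$ by Item \eqref{item: prob 5} of Proposition \ref{prop: lifted face geometry}. Therefore local placements in $\widetilde X_\ell$ of the chain of windows correspond bijectively, with the same orientation signs, to the placements $g_1,\dots,g_{r_B-1}$ in $T$ counted by $\Psi_{r_B,R_\ell}$. Closing the cycle, i.e.\ requiring that the last central edge lie on $F_{C_0^B}$, is exactly the factor $j=r_B-1$ of $\Psi$. Bridges are the case $r_B=2$. Taking the product over the blocks gives $\Phi_{\pi,R_\ell}$, since distinct blocks only share faces and impose independent constraints given the shared face.

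The mismatches arise in three ways. First, $\mathcal{R}_\ell(\tau)$ forbids coinciding roots on the same relator, whereas genuine configurations may produce them. Second, two distinct classes $C\neq D$ must have $F_C\neq F_D$, while a tree placement might produce equal faces. Third, there is the normalization $|\mathcal{R}_\ell(\tau)|$ versus $\ell^{\freesum{\pi}}$, which by \eqref{eq: number admissible roots} differ by a factor $1+O(1/\ell)$. In each of the first two cases some pair of reference positions is pinned within $O(R_\ell)$, or even to equality, of another, or the root vector is non-generic. So the exceptional $\mathbf{p}$ number $O(\ell^{\freesum{\pi}-1})$, and up to factors of $R_\ell$ these can be absorbed. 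On each exceptional $\mathbf{p}$ both $|N(\mathbf{p})|$ and $|\Phi|$ are at most $(2R_\ell+1)^{|\pi|-1}$, by the bound after Definition \ref{defn: observables} and the same counting for $N$. This yields the claimed error $C(2R_\ell+1)^{|\pi|-1}/\ell$, and since $R_\ell=O(\log\ell)$ it tends to $0$ on $\mathcal{G}_\ell$, whose probability tends to $1$.

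I expect the main obstacle to be the exact correspondence within a cyclic block: checking that the distinctness conditions and the coincidence of faces $F_C=F_D$ for $C\neq D$ occur only on the exceptional set. The first thing I would try is to show that equality of two faces, or two roots in the same relator being equal, forces the reference positions into an $O(R_\ell)$-neighbourhood, using Lemma \ref{lem: close overlap positions} and the bound $\rho_\ell$ on pieces. This should cost only one factor of $\ell$.
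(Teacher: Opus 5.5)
Your architecture matches the paper's proof: reference positions $p_{B,i}=\mathrm{pos}_{F_{C_i^B}}(e_{B,i})$, block-by-block reconstruction along the rooted incidence tree $\mathcal{I}_\pi$, and a sign-preserving correspondence between configurations $(\mathbf{F},\mathbf{e})$ and window placements $\mathbf{g}\in\mathcal{H}_{R}(\mathbf{p})$, exact except for non-admissible roots and coincident faces. The gap is in the step you flag as the main obstacle. As written, your bookkeeping cannot give the stated bound $C(2R_\ell+1)^{|\pi|-1}/\ell$. You bound the error by the number of exceptional $\mathbf{p}$ times $2(2R_\ell+1)^{|\pi|-1}$, and assert that there are $O(\ell^{\freesum{\pi}-1})$ exceptional $\mathbf{p}$. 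This fails already for $k=2$, $\pi=\{\{0\},\{1\}\}$, with both classes of the same type $t$. For every admissible $\mathbf{p}=(p_0,p_1)$ with $0<d_\ell(p_0,p_1)\le R_\ell$, the placement that identifies the two windows of $r_{t,\ell}$ along their overlap lies in $\mathcal{H}_{R}(\mathbf{p})$ and produces the coincidence $F_C=F_{C_*}$. So there are at least $2R_\ell\,\ell$ exceptional root vectors, and your estimate only gives $O(R_\ell^2/\ell)$. Your proposed mechanism, pinning two same-type reference positions within $O(R_\ell)$ via Lemma \ref{lem: close overlap positions}, has the same defect. It can be completed: take a coincident pair $F_C=F_D$ at minimal distance in $\Gamma_\pi$, so that the interior faces of a shortest path differ from $F_C$ and the lemma applies. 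But it costs a factor $R_\ell$ and yields only $O(R_\ell(2R_\ell+1)^{|\pi|-1}/\ell)$. That suffices for the convergence in probability, but not for the displayed inequality.

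The missing idea has two parts. First, count bad \emph{pairs} $(\mathbf{p},\mathbf{g})$ rather than bad $\mathbf{p}$. Since every term has modulus one, the discrepancy at $\mathbf{p}$ is at most the number of $\mathbf{g}\in\mathcal{H}_R(\mathbf{p})$ producing a coincidence. Second, bound these pairs by \emph{exact} pinning. Introduce the non-root classes along $\mathcal{I}_\pi$. For a class $C=C_i^B$ introduced at $(B,i)$, record the offset $u_C\in\{-R,\ldots,R\}$ at which the central edge of window $(B,i-1)$ sits in window $(B,i)$. The full $\mathbf{p}$ together with these $|\pi|-1$ offsets determines $\mathbf{g}$ and all faces. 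Suppose $F_C=F_D$ with $D$ introduced earlier. The faces introduced before $C$, and in particular the edge $e$ of $F_{C_{i-1}^B}$ at position $p_{B,i-1}$, are determined without using $p_{B,i}$. Since $e\subseteq\partial F_D$ and $\partial F_D$ is embedded, $p_{B,i}=\mathrm{pos}_{F_D}(e)-u_C$ is forced. This gives at most $\binom{|\pi|}{2}\ell^{\freesum{\pi}-1}(2R+1)^{|\pi|-1}$ bad pairs, and no closeness argument is needed.

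Two smaller corrections. Item \eqref{item: prob 5} gives injectivity of the projection from $T$ on a \emph{connected} union of translated windows with fewer than $\ell/2$ edges; equivalently, the union of the boundary windows of a block is a tree. It does not give injectivity on a ball of radius $\ell/2$, which contains whole relator cycles. Also, for the reconstructed faces to lie in $\Omega_\ell(\pi,\typemap)$ you must check $h_\ell\neq 0$ for consecutive faces. This holds because two distinct faces meet in a single path, along which their relative orientation is constant.
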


	\begin{proof}
		Fix a presentation in the event $\mathcal{G}_\ell$, with $\ell$ sufficiently large. To alleviate the notation, let us write $R=R_\ell$, $f=\freesum{\pi}$ and $d=\mathrm{diag}(\pi)$. Then, we have $d+f=k$ and
		\[
		\sum_{B\in\mathcal{B}(\Gamma_\pi)}(r_B-1)=|\pi|-1.
		\]
		
	Recall that for each $1\leq j\leq m$ we write the $j$-th defining relator of $G_{\ell}$ as $r_{j,\ell}=s_{j,0}s_{j,1}\cdots s_{j,\ell-1}$, with $s_{j,l}\in\mathcal{A}$ for each $0\leq l<\ell$. Recall also that we read the indices of the letters cyclically. Define the corresponding prefix elements in $G_\ell$ by $w_{j,0}=e_{G_\ell}$ and $w_{j,l}=s_{j,0}\cdots s_{j,l-1}$ for each $1\leq l\leq\ell$. Since $w_{j,\ell}=e_{G_\ell}$, we extend this notation periodically to all $l\in\Z$. We choose each representative $F^0_{j,\ell}$ with boundary starting at $e_{G_\ell}$.
		
		For each $\mathbf{p}\in(\Z/\ell\Z)^f$, set $\mathbf{y}_{B,i}\coloneqq W^R_{t_{B,i},p_{B,i}}$ and $\mathbf{y}_{B,r_B}\coloneqq\mathbf{y}_{B,0}$. Let us define $\mathcal{H}_{R}(\mathbf{p})$ to be the set of all tuples $\mathbf{g}=(g_{B,i})_{B\in\mathcal{B}(\Gamma_\pi),\,0\leq i<r_B}$ such that
 \begin{itemize}
 \item $g_{B,i}\in F_n$ for all $B,i$ and $g_{B,0}=g_{B,r_B}=e_{F_n}$ for all $B$, and
 \item $\prod_{B\in\mathcal{B}(\Gamma_\pi)}
			\prod_{i=0}^{r_B-1}
			\left\langle
			g_{B,i}\varepsilon_{\mathbf{y}_{B,i}}(0),
			g_{B,i+1}b_{\mathbf{y}_{B,i+1}}
			\right\rangle\neq0$,
 \end{itemize}
		where $g_{B,r_B}=g_{B,0}$. Define also the sets
		\[
		\mathcal{D}_{\ell}\coloneqq
		\left\{(\mathbf{p},\mathbf{g}):\mathbf{p}\in(\Z/\ell\Z)^f,\ \mathbf{g}\in\mathcal{H}_R(\mathbf{p}) \right\},\ \ 
		\mathcal{D}_{\ell}^{\mathrm{admissible}}\coloneqq \left\{ (\mathbf{p},\mathbf{g})\in\mathcal{D}_{\ell}: \mathbf{p}\in\mathcal{R}_{\ell}(\tau) \right\},
		\]
		and
		\[
		\mathcal{K}_{\ell}\coloneqq \left\{ (\mathbf{F},\mathbf{e}): \mathbf{F}\in\Omega_{\ell}(\pi,\typemap),\ \mathbf{e}\in\mathcal{E}(\mathbf{F})
		\right\}.
		\]
		The following claim is our main tool to compare the expansion of $Z_\ell$ with the average of the observable.
		\begin{claim}\label{claim: bijection construction}
			There is a map
			\[\Xi:\mathcal{D}_{\ell}\longrightarrow (\mathcal{F}_{\ell})^\pi \times \left(E^+(\widetilde X_\ell)\right)^{|\mathcal{O}_\pi|}
			\]
			with the following properties. Let us write $\Xi(\mathbf{p},\mathbf{g}) =((F_C)_{C\in\pi},(e_i)_{i\in\mathcal{O}_\pi})$. Then:
			\begin{enumerate}
				\item\label{item: claim1} For every $B\in\mathcal{B}(\Gamma_\pi)$ and all $0\leq i<r_B$, we have
				\[
				\left\langle g_{B,i}\varepsilon_{\mathbf{y}_{B,i}}(0), g_{B,i+1}b_{\mathbf{y}_{B,i+1}} \right\rangle = c_{F_{C_i^B}}(e_{B,i}) c_{F_{C_{i+1}^B}}(e_{B,i}).
				\]
				\item\label{item: claim2} Writing
				\[
				\mathcal{D}_\ell^{\mathrm{distinct}}\coloneqq \left\{(\mathbf{p},\mathbf{g})\in\mathcal{D}_\ell: F_C\neq F_D 	\text{ for every distinct }C,D\in\pi\right\},
				\]
				the restriction of $\Xi$ to	$\mathcal{D}_\ell^{\mathrm{distinct}}$ is a bijection between $\mathcal{D}_\ell^{\mathrm{distinct}}$ and $\mathcal{K}_\ell$.
			\end{enumerate}
		\end{claim}
		
		\begin{proof}[Proof of Claim \ref{claim: bijection construction}]
	Consider $(\mathbf{p},\mathbf{g})\in\mathcal{D}_\ell$. We proceed to explain a construction that gives a collection of faces $(F_C)_{C\in\pi}$ and a collection of edges $(e_i)_{i\in\mathcal{O}_\pi}$, and then set $\Xi(\mathbf{p},\mathbf{g}) \coloneqq ((F_C)_{C\in\pi},(e_i)_{i\in\mathcal{O}_\pi})$.
			
	Recall the definition of the incidence tree $\mathcal{I}_\pi$ given in the paragraph following Definition \ref{def: block cut graph}. Let us root $\mathcal{I}_\pi$ at the vertex $C_*=[0]_\pi$, and define $F_{C_*}\coloneqq F^0_{\typemap(C_*),\ell}$.
			
		We process the graph blocks in an order compatible with this rooted tree. Each block $B$ is reached through its parent class $C_a^B$, whose face has already been determined. Write $F_{C_a^B}=h_{B,a}F^0_{t_{B,a},\ell}$ for some $h_{B,a}\in G_\ell$. Define
			\[
			z_B\coloneqq h_{B,a}w_{t_{B,a},p_{B,a}}q_\ell(g_{B,a})^{-1}\in G_\ell,
			\]
			and, for every $0\leq i<r_B$, set $h_{B,i}\coloneqq z_Bq_\ell(g_{B,i})w_{t_{B,i},p_{B,i}}^{-1}$, and $F_{C_i^B}\coloneqq h_{B,i}F^0_{t_{B,i},\ell}$.

		For $i=a$, this agrees with the face already prescribed. Since $\mathcal{I}_\pi$ is a tree, this procedure determines every face $F_C$ uniquely.
			
Given $j\in\mathcal{O}_\pi$, let $(B,i)$ be the unique pair such that $j=\nu_B(i)$. Define $e_j=e_{B,i}$ to be the edge of $\partial F_{C_i^B}$ at position $p_{B,i}$, equipped with its chosen orientation in $E^+(\widetilde X_\ell)$. Set $\mathbf{F}\coloneqq(F_C)_{C\in\pi}$, $\mathbf{e}\coloneqq(e_j)_{j\in\mathcal{O}_\pi}$, and define $\Xi(\mathbf{p},\mathbf{g})\coloneqq(\mathbf{F},\mathbf{e})$.
			
	Let us first prove that the equality in Item \eqref{item: claim1} holds. For each $B\in\mathcal{B}(\Gamma_\pi)$ and every $0\leq i<r_B$, the definition of $h_{B,i}$ ensures that, for all $-R\leq u\leq R+1$, we have
			\[
			h_{B,i}w_{t_{B,i},p_{B,i}+u}=z_Bq_\ell\left(g_{B,i}v_{\mathbf{y}_{B,i},u}\right).
			\]
			Therefore, the map $x\mapsto z_Bq_\ell(x)$ sends the canonical window path of $\mathbf{y}_{B,i}$ translated by $g_{B,i}$ to the boundary window of $F_{C_i^B}$ with root $p_{B,i}$. The central edge of the translated window maps to $e_{B,i}$, traversed in the boundary direction of $F_{C_i^B}$. Since $(\mathbf{p},\mathbf{g})\in\mathcal{D}_\ell$, we have
			\[
			\left\langle g_{B,i}\varepsilon_{\mathbf{y}_{B,i}}(0), g_{B,i+1}b_{\mathbf{y}_{B,i+1}} \right\rangle\neq 0
			\]
			for every $i$. Thus, the central edge of each translated window belongs to the next translated window. The union of these window paths is therefore connected and has at most $r_B(2R+1)\leq k(2R+1)<\ell/2$ edges. Item \eqref{item: prob 5} of Proposition \ref{prop: lifted face geometry} implies that $x\mapsto z_Bq_\ell(x)$ is injective on this union. Hence, there is an identification between the union of the translated window paths and the union of the corresponding boundary windows.
			
			For each $0\leq i<r_B$, we have
			\[
			\left\langle
			g_{B,i}\varepsilon_{\mathbf{y}_{B,i}}(0), g_{B,i+1}b_{\mathbf{y}_{B,i+1}}\right\rangle = \sum_{u=-R}^{R} \left\langle g_{B,i}\varepsilon_{\mathbf{y}_{B,i}}(0), g_{B,i+1}\varepsilon_{\mathbf{y}_{B,i+1}}(u)
			\right\rangle.
			\]
			Exactly one summand is non-zero, and the corresponding edge maps to $e_{B,i}$. The two traversal directions become the boundary directions of $F_{C_i^B}$ and $F_{C_{i+1}^B}$ along that edge. Their relative sign is therefore $c_{F_{C_i^B}}(e_{B,i})c_{F_{C_{i+1}^B}}(e_{B,i})$. This product is independent of the chosen orientation of $e_{B,i}$, since reversing that orientation changes both coefficients by a factor of $-1$. We conclude that
			\[
			\left\langle g_{B,i}\varepsilon_{\mathbf{y}_{B,i}}(0), g_{B,i+1}b_{\mathbf{y}_{B,i+1}} \right\rangle = c_{F_{C_i^B}}(e_{B,i})c_{F_{C_{i+1}^B}}(e_{B,i}).
			\]
			This is the desired equality.
			
			Next let us prove that Item \eqref{item: claim2} holds. The identification established in the preceding paragraphs shows that $e_{B,i}\in \partial F_{C_i^B}\cap\partial F_{C_{i+1}^B}$ for every $B$ and $i$. Indeed, the central edge of the $i$-th translated window belongs to the next translated window, and both occurrences map to $e_{B,i}$.
			
		If $(\mathbf{p},\mathbf{g})\in\mathcal{D}_\ell^{\mathrm{distinct}}$, then the associated tuple $\mathbf{F}$ belongs to $\Omega_\ell(\pi,\typemap)$. To see this, note that consecutive distinct faces share an edge. By Item \eqref{item: prob 2} of Proposition \ref{prop: lifted face geometry}, their intersection is a single path. Their relative boundary orientation is constant along that path, so we have
			\[
			|h_\ell(F_{C_i^B},F_{C_{i+1}^B})| = \#\{\text{edges in } \partial F_{C_i^B}\cap\partial F_{C_{i+1}^B}\} >0.
			\]
			Also, by construction, we have $\mathrm{type}(F_{C_i^B}) =t_{B,i} = \typemap(C_i^B)$ and $F_{C_*}=F^0_{\typemap(C_*),\ell}$. The diagonal factors are non-zero because every face has boundary of length $\ell$. Thus $\mathbf{F}\in\Omega_\ell(\pi,\typemap)$ and $\mathbf{e}\in\mathcal{E}(\mathbf{F})$. We conclude that the restriction $\Xi|_{\mathcal{D}_\ell^{\mathrm{distinct}}}: \mathcal{D}_\ell^{\mathrm{distinct}}\longrightarrow\mathcal{K}_\ell$ is well-defined.
			
	To show that this restriction is a bijection, consider $(\mathbf{F},\mathbf{e})\in\mathcal{K}_\ell$ and define $p_{B,i}\coloneqq \mathrm{pos}_{F_{C_i^B}}(e_{B,i})$ and $\mathbf{p}[\mathbf{e}]\coloneqq(p_{B,i})_{B,i}$. Since $\pi\in\mathrm{NC}(k)$, we have
			$|\mathcal{O}_\pi|=\freesum{\pi}$, so $\mathbf{p}[\mathbf{e}]\in(\Z/\ell\Z)^f$.
			
		Every lifted face of type $j$ can be written uniquely as $F=hF^0_{j,\ell}$, with $h\in G_\ell$. Write $\beta_F(p)\coloneqq hw_{j,p}$ for $p\in\Z/\ell\Z$. Thus the directed boundary edge at position $p$ is $(\beta_F(p),\beta_F(p+1))$. By Item \eqref{item: prob 2} of Proposition \ref{prop: lifted face geometry}, the map $\beta_F:\Z/\ell\Z\to\widetilde{X}_\ell^{(0)}$ is injective.
			
	For each graph block $B$ and $0\leq i<r_B$, let $\mathbf{y}_{B,i}\coloneqq W^R_{t_{B,i},p_{B,i}}$. Lemma \ref{lem: graph block reference position} ensures that this window contains both edges $e_{B,i-1}$ and $e_{B,i}$. Fix a block $B$, and let $D_B$ be the union of these $r_B$ boundary windows, viewed as a subgraph of $\widetilde{X}_\ell^{(1)}$. Two successive windows share the selected edge $e_{B,i}$, so $D_B$ is connected. Moreover, we have
			\[
			|E(D_B)|\leq r_B(2R+1)\leq k(2R+1)<\ell/2.
			\]
			If $D_B$ contained a cycle, a simple cycle in $D_B$ would have a non-empty freely reduced label representing the identity in $G_\ell$, of length less than $\ell/2$. This contradicts Item \eqref{item: prob 5} of Proposition \ref{prop: lifted face geometry}. Therefore $D_B$ is a tree. Hence there is a unique label-preserving embedding $\iota_B:D_B\to T=\mathrm{Cay}(F_n,\mathcal{A})$ that maps $\beta_{F_{C_0^B}}(p_{B,0})$ to $e_{F_n}$.
			
		Define $g_{B,i}\coloneqq\iota_B\left(\beta_{F_{C_i^B}}(p_{B,i})\right)$ for each $B,i$. Then $g_{B,0}=e_{F_n}$, and for each $-R\leq u\leq R+1$, we have
			\[
			\iota_B\left(\beta_{F_{C_i^B}}(p_{B,i}+u)\right)= g_{B,i}v_{\mathbf{y}_{B,i},u}.
			\]
			Since the selected edge $e_{B,i}$ belongs to the next boundary window, the corresponding inner product is non-zero. Thus $\mathbf{g}=(g_{B,i})_{B,i}\in\mathcal{H}_R(\mathbf{p}[\mathbf{e}])$.
			
			Set $\Upsilon(\mathbf{F},\mathbf{e}) \coloneqq(\mathbf{p}[\mathbf{e}],\mathbf{g})$. The construction of $\Xi$ recovers the prescribed root face and the relative placements of all windows, and hence recovers $(\mathbf{F},\mathbf{e})$. In particular, $\Upsilon$ takes values in $\mathcal{D}_\ell^{\mathrm{distinct}}$.
			Conversely, starting with $(\mathbf{p},\mathbf{g})\in\mathcal{D}_\ell^{\mathrm{distinct}}$, the roots are recovered as the positions of the selected edges. For each block, the injective projection of its translated windows has inverse equal to the normalized embedding $\iota_B$, so the translations $\mathbf{g}$ are recovered as well.
		
		Therefore $\Upsilon$ and $\Xi\vert_{\mathcal{D}_\ell^{\mathrm{distinct}}}$ are inverses of each other. This finishes the proof.
		\end{proof}

	From the definition of $\mathcal{D}_\ell^{\mathrm{admissible}}$ and distributivity of multiplication over addition, we have
		\begin{equation}\label{eq:average sum rewritten}
			|\mathcal{R}_\ell(\tau)|S_{R,\tau}(\Phi_{\pi,R}) = \sum_{(\mathbf{p},\mathbf{g})\in\mathcal{D}_\ell^{\mathrm{admissible}}}
			\prod_{B\in\mathcal{B}(\Gamma_\pi)}
			\prod_{i=0}^{r_B-1}\left\langle g_{B,i}\varepsilon_{\mathbf{y}_{B,i}}(0),g_{B,i+1}b_{\mathbf{y}_{B,i+1}} \right\rangle.
		\end{equation}
		On the other hand, using Claim \ref{claim: bijection construction} and the expression of $Z_\ell(\pi,\typemap)$ from Lemma \ref{lemma: partition off diagonal}, we get
		\begin{equation}\label{eq:yet another rewriting of Z}
			\ell^{-d}Z_\ell(\pi,\typemap)= \sum_{(\mathbf{p},\mathbf{g})\in\mathcal{D}_\ell^{\mathrm{distinct}}}\prod_{B\in\mathcal{B}(\Gamma_\pi)} \prod_{i=0}^{r_B-1}\left\langle g_{B,i}\varepsilon_{\mathbf{y}_{B,i}}(0), g_{B,i+1}b_{\mathbf{y}_{B,i+1}}\right\rangle.
		\end{equation}
		Hence, because all terms in these sums have absolute value $1$, it suffices to bound the cardinality of the symmetric difference of
		$\mathcal{D}_\ell^{\mathrm{distinct}}$ and $\mathcal{D}_\ell^{\mathrm{admissible}}$. We do so by bounding the cardinalities of $\mathcal{D}_\ell\backslash\mathcal{D}_\ell^{\mathrm{distinct}}$ and $\mathcal{D}_\ell\backslash\mathcal{D}_\ell^{\mathrm{admissible}}$ in the next two claims.
		
		\begin{claim}\label{claim: D distinct}
			We have
			\[
			\left| \mathcal{D}_\ell\backslash\mathcal{D}_\ell^{\mathrm{distinct}}\right| \leq \binom{|\pi|}{2}\ell^{f-1}(2R+1)^{|\pi|-1}.
			\]
		\end{claim}
		
		\begin{proof}[Proof of Claim \ref{claim: D distinct}]
			We first fix an order in which to introduce the face classes. Root the incidence tree $\mathcal{I}_\pi$ at $C_*=[0]_\pi$ as above. For each block $B$, let $C_{a(B)}^B$ be its parent class. Starting from this class, follow the cyclic order $C_{a(B)}^B,C_{a(B)+1}^B,\ldots,C_{a(B)+r_B-1}^B$, where the indices are read modulo $r_B$, and omit the final transition back to $C_{a(B)}^B$.
			
			Process the blocks in a fixed order consistent with the rooted incidence tree, introducing all their remaining classes in the displayed order.
			Since $\mathcal{I}_\pi$ is a tree, every class $C\neq C_*$ is introduced exactly once. For each such class, denote its introduction incidence by $(B(C),i(C))$, so that $C=C_{i(C)}^{B(C)}$, and write $P(C)\coloneqq C_{i(C)-1}^{B(C)}$ for the preceding class. By construction, $P(C)$ is introduced before $C$.
			
			Let $(\mathbf{p},\mathbf{g})\in\mathcal{D}_\ell$. For each $C\neq C_*$, put $B=B(C)$ and $i=i(C)$, and record the unique value $u_C\in\{-R,\ldots,R\}$ satisfying
			\[
			\left\langle g_{B,i-1}\varepsilon_{\mathbf{y}_{B,i-1}}(0), g_{B,i}\varepsilon_{\mathbf{y}_{B,i}}(u_C)\right\rangle\neq0.
			\]
			Such an index exists since $\mathbf{g}\in\mathcal{H}_R(\mathbf{p})$, and it is unique because the canonical window path of $\mathbf{y}_{B,i}$ is geodesic.
		
			There are $|\pi|-1$ recorded such values, so there are at most $(2R+1)^{|\pi|-1}$ possible records, independently of the root vector.
			
			Let us first explain why the full root vector $\mathbf{p}$ and the values $(u_C)_{C\neq C_*}$ determine at most one element of $\mathcal{D}_\ell$. First, we have that the root face $F_{C_*}$ is prescribed. Suppose now that $C=C_i^B$ is the next class to be introduced and that $F_{P(C)}=F_{C_{i-1}^B}$ has already been determined. Its selected outgoing edge is the boundary edge at position $p_{B,i-1}$. The recorded offset requires this edge to coincide with the boundary edge of $F_C$ at position $p_{B,i}+u_C\text{ mod }\ell.$ The type of $F_C$, this boundary position, and the already determined edge specify at most one lifted face $F_C$. Indeed, the letters at the two positions determine whether the boundary directions agree or disagree, and the corresponding boundary vertex then determines the lift uniquely. Proceeding in the chosen order determines all the faces.
		
		Note that the same data also determine the normalized translations uniquely. In a block $B$, consider one of its recorded transitions from $C_{i-1}^B$ to $C_i^B$. Let $a=(\mathbf{y}_{B,i-1})_0$ and $b=(\mathbf{y}_{B,i})_{u_C}$. If the corresponding edges can be identified, either $b=a$ and their directions agree, or $b=a^{-1}$ and their directions disagree. In these two cases, respectively, we have
			\[
			g_{B,i}=g_{B,i-1}v_{\mathbf{y}_{B,i},u_C}^{-1} \ \ \text{ or }\ \ \ 
			g_{B,i}=g_{B,i-1}v_{\mathbf{y}_{B,i},u_C+1}^{-1}.
			\]
			These alternatives are determined by the letters. The recorded transitions form a path through all vertices of $B$, so these relations determine all its translations up to a common left translation. The normalization $g_{B,0}=e_{F_n}$ determines them uniquely. Thus, the full root vector and the offsets determine at most one element of $\mathcal{D}_\ell$.
			
		Now, let us fix distinct classes $D,C\in\pi$, with $D$ preceding $C$ in the chosen order, and consider configurations satisfying $F_C=F_D$. Let us write $B=B(C)$ and $i=i(C)$. We claim that the offsets and all root coordinates except $p_{B,i}$ determine at most one such configuration.
			
		Indeed, the preceding reconstruction determines every face introduced before $C$ without using $p_{B,i}$. In particular, it determines $F_D$, $F_{P(C)}$, and the selected outgoing edge $e$ of $F_{P(C)}$ at position $p_{B,i-1}$.
		
		If $F_C=F_D$, then $e$ belongs to $\partial F_D$. By Item \eqref{item: prob 2} of Proposition \ref{prop: lifted face geometry}, the boundary contour of $F_D$ is embedded, so there is a unique position $q=\mathrm{pos}_{F_D}(e)\in\Z/\ell\Z$. By the definition of $u_C$ and the construction of $\Xi$, this same edge occurs on $F_C$ at position $p_{B,i}+u_C$. Consequently,	$p_{B,i}=q-u_C\text{ mod }\ell$. This recovers the missing root coordinate. The full root vector and the offsets then determine at most one configuration, as shown above.
			
	There are at most $\ell^{f-1}$ choices for the remaining root coordinates and $(2R+1)^{|\pi|-1}$ choices for the offsets. Thus, for the fixed pair $D,C$, there are at most $\ell^{f-1}(2R+1)^{|\pi|-1}$ configurations with $F_C=F_D$. Every element of $\mathcal{D}_\ell\backslash\mathcal{D}_\ell^{\mathrm{distinct}}$ has at least one such pair. Summing over the at most $\binom{|\pi|}{2}$ pairs proves the claim.
		\end{proof}
		
		\begin{claim}\label{claim: D admiss}
			We have
			\[
			\left|\mathcal{D}_\ell\backslash\mathcal{D}_\ell^{\mathrm{admissible}} \right| \leq \binom{f}{2}\ell^{f-1}(2R+1)^{|\pi|-1}.
			\]
		\end{claim}
		
		\begin{proof}[Proof of Claim \ref{claim: D admiss}]
			There are at most $\binom{f}{2}\ell^{f-1}$ non-admissible root vectors, since non-admissibility requires equality of two root coordinates with the same relator type. For each fixed root vector, the offset encoding from the preceding proof gives at most $(2R+1)^{|\pi|-1}$ possible normalized translation tuples in $\mathcal{H}_R(\mathbf{p})$. The multiplication of these bounds gives the bound that appears in the claim.
		\end{proof}
		
	Using Equations \eqref{eq:average sum rewritten} and \eqref{eq:yet another rewriting of Z}, Claims \ref{claim: D distinct} and \ref{claim: D admiss}, and the inequality
		\[
		\left|\mathcal{D}_\ell^{\mathrm{distinct}} \triangle\mathcal{D}_\ell^{\mathrm{admissible}}\right|\leq \left| \mathcal{D}_\ell\backslash\mathcal{D}_\ell^{\mathrm{distinct}}\right|+\left|\mathcal{D}_\ell\backslash\mathcal{D}_\ell^{\mathrm{admissible}}	\right|,
		\]
		we obtain
		\[
		\left|\ell^{-d}Z_\ell(\pi,\typemap)- |\mathcal{R}_\ell(\tau)|S_{R,\tau}(\Phi_{\pi,R})\right|=O\left(\ell^{f-1}(2R+1)^{|\pi|-1}\right).
		\]
		Finally, using that $|\mathcal{R}_\ell(\tau)|=\ell^f+O(\ell^{f-1})$ and $\|\Phi_{\pi,R}\|_\infty\leq(2R+1)^{|\pi|-1}$, we get
		\begin{align*}
			\left|\frac{Z_\ell(\pi,\typemap)}{\ell^k}-S_{R,\tau}(\Phi_{\pi,R})\right|&\leq\frac1{\ell^f}\left|\frac{Z_\ell(\pi,\typemap)}{\ell^d} -|\mathcal{R}_\ell(\tau)|S_{R,\tau}(\Phi_{\pi,R}) \right|\\
			&\ \ +\left|1-\frac{|\mathcal{R}_\ell(\tau)|}{\ell^f}\right|\left|S_{R,\tau}(\Phi_{\pi,R})\right|\\
			&=O\left(\frac{(2R+1)^{|\pi|-1}}{\ell}\right).
		\end{align*}
		This proves the claimed inequality on $\mathcal{G}_\ell$. Since $R_\ell=O(\log\ell)$, the last bound tends to zero as $\ell\to\infty$. Together with the fact that $\P_\ell(\mathcal{G}_\ell)\xrightarrow[\ell\to \infty]{}1$, this also proves the convergence in probability.
	\end{proof}

 \subsection{Computing the expectation of the observable}\label{subsection:expected value of observable}
	Next, let us compute the value of
	\[
	\int_{\mathrm{NB}_R^{\freesum{\pi}}} \Phi_{\pi,R}\ d\eta_R^{\otimes\freesum{\pi}}.
	\]
	The observable $\Phi_{\pi,R}$ is a product of observables $\Psi_{r_B,R}$, one for each graph block $B\in\mathcal{B}(\Gamma_\pi)$. Since these factors depend on disjoint window coordinates and $\eta_R^{\otimes\freesum{\pi}}$ is a product measure, their expectations factor as a product. It is therefore enough to compute the value of the quantity
	\[
	\int_{\mathrm{NB}_R^r}\Psi_{r,R}\ d\eta_R^{\otimes r}
	\]
	for one graph block with $r$ vertices, where the map $\Psi_{r,R}$ is given in Definition \ref{defn: observables}. 

 Recall that we are working with the Cayley graph $T$ of the free group $F_n$. For $e,f\in E^+(T)$, define $d_T(e,f)$ to be the number of
	steps from $e$ to $f$ along the unique edge path joining them. Thus $d_T(e,e)=0$, two distinct edges sharing a vertex have distance $1$, and in general $d_T(e,f)=q$ if the unique edge path from $e$ to $f$ consists of $q+1$ edges. If $e\neq f$, orient the unique shortest edge path from $e$ to $f$. Let $\sigma(e,f)\in\{-1,1\}$ be the product of the signs with which this path traverses $e$ and $f$, relative to their chosen orientations in $E^+(T)$. Set $\sigma(e,e)=1$, and define
	\begin{equation}\label{eq: observable kernel}
		K(e,f)\coloneqq\frac{\sigma(e,f)}{n(2n-1)^{d_T(e,f)}}, \text{ and }
 K_R(e,f)\coloneqq K(e,f)\mathds{1}_{\{d_{T}(e,f)\le R\}}
	\end{equation}
 for each $R\ge 0$.
 
 The computation of the expectation of $\Psi_{r,R}$ will have as an underlying component the next lemma. 

\begin{lem}\label{lem: averaging one window}
		Recall the setting and notation from Definitions \ref{defn: stationary non-backtracking law on NBR}, \ref{defn: currents} and \ref{def: signed current}. Then for every $e,f\in E^+(T)$ and every $R\geq0$ we have
		\begin{equation}\label{eq: averaging one window}
			\int_{\mathrm{NB}_R}\sum_{g\in F_n}\left\langle u_e,gb_{\mathbf{y}}\right\rangle
			\left\langle g\varepsilon_{\mathbf{y}}(0),u_f\right\rangle
			\ d\eta_R(\mathbf{y})=K_R(e,f).
		\end{equation}
	\end{lem}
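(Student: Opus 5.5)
We fix $\mathbf{y}\in\mathrm{NB}_R$ and first evaluate the inner sum over $g\in F_n$ deterministically. We then average over $\mathbf{y}$ using the uniformity of $\eta_R$.

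\textbf{Step 1: reduce to translates matching $f$.} The factor $\langle g\varepsilon_{\mathbf{y}}(0),u_f\rangle$ vanishes unless $g$ maps the central edge $(e_{F_n},y_0)$ onto the underlying edge of $f$. Write $f=(a,ax)$ with $a\in F_n$, $x\in\mathcal{A}$, using the chosen orientation. There are then at most two contributing $g$, namely:
\begin{itemize}
\item $g=a$, which requires $y_0=x$ and contributes the sign $+1$;
\item $g=ax$, which requires $y_0=x^{-1}$ and contributes the sign $-1$.
\end{itemize}
In either case the translated central edge coincides with $f$ as a geometric edge, with orientation sign $\pm1$.

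The second factor $\langle u_e,gb_{\mathbf{y}}\rangle$ is nonzero exactly when $e$ lies on the translated geodesic window path $g\cdot(v_{\mathbf{y},i})_{i=-R}^{R+1}$. Suppose $e$ occurs as the step with index $i$. The unique edge path from $f$ to $e$ is then the portion of the window between index $0$ and index $i$. Hence $d_T(e,f)=|i|\le R$.

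Orient this path from $f$ to $e$. The product of the two window signs equals $\sigma(e,f)$: the window traverses both edges in a single consistent direction, and the path from $f$ to $e$ traverses both either along or against that direction. The integrand is therefore $\sigma(e,f)$ times the indicator that $d_T(e,f)\le R$ and that the translated window passes through both $f$ (centrally) and $e$. In particular the integral vanishes when $d_T(e,f)>R$, which matches the truncation in $K_R$.

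\textbf{Step 2: compute the probability under $\eta_R$.} Assume $d\coloneqq d_T(e,f)\le R$ and $e\neq f$. The edge path from $f$ to $e$ is a reduced word of length $d+1$, say $z_0z_1\cdots z_d$, with $z_0$ the letter of $f$ read in the direction pointing towards $e$. The window, read in one of its two directions, must contain this word starting at the central position.
\begin{itemize}
\item If the path leaves $f$ forward, we need $(y_0,\dots,y_d)=(z_0,\dots,z_d)$. Here $d\le R$ keeps the indices inside the window.
\item If the path leaves $f$ backward, we need $(y_0,y_{-1},\dots,y_{-d})$ to equal $(z_0^{-1},z_1^{-1},\dots,z_d^{-1})$.
\end{itemize}
Exactly one of these patterns corresponds to each of the two possible $g$ from Step~1, and the two events are disjoint.

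Under $\eta_R$, which is uniform on $\mathrm{NB}_R$, the law of any block of $d+1$ consecutive letters is uniform on non-backtracking words of that length. Prescribing such a block therefore has probability $\frac{1}{2n(2n-1)^{d}}$. Adding the two disjoint events gives total probability $\frac{2}{2n(2n-1)^d}=\frac{1}{n(2n-1)^d}$, and multiplying by the sign $\sigma(e,f)$ yields $K_R(e,f)$.

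For $e=f$ the same count applies with $d=0$: one of the two letters $y_0\in\{x,x^{-1}\}$ occurs, each with probability $1/(2n)$, and each contributes $+1$. This gives $1/n=K_R(e,e)$.

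\textbf{Main obstacle.} The delicate step is the sign bookkeeping in Step~1, namely checking that the product of the two orientation signs equals $\sigma(e,f)$ in both cases $g=a$ and $g=ax$. I would verify this by noting that the window path is a geodesic through both edges. Reversing the reading direction flips both signs at once, so their product is unchanged and equals the sign of traversal along the oriented path from $f$ to $e$.
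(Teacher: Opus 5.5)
Your proposal is correct and follows essentially the same route as the paper. For fixed $\mathbf{y}$, the factor $\langle g\varepsilon_{\mathbf{y}}(0),u_f\rangle$ pins down the single translate $g$. The window must then contain the geodesic segment from $f$ to $e$ in one of its two reading directions; each direction has $\eta_R$-probability $\frac{1}{2n(2n-1)^{d_T(e,f)}}$ and contributes the sign $\sigma(e,f)$. You spell out the sign bookkeeping and the block-marginal computation under $\eta_R$ more carefully than the paper, which states them briefly.
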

	\begin{proof}
		Fix $e,f\in E^+(T)$. Given $\mathbf{y}=(y_{-R},\dots,y_R)\in \mathrm{NB}_R$, the inner product $\left\langle g\varepsilon_{\mathbf{y}}(0),u_f\right\rangle$ is non-zero only when $(g,gy_0)$ and $f$ have the same underlying undirected edge. This uniquely determines the translating element $g$, so the sum
 \[\sum_{g\in F_n}\left\langle u_e,gb_{\mathbf{y}}\right\rangle\left\langle g\varepsilon_{\mathbf{y}}(0),u_f\right\rangle\]
 consists of only one term, which we write $\left\langle u_e,g_0b_{\mathbf{y}}\right\rangle\left\langle g_0\varepsilon_{\mathbf{y}}(0),u_f\right\rangle$. If the inner product $\left\langle u_e,g_0b_{\mathbf{y}}\right\rangle$ is non-zero then necessarily $h\coloneqq d_T(e,f)\leq R$. There are two possible directions in which the geodesic segment joining $e$ and $f$ can be traversed. For each direction, the required segment has probability $\frac{1}{2n(2n-1)^h}$ under the stationary non-backtracking law. In both directions, the product of the two signs in the term $\left\langle u_e,gb_{\mathbf{y}}\right\rangle\left\langle g\varepsilon_{\mathbf{y}}(0),u_f\right\rangle$ is $\sigma(e,f)$. Therefore, the left-hand side of \eqref{eq: averaging one window} is
		\[
		2\frac{\sigma(e,f)}{2n(2n-1)^h}=K(e,f).
		\]
		If $d_T(e,f)>R$, both sides vanish. This proves the identity.
	\end{proof}
	
\begin{lem}	\label{lem: expectation of the observable}
 For every $r\geq2$ we have
		\begin{equation}\label{eq: expectation of one graph block}
			\lim_{R\to \infty}\int_{\mathrm{NB}_R^r} \Psi_{r,R}(\mathbf{y}_0,\ldots,\mathbf{y}_{r-1})\ d\eta_R^{\otimes r}= \frac{1}{(n-1)^{r-1}}.
		\end{equation}
 	As a consequence, for every $\pi\in \mathrm{NC}(k)$ with $|\pi|\ge 2$ we have
		\begin{equation}\label{eq: expectation observable limit}
			\lim_{R\to\infty}\int_{\mathrm{NB}_R^{\freesum{\pi}}}
			\Phi_{\pi,R}\ d\eta_R^{\otimes\freesum{\pi}}
			=\left(\frac{1}{n-1}\right)^{|\pi|-1}.
		\end{equation}
	\end{lem}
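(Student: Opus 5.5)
The plan is to reduce the expectation of $\Psi_{r,R}$ to a diagonal matrix coefficient of the $r$-th power of the kernel $K_R$ from \eqref{eq: observable kernel}, using Lemma \ref{lem: averaging one window} once for each window. Then I let $R\to\infty$ and identify the limiting kernel $K$ with a multiple of an orthogonal projection. The consequence \eqref{eq: expectation observable limit} then follows at once. Indeed, $\Phi_{\pi,R}$ is a product over graph blocks of factors depending on disjoint coordinates, so its integral against the product measure $\eta_R^{\otimes\freesum{\pi}}$ factors. Each factor tends to $(n-1)^{1-r_B}$, and Lemma \ref{lem: counting size vertices blocks} gives $\sum_B(r_B-1)=|\pi|-1$.

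\textbf{Step 1: resolution of identity.} In each factor $\langle g_j\varepsilon_{\mathbf y_j}(0),g_{j+1}b_{\mathbf y_{j+1}}\rangle$ I insert $\sum_{e_j\in E^+(T)}u_{e_j}\langle u_{e_j},\cdot\rangle$. I then regroup the product by windows rather than by transitions. For $1\le j\le r-1$, window $j$ contributes $\sum_{g_j}\langle u_{e_{j-1}},g_jb_{\mathbf y_j}\rangle\langle g_j\varepsilon_{\mathbf y_j}(0),u_{e_j}\rangle$. Only window $0$ has its translation fixed at $g_0=e_{F_n}$, so its central edge $e_0=(e_{F_n},(\mathbf y_0)_0)$ is forced. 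Since the windows are independent under $\eta_R^{\otimes r}$ and every sum is finite, Lemma \ref{lem: averaging one window} applied to windows $1,\dots,r-1$ yields
\[\int\Psi_{r,R}\,d\eta_R^{\otimes r}=\int_{\mathrm{NB}_R}\sum_{e_{r-1}}\langle u_{e_{r-1}},b_{\mathbf y_0}\rangle\,\bigl(K_R^{\,r-1}\bigr)(e_0,e_{r-1})\,d\eta_R(\mathbf y_0).\]
For the remaining window I use $F_n$-invariance of $K_R$ to move the translation freedom onto $\mathbf y_0$. Summing over the $2n$ choices of $(\mathbf y_0)_0$ and applying the computation of Lemma \ref{lem: averaging one window} once more, I expect to obtain $\int\Psi_{r,R}\,d\eta_R^{\otimes r}=c_n\,(K_R^{\,r})(e,e)$ for any fixed edge $e$. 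Here the constant $c_n$ should equal $n$, accounting for the $n$ edge orbits and the two orientations. I will fix $c_n$ by checking the case $r=1$ formally, using $K(e,e)=1/n$.

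\textbf{Step 2: identify $K$.} I claim that $K=(n-1)^{-1}P$, where $P$ is the orthogonal projection of $\ell^2(E^+(T))$ onto $\ker\partial_1$, the space of $\ell^2$ flows on the tree. To prove this, I would check two things. First, $K u_f$ lies in $\ker\partial_1$: at every vertex, the signed contributions of the $2n-1$ edges one step farther from $f$ cancel against the edge one step nearer, which matches the factor $(2n-1)^{-d}$. Second, $K$ is self-adjoint and acts as $(n-1)^{-1}$ on $\ker\partial_1$, which reduces to the identity $K^2=(n-1)^{-1}K$ on basis vectors. As a consistency check, $P(e,e)=\dim_{L(F_n)}\ker\partial_1/n=(n-1)/n$, which agrees with $K(e,e)=1/n$. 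Granting the claim, $K^r(e,e)=(n-1)^{-r}P(e,e)=\frac{1}{n(n-1)^{r-1}}$, so $n K^r(e,e)=(n-1)^{1-r}$, as required.

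\textbf{Step 3 (main obstacle): the limit $R\to\infty$.} The absolute row sums $\sum_f|K(e,f)|$ diverge, so dominated convergence over paths is unavailable. I will argue instead in $\ell^2$. Each row $K(e,\cdot)$ lies in $\ell^2$, since $\sum_d 2(2n-1)^d\,n^{-2}(2n-1)^{-2d}<\infty$, and $K_R(e,\cdot)\to K(e,\cdot)$ in $\ell^2$. The difficulty is that I need a uniform bound on $\|K_R\|$, or some substitute for it, to pass the limit through the $r$-fold product. I would first try writing $K_R=K\circ\chi_R$ locally and showing directly that the truncated kernels are uniformly bounded, for instance by a Schur test with radial weights $(2n-1)^{-d/2}$. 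If that fails, I would expand $K_R^r(e,e)$ as an iterated sum and use the sign cancellations from Step 2 shell by shell to bound the tails by $O(R^{r}(2n-1)^{-R/2})$.
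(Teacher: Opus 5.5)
Your proposal is correct, and it takes a different route from the paper.

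\textbf{How the two routes differ.} The paper never integrates out $\mathbf y_0$. It fixes $\mathbf y_0$ as the restriction of a bi-infinite geodesic $\gamma$ and integrates out $\mathbf y_1,\dots,\mathbf y_{r-1}$ with Lemma \ref{lem: averaging one window}. It then evaluates the limit pointwise in $\gamma$, using the eigen-identity $\sum_f K(e,f)c_\gamma(f)=(n-1)^{-1}c_\gamma(e)$ for the non-$\ell^2$ geodesic flow, and finishes with dominated convergence over $\gamma$.

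You also integrate out $\mathbf y_0$, which gives the exact finite-$R$ identity $\int\Psi_{r,R}\,d\eta_R^{\otimes r}=n\,(K_R^{\,r})(e,e)$. This is right. The central edge of $\mathbf y_0$ lies in each of the $n$ edge orbits with probability $1/n$. The last use of Lemma \ref{lem: averaging one window} produces the closing factor $K_R(f,e)$. The diagonal entries do not depend on the orbit, by symmetry. For $r=2$, both sides equal $\frac1n\bigl(1+2\sum_{d=1}^R(2n-1)^{-d}\bigr)$.

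\textbf{What each route buys.} Yours explains the constant conceptually: $(n-1)K$ is the projection onto $\ell^2$-cycles of the tree, with trace $n-1=\beta_1^{(2)}(F_n)$, matching Remark \ref{rem: comparison random matrix theorem}. The paper's route is more elementary. It also shows that the conditional expectation given $\mathbf y_0$ tends to $(n-1)^{1-r}$ for every $\mathbf y_0$.

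\textbf{Loose end in Step 2.} You only need $K^2=(n-1)^{-1}K$ entrywise, which gives $nK^r(e,e)=(n-1)^{1-r}$. Identifying $(n-1)K$ with $P$ is a bonus. That identification needs your dimension count plus faithfulness of the trace, since $K^2=(n-1)^{-1}K$ alone only yields a subprojection. You have not actually verified $K^2=(n-1)^{-1}K$, and this is where the paper's contraction identity is needed. Write $Ku_f=\frac1n\E[c_\gamma]$, where $\gamma$ is a random geodesic through $f$ extended by uniform non-backtracking steps in both directions. Then $\sum_g K(e,g)K(g,f)=\frac1n\E\bigl[\sum_g K(e,g)c_\gamma(g)\bigr]=(n-1)^{-1}K(e,f)$. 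Alternatively, use $P=I-\partial_1^*(2n-A)^{-1}\partial_1$ together with a Green function proportional to $(2n-1)^{-d}$.

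\textbf{Step 3 is not a real obstacle.} The row sums of $|K|$ diverge, but the closed sums $\sum_{f_1,\dots,f_{r-1}}|K(e,f_1)|\cdots|K(f_{r-1},e)|$ converge. Every edge of the subtree spanned by a closed route is traversed at least twice. So there are at most $C_r(1+h)^{A_r}(2n-1)^{h/2}$ tuples of total length $h$, each of weight at most $(2n-1)^{-h}$. This is exactly the counting in Claim \ref{claim: unrestricted sum}. It gives dominated convergence directly, with error $O\bigl((1+R)^{A_r}(2n-1)^{-R/2}\bigr)$, and it also justifies the Fubini rearrangements behind $K^r=(n-1)^{1-r}K$. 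Your Schur-test alternative works too but is unnecessary.
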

 \begin{proof}
 For all $r\geq 2$, $R\geq 0$ and $\mathbf{y}_0\in\mathrm{NB}_R$, write
 \[F_{r,R}(\mathbf{y}_0)\coloneqq \int_{\mathrm{NB}_R^{r-1}}\Psi_{r,R}(\mathbf{y}_0,\dots,\mathbf{y}_{r-1})d\eta_R^{\otimes r-1}(\mathbf{y}_1,\dots,\mathbf{y}_{r-1}).\]
 Let $\Omega\subseteq \partial T\times \partial T$ be the subspace of all bi-infinite geodesics in $T$, and let $\eta$ be the probability measure on $\Omega$ satisfying 
 \[\eta(\{\gamma\in \Omega:\gamma\rvert_{[-R,R]}=\mathbf{y}\})=\eta_R(\{\mathbf{y}\})\]
 for all $\mathbf{y}\in\mathrm{NB}_R$ and $R\geq 0$. It is then immediate that
 \[\int_{\mathrm{NB}_R}F_{r,R}(\mathbf{y}_0)d\eta_R(\mathbf{y_0})=\int_{\Omega}F_{r,R}(\gamma\rvert_{[-R,R]})d\eta(\gamma).\]
 In what follows, we will prove that
 \begin{enumerate}
 \item $F_{r,R}(\gamma\rvert_{[-R,R]})$ converges to $(n-1)^{-(r-1)}$ as $R\to \infty$ for every $\gamma\in \Omega$, and
 \item the sequence $(F_{r,R})_{R\geq 0}$ is uniformly bounded by a constant $C>0$.
 \end{enumerate}
Let $\gamma\in \Omega$ and write $\mathbf{y}_0\coloneqq \gamma\rvert_{[-R,R]}\in\mathrm{NB}_R$. To make the computation less cumbersome, write $\mathbf{Y}_i\coloneqq (\mathbf{y}_i,\dots,\mathbf{y}_{r-1})$ and $\mathbf{g}_i=(g_i,\dots,g_{r-1})$ for every $1\leq i\leq r-1$. We have
\begin{align*}
 F_{r,R}(\mathbf{y}_0)&=\int\sum_{\mathbf{g}_1\in F_n^{r-1}}\langle \varepsilon_{\mathbf{y}_0}(0),g_1b_{\mathbf{y}_1}\rangle \langle g_1\varepsilon_{\mathbf{y}_1}(0),g_2b_{\mathbf{y}_2}\rangle \cdots \langle g_{r-1}\varepsilon_{\mathbf{y}_{r-1}}(0),b_{\mathbf{y}_0}\rangle d\eta_R^{\otimes r-1}(\mathbf{Y}_1).
\end{align*}
By writing 
\[\langle g_1\varepsilon_{\mathbf{y}_1}(0),g_2b_{\mathbf{y}_2}\rangle=\sum_{e_1\in E^+(T)}\langle g_1\varepsilon_{\mathbf{y}_1}(0),u_{e_1}\rangle\langle u_{e_1},g_2b_{\mathbf{y}_2}\rangle,\]
we obtain
\begin{align*}
 F_{r,R}(\mathbf{y}_0)=\int\sum_{e_1}\sum_{\mathbf{g}_2}&\left(\int\sum_{g_1}\langle \varepsilon_{\mathbf{y}_0}(0),g_1b_{\mathbf{y}_1}\rangle \langle g_1\varepsilon_{\mathbf{y}_1}(0),u_{e_1}\rangle d\eta_R(\mathbf{y}_1)\right)\cdot\\
 &\langle u_{e_1},g_2b_{\mathbf{y}_2}\rangle \cdots \langle g_{r-1}\varepsilon_{\mathbf{y}_{r-1}}(0),b_{\mathbf{y}_0}\rangle d\eta_R^{\otimes r-2}(\mathbf{Y}_2).
\end{align*}
The oriented bi-infinite geodesic $\gamma$ in $T$ determines, for each $e\in E^+(T)$, a signed coefficient
	\[ c_\gamma(e)\coloneqq
	\begin{cases}
		1,&\gamma\text{ traverses }e\text{ in its chosen orientation},\\
		-1,&\gamma\text{ traverses }e\text{ in the opposite orientation, and }\\
		0,&e\text{ does not belong to }\gamma.
	\end{cases}
	\]
In particular, we can write $ \varepsilon_{\mathbf{y}_0}(0)=c_\gamma(e_0)u_{e_0}$ for some $e_0\in E^+(T)$. Using Lemma \ref{lem: averaging one window}, we get
\[F_{r,R}(\mathbf{y}_0)=\sum_{e_1\in B(e_0,R)}c_\gamma(e_0)K(e_0,e_1)\int \sum_{\mathbf{g}_2}\langle u_{e_1},g_2b_{\mathbf{y}_2}\rangle \cdots \langle g_{r-1}\varepsilon_{\mathbf{y}_{r-1}}(0),b_{\mathbf{y}_0}\rangle d\eta_R^{\otimes r-1}(\mathbf{Y}_2).\]
Iterating the process, we find that
\begin{align*}
 F_{r,R}(\mathbf{y}_0)=\sum_{e_1,\dots,e_{r-2}}c_\gamma(e_0)\prod_{i=1}^{r-2}K(e_{i-1},e_{i})\int \sum_{g_{r-1}}\langle u_{e_{r-2}},g_{r-1}b_{\mathbf{y}_{r-1}}\rangle\langle g_{r-1}\varepsilon_{\mathbf{y}_{r-1}}(0),b_{\mathbf{y}_0}\rangle d\eta_R(\mathbf{y}_{r-1}),
\end{align*}
where the sum is over tuples satisfying $e_{i}\in B(e_{i-1},R)$ for each $1\leq i\leq r-2$. Expanding the current $b_{\mathbf{y}_0}$ and applying Lemma \ref{lem: averaging one window} once again, we arrive to
\[F_{r,R}(\mathbf{y}_0)=\sum_{e_1,\dots,e_{r-2}}c_\gamma(e_0)\prod_{i=1}^{r-2}K(e_{i-1},e_{i})\sum_{q=-R}^{R}\mathds{1}_{\{d_T(e_{r-2},e(q))\leq R\}}K(e_{r-2},e(q)),\]
where $e(q)$ is the underlying edge of $\varepsilon_{\mathbf{y}_0}(q)$ for each $-R\leq q\leq R$. The edge $e(q)$ is contained in $\gamma$ at distance at most $R$ from $e_0$. Hence, taking into account the direction in which this edge is traversed by $\gamma$ we obtain the following expression.
\begin{equation*}\label{eq: nice expression}
 F_{r,R}(\mathbf{y}_0)=\sum_{e_1,\dots,e_{r-1}}c_\gamma(e_0)c_\gamma(e_{r-1})\prod_{i=1}^{r-1}K(e_{i-1},e_{i}),
\end{equation*}
where the sum is over tuples satisfying $e_{i}\in B(e_{i-1},R)$ for all $1\leq i\leq r-1$ and $e_{r-1}\in B(e_0,R)$. In order to understand $F_{r,R}(\mathbf{y}_0)$ as $R\to \infty$, we need the following two claims.

\begin{claim}\label{claim: unrestricted sum}
 The unrestricted sum
 \[F_{r,\infty}(\mathbf{y}_0)\coloneqq \sum_{e_1,\dots,e_{r-1}}c_\gamma(e_0)c_\gamma(e_{r-1})\prod_{i=1}^{r-1}K(e_{i-1},e_{i}) \]
 is absolutely convergent, and $|F_{r,R}(\mathbf{y}_0)-F_{r,\infty}(\mathbf{y}_0)|\to 0$ as $R\to \infty$.
\end{claim}

\begin{proof}
 For $\mathbf{e}=(e_1,\ldots,e_{r-1})$, let us put 
		\[
		L(\mathbf{e})\coloneqq\sum_{j=1}^{r-1}d_T(e_{j-1},e_j).
		\]
		Only tuples for which $e_{r-1}$ belongs to $\gamma$ contribute to the sum. For each fixed $r$, the number of such tuples satisfying $L(\mathbf{e})=h$ is at most
		\begin{equation}\label{eq: edge tuple bound expectation}
			C_r(1+h)^{A_r}(2n-1)^{h/2}
		\end{equation}
		for constants $C_r,A_r<\infty$. Indeed, we can join successive selected edges
		by their unique shortest paths and close the resulting route by the segment of $\gamma$ from $e_{r-1}$ to $e_0$. Every edge outside $\gamma$ on this closed route is traversed at least twice. Hence, at most $h/2+r$ edges may be chosen away from $\gamma$; their labels give at	most a constant multiple of $(2n-1)^{h/2}$ possibilities, while the finitely many choices of the shape and the lengths of the connecting segments give the polynomial factor.
		
		By the definition of the kernel $K$, a summand with $L(\mathbf{e})=h$ has absolute value at most $n^{-(r-1)}(2n-1)^{-h}$. Together with Equation \eqref{eq: edge tuple bound expectation}, this gives the convergent upper bound
		\[
		C_r\sum_{h\geq0}(1+h)^{A_r}(2n-1)^{-h/2}<\infty.
		\]
		Moreover, we also obtain that
		\begin{equation*}\label{eq: block expectation error}
			|F_{r,R}(\mathbf{y}_0)-F_{r,\infty}(\mathbf{y}_0)|	\leq C_r(1+R)^{A_r}(2n-1)^{-R/2},
		\end{equation*}
		because every tuple that is not $R$-spaced satisfies $L(\mathbf{e})>R$.
\end{proof}

\begin{claim}\label{claim: contraction identity}
		For every $e\in E^+(T)$ we have that
		\begin{equation}\label{eq: contraction identity}
			\sum_{f\in E^+(T)}K(e,f)c_\gamma(f) =\frac{1}{n-1}c_\gamma(e).
		\end{equation}
		The series on the left is absolutely convergent.
	\end{claim}
	
	\begin{proof}
		Suppose first that $e$ belongs to $\gamma$. The line $\gamma$ contains one edge at distance $0$ from $e$ and two edges at every distance $h\geq1$. Moreover, for every $f$ on $\gamma$, we have
		$\sigma(e,f)c_\gamma(f)=c_\gamma(e)$. It follows from Equation \eqref{eq: observable kernel} that
		\begin{equation*}
			\sum_{f\in E^+(T)}K(e,f)c_\gamma(f)=\frac{c_\gamma(e)}{n}\left(1+2\sum_{h\geq1}(2n-1)^{-h}\right)=\frac{c_\gamma(e)}{n}\left(1+\frac{1}{n-1}\right)=\frac{1}{n-1}c_\gamma(e).
		\end{equation*}
		
		Suppose now that $e$ does not belong to $\gamma$. Let $v$ be the vertex of $\gamma$ closest to $e$. Pair the edges on the two rays of $\gamma$ starting at $v$ according to their distance from $v$. The two edges in each pair have the same distance from $e$ and opposite signed contributions, so they cancel. The sum is therefore zero, which agrees with the right-hand side of Equation \eqref{eq: contraction identity}. Absolute convergence follows in both cases from the geometric series $\sum_{h\geq0}(2n-1)^{-h}$.
	\end{proof}

 Using Claim \ref{claim: unrestricted sum} and iteratively applying Claim \ref{claim: contraction identity} yield
 \begin{align*}
 \lim_{R\to\infty}F_{r,R}(\mathbf{y}_0)=\sum_{e_1,\dots,e_{r-1}}c_\gamma(e_0)c_\gamma(e_{r-1})\prod_{i=1}^{r-1}K(e_{i-1},e_{i})=\frac{c_\gamma(e_0)^2}{(n-1)^{r-1}}=\frac{1}{(n-1)^{r-1}}.
 \end{align*}
 Note that $F_{r,\infty}(\mathbf{y}_0)$ does not depend on $\gamma$. Indeed, a tree automorphism takes any oriented geodesic line with a distinguished edge to any other such pair and preserves all distance restrictions. The auxiliary choices of edge orientations do not affect the summands, as changing one such orientation switches an even number of signs in each product.

 We conclude from the above and from the second assertion in Claim \ref{claim: unrestricted sum} that $F_{r,R}(\mathbf{y}_0)$ is bounded above by a constant $C>0$ not depending on $R$ or $\gamma$. The Dominated Convergence Theorem now gives
 \begin{equation*}
 \lim_{R\to\infty}\int_{\mathrm{NB}_R^{r}}\Psi_{r,R}\,d\eta_R^{\otimes r}=\int_{\Omega}\frac{1}{(n-1)^{r-1}}d\eta(\gamma)=\frac{1}{(n-1)^{r-1}}.
 \end{equation*}

Finally, let us deduce that Equation \eqref{eq: expectation observable limit} holds. We have from Equation \eqref{eq: definition of main observable} that $\Phi_{\pi,R}=\prod_{B\in\mathcal{B}(\Gamma_\pi)}\Psi_{r_B,R}$. The factors on the right depend on disjoint groups of coordinates of $\mathrm{NB}_R^{\freesum{\pi}}$. Since $\eta_R^{\otimes\freesum{\pi}}$ is a product measure, we have
		\[
		\int\Phi_{\pi,R}\ d\eta_R^{\otimes\freesum{\pi}}=	\prod_{B\in\mathcal{B}(\Gamma_\pi)}	\int\Psi_{r_B,R}\ d\eta_R^{\otimes r_B}.
		\]
		
	Suppose that $|\pi|\geq 2$. Thanks to Equation \eqref{eq: expectation of one graph block}, we have that
		\[
		\lim_{R\to \infty}\prod_{B\in\mathcal{B}(\Gamma_\pi)}	\int\Psi_{r_B,R}\ d\eta_R^{\otimes r_B}=\prod_{B\in\mathcal{B}(\Gamma_\pi)}
		(n-1)^{-(r_B-1)}=(n-1)^{-\sum_B(r_B-1)}=\left(\frac{1}{n-1}\right)^{|\pi|-1},
		\]
		where in the last equality we used Lemma \ref{lem: counting size vertices blocks}. This proves the convergence in Equation \eqref{eq: expectation observable limit}.
 \end{proof}

\subsection{Proof of Proposition \ref{prop:main_technical}}\label{subsection: proof of the technical proposition}
Fix $\pi\in\mathrm{NC}(k)$ and $\typemap:\pi\to\{1,\ldots,m\}$.

If $|\pi|=1$, the only face is the prescribed root face. By Item \eqref{item: prob 2} of Proposition \ref{prop: lifted face geometry}, on the event $\mathcal{G}_\ell$ we have $Z_\ell(\pi,\typemap)=\ell^k$. Since $\P_\ell(\mathcal{G}_\ell)\xrightarrow[\ell\to \infty]{}1$, the proposition follows in this case.

Suppose now that $|\pi|\geq2$. Let us consider $\tau$ as defined in Equation \eqref{eq: tau} and $R_\ell=2k\rho_\ell$ as in Equation \eqref{eq: choice of window radius}. Put
\[
I_\ell =\int_{\mathrm{NB}_{R_\ell}^{\freesum{\pi}}} \Phi_{\pi,R_\ell}\ d\eta_{R_\ell}^{\otimes\freesum{\pi}}.
\]
The observable satisfies $\|\Phi_{\pi,R_\ell}\|_\infty
\leq(2R_\ell+1)^{\freesum{\pi}}$, with $R_\ell=O(\log(\ell))$. Therefore, the weak ergodic theorem from Proposition \ref{prop: weak ergodic theorem} gives
\[
S_{R_\ell,\tau}(\Phi_{\pi,R_\ell})-I_\ell \xrightarrow[\ell\to\infty]{\text{in probability}}0.
\]
By Lemma \ref{lem: expectation of the observable}, we also have $I_\ell\xrightarrow[\ell\to \infty]{}(n-1)^{-(|\pi|-1)}$.

Fix $\varepsilon>0$. For all sufficiently large $\ell$, Lemma \ref{lem: comparison between Z and average of observables}
gives on the event $\mathcal{G}_\ell$ that
\[
\left|\ell^{-k}Z_\ell(\pi,\typemap) -S_{R_\ell,\tau}(\Phi_{\pi,R_\ell})\right|\leq \varepsilon/3.
\]
On the other hand, the convergence of $I_\ell$ gives
\[
|I_\ell-(n-1)^{-(|\pi|-1)}|\leq \varepsilon/3.
\]
Therefore, we have
\[
\P_\ell\left(
	\left|\ell^{-k}Z_\ell(\pi,\typemap)
	-(n-1)^{-(|\pi|-1)}\right|\ge\varepsilon
	\right)\leq \P_\ell(\mathcal{G}_\ell^c)
	+\P_\ell\left(
	|S_{R_\ell,\tau}(\Phi_{\pi,R_\ell})-I_\ell|\ge\varepsilon/3
	\right)\xrightarrow[\ell\to \infty]{}0.
\]

Since $\varepsilon>0$ was arbitrary, this proves the desired convergence in probability. \qed

\subsection{The proof of Theorem \ref{thm:main spectral law}}\label{subsection:proof of spectral convergence}
We are finally ready to apply the results from Sections \ref{section: trace laplacian} and \ref{section:convergence of the spectral measure} to prove our second main theorem.
\begin{proof}[Proof of Theorem \ref{thm:main spectral law}]
Let us first establish convergence in probability of the $k$-th moments for every $k\geq 0$. 
	
	Recall that $\sigma_\infty=(n-1)\mu_\lambda+(m-n+1)\delta_0$. For $k=0$, both measures have total mass $m$, and hence we have
	\[
\left| \int_\R 1\ d\sigma_\ell(t)- \int_\R 1\ d\sigma_\infty(t)\right|=0
	\]
	for every $\ell\geq 1$.
	
	Let us now fix $k\geq 1$. By Lemma \ref{lem: partition trace}, we have
	\[
	\int_\R t^k\ d\sigma_\ell(t)=\mathrm{Tr}_m(\widehat{\Delta}_{2,\ell}^k)=	\frac{1}{\ell^k}\mathrm{Tr}_m(\Delta_{2,\ell}^k) =\frac{1}{\ell^k} \sum_{\pi\in\mathcal{P}(k)} \sum_{\typemap:\pi\to\{1,\dots,m\}}
	Z_\ell(\pi,\typemap).
	\]
	Thus, by Lemma \ref{lem:bound order l}, there are events $\mathcal{E}_\ell$ with $\P_\ell(\mathcal{E}_\ell)\to 1$ such that for all sufficiently large $\ell$ we have
	\begin{align*}
		 \left| \int_\R t^k d\sigma_\ell(t) - \frac{1}{\ell^k} \sum_{\pi\in\mathrm{NC}(k)} \sum_{\typemap\colon\pi\to\{1,\dots,m\}} Z_\ell(\pi,\typemap) \right|&\leq
		\frac{1}{\ell^k}\sum_{\pi\in\mathcal{P}(k)\backslash\mathrm{NC}(k)}	\sum_{\typemap\colon\pi\to\{1,\dots,m\}} |Z_\ell(\pi,\typemap)| \\
		&\leq \frac{1}{\ell^k} \sum_{\pi\in\mathcal{P}(k)\backslash\mathrm{NC}(k)}
		C_k\ell^{\mathrm{diag}(\pi)+\freesum{\pi}} (\log\ell)^{2k}m^{|\pi|}.
	\end{align*}
	Here $C_k$ can be chosen uniformly over all partitions and type
	maps, since there are only finitely many of them for each fixed $k$.
	
	Lemma \ref{lem: noncrossing_partitions} establishes that $\mathrm{diag}(\pi)+\freesum{\pi}<k$ whenever $\pi\in\mathcal{P}(k)\backslash\mathrm{NC}(k)$. Consequently, on the event $\mathcal{E}_\ell$, we have
	\begin{align*}
	\left| \int_\R t^k\,d\sigma_\ell(t)- \frac{1}{\ell^k} \sum_{\pi\in\mathrm{NC}(k)} \sum_{\typemap:\pi\to\{1,\dots,m\}} Z_\ell(\pi,\typemap) \right|& \leq C_k|\mathcal{P}(k)|m^k \frac{(\log\ell)^{2k}}{\ell}
		\xrightarrow[\ell\to \infty]{} 0.
	\end{align*}
	Therefore, given $\varepsilon>0$, the expression on the left is less than $\varepsilon/2$ on the event $\mathcal{E}_\ell$ for all sufficiently large $\ell$.
	
By Proposition \ref{prop:main_technical} together with a union bound, there are events $\mathcal{F}_\ell$ with $\mathbb{P}_\ell(\mathcal{F}_\ell)\to 1$ on which
	\[
	\left| \frac{Z_\ell(\pi,\typemap)}{\ell^k} - \left(\frac{1}{n-1}\right)^{|\pi|-1}\right|<	\frac{\varepsilon}{2|\mathrm{NC}(k)|m^k}
	\]
	simultaneously for every $\pi\in\mathrm{NC}(k)$ and every $\typemap\colon\pi\to\{1,\dots,m\}$.

	Thus, on the intersection $\mathcal{E}_\ell\cap\mathcal{F}_\ell$ and for all
	sufficiently large $\ell$, we have
	\begin{align*}
		\left|\int_\R t^k\,d\sigma_\ell(t)-(n-1)\sum_{\pi\in\mathrm{NC}(k)}\left(\frac{m}{n-1}\right)^{|\pi|}\right|&\leq\frac{\varepsilon}{2}+ \sum_{\pi\in\mathrm{NC}(k)} \sum_{\typemap\colon\pi\to\{1,\dots,m\}}\left|	\left(\frac{1}{n-1}\right)^{|\pi|-1}-\frac{Z_\ell(\pi,\typemap)}{\ell^k}\right|\\
		&<\frac{\varepsilon}{2}+\frac{\varepsilon}{2}=\varepsilon.
	\end{align*}

By Proposition \ref{prop:free poisson moments}, we have
\[
\int_\R t^k\ d\mu_\lambda(t)
= \sum_{\pi\in\mathrm{NC}(k)}\lambda^{|\pi|} = \sum_{\pi\in\mathrm{NC}(k)} \left(\frac{m}{n-1}\right)^{|\pi|},
\]
where $\lambda=m/(n-1)$. Since $k\geq 1$, the atom at zero does not contribute to the $k$-th moment of $\sigma_\infty$. It follows that, on the event $\mathcal{E}_\ell\cap\mathcal{F}_\ell$, we have
\[
\left|
\int_\R t^k\ d\sigma_\ell(t) - \int_\R t^k\ d\sigma_\infty(t) \right|
< \varepsilon\]
for all sufficiently large $\ell$. As $\P_\ell(\mathcal{E}_\ell\cap\mathcal{F}_\ell)\to 1$, this proves convergence in probability of every moment.

	We next choose a single sequence of events on which these moment convergences hold simultaneously in the limit. 
	
	Write $M_{\ell,k}\coloneqq \int_\R t^k\ d\sigma_\ell(t)$ and $M_{\infty,k}\coloneqq \int_\R t^k\ d\sigma_\infty(t)$. For every integer $r\geq 2$, convergence in probability of the first $r$ moments allows us to choose an integer $L_r$ such that
	\[
	\P_\ell\left( \max_{1\leq k\leq r}|M_{\ell,k}-M_{\infty,k}|>\frac{1}{r}
	\right) <\frac{1}{r} \ \ \text{for every }\ell\geq L_r.
	\]
	We may choose the integers $L_r$ to be strictly increasing and to satisfy $L_r\geq r$.
	
	For $\ell\geq L_2$, let us set $r(\ell)\coloneqq \max\{r\geq 2: L_r\leq\ell\}$, and define \[\mathcal{H}_\ell\coloneqq \left\{\max_{1\leq k\leq r(\ell)}|M_{\ell,k}-M_{\infty,k}|\leq\frac{1}{r(\ell)}\right\}.\]

	For the finitely many values $\ell\in \{1,\ldots,L_2\}$, let $\mathcal{H}_\ell$ be the whole probability space. Then we have $r(\ell)\to\infty$ and $\P_\ell(\mathcal{H}_\ell)\xrightarrow[\ell\to \infty]{} 1$. Moreover, for every fixed $k\geq 1$, the $k$-th moments converge to $M_{\infty,k}$ along any choice of presentations from the events $\mathcal{H}_\ell$.
	
Choose one presentation from each $\mathcal{H}_\ell$, and denote the corresponding tracial spectral measure by $\nu_\ell$. We have that
	\[
	\int_\R t^k\ d\nu_\ell(t) \xrightarrow[\ell\to \infty]{} \int_\R t^k\ d\sigma_\infty(t)\ \ \ \text{for every }k\geq 0.
	\]
	We will show that $\nu_\ell$ converges weakly to $\sigma_\infty$.
	
	Each measure $\nu_\ell$ is supported on $[0,\infty)$ and has total mass $m$. Convergence of the first moments gives
	\[
	\sup_{\ell\geq 1}\left\{\int_0^\infty t\ d\nu_\ell(t)\right\}<\infty.
	\]
	Consequently, we have
	\[
	\nu_\ell((K,\infty)) \leq \frac{1}{K}\int_0^\infty t\ d\nu_\ell(t)
	\]
	for every $K>0$, where the bound is uniform in $\ell$. It follows that $(\nu_\ell)_{\ell\geq 1}$ is a tight sequence of measures.
	
	Consider a weakly convergent subsequence $(\nu_{\ell_i})_{i\geq 1}$, and let $\nu$ be its weak limit. The measure $\nu$ is supported on $[0,\infty)$ and has total mass $m$. We will show that $\nu=\sigma_\infty$.
	
	Fix $k\geq 1$. Since the $(k+1)$-st moments converge, we have
	\[
	C_{k+1}\coloneqq \sup_{\ell\geq 1}	\int_0^\infty t^{k+1}\ d\nu_\ell(t)<\infty.
	\]
	By lower semicontinuity under weak convergence, we have
	\[
	\int_0^\infty t^{k+1}\ d\nu(t)\leq C_{k+1}.
	\]
	For $N>0$, let us define the bounded continuous function $q_{k,N}:\R\to \R$ as
	\[
	q_{k,N}(t)\coloneqq \min\{(\max\{t,0\})^k,N^k\}
	\]
	for each $t\in \R$. For either $\eta=\nu_{\ell_i}$ or $\eta=\nu$, we have
	\[
	0\leq\int_0^\infty t^k\ d\eta(t) - \int_\R q_{k,N}(t)\ d\eta(t) \leq
	\int_{(N,\infty)}t^k\ d\eta(t) \leq \frac{C_{k+1}}N.
	\]
	Since $\nu_{\ell_i}$ converges weakly to $\nu$, we have
	\[
	\int_\R q_{k,N}(t)\ d\nu_{\ell_i}(t) \xrightarrow[\ell \to \infty]{}\int_\R q_{k,N}(t)\ d\nu(t).
	\]
	Combining the last two estimates gives
	\[
	\limsup_{i\to\infty}
	\left|\int_0^\infty t^k\ d\nu_{\ell_i}(t)-\int_0^\infty t^k\ d\nu(t) \right|
	\leq \frac{2C_{k+1}}N.
	\]
	Letting $N\to\infty$, and using convergence of the moments of $\nu_\ell$, we obtain
 \[
	\int_0^\infty t^k\ d\nu(t) = \int_0^\infty t^k\,d\sigma_\infty(t).
	\]
	Thus $\nu$ and $\sigma_\infty$ have the same moments of every order.

We will now show, using the Stone-Weierstrass theorem, that $\nu=\sigma_{\infty}$. Since $\sigma_\infty$ has compact support, we can find $L>0$ such that $\supp{\sigma_\infty}\subseteq[0,L]$. For each $b>L$ and $k\ge 1$, the fact that $\nu$ and $\sigma_{\infty}$ have the same moments implies that
\[
b^k\nu([b,\infty))\le \int_0^{\infty} t^k \ d\nu(t)=\int_0^{\infty} t^k \ d\sigma_{\infty}(t)\le mL^k,
\]
so that 
\[
\nu([b,\infty))\le m\left(\frac{L}{b}\right)^k.
\]
Letting $k$ tend to infinity and using that $b>L$, we obtain $\nu([b,\infty))=0$.

Since the Stone-Weierstrass theorem guarantees that polynomials are dense with the uniform topology in $C([0,L])$, we conclude that $\nu=\sigma_{\infty}$.

We conclude from the preceding argument that every subsequential weak limit of $(\nu_\ell)_{\ell\geq 1}$ is equal to $\sigma_\infty$. By tightness of the sequence together with Prokhorov's theorem, applied to the probability
measures $m^{-1}\nu_\ell$, we conclude that the sequence $\nu_\ell$ converges weakly to $\sigma_\infty$.

We have proved that for every bounded continuous function $f:\R\to \R$ and every $\varepsilon>0$, there exists $L_0$ such that
	\[
	\left|\int_\R f(t)\ d\sigma_\ell(t)-\int_\R f(t)\ d\sigma_\infty(t) \right|< \varepsilon
	\]
	for every presentation in $\mathcal{H}_\ell$ and for every $\ell\geq L_0$. This finishes the proof of the theorem.
    
\end{proof}
\section{The proof of Theorem \ref{thm: main theorem} in the non-negative Euler characteristic case}\label{section:main theorem low deficiency}
	
In this section we present the proof of Theorem \ref{thm: main theorem} in the non-negative Euler characteristic case, that is, when $m\ge n-1$ where $n$ is the number of generators and $m$ is the number of relators of the random group $G_{\ell}$.
	
	\begin{customthm}{A.1}\label{thm:low_deficiency}
		Let $G_\ell=\langle x_1,\ldots,x_n\mid r_1,\ldots,r_m\rangle$, $\ell\ge 1$, be a random group with $n\geq 2$ generators and $m\geq n-1$ relators in the few-relator model. Then, asymptotically almost surely as $\ell\to\infty$, we have
		\begin{equation*}\label{eq: low deficiency all equalities}
			\beta_1^{(2)}(G_\ell)
			=
			\mathrm{cost}(G_\ell)-1
			=
			\rg(G_\ell)
			=
			0.
		\end{equation*}
	\end{customthm}

 \subsection{Vanishing of the first \texorpdfstring{$\ell^2$}{l2}-Betti number}\label{subsection:vanishing of Betti number}
	\begin{thm}\label{thm: vanishing first l2 betti number}
		Let $2\le n \le m+1$ and for each $\ell\ge 1$ consider the law $\P_{\ell}$ of the few-relator random group $G_{\ell}$ with $n$ generators and $m$ relators. Then
		\[
		\P_{\ell}\left(\beta_1^{(2)}(G_{\ell})=0\right)\xrightarrow[\ell\to \infty]{}1.
		\]
	\end{thm}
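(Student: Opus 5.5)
We combine the Euler--Poincaré formula with the weak convergence of Theorem \ref{thm:main spectral law}, and then use integrality of $\beta_1^{(2)}$ coming from the strong Atiyah conjecture. Throughout we work on the event $\mathcal{G}_\ell$ of Proposition \ref{prop: lifted face geometry}, which has asymptotic probability $1$. On this event $X_\ell$ is aspherical and $G_\ell$ is infinite. The Euler--Poincaré identity of Subsection \ref{subsection: laplacian} then gives
\[
\beta_1^{(2)}(G_\ell)=n-m-1+\sigma_\ell(\{0\}),
\]
because the kernel of $\widehat{\Delta}_{2,\ell}$ coincides with that of $A_\ell^*A_\ell$. It therefore suffices to show that $\sigma_\ell(\{0\})< m-n+2$ with asymptotic probability $1$, since then $\beta_1^{(2)}(G_\ell)<1$.

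The key step is an upper bound on the atom at zero from weak convergence. Weak convergence only gives upper semicontinuity on closed sets through continuous test functions, so we choose a bounded continuous $f\colon\R\to[0,1]$ with $f(0)=1$ and $f$ supported in $(-\delta,\delta)$. Here $\delta>0$ is small enough that $\sigma_\infty((-\delta,\delta))<\sigma_\infty(\{0\})+\tfrac14$. Such a $\delta$ exists because $\sigma_\infty=(n-1)\mu_\lambda+(m-n+1)\delta_0$ with $\lambda=m/(n-1)\ge 1$.

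When $\lambda>1$, the absolutely continuous part of $\mu_\lambda$ is supported on $[a_\lambda,b_\lambda]$ with $a_\lambda>0$. When $\lambda=1$, the density $\sqrt{(4-t)t}/(2\pi t)$ is integrable near $0$, so the mass of $(0,\delta)$ tends to zero as $\delta\to0$. In both cases $\mu_\lambda$ has no atom at $0$, because $(1-\lambda)_+=0$. Hence $\sigma_\infty(\{0\})=m-n+1$. We then have
\[
\sigma_\ell(\{0\})\le\int f\,d\sigma_\ell
\quad\text{and}\quad
\int f\,d\sigma_\infty\le \sigma_\infty((-\delta,\delta))<m-n+1+\tfrac14 .
\]
By \eqref{eq:weak spectral convergence} with $\varepsilon=\tfrac14$, with asymptotic probability $1$ we get
\[
\sigma_\ell(\{0\})<m-n+1+\tfrac12,
\]
and therefore $\beta_1^{(2)}(G_\ell)\le \tfrac12$.

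Finally, on $\mathcal{G}_\ell$ the group $G_\ell$ is torsion-free and satisfies the strong Atiyah conjecture over $\Q$, by Items \eqref{item: prob 6} and \eqref{item: prob 7} of Proposition \ref{prop: lifted face geometry}. Its $\ell^2$-Betti numbers are therefore integers. The bound $0\le\beta_1^{(2)}(G_\ell)\le\tfrac12$ then forces $\beta_1^{(2)}(G_\ell)=0$ on the intersection of $\mathcal{G}_\ell$ with the convergence event, and this intersection has probability tending to $1$.

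The only delicate point is the semicontinuity step: weak convergence does not directly control atoms, which is why we pass through the test function $f$. This in turn needs the check that $\sigma_\infty$ carries no mass near $0$ beyond its atom, in the borderline case $\lambda=1$ (that is, $m=n-1$).
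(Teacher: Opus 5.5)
Your proposal is correct and follows the paper's proof essentially verbatim: the Euler--Poincaré identity $\beta_1^{(2)}(G_\ell)=n-m-1+\sigma_\ell(\{0\})$, a nonnegative continuous test function with $f(0)\geq 1$ whose $\sigma_\infty$-integral is within $\tfrac14$ of $\sigma_\infty(\{0\})=m-n+1$, Theorem~\ref{thm:main spectral law} with $\varepsilon=\tfrac14$ to get $\beta_1^{(2)}(G_\ell)\leq\tfrac12$, and integrality from torsion-freeness plus the strong Atiyah conjecture. One small remark: a suitable $\delta$ exists for any finite measure by continuity from above, so your analysis of the case $\lambda\geq 1$ is needed only to identify $\sigma_\infty(\{0\})=m-n+1$, not for the existence of $f$.
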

	\begin{proof}
 Denote by $\widehat{\Delta}_{2,\ell}$ the normalized degree-$2$ cellular $\ell^2$-Laplacian on the universal cover of the presentation complex of $G_\ell$, and denote by $\sigma_{\ell}$ the associated tracial spectral measure. Thanks to Theorem \ref{thm:main spectral law}, we know that for every $\varepsilon>0$ and every bounded continuous function $f:\R\to \R$ we have
 \begin{equation}\label{eq: conv from thm B}
\P_\ell\left(\left| \int f(t)\ d\sigma_\ell(t) - \int f(t) \ d\sigma_{\infty}(t)\right|>\varepsilon\right)\xrightarrow[\ell\to \infty]{}0,
 \end{equation}
 where $\sigma_\infty=(n-1)\mu_{\lambda}+(m-n+1)\delta_0$ and where $\mu_\lambda$ is the free-Poisson law of mean $\lambda=m/(n-1)$.

 Let us consider $\varepsilon=1/4$ and choose a bounded continuous function $f_0:\R\to \R_{\ge 0}$ such that $f_0(0)\ge 1$ and such that
 \[
 \int_{\R}f_0(t) \ d\sigma_{\infty}(t)<\sigma_\infty(\{0\})+\frac{1}{4}=m-n+1+\frac{1}{4}.
 \]

 Note that, since $f_0$ takes non-negative values and $f_0(0)\ge 1$, for each $\ell\ge 1$ we have
 \[
 \int f_0(t) \ d\sigma_{\ell} (t)-\int f_0 (t)\ d\sigma_\infty(t) \ge \sigma_{\ell}(\{0\})-\int f_0 (t)\ d\sigma_\infty(t)> \sigma_{\ell}(\{0\})-\sigma_{\infty}(\left\{0\right\})-\frac{1}{4}.
 \]
 Hence, we have the containment of events
 \[\left\{\sigma_{\ell}(\{0\})>\sigma_{\infty}(\{0\})+\frac{1}{2}\right\}\subseteq \left\{\left| \int f_0(t)\ d\sigma_\ell(t) - \int f_0(t) \ d\sigma_{\infty}(t)\right|>\frac{1}{4} \right\}.\] 
 From this, the convergence of Equation \eqref{eq: conv from thm B} guarantees that
 \[
 \P_{\ell}\left(\sigma_{\ell}(\{0\})-\sigma_\infty(\{0\})>\frac{1}{2}\right)\xrightarrow[\ell \to \infty]{}0.
 \]
 We conclude that, asymptotically almost surely as $\ell\to \infty$, we have
 \begin{equation}\label{eq: bound for sigma ell}
 \sigma_{\ell}(\{0\})\le \frac{1}{2} +\sigma_\infty(\{0\}) =\frac{1}{2}+m-n+1
 \end{equation}
 Consider a presentation $G_\ell$ in the intersection of the event $\mathcal{G}_\ell$ from Proposition \ref{prop: lifted face geometry} and the event where Equation \eqref{eq: bound for sigma ell} holds. Together with the Euler-Poincaré formula (Equation \eqref{eq: euler poincare}), we have
 \[\beta_1^{(2)}(G_\ell)=n-m-1+\sigma_\ell(\{0\})\leq n-m-1+\left(m-n+1+\frac{1}{2}\right) =\frac{1}{2}.\]

 Thanks to Items \eqref{item: prob 6} and \eqref{item: prob 7} of Proposition \ref{prop: lifted face geometry}, the group $G_\ell$ is torsion-free and satisfies the Atiyah conjecture, so that $\beta_1^{(2)}(G_\ell)$ is an integer. We thus conclude that it must be zero. This finishes the proof.
\end{proof}
\begin{rem}
The fact that the strong Atiyah conjecture is satisfied with asymptotic probability is an essential ingredient in the above proof that allows us to reach the conclusion that $\beta_1^{(2)}(G_\ell)=0$ with asymptotic probability one. Without it, the application of Theorem \ref{thm:main spectral law} would allow us to conclude only that $\beta_1^{(2)}(G_\ell)\xrightarrow[\ell\to \infty]{\P_\ell}0$.
\end{rem}
	\subsection{Vanishing rank gradient for virtually fibered groups}\label{subsection:rank gradient criterion}
	In this section, we prove Theorem \ref{thm:low_deficiency}. The asymptotic almost-sure vanishing of the first $\ell^2$-Betti number proved in Theorem \ref{thm: vanishing first l2 betti number} implies asymptotic almost-sure virtual fiberedness by \cite[Theorem 5.3]{Kielak2020}. This, together with the criterion for vanishing of the absolute rank gradient given in Proposition \ref{prop: vanishing_rank_gradient}, yields the result.
 
 \begin{lem}\label{lem:absolute_rank_gradient}
		Let $G$ be a finitely generated residually finite group. For every $i\geq 0$, let $G_i$ be the intersection of all subgroups $H\leqslant G$ of index $[G:H]\leq i$. Then 
		\[\rg(G)=\rg(G,(G_i)_{i\geq 0}).\]
	\end{lem}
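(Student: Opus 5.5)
Two inequalities, using the infimum description of the absolute rank gradient. Recall
\[
\rg(G)=\inf\left\{ \frac{d(H)-1}{[G:H]} : H\lhd G,\ [G:H]<\infty \right\}.
\]
The chain $(G_i)_{i\ge 0}$ consists of finite-index normal subgroups. Indeed, since $G$ is finitely generated, there are only finitely many subgroups of index at most $i$, so $G_i$ is a finite intersection of finite-index subgroups, hence of finite index. It is normal because conjugation permutes the subgroups of index at most $i$. The chain is decreasing, and $G_0=G_1=G$. It has trivial intersection by residual finiteness: any $g\neq e_G$ avoids some finite-index normal subgroup $N$, and $G_{[G:N]}\subseteq N$. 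So the rank gradient along the chain is a legitimate object.

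First, the sequence $a_i\coloneqq (d(G_i)-1)/[G:G_i]$ is non-increasing. For $K\le H$ of finite index, the Reidemeister--Schreier bound gives $d(K)-1\le [H:K](d(H)-1)$. Dividing by $[G:K]$ yields monotonicity. Hence the limit defining $\rg(G,(G_i)_{i\ge 0})$ exists and equals $\inf_i a_i$. Since each $G_i$ is a finite-index normal subgroup, this infimum is at least $\rg(G)$.

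For the reverse inequality, fix a finite-index normal subgroup $H\lhd G$ and put $i=[G:H]$. Then $G_i\le H$, so the same Reidemeister--Schreier monotonicity applied to $G_i\le H\le G$ gives
\[
\frac{d(G_i)-1}{[G:G_i]}\le \frac{d(H)-1}{[G:H]}.
\]
Therefore $\inf_i a_i\le (d(H)-1)/[G:H]$. Taking the infimum over $H$ gives $\rg(G,(G_i)_{i\ge 0})\le \rg(G)$, which closes the argument.

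The only slightly delicate point is justifying the monotonicity bound $d(K)-1\le [H:K](d(H)-1)$. I would cite the Schreier index formula: a subgroup of index $j$ in a group generated by $d$ elements is generated by $j(d-1)+1$ elements, obtained by passing through a free group surjecting onto $H$. Everything else is bookkeeping. One small check is that the case $d(H)=0$, i.e. $H$ trivial, cannot occur for infinite $G$, and the inequalities hold regardless.
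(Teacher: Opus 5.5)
Your proof is correct and follows the paper's route. The paper just invokes coinitiality of $(G_i)_{i\ge 0}$ among finite-index subgroups, citing Abért--Nikolov. Your Reidemeister--Schreier monotonicity $d(K)-1\le [H:K](d(H)-1)$ is the ingredient the paper leaves implicit: it makes coinitiality sufficient and shows that the limit along the chain exists and equals its infimum.
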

	
	\begin{proof}
		This follows from the fact that the sequence $(G_i)_{i\geq 0}$ is coinitial in the set of finite-index subgroups ordered by inclusion. See also \cite[Corollary 11]{abert2012rank} and its proof.
	\end{proof}
	
	\begin{prop}\label{prop: vanishing_rank_gradient}
		Let $G$ be a finitely generated group that fits into a short exact sequence
		\[
		1\longrightarrow N\longrightarrow G\longrightarrow Q\longrightarrow 1,\]
		where $N$ is infinite and finitely generated. Let $(G_i)_{n\geq 0}$ be a decreasing chain of finite-index normal subgroups of $G$ with trivial intersection. Suppose that
		$[G:NG_i]\xrightarrow[i\to \infty]{}\infty$.
		Then
		\[\rg\left(G,(G_i)_{i\geq 0}\right)=0.\]
		In particular, if $Q$ has infinitely many finite-index subgroups, then $\rg(G)=0$.
	\end{prop}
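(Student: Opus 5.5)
Write $\pi\colon G\to Q$ for the quotient map. The plan is to bound $d(G_i)$ by building a generating set of $G_i$ directly from the extension. For each $i$, let $N_i\coloneqq N\cap G_i$; this is a finite-index normal subgroup of $N$ with $[N:N_i]=[NG_i:G_i]$. The image $\pi(G_i)$ is a finite-index subgroup of $Q$, and $[Q:\pi(G_i)]=[G:NG_i]$. This gives the short exact sequence
\[
1\longrightarrow N_i\longrightarrow G_i\longrightarrow \pi(G_i)\longrightarrow 1,
\]
and hence
\[
d(G_i)\le d(N_i)+d(\pi(G_i)).
\]
We bound the two terms separately using the Schreier index formula $d(H)-1\le [K:H](d(K)-1)$ for $H\le K$ of finite index. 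For the kernel, $d(N_i)\le 1+[N:N_i](d(N)-1)$. For the image, since $\pi(G_i)\le Q$ has index $[G:NG_i]$ and $Q$ is finitely generated, $d(\pi(G_i))\le 1+[G:NG_i](d(Q)-1)$.

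Dividing by $[G:G_i]=[G:NG_i]\,[N:N_i]$ gives
\[
\frac{d(G_i)-1}{[G:G_i]}\le \frac{1+[N:N_i](d(N)-1)}{[G:NG_i][N:N_i]}+\frac{[G:NG_i](d(Q)-1)}{[G:NG_i][N:N_i]}\le \frac{d(N)}{[G:NG_i]}+\frac{d(Q)}{[N:N_i]}.
\]
The first term tends to $0$ by the hypothesis $[G:NG_i]\to\infty$.

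For the second term we need $[N:N_i]\to\infty$, and this is the one point requiring care. Since $\bigcap_i G_i=\{e\}$, we have $\bigcap_i N_i=\{e\}$. Because $N$ is infinite, a decreasing chain of finite-index subgroups of $N$ with trivial intersection cannot have bounded index. If it did, the indices would stabilize, so the chain would be eventually constant equal to some finite-index, hence infinite, subgroup, contradicting trivial intersection. Therefore $[N:N_i]\to\infty$, the limit defining $\rg$ is $0$, and since rank gradients are non-negative, $\rg(G,(G_i))=0$.

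For the final statement, take the chain $(G_i)$ of Lemma \ref{lem:absolute_rank_gradient}; it is normal and of finite index, and $\rg(G)=\rg(G,(G_i))$. It has trivial intersection when $G$ is residually finite. The main obstacle here is verifying $[G:NG_i]\to\infty$. Suppose $Q$ has infinitely many finite-index subgroups, and hence subgroups of arbitrarily large index. Given a subgroup $H\le Q$ of index $j$, its preimage $\pi^{-1}(H)$ has index $j$ in $G$, so $G_j\le\pi^{-1}(H)$. Since $\pi^{-1}(H)$ contains $N$, we get $NG_j\le \pi^{-1}(H)$, so $[G:NG_j]\ge j$. As $(G_i)$ is decreasing, $[G:NG_i]$ is non-decreasing and unbounded, and the first part applies. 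Residual finiteness of $G$ is implicitly needed for $\rg(G)$ to be defined, and I would note this assumption.
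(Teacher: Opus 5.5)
Your proof is correct and follows the paper's argument essentially line by line: the same exact sequence $1\to N\cap G_i\to G_i\to NG_i/N\to 1$, the same Schreier index bounds, the same argument that $[N:N\cap G_i]\to\infty$, and the same preimage argument for the final statement. Your remark that residual finiteness of $G$ is needed for the ``in particular'' part is correct; the paper also relies on it, implicitly, through Lemma \ref{lem:absolute_rank_gradient} and the trivial-intersection hypothesis.
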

	\begin{proof}	
		For each $i\ge 1$ let us denote $a_i \coloneqq[N:N\cap G_i]$ and $b_i\coloneqq [G:NG_i]$. Since $G_i\triangleleft G$ is a normal subgroup, the second isomorphism theorem tells us that
		\[NG_i/G_i \cong N/(N\cap G_i),\]
		
		so that we have $[G:G_i]=[G:NG_i][NG_i:G_i]=a_i b_i$.
		
		Let	$r\coloneqq d(N)$ and $s\coloneqq d(Q)$ be the minimum number of generators of $N$ and $Q$, respectively. Note the hypotheses of the proposition imply that both of these quantities are finite: $r$ is finite since $N$ is finitely generated, and $s$ is finite since $Q$ is finitely generated, it being a quotient of $G$.
		
		The restriction of the quotient map $G\to Q=G/N$ to $G_i$ yields a short exact sequence
		\[
		1\longrightarrow N\cap G_i \longrightarrow	G_i	\longrightarrow	NG_i/N	\longrightarrow 1,
		\]
		and thus $d(G_i) \leq d(N\cap G_i)+d(NG_i/N)$. Since $N\cap G_i$ has index $a_i$ in $N$, it follows from the Reidemeister-Schreier method that
		\[
		d(N\cap G_i)-1 \leq a_i(r-1).
		\]
		
		Moreover, $NG_i/N$ is a subgroup of $Q$ of index $[Q:NG_i/N] =[G:NG_i]=b_i$,
		so again by the Reidemeister-Schreier method we have
		\[d(NG_i/N)-1 \leq b_i(s-1).\]
		As a consequence, we get
		\[
		d(G_i)-1	\leq 1+a_i(r-1)+b_i(s-1).
		\]
		Dividing by $[G:G_i]=a_ib_i$, we obtain
		\begin{equation}\label{eq: N GN}
			\frac{d(G_i)-1}{[G:G_i]}\leq\frac{1}{a_ib_i}+\frac{r-1}{b_i}+\frac{s-1}{a_i}.
		\end{equation}
		
		We have as a hypothesis of the proposition that $b_i=[G:NG_i]\xrightarrow[i\to\infty]{}\infty$. We claim that it also holds that $a_i=[N:N\cap G_i]\xrightarrow[i\to\infty]{}\infty$. Indeed, note that we have
		\[
		\bigcap_{i\geq 0}(N\cap G_i) =N\cap\bigcap_{i\geq 0}G_i=\{e_G\}.
		\]
		Since the chain $(G_i)_{i\geq 0}$ is decreasing, the sequence $(a_i)_{i\geq 0}$ is non-decreasing. If this sequence were bounded, it would eventually be constant. This would then imply that the chain $(N\cap G_i)_{i\geq 0}$ eventually stabilizes, since the subgroups $N\cap G_i$ are nested. Its intersection would therefore be a finite-index subgroup of $N$. This is not possible, since the intersection is trivial and $N$ is infinite. Hence $a_i\xrightarrow[i\to \infty]{}\infty$.
		
		From this, it follows from Inequality \eqref{eq: N GN} that
		\[
		\rg\left(G,(G_i)_{i\geq 0}\right)=0.
		\]
		
		Finally, assume $Q$ has infinitely many finite-index subgroups. Then, since $Q$ is finitely generated, it admits a sequence $(Q_i)_{i\geq 0}$ of finite-index subgroups such that $([Q:Q_i])_{i\geq 0}$ is unbounded. For each $i\geq 0$, set $q_i\coloneqq[Q:Q_i]$ and let $G_i=\bigcap_{[G:H]\leq i+1}H\leqslant G$. Then $[G:\pi^{-1}(Q_i)]=[Q:Q_i]=q_i$, so $G_{q_i}\leqslant \pi^{-1}(Q_i)$. Therefore $\pi(G_{q_i})\leqslant Q_i$, which gives
		\[[G:NG_{q_i}]=[G:\pi^{-1}(\pi(G_{q_i}))]=[Q:\pi(G_{q_i})]\geq [Q:Q_i]=q_i.\]
		We conclude that the sequence $([G:NG_{q_i}])_{i\geq 0}$ is unbounded. It follows from the first part of the statement and Lemma \ref{lem:absolute_rank_gradient} that
		\[\rg(G)=\rg(G,(G_i)_{i\geq 0})=0.\]
		
	\end{proof}
	
	\begin{proof}[Proof of Theorem \ref{thm:low_deficiency}]
	Consider a presentation in the event $\mathcal{G}_\ell$ of Proposition \ref{prop: lifted face geometry}. Thanks to Theorem \ref{thm: vanishing first l2 betti number}, we have that the event where $\beta_1^{(2)}(G_\ell)=0$ has asymptotic probability $1$, and hence its intersection with $\mathcal{G}_{\ell}$ also does.
	
	 By Items \eqref{item: prob 6} and \eqref{item: prob 7} of Proposition \ref{prop: lifted face geometry}, the group $G_\ell$ is infinite, torsion-free, residually finite, and virtually RFRS. Hence, we can use Kielak's virtual fiberedness theorem \cite[Theorem 5.3]{Kielak2020} to conclude the existence of a finite-index subgroup $H\leqslant G_\ell$ and an exact sequence
	\[
	1\longrightarrow N\longrightarrow H \longrightarrow\Z \longrightarrow1
	\]
	with $N$ finitely generated. If $N$ is infinite, Proposition \ref{prop: vanishing_rank_gradient} gives $\rg(H)=0$. If $N$ is finite, torsion-freeness implies $N=\{e\}$, so $H\cong\Z $ and again $\rg(H)=0$.
	
	For every finite-index subgroup $K\leq H$, we have
	\[
	\frac{d(K)-1}{[G_\ell:K]} =\frac{1}{[G_\ell:H]} \frac{d(K)-1}{[H:K]}.
	\]
	Taking infimum, we obtain
	\[
	0\leq\rg(G_\ell) \leq\frac{\rg(H)}{[G_\ell:H]}=0.
	\]
	The general inequalities from Equation \eqref{eq:general-inequalities} therefore imply $\beta_1^{(2)}(G_\ell) =\mathrm{cost}(G_\ell)-1 =\rg(G_\ell)=0$. This finishes the proof.
\end{proof}

	\section{The proof of Theorem \ref{thm: main theorem} in the negative Euler characteristic case}\label{section:main theorem high deficiency}
	
	In this part of the paper, we complete the proof of Theorem \ref{thm: main theorem} by establishing Theorem \ref{thm:high_deficiency} below. We point out that the proof of Theorem \ref{thm:high_deficiency} is self-contained and independent from the previous sections.
	
	\begin{customthm}{A.2}\label{thm:high_deficiency}
		Let $G_\ell=\langle x_1,\ldots,x_n\mid r_1,\ldots,r_m\rangle$, $\ell\ge 1$, be a random group with $n\geq2$ generators and $0\leq m\leq n-2$ relators in the few-relator model. Then, asymptotically almost surely as $\ell\to\infty$, we have
		\begin{equation*}\label{eq: high deficiency all equalities}
			\beta_1^{(2)}(G_\ell)
			=
			\mathrm{cost}(G_\ell)-1
			=
			\rg(G_\ell)
			=
			n-m-1.
		\end{equation*}
	\end{customthm}
	
	\subsection{Graphs and labeled graphs}
	We will use throughout the notion of directed labeled graphs and paths in them. In this subsection we gather the convention we use.
	
	A \emph{directed graph} is a tuple $\Gamma=(V(\Gamma),E(\Gamma),o,t)$ where $V(\Gamma)$ and $E(\Gamma)$ are the sets of vertices and edges,
	respectively, and $o,t:E(\Gamma)\longrightarrow V(\Gamma)$ assign to each edge its initial and terminal vertex. We allow loops and multiple edges. An edge $e$ is thus regarded as having a distinguished orientation, from $o(e)$ to $t(e)$. For $e\in E(\Gamma)$ and $\varepsilon\in\{\pm1\}$, we use the
	notation $e^\varepsilon$ for the traversal of $e$ in its given direction
	if $\varepsilon=1$, and in the opposite direction if $\varepsilon=-1$. Thus $o(e^{-1})=t(e),\qquad t(e^{-1})=o(e)$.
	
	We allow paths, however, to traverse edges in either direction. More precisely, a path of length $k\ge 0$ is a sequence $P=e_1^{\varepsilon_1}\cdots e_k^{\varepsilon_k}$ such that
	\[
	t(e_j^{\varepsilon_j})=o(e_{j+1}^{\varepsilon_{j+1}})\ \text{ for all } 1\leq j<k.
	\]
	Its initial and terminal vertices are denoted by $o(P)$ and $t(P)$. A path is \emph{closed} if $o(P)=t(P)$. Connectedness, components, spanning subgraphs, and trees are always understood with respect to the underlying unoriented graph; in particular, the direction of an edge does not restrict the direction in which it may be used in a path.
	
	Let $S=\{x_1,\ldots,x_n\}$. An $S$-\emph{labeled directed graph} is a
	directed graph $\Gamma$ together with a map $\lab:E(\Gamma)\longrightarrow S$. Given $1\le i\le n$, an edge labeled by $x_i$ will be called an \(x_i\)-edge. If $P=e_1^{\varepsilon_1}\cdots e_k^{\varepsilon_k}$ is a path on $\Gamma$, its
	\emph{label} is the word
	\[
	\lab(P)\coloneqq\lab(e_1)^{\varepsilon_1}\cdots\lab(e_k)^{\varepsilon_k}\in F(S).
	\]
	Thus, traversing an edge against its distinguished orientation contributes
	the inverse of its label. When $S$ is a generating set of a group $G$,
	we also use $\lab(P)$ for the element of $G$ represented by this word,
	when there is no risk of confusion. We will call this element the \emph{product label} of $P$.
	
	We consider right Cayley graphs. In other words, if $G$ is a group generated by $S=\{x_1,\ldots,x_n\}$, the labeled Cayley graph $\Cay(G,S)$ has vertex set $G$, and, for every $g\in G$ and $1\leq i\leq n$, it has a directed edge $(g,gx_i)$ from $g$ to $gx_i$ labeled by $x_i$. Consequently, if $P$ is a path from $g$ to $h$, then $h=g\cdot \lab(P)$ as a group element in $G$. In particular, left multiplication by an element of $G$ acts on $\Cay(G,S)$ by label-preserving and orientation-preserving graph automorphisms. The word metric associated with $S$ is the graph metric on the underlying unoriented Cayley graph.
	
	More generally, if $H\leq G$, the \emph{Schreier graph} $\Sch(H\backslash G,S)$
	of the right action of $G$ on $H\backslash G$ has vertex set $H\backslash G$, with a directed $x_i$-edge $Hg\longrightarrow Hgx_i$ for every $Hg\in H\backslash G$ and $1\leq i\leq n$.
	
	Suppose now that $\Gamma$ is an $S$-labeled graph and that $S$ generates a group $G$. For a distinguished vertex $v\in V(\Gamma)$, we define the \emph{labeled fundamental group} of $\Gamma$ at $v$ by
	\[
	\pi_1^{\lab}(\Gamma,v)\coloneqq\left\{\lab(P)\in G :P\text{ is a closed path in }\Gamma\text{ based at }v \right\}.
	\]
	Equivalently, this is the image in $G$ of the ordinary fundamental group of the underlying graph under the homomorphism $F(S)\to G$ induced by the labeling. When $\Gamma$ is a subgraph of $\Sch(H\backslash G,S)$, we always use the canonical basepoint $H$, unless stated otherwise. In this case $\pi_1^{\lab}(\Gamma,H)\leq H$ and for the full Schreier graph one has $\pi_1^{\lab}\left(\Sch(H\backslash G,S),H\right)=H$. When the basepoint is clear, we simply write $\pi_1^{\lab}(\Gamma)$, or simply $\pi_1(\Gamma)$.

	If \(\Gamma\) is finite, we use
	\[
	\deg_\Gamma^+(v)
	\coloneqq \left|\{e\in E(\Gamma):o(e)=v\}\right|
	\]
	for its out-degree. Thus
	\[\frac{1}{|V(\Gamma)|} \sum_{v\in V(\Gamma)}\deg_\Gamma^+(v) = \frac{|E(\Gamma)|}{|V(\Gamma)|}.\]
	We refer to this quantity as the \emph{average degree} of the directed
	graph. Notice that this convention counts every directed labeled edge
	once.
	
	\subsection{Cost}\label{subsection:cost}
	Let $G$ be a countable group acting on the left on a standard Borel space $X$ by Borel automorphisms. Let $\mu$ be a $G$-invariant probability measure on $X$. Denote by $E$ the associated orbit equivalence relation on $X$. That is, for each $x,y\in X$ we have $xEy$ if and only if there is $g\in G$ with $y=gx$. The equivalence relation $E$ can be viewed as a Borel subset of $X\times X$, or equivalently as a graph on $X$. A \emph{Borel subgraph} of $E$ is a directed graph on $X$ such that its set of edges is a Borel subset of $E$. Let $S\subseteq X\times X$ be a graph on $X$. A \emph{path} from $x$ to $y$ in $S$ of length $k\ge 1$ is a sequence of elements $x_0,x_1,\ldots,x_k\in X$ such that
	\begin{enumerate}
		\item $x_0=x$ and $x_k=y$, and
		
		\item $(x_i,x_{i+1})\in S$ or $(x_{i+1},x_i)\in S$ for each $i=0,1,\ldots,k-1$.
	\end{enumerate}
	We define the graph $S^k$ by saying that $(x,y)\in S^k$ if and only if there is a path of length at most $k$ from $x$ to $y$ in $S$.
	
	We say that a subgraph $S$ of $E$ is \emph{spanning} if for any $(x,y)\in E$ with $x\neq y$ there exists a path from $x$ to $y$. Equivalently, $S$ is spanning if and only if $E=\bigcup_{k\ge 0}S^k$.
	
	The \emph{edge-measure} of a Borel subgraph $S$ of $E$ is defined as
	\[
	e(S)=\int_{X}\mathrm{deg}_S(x)d\mu(x),
	\]
	where \[
	\mathrm{deg}_S(x)\coloneqq\left|\left\{ y\in X: (x,y) \in S\right\}\right|
	\]
	denotes the number of edges in $S$ with initial vertex equal to $x$. 
	
	The \emph{cost} of $E$ is defined as 
	\[
	\mathrm{cost}(E)\coloneqq \inf\left\{e(S): S \text{ is a spanning Borel subgraph of }E\right\}.
	\]
	The cost of $G$ is defined as 
	\begin{equation*}
		\mathrm{cost}(G)\coloneqq \inf\left\{\mathrm{cost}(E):\ \begin{aligned}& E\text{ is the orbit equivalence relation associated with an ergodic}\\
			&\text{and essentially free action of }G\text{ on a standard Borel space}\end{aligned}\right\}.
	\end{equation*}
	
	We say that $G$ has \emph{fixed price} if the value $\mathrm{cost}(E)$ is the same for all orbit equivalence relations of ergodic and essentially free actions of $G$ on a standard Borel space.

    \subsection{Groupoid cost}\label{subsection: grupoid cost}
	We now recall the definition of groupoid cost, as defined in \cite[\S 2]{abert2012rank}, which extends the notion of cost for non-essentially free actions.
	
	Let $G$ be a countable group that acts on the left on a standard Borel space $X$ by Borel automorphisms and with a $G$-invariant probability measure $\mu$ on $X$. Consider the product space $G\times X$, where $G$ is endowed with the discrete topology. Denote the product measure by $ \mathrm{m}$. A \emph{graphing} is a Borel subset of $G\times X$. For a graphing $\Phi$ and $g\in G$ we denote the $g$-fibre of $\Phi$ by
	\[
	\Phi_g\coloneqq \{x\in X: (g,x)\in \Phi\}.
	\]
	
	For each $x\in X$ we denote the set of neighbors of $x$ in $\Phi$ by
	\[
	N(\Phi,x)\coloneqq \{g\in G: x\in \Phi_g\},
	\]
	and the degree of $x$ in $\Phi$ by $\mathrm{deg}_{\Phi}(x)\coloneqq |N(\Phi,x)|$. Then we have
	\begin{equation*}\label{eq: m}
		\mathrm{m}(\Phi)=\sum_{g\in G}\mu(\Phi_g)=\int_{X}\mathrm{deg}_\Phi(x)d\mu(x).
	\end{equation*}
	
	Let $I$ be the graphing whose fiber for each $g\in G$ is given by $I_g\coloneqq \begin{cases}
		X, &\text{ if }g=e_G, \text{ and}\\ \varnothing, &\text{ otherwise}.
	\end{cases}$
	
	For a graphing $\Phi$, define the graphing $\Phi^{T}$ as the one with fibers $(\Phi^T)_g\coloneqq g^{-1}\cdot\Phi_{g^{-1}}$ for each $g\in G$. Define also the graphing $\overline{\Phi}\coloneqq \Phi\cup \Phi^T\cup I$.
	
	Given graphings $\Phi,\Psi$ we define their product $\Phi\cdot \Psi$ as the graphing with fibers
	\[
	(\Phi\cdot \Psi)_g\coloneqq \bigcup_{h\in G} \left(\Phi_h\cap h^{-1}\cdot(\Psi_{gh^{-1}})\right),
	\]
	for each $g\in G$. Equivalently, $(g,x) \in \Phi\cdot \Psi$ if and only if there are $g_1,g_2\in G$ with $(g_1,x)\in \Phi$, $(g_2,g_1x)\in \Psi$ and $g_2g_1=g$.
	
	Define $\Phi^1=\overline{\Phi}$ and $\Phi^{k}=\Phi^{k-1}\cup (\Phi^{k-1}\cdot \overline{\Phi})$. A graphing $\Phi$ is called an \emph{L-graphing} if $G\times X=\bigcup_{k\ge 1}\Phi^k$. The \emph{groupoid cost} of $(G,X)$ is defined as
	\[
	\mathrm{gcost}(G,X)\coloneqq \inf\left\{ \mathrm{m}(\Phi): \Phi \text{ is an }\text{L-graphing} \right\}.
	\]
	
	A graphing $\Phi$ is called finitely supported if $\Phi_g$ is empty for all but finitely many $g\in G$.

    It is proved in \cite[Lemma 6]{abert2012rank} that $\mathrm{gcost}(G, X)\ge\mathrm{cost}(E_{G\curvearrowright X})$, with equality if the action $G\curvearrowright X$ is essentially free and $G$ is finitely generated.

    \subsection{The profinite action}
    We follow the notation from \cite[Section 3]{abert2012rank}. Let $G$ be a countable residually finite group, and $S\subseteq G$ a finite generating set. The profinite completion of $G$ can be identified as
    \[\widehat{G}\coloneqq \varprojlim_n G/G_i=\left\{(w_i)_{i\geq 0}\in \prod_{i\geq 0}G/G_i:w_{i+1}\subseteq w_i\text{ for all }i\ge 0\right\},\]
    where for each $i\geq 0$, the finite-index subgroup $G_i\leqslant G$ is defined as the intersection of all normal subgroups $H\lhd G$ such that $[G:H]\leq i$. Given $w\in G/G_j$, we define
    \[\mathrm{Sh}(w)\coloneqq \{(w_i)_{i\ge 0}\in \widehat{G}:w_j= w\}.\]

    The following lemma expresses the concepts of L-graphings and edge-measure in the particular case of the profinite action defined above. Its proof is elementary and follows from the arguments in \cite[Section 3]{abert2012rank}.
	
	\begin{lem}\label{lemma:L-graphings}
		Let $G$ be a countable group and $S$ be a finite generating set. Let $H\lhd G$ be a finite-index subgroup and let $\Gamma \leq \mathrm{Cay}(G/H,S)$ be a spanning connected subgraph. Consider the graphing $\Phi\subseteq G\times \widehat{G}$ defined by setting, for each $s\in S$, the fiber 
		\(
		\Phi_{s^{-1}}=\bigcup_{w}\mathrm{Sh}(w^{-1})
		\),
		where the union ranges over all vertices $w\in G/H$ such that $(w,ws)\in E(\Gamma)$. Then:
		\begin{enumerate}
			\item $\Phi$ is an L-graphing if and only if $\pi_1(\Gamma)=H$, and
			\item \(\mathbf{m}(\Phi)=|E(\Gamma)|/|V(\Gamma)|.\)
		\end{enumerate} 
	\end{lem}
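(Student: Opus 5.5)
The plan is to interpret $\Phi$ orbitwise as a lift of $\Gamma$ to the Cayley graph of $G$, and then read off both items. Since $H\lhd G$ has finite index, it contains some $G_j$ from the defining chain of $\widehat G$. Hence $\mathrm{Sh}(w^{-1})$ for $w\in G/H$ makes sense as a clopen set: it is the set of points of $\widehat G$ whose image under the projection $\widehat G\to G/H$ equals $w^{-1}$. Its Haar measure is $1/[G:H]=1/|V(\Gamma)|$, and these sets partition $\widehat G$ as $w$ ranges over $G/H$. The action of $G$ on $\widehat G$ is free because $G$ is residually finite.

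\textbf{Item (ii).} For each $s\in S$ we have
\[
\mu(\Phi_{s^{-1}})=\frac{\#\{w\in G/H:(w,ws)\in E(\Gamma)\}}{|V(\Gamma)|},
\]
because the sets $\mathrm{Sh}(w^{-1})$ are disjoint. Every edge of $\Gamma$ carries exactly one label $s\in S$ and has a unique initial vertex $w$. Summing over $s$ and using $\mathrm{m}(\Phi)=\sum_g\mu(\Phi_g)$ therefore gives $|E(\Gamma)|/|V(\Gamma)|$.

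\textbf{Item (i).} Fix $x\in\widehat G$ and identify its orbit with $G$ via $g\mapsto gx$. Let $\bar x\in G/H$ denote the image of $x$. By definition, $(s^{-1},y)\in\Phi$ exactly when the image of $y$ in $G/H$ is $w^{-1}$ for some $w$ with $(w,ws)\in E(\Gamma)$. In that case the image of $s^{-1}y$ is $(ws)^{-1}$. So under the map $y\mapsto(\text{image of }y)^{-1}$, which sends $gx$ to $\bar x^{-1}g^{-1}$, each $\Phi$-edge from $gx$ to $s^{-1}gx$ corresponds to the $s$-edge of $\Gamma$ from $\bar x^{-1}g^{-1}$ to $\bar x^{-1}g^{-1}s$. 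Consequently, the graph on the orbit whose edges join $y$ to $s^{-1}y$ for $y\in\Phi_{s^{-1}}$ is isomorphic to the full preimage $\widetilde\Gamma$ of $\Gamma$ under the covering map $\mathrm{Cay}(G,S)\to\mathrm{Cay}(G/H,S)$. Concretely, the isomorphism is $g\mapsto g^{-1}$, translated by a lift of $\bar x^{-1}$. Unwinding the definitions of $\overline\Phi$ and $\Phi^k$, the pair $(g,y)$ lies in $\Phi^k$ iff there is a path of length at most $k$ from $y$ to $gy$ in this orbit graph, with edges traversable in both directions. Therefore $\Phi$ is an L-graphing iff $\widetilde\Gamma$ is connected. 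This criterion is the same for every $x$, so no almost-everywhere issues arise.

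The final step is the standard covering-space argument. The map $\mathrm{Cay}(G,S)\to\mathrm{Cay}(G/H,S)$ is a regular cover with deck group $H$ acting by left multiplication, and $\Gamma$ is connected and spanning. Lifting closed paths based at the vertex $H$ shows that the endpoints of lifts starting at $e_G$ are exactly the elements of $\pi_1^{\lab}(\Gamma,H)\le H$. Hence the connected components of $\widetilde\Gamma$ are in bijection with $H/\pi_1^{\lab}(\Gamma)$. So $\widetilde\Gamma$ is connected iff $\pi_1(\Gamma)=H$, which is item (i).

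I expect the main obstacle to be the bookkeeping in item (i): checking that the inversions and left/right conventions in the definition of $\Phi_{s^{-1}}$ and of the product $\Phi\cdot\Psi$ really match the right Cayley graph structure. I will verify this first on a single edge and then propagate it along paths.
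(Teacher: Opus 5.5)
Your proposal is correct. Item (ii) is the straightforward count over shadows. For item (i), the identification of each orbit graph with the full preimage $\widetilde\Gamma$ via $gx\mapsto cg^{-1}$ (with $c$ a lift of $\bar x^{-1}$) checks out with the paper's left/right conventions, and so does the covering argument that puts the components in bijection with $H/\pi_1^{\lab}(\Gamma)$. The paper gives no proof beyond calling the lemma elementary and pointing to \cite[Section 3]{abert2012rank}, and your argument is precisely that elementary verification. One small point: identifying orbits with $G$ uses freeness, which comes from the residual finiteness assumed at the start of that subsection. You could avoid it entirely by working directly with the graph on $G$ attached to each base point $x$ (which is your $\widetilde\Gamma$), since $\Phi^k$ is defined on pairs $(g,x)$.
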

	\subsection{The minimum condition}\label{subsection: minimum condition}
	
	\begin{defn}\label{def: minimum condition} Let $n>m\ge 1$. Let $F_n$ be the free group of rank $n$ with free basis $\{x_1,\ldots,x_n\}$ and let $(r_1,\ldots, r_m)$ be an $m$-tuple of cyclically reduced words on $F_n$. Let $\phi:F_n\to \Z$ be a homomorphism. Let $r=s_0\cdots s_{L-1}$ be a cyclic word with $\phi(r)=0$. Along the boundary cycle of $r$, assign to the vertex reached after reading the prefix $s_0\cdots s_{p-1}$ the height $\phi(s_0\cdots s_{p-1})$. Since $\phi(r)=0$, this is well defined on the cyclic boundary. Extend the height linearly over the boundary edges. We call the set of points of minimum height the \emph{lower section} of $r$ and denote it by $L_\phi(r)$.

The pair $((r_1,\ldots, r_m),\phi)$ satisfies the \emph{minimum condition} if, possibly after renaming the generators $\{x_1,\ldots, x_n\}$, for each $j=1,\ldots,m$ it holds that $\phi(r_j)=0$ and that the lower section $L_{\phi}(r_j)$ consists of 
		\begin{itemize}
			\item either exactly one vertex, whose two adjacent edges are labeled by $x_j^{\pm 1}$ and $x_n^{\pm 1}$, or
			\item exactly one edge, labeled by $x_j^{\pm 1}$, whose adjacent edges are labeled by $x_n^{\pm 1}$.
		\end{itemize}
		The pair $((r_1,\ldots, r_m),\phi)$ satisfies a \emph{standard minimum condition} if there exists $\phi:F_n\to \Z$ such that, in addition to the above, satisfies $\phi(x_i)\ge 0$ for each $1\le i< n$ and $\phi(x_n)<0$.
	\end{defn}
	
	\begin{rem}
		Modulo changing generators $x_i$ to $x_i^{-1}$, every pair $((r_1,\dots,r_m),\phi)$ satisfying the minimum condition is seen to satisfy the standard minimum condition. Further, we may assume that the map $\phi$ is surjective.
	\end{rem}
	
	Let $G$ be a finitely generated group and let $S=\{x_1,\ldots, x_n\}$ be a generating set. Let $h:G\to \Z$ be a homomorphism. For each $i=1,\ldots,n$ and every $x_i$-edge $e=(g,gx_i)$ in $\mathrm{Cay}(G,S)$, let us define its \emph{lowest $h$-value} by 
	\begin{equation*}\label{eq: lowest h value}
		\ell(e)\coloneqq \min\{h(g),h(gx_i)\}.
	\end{equation*}
	
	\begin{lem}\label{lem: path lemma}
		Let $G=\langle x_1,\dots,x_n\mid r_1,\dots,r_m\rangle$ and let $\phi:F_n\to \Z$ be a surjective homomorphism such that $((r_1,\ldots,r_m),\phi)$ satisfies the standard minimum condition. Let $h:G\to \Z$ be the homomorphism induced by $\phi$, and set $S=\{x_1,\ldots,x_n\}$.
		
		Then there exist constants $C,R\in \N$, depending only on $r_1,\ldots,r_m$ and $\phi$, such that for every $1\le j\le m$ and every $x_j$-edge $e=(g,gx_j)$ in $\mathrm{Cay}(G,S)$, there is a finite path $P(e)$ joining $g$ and $gx_j$ in $\mathrm{Cay}(G,S)$ with the following properties.
		\begin{enumerate}
			\item Every $x_i$-edge $e'$ occurring in $P(e)$ satisfies either $i>m$, or $i\le m$ and
			\[
			\ell(e)<\ell(e')\le \ell(e)+C.
			\]
			\item The path $P(e)$ is contained in the $R$-neighborhood of the edge $e$ in
			$\mathrm{Cay}(G,S)$.
			
			\item The product of the labels along the path $P(e)$ is equal to $x_j$ in $G$.
		\end{enumerate}
	\end{lem}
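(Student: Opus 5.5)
Each relator $r_j$ is itself a closed path in $\mathrm{Cay}(G,S)$ with product label $e_G$. I will use it to replace a single $x_j$-edge by the complementary arc of the relator. The minimum condition places the distinguished $x_j$-letter at the bottom of the height profile of $r_j$, so all other occurrences of $x_1,\ldots,x_m$ along the relator sit strictly higher.

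Fix $1\le j\le m$ and an $x_j$-edge $e=(g,gx_j)$. By the standard minimum condition, the lower section $L_\phi(r_j)$ is either a single vertex whose two adjacent edges are labeled $x_j^{\pm1}$ and $x_n^{\pm1}$, or a single edge labeled $x_j^{\pm1}$ flanked by $x_n^{\pm1}$-edges. In both cases there is a distinguished occurrence of $x_j^{\pm1}$ in $r_j$ that touches the lowest height. Cyclically permute $r_j$, and invert it if necessary, to write $r_j = x_j u^{-1}$ as cyclic words, where the displayed $x_j$ is this distinguished occurrence. Then $u$ represents $x_j$ in $G$. Define $P(e)$ as the path starting at $g$ that reads $u$. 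It ends at $g\cdot u = gx_j$, which gives item (3).

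For item (1), translate heights along $P(e)$ by $h(g)$ relative to the boundary of the relator cell glued at the distinguished edge. The lowest height of the relator cycle, after translation, equals $\ell(e)$.

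In the vertex case, exactly one endpoint of $e$ is at minimal height, and $\ell(e)$ is that minimum. In the edge case, $\phi(x_j)=0$ and both endpoints are at the minimum.

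Every other edge of the cycle, that is, every edge of $P(e)$, has its lower endpoint at least at the minimum. Equality is possible only if it meets the lower section. In the vertex case the only other edge touching the minimal vertex is labeled $x_n^{\pm1}$. In the edge case the edges adjacent to the minimal edge are $x_n$-edges, and no other edge is at minimal height. Hence every $x_i$-edge $e'$ in $P(e)$ with $i\le m$ satisfies $\ell(e')>\ell(e)$. Since $m\le n-2<n$, the index $n$ is not among $1,\ldots,m$, so it causes no conflict.

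The upper bound $\ell(e')\le \ell(e)+C$ holds with $C=\max_j(\max\text{height}-\min\text{height on } r_j)$, which depends only on the $r_j$ and $\phi$.

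Item (2) holds with $R=\max_j|r_j|$, since $P(e)$ has length $|r_j|-1$ and starts at $g$.

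The only subtle step is the orientation bookkeeping: the distinguished letter may occur as $x_j^{-1}$, and one must check that reading the relator in the appropriate direction still puts the edge $e$ in the position of that lowest letter. This is handled by choosing the cyclic conjugate and inverse so that the letter reads exactly $x_j$ starting from $g$. Heights are preserved up to the global shift $h(g)$, because left multiplication changes $h$ by a constant.
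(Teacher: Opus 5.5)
Your proof is correct and follows the paper's argument: write $r_j$ as a cyclic conjugate, inverted if necessary, that begins with the distinguished $x_j$, take the complementary arc of the relator cycle as $P(e)$, read off (1) from the lower section using that heights shift by the constant $h(g)$, and take $R=\max_j|r_j|$. The only slip is your justification that $n\notin\{1,\dots,m\}$: the lemma does not assume $m\le n-2$, but $m<n$ is built into the definition of the minimum condition, and that already suffices.
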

	\begin{proof}
		For each $j=1,\ldots,m$, choose the distinguished $x_j$-edge in the $r_j$-cycle given by the standard minimum condition. After cyclically conjugating, and if necessary, inverting, $r_j$, we may write $r_j=x_j u_j$ in such a way that the $x_j$-edge at the start of $r_j$ is the distinguished edge. In other words, in the corresponding $r_j$-cycle based at $e_G$, the distinguished edge is $e_j\coloneqq (e_G,x_j)$ with label $x_j$. Note that, since word metrics are left invariant, in order to establish the statement of the lemma it is sufficient to find $C,R\in \N$ that satisfy the desired properties for all $e_j$, $1\le j \le m$.
		
		Let $Q_j$ be the complementary path from $e_G$ to $x_j$ obtained by going around the rest of the $r_j$-cycle. Its label is $u_j^{-1}$. Since $x_j u_j=r_j=e_G$ in $G$, the word $u_j^{-1}$ and $x_j$ evaluate to the same element in $G$. Hence the label of $Q_j$ represents $x_j$. By the standard minimum condition, $e_j$ meets the lower section of the $r_j$-cycle, and every other edge meeting the lower section is labeled by $x_n$ or $x_n^{-1}$. Hence, every $x_i$-edge $e'$ of $Q_j$ with $i\leq m$ has both endpoints strictly above the minimum height $\ell(e_j)$, so we have $\ell(e')>\ell(e_j)$.
		
		Define 
		\[
		C\coloneqq \max\left\{ \ell(e')-\ell(e_j) : 1\le j \le m, e'\text{ an edge in the path }Q_j\right\}.
		\]
		Hence, for every $x_i$-edge $e'$ of some path $Q_j$ with $i\le m$, we have $0<\ell(e')-\ell(e_j)\le C$. Also, let us set $R=\max_{1\le j\le m}|r_j|$, where $|r_j|$ denotes the word length of $r_j$ as a reduced word in the free group $F_n$. Note that any edge in $Q_j$ is within distance $R$ in $\mathrm{Cay}(G,S)$ of $e_j$, so the last property of the lemma is satisfied.
	\end{proof}

For the random presentations considered here, the minimum condition was proved to hold with asymptotic probability $1$ when $m<n-1$ in \cite[Proposition 5.3]{KielakKrophollerWilkes2022}.
	
	\subsection{Upper bound on the rank gradient and proof of Theorem \ref{thm:high_deficiency}}\label{subsection:upper bound on rank gradient}
	The objective of this section is to establish the following inequality.
	
	\begin{prop}\label{prop: groupoid cost}
		Let $((r_1,\ldots, r_m),\phi)$ be a pair satisfying a minimum condition and let $G=\langle x_1,\dots,x_n\mid r_1,\dots,r_m\rangle$ with $m\le n-1$. Suppose that $G$ is residually finite and denote by $\widehat{G}$ the profinite completion of $G$. Then $\mathrm{gcost}(G,\widehat{G})\leq n-m$.
	\end{prop}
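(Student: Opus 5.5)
We will build, for each $\varepsilon>0$, a finite-index normal subgroup $H\lhd G$ and a connected spanning subgraph $\Gamma\subseteq\Cay(G/H,S)$ with $\pi_1(\Gamma)=H$ and $|E(\Gamma)|/|V(\Gamma)|\le n-m+\varepsilon$. Lemma \ref{lemma:L-graphings} then turns $\Gamma$ into an L-graphing of measure at most $n-m+\varepsilon$, which gives $\mathrm{gcost}(G,\widehat G)\le n-m$. By the remark after Definition \ref{def: minimum condition}, we may assume the standard minimum condition holds with $\phi$ surjective. We take $h:G\to\Z$ to be the induced homomorphism and $C,R$ the constants of Lemma \ref{lem: path lemma}.

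\textbf{Choice of $H$.} Fix a large integer $N$, a multiple of $C$ to be chosen later. Let $H=K\cap h^{-1}(N\Z)$, where $K\lhd G$ is a finite-index normal subgroup, supplied by residual finiteness, that avoids the finitely many nontrivial elements of the ball of radius $2R$. Then $h$ descends to $\bar h:G/H\to\Z/N\Z$. Moreover, the $R$-neighbourhoods of edges in $\Cay(G/H,S)$ look like those in $\Cay(G,S)$, so every path $P(e)$ of Lemma \ref{lem: path lemma} projects to a path in $\Cay(G/H,S)$ with the same label and the same endpoints as the image of $e$. For each residue, the fibres $\bar h^{-1}(c)$ all have the same size $|V|/N$. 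The $x_j$-edges with $j\le m$ have lowest value in a well-defined residue class, and those classes $c$ are equidistributed.

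\textbf{Deleting edges.} Cut $\Z/N\Z$ at a single residue class, say $0$, and lift it to heights in $\{0,\dots,N-1\}$. Delete from $\Cay(G/H,S)$ every $x_j$-edge with $j\le m$ whose lowest value lies in $\{0,\dots,N-C-1\}$, and keep all $x_i$-edges with $i>m$ together with the $x_j$-edges in the top window of width $C$. By downward induction on the lowest value, every deleted edge $e$ is homotopic rel endpoints, within the remaining graph, to a path. Indeed, Lemma \ref{lem: path lemma} replaces $e$ by $P(e)$, whose $x_{\le m}$-edges have strictly larger lowest value and which stays below height $N$ because $\ell(e)+C\le N-1$. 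The induction terminates because the heights are bounded and the edges at heights $\ge N-C$ are kept. Hence the remaining graph $\Gamma$ is connected, and since each deletion is a homotopy of paths carrying the same group label, $\pi_1^{\lab}(\Gamma)=\pi_1^{\lab}(\Cay(G/H,S))=H$. The edge count is $|V|\bigl((n-m)+m\,C/N\bigr)$, so the average degree is $n-m+mC/N$. Letting $N\to\infty$ gives the bound.

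\textbf{Main obstacle.} The delicate point is the wrap-around in $\Z/N\Z$. A path $P(e)$ starting near height $N-C$ could cross the cut and land in low heights, where edges have already been deleted, which would create circular dependencies. This is why we keep the top window of width $C$ intact and check that Lemma \ref{lem: path lemma} (i) bounds the lowest values of $x_{\le m}$-edges in $P(e)$ by $\ell(e)+C$. The edges with $i>m$ in $P(e)$ are never deleted, so their heights do not matter. A second point to verify carefully is that the projection to $G/H$ preserves the rel-endpoint equivalence, so that $\pi_1$ is exactly $H$ and not only contained in it. This follows because each replacement preserves the product label, and $\pi_1^{\lab}$ is defined via labels.
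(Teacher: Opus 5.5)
Your proposal is correct and follows essentially the same route as the paper. The paper also keeps every $x_i$-edge with $i>m$ together with the $x_{\le m}$-edges whose lowest value lies in a window of width about $C$, namely $\{0,\dots,C\}$. Its induction on the positive cyclic distance to that window is your downward induction after a shift. It then applies Lemma \ref{lemma:L-graphings} and lets $N\to\infty$.

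The differences are cosmetic. The paper takes $H=H_N$ from the chain defining $\widehat G$ instead of $K\cap h^{-1}(N\Z)$. Neither the local injectivity you impose via $K$ nor the requirement that $N$ be a multiple of $C$ is actually used, since any path in $\Cay(G,S)$ projects to a path in $\Cay(G/H,S)$ with the same label and endpoints.
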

	
	\begin{lem}\label{lemma:spanning_graph}
		Let $((r_1,\ldots, r_m),\phi)$ be a pair satisfying a minimum condition and let $G=\langle x_1,\dots,x_n\mid r_1,\dots,r_m\rangle$ with $m\le n-1$. Let $C,R\in \N$ be the constants given by Lemma \ref{lem: path lemma}. Let $N\ge 1$, and let $H\lhd G$ be a finite-index normal subgroup such that $H\le \ker(G\xrightarrow{h}\Z\to \Z/N\Z)$, where $h:G\to \Z$ is the homomorphism induced by $\phi$. Set $S=\{x_1,\ldots,x_n\}$, and assume that the quotient map $G\to G/H$ is injective on $\bigcup_{j=1}^m C(r_j)\cup B_S(e_G,R)$.
		
		Then the labeled Cayley graph $\mathrm{Cay}(G/H,S)$ contains a connected spanning subgraph $\Gamma$ with average degree at most $n-m+m(C+1)/N$. Moreover, the labeled fundamental group of $\Gamma$ is $H$, that is, $\pi_1(\Gamma)=H$.
	\end{lem}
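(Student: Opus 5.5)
We will build $\Gamma$ from the full Cayley graph $\mathrm{Cay}(G/H,S)$ by deleting most $x_j$-edges for $1\le j\le m$, keeping only those in one residue class of heights modulo $N$. Since $H\le\ker(G\to\Z\to\Z/N\Z)$, the homomorphism $h$ descends to a well-defined map $\bar h:G/H\to\Z/N\Z$. For an $x_j$-edge $(gH,gx_jH)$ of $\mathrm{Cay}(G/H,S)$, its lowest $h$-value is well defined modulo $N$; call it $\bar\ell(e)\in\Z/N\Z$. We let $\Gamma$ contain every $x_i$-edge with $i>m$. For $j\le m$, we keep an $x_j$-edge only if $\bar\ell(e)\in\{0,1,\dots,C\}\bmod N$, where $C$ is the constant from Lemma \ref{lem: path lemma}. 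If $N\le C+1$ we keep all edges, and the average degree bound $n\le n-m+m(C+1)/N$ is trivial.

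Next we count edges. The map $\bar h$ is surjective, since $\phi$ is surjective, and $G/H$ is a group, so each fibre of $\bar h$ has the same size $|G/H|/N$. An $x_j$-edge $(w,wx_j)$ has lowest value $\min\{\bar h(w),\bar h(w)+\phi(x_j)\}$, so its class is a fixed translate of $\bar h(w)$. Hence exactly $(C+1)|G/H|/N$ of the $x_j$-edges survive. The out-degree average is therefore $(n-m)+m(C+1)/N$.

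Connectivity and $\pi_1(\Gamma)=H$ we prove together. It suffices to show that every deleted edge $e$ is joined in $\Gamma$ by a path whose label represents $x_j$; then every closed path in $\mathrm{Cay}(G/H,S)$ can be rewritten inside $\Gamma$ with the same product label. Since $\pi_1$ of the full Cayley graph of $G/H$ at $H$ is $H$, it follows that $\pi_1(\Gamma)\supseteq H$, and the reverse inclusion is automatic. We argue by induction on the lift value: lift $e$ to an edge $\tilde e$ in $\mathrm{Cay}(G,S)$ and apply Lemma \ref{lem: path lemma} to get the path $P(\tilde e)$. Project it to $G/H$; the injectivity hypothesis on $B_S(e_G,R)$ and on the relator cycles ensures the projection is an honest path in the quotient with the correct endpoints and label. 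Every $x_i$-edge of $P$ with $i\le m$ has strictly larger lowest value, by at most $C$.

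Order the edges by $\ell(\tilde e)$. Starting from a deleted edge with lowest value $a$, the replacement edges have values in $(a,a+C]$. Those whose residue lies in the kept window are in $\Gamma$; the others are replaced again. Residues only move forward, and any run of $C+1$ consecutive residues meets the window $\{0,\dots,C\}$, so there is no infinite regress: the deleted residues form an interval of length $N-C-1$, and each replacement strictly increases the value, so after at most $N$ rounds every edge lies in $\Gamma$. Formally, we induct downward on the distance of $\bar\ell(e)$ to the window, measured forward cyclically.

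The main obstacle is making this induction rigorous. Values increase in $\Z$, not cyclically, so we must check that an increase of at most $C$ never jumps over the kept window. It cannot, because the window contains $C+1$ consecutive residues. We must also check that the lifting and projection respect heights, which holds because $h$ is compatible with $\bar h$.
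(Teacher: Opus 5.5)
Your proposal is correct and follows the paper's proof essentially step by step. You use the same window $\{0,\dots,C\}\subseteq\Z/N\Z$ for the mod-$N$ lowest heights of the $x_j$-edges with $j\le m$, and the same equal-fibre count for $\bar h$; your translate-by-$\min\{0,\phi(x_j)\}$ observation is a slightly more general version of the paper's use of $\phi(x_j)\ge 0$. You also use the same induction on the forward cyclic distance to the window, based on the fact that a jump of at most $C$ from outside the window cannot skip its $C+1$ consecutive residues. One small correction: projecting a path of $\mathrm{Cay}(G,S)$ to $\mathrm{Cay}(G/H,S)$ automatically preserves endpoints and labels, so the injectivity hypothesis is not what makes that step work, and the paper's argument does not invoke it there either.
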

	\begin{proof}
		Let us first note that, since $H\le \ker(G\xrightarrow{h}\Z\to \Z/N\Z)$, the map
		$h$ induces a homomorphism $\bar{h}:G/H\to \Z/N\Z$.
		
		For each $i=1,\ldots, n$ and every $x_i$-edge $\bar{e}=(\bar{g},\bar{g} x_i)$ in $\mathrm{Cay}(G/H,S)$, choose a	lift $e=(g,gx_i)$ in $\mathrm{Cay}(G,S)$ and define
		$\bar{\ell}(\bar{e})$ to be the image of $\ell(e)$ in $\Z/N\Z$. Note that this is well-defined. Indeed, let $e_1$ and $e_2$ be both lifts of $\bar{e}$ in $\mathrm{Cay}(G,S)$. Since $e_1$ and $e_2$ project to the same edge of $G/H$, their initial vertices differ by an element of $H$, so that $g_2^{-1}g_1\in H$.
		Hence $h(g_1)-h(g_2)=h(g_2^{-1}g_1)\in N\Z$ and therefore $\ell(e_1)\equiv \ell(e_2)\ \mathrm{mod}\ N$.
		
		If $N\le C+1$, we take $\Gamma=\mathrm{Cay}(G/H,S)$. Then $\Gamma$ is connected,
		its labeled fundamental group is $H$, and its average degree is at most $n$, which is
		bounded above by $n-m+m\frac{C+1}{N}$. Thus we may assume from now on that $N>C+1$.
		
		Let $B\coloneqq\{0,1,\ldots,C\}\subset \Z/N\Z$. Define $\Gamma$ to be the subgraph of $\mathrm{Cay}(G/H,S)$ with vertex set $G/H$ and containing as edges:
		\begin{itemize}
			\item every $x_i$-edge with $i>m$, and
			\item every $x_i$-edge with $1\le i\le m$ and $\bar\ell(\bar e)\in B$.
		\end{itemize}
		Let us first estimate the average degree $\frac{|E(\Gamma)|}{|V(\Gamma)|}$.

		For each $i>m$ all $x_i$-edges are kept, so in total they contribute $[G:H](n-m)$ edges to $\Gamma$. Let us now count the retained edges with labels $x_1,\ldots,x_m$. Fix $1\le i\le m$. Since the minimum condition gives $h(x_i)\ge 0$, an $x_i$-edge $\bar{e}=(\bar{g},\bar{g} x_i)$ is retained if and only if $\bar{\ell}(\bar{e})=\bar{h}(\bar{g})\in B.$ Since $\phi$ is surjective, the map $h:G\to \Z$ is surjective, and hence so is the map $G\xrightarrow{h}\Z\to \Z/N\Z$. Since we are assuming that $H\le \mathrm{ker}(G\xrightarrow{h}\Z\to \Z/N\Z)$, the induced map $\bar{h}:G/H\to \Z/N\Z$ in the quotient $G/H$ is also surjective. Thus, every fiber of $\bar{h}$ has the same cardinality. In other words, we have
		\[
		\left|\bar{h}^{-1}(\{a\})\right|=\frac{[G:H]}{N}
		\]
		for every $a\in \Z/N\Z$. Since we have $|B|=C+1$, then
		
		\[
		|\bar h^{-1}(B)|=\sum_{a\in B}\left|\bar{h}^{-1}(\{a\})\right|=(C+1)\frac{[G:H]}{N}.
		\]
		It follows that, for this fixed $i\le m$, the number of retained $x_i$-edges is $(C+1)\frac{[G:H]}{N}.$ Summing over $i=1,\ldots,m$, the total number of retained edges with labels in $\{x_1,\ldots,x_m\}$ is $m(C+1)\frac{[G:H]}{N}$. We conclude that \[|E(\Gamma)|=(n-m)[G:H]+m(C+1)\frac{[G:H]}{N},\]
		and therefore the average degree in $\Gamma$ is 
		\[
		\frac{|E(\Gamma)|}{|V(\Gamma)|}=n-m+m\frac{C+1}{N}.
		\]
		This proves the desired average degree estimate.

		Let us now show that for every $j\leq m$, every
		$x_j$-edge that got removed from $\mathrm{Cay}(G/H,S)$ has its endpoints connected within $\Gamma$ through a path with product label $x_j$. From this, it will follow both that $\Gamma$ is connected and that $\pi_1(\Gamma)=H$.

		For each $a\in \Z /N\Z $ let us define
		\[
		d(a)=\min\{k\ge 0: a+k\in B \ \mathrm{mod}\ N\}.
		\]
		
		In other words, $d(a)$ is the distance, moving in the positive cyclic direction, from $a$ to the retainment window $B=\{0,1,\ldots,C\}$. In particular, $d(a)=0$ if and only if $a\in B$.
		
		Let $\bar{e}$ be an $x_j$-edge with $1\le j\le m$. Let us prove by induction on the value $d(\bar{\ell}(\bar{e}))$ that the endpoints of $\bar{e}$ are connected in $\Gamma$.
		
		If $d(\bar{\ell}(\bar{e}))=0$, then $\bar{\ell}(\bar{e})\in B$, so by definition of $\Gamma$, the edge $\bar{e}$ is retained. Thus, in particular, the two endpoints of $\bar{e}$ are connected in $\Gamma$.
		
		Now let us suppose that $d(\bar\ell(\bar e))>0$, and assume the claim has already been proved for all $x_j$-edges $\bar{f}$ with $j\le m$ and $d(\bar{\ell}(\bar{f}))<d(\bar{\ell}(\bar{e}))$.
		
		Choose a lift $e=(g,gx_j)$ of $\bar{e}$ to $\mathrm{Cay}(G,S)$. By Lemma \ref{lem: path lemma}, there is a path $P(e)$ from $g$ to $gx_j$ with product label $x_j$ and such that every $x_i$-edge $e'$ of $P(e)$
		satisfies either that $i>m$, or that $i\le m$ and
		\[
		\ell(e)<\ell(e')\le \ell(e)+C.
		\]
		Project $P(e)$ to a path $\bar{P}(\bar{e})$ in $\mathrm{Cay}(G/H,S)$ joining
		the endpoints of $\bar{e}$. It is clear that the product label of $\bar{P}(\bar{e})$ is precisely $x_j$. Every $x_i$-edge in $\bar{P}(\bar e)$ with $i>m$ lies in $\Gamma$. Now consider an $x_i$-edge $\bar{f}$ of $\bar P(\bar e)$ with $i\le m$. Let $f$ be the corresponding lift occurring in $P(e)$. Then $\ell(e)<\ell(f)\le \ell(e)+C$ and hence $\ell(f)=\ell(e)+t$ for some integer $t\in \{1,\ldots, C\}$. Taking residue class modulo $N$, we get $\bar{\ell}(\bar{f})=\bar\ell(\bar{e})+t$. Since $d(\bar{\ell}(\bar {e}))>0$, we have $\bar{\ell}(\bar{e})\notin B$. Choose the representative $A\in\{C+1,C+2,\ldots,N-1\}$ of $\bar{\ell}(\bar{e})$. Then $d(\bar{\ell}(\bar{e}))=N-A.$ Since $1\le t\le C$, there are two possibilities:
		\begin{itemize}
			\item If $A+t\ge N$, then
			\[
			\bar{\ell}(\bar{f})=A+t-N\in \{0,1,\ldots,C-1\}\subseteq B.
			\]
			As a consequence, $\bar{f}$ belongs to $\Gamma$.
			
			\item If $A+t<N$, then $\bar{\ell}(\bar{f})$ is represented by $A+t$, which still lies outside $B$, and
			\[
			d(\bar{\ell}(\bar{f})) =N-(A+t)<N-A=d(\bar\ell(\bar e)).\]
			By the induction hypothesis, $\bar{f}$ has its endpoints connected in $\Gamma$ through a path with product label $x_i$.
		\end{itemize}
		In any case, we can replace the edge $\bar{f}$ by a path in $\Gamma$ with the same product label. After doing so for every $x_i$-edge $\bar{f}$ of $\bar{P}(\bar{e})$ with $i\leq m$, we obtain a path in $\Gamma$ joining $\overline{g}$ and $\overline{gx_j}$ with product label $x_j$. This proves the induction step. 
		
		The connectedness of $\Gamma$ now follows from the above argument and the fact that $x_j$-edges with $j>m$ were all retained. It also immediately follows that the labeled fundamental group of $\Gamma$ is $H$. Indeed, first note that a cycle in $\Gamma$ is a cycle in $\mathrm{Cay}(G/H,S)$, and so the product of its labels has to be equal to $e_{G/H}$. Therefore $\pi_1(\Gamma)\leqslant H$. Let us see that $H\leqslant \pi_1(\Gamma)$. Every $g\in H$ defines a cycle in $\mathrm{Cay}(G/H,S)$. By replacing the edges of this cycle that got removed in the construction of $\Gamma$ with the path in $\Gamma$ granted by the above induction argument, we can build a cycle in $\Gamma$ with product label $g$. This shows that $H\leqslant \pi_1(\Gamma)$.
	\end{proof}
	
	\begin{proof}[Proof of Proposition \ref{prop: groupoid cost}]
		For each $N\geq 1$, define
		\[
		H_N:=\bigcap_{H\lhd G,[G:H]\leq N}H.
		\]
		It is readily checked that $\widehat{G}$ can be identified with $\varprojlim_N G/ H_N$. Since $\ker(G\xrightarrow{h}\Z\to \Z/N\Z)$ has index $N$ in $G$, we get that $H_N\leqslant \ker(G\xrightarrow{h}\Z\to \Z/N\Z)$. Since $G$ is residually finite and the subgroups $H_N$ form
		a decreasing chain with trivial intersection, the quotient map $G\to G/H_N$ is injective on the fixed finite set required by Lemma \ref{lemma:spanning_graph} for all sufficiently large $N$. Letting $C,R\in \N$ be the constants given by Lemma \ref{lem: path lemma}, by Lemma \ref{lemma:spanning_graph} there is a spanning subgraph $\Gamma\leq \mathrm{Cay}(G/H_N,S)$ with $\pi_1(\Gamma)=H_N$ and 
		\[
		\frac{|E(\Gamma)|}{|V(\Gamma)|}\leq n-m+\frac{m(C+1)}{N}.
		\]
		Define the graphing $\Phi_N\subseteq G\times \widehat{G}$ by setting 
		\[
		(\Phi_N)_{x_i^{-1}}=\bigcup_{w}\mathrm{Sh}(w^{-1}),
		\]
		where the union ranges over all vertices $w\in G/H_N$ such that $(w,wx_i)\in E(\Gamma)$. Then by Lemma \ref{lemma:L-graphings} $\Phi_N$ is an L-graphing with edge measure 
		\[
		\mathbf{m}(\Phi_N)\leq n-m+\frac{m(C+1)}{N}.
		\]
		We thus conclude that
		\[
		\mathrm{gcost}(G,\widehat{G})\leq \mathbf{m}(\Phi_N)\leq n-m+\frac{m(C+1)}{N}.
		\]
		Taking $N\to \infty$ we obtain the desired result.
	\end{proof}
	
\begin{proof}[Proof of Theorem~\ref{thm:high_deficiency}]
Suppose that $1\leq m\leq n-2$, and consider the event $\mathcal{G}_\ell$. By Items \eqref{item: prob 7} and \eqref{item: prob 8} of Proposition \ref{prop: lifted face geometry}, the group $G_\ell$ is residually finite, its presentation satisfies a minimum condition, and it holds that $\beta_1^{(2)}(G_\ell)=n-m-1$.
	
	Proposition \ref{prop: groupoid cost} gives
	\[
	\mathrm{gcost}(G_\ell,\widehat G_\ell)\leq n-m.
	\]
	Since the profinite action is free, its groupoid cost equals its cost. Lemma \ref{lem:absolute_rank_gradient} and \cite[Theorem 1]{abert2012rank} then give
	\[
	\rg(G_\ell)+1 =\mathrm{cost}(G_\ell\curvearrowright\widehat G_\ell) \leq n-m.
	\]
	Together with Inequalities \eqref{eq:general-inequalities}, this yields
	\[
	n-m-1	=\beta_1^{(2)}(G_\ell)	\leq\mathrm{cost}(G_\ell)-1	\leq\rg(G_\ell)	\leq n-m-1.
	\]
	All inequalities are therefore equalities. The conclusion holds with probability tending to one because we have $\P_\ell(\mathcal{G}_\ell)\xrightarrow[\ell\to \infty]{}1$.
\end{proof}
	
\makeatletter
\nom@tempdim=\nomlabelwidth\relax
\makeatother
\input{main.nls}
	
	\bibliographystyle{alpha}
	\bibliography{biblio}
\end{document}